\renewcommand{\phi}{\varphi}
\renewcommand{\rho}{\varrho}
\renewcommand{\epsilon}{\varepsilon}
\tikzset{commutative diagrams/.cd}
\newtheorem{Def}{Definition}[section]
\newenvironment{definition}{\begin{Def} \rm}{\end{Def}}
\newtheorem{lemma}[Def]{Lemma}
\newtheorem{proposition}[Def]{Proposition}
\newtheorem{corollary}[Def]{Corollary}
\newtheorem{theorem}[Def]{Theorem}
\newtheorem{example}[Def]{Example}
\newtheorem{remark}[Def]{Remark}
\newcommand{\komma}{,\hspace{0.3em}}
\newcommand{\id}{\text{\rm id}}
\renewcommand{\leq}{\leqslant}
\renewcommand{\geq}{\geqslant}
\newenvironment{sm}{\scriptsize \begin{pmatrix}}{\end{pmatrix}}
\newcommand{\Naturals}{{\mathbb N}}
\newcommand{\Reals}{{\mathbb R}}
\newcommand{\Complexes}{{\mathbb C}}
\newcommand{\Quaternions}{{\mathbb H}}
\newcommand{\notperp}{\mathbin{\not\perp}}
\newcommand{\Zero}{{\bf 0}}
\renewcommand{\c}{^\perp}
\newcommand{\cc}{^{\perp\perp}}
\newcommand{\ce}[1]{^{\perp_{#1}}}
\newcommand{\cce}[1]{^{\perp_{#1}\perp_{#1}}}
\newcommand{\herm}[2]{\left( #1 , #2 \right)}
\newcommand{\lin}[1]{\langle #1\rangle}
\renewcommand{\P}{\mathbf P}
\DeclareMathOperator{\Sym}{Sym}
\newcommand{\scmu}{\mathbin{\raisebox{2pt}{$\scriptscriptstyle\diamond$}}}
\newcommand{\prl}{\parallel}
\newcommand{\class}[1]{[#1]}
\newcommand{\zerokernel}[1]{{#1}^{\circ}}
\newcommand{\withoutzero}{^{\raisebox{0.2ex}{\scalebox{0.4}{$\bullet$}}}}
\newcommand{\adj}{^\ast}
\newcommand{\inv}{^\ast}
\newcommand{\OMS}[1]{\mathcal{OMS}_{#1}}
\newcommand{\OMSfin}[1]{\mathcal{OMS}^\text{\rm fin}_{#1}}
\newcommand{\Hil}[1]{\mathcal{Hil}_{#1}}
\newcommand{\C}{{\mathcal C}}
\newcommand{\Cfin}{{\mathcal C}^\text{\rm fin}}
\renewcommand{\L}{{\mathsf L}}
\newcommand{\Lfin}{{\mathsf L}^\text{\rm fin}}
\newcommand{\E}{{\mathsf E}}
\newcommand{\op}{^\text{op}}
\DeclareMathOperator{\kernel}{ker}
\DeclareMathOperator{\image}{im}
\DeclareMathOperator{\homset}{Hom}
\DeclareMathOperator{\spectrum}{sp}
\begin{document}

\title{Dagger categories of orthosets \\ and the complex Hilbert spaces}

\author[1]{Jan Paseka}

\author[2]{Thomas Vetterlein}

\affil[1]{\footnotesize Department of Mathematics and Statistics,
Masaryk University \authorcr
Kotl\'a\v rsk\'a 2, 611\,37 Brno, Czech Republic \authorcr
{\tt paseka@math.muni.cz}}

\affil[2]{\footnotesize Institute for Mathematical Methods in Medicine and Data Based Modeling, \authorcr
Johannes Kepler University Linz \authorcr
Altenberger Stra\ss{}e 69, 4040 Linz, Austria \authorcr
{\tt Thomas.Vetterlein@jku.at}}

\date{\today}

\maketitle

\begin{abstract}\parindent0pt\parskip1ex

\noindent An orthoset is a non-empty set $X$ together with a symmetric binary relation $\perp$ and a constant $0$ such that $x \notperp x$ for any $x \neq 0$, and $0 \perp x$ for any $x$. Maps $f \colon X \to Y$ and $g \colon Y \to X$ between orthosets are said to form an adjoint pair if, for any $x \in X$ and $y \in Y$, $f(x) \perp g$ if and only if $x \perp g(x)$. Hilbert spaces, equipped with the usual orthogonality relation and the zero vector, provide the motivating examples of orthosets. The usual adjoints of bounded linear maps between Hilbert spaces are adjoints also in our sense.

We investigate dagger categories of orthosets and maps between them, requiring that any morphism and its dagger form an adjoint pair. We indicate conditions under which such a category is unitarily dagger equivalent to the dagger category of complex Hilbert spaces and bounded linear maps.

{\it Keywords:} Orthoset; orthogonality space; orthomodular space; Hilbert space; dagger category

{\it MSC:} 81P10; 06C15; 46C05

\mbox{}\vspace{-2ex}

\end{abstract}

\section{Introduction}
\label{sec:Introduction}

The complex Hilbert space provides a mathematical framework for modelling quantum phenomena. Its vectors, its linear structure and the inner product do not themselves reflect form and substance of physical observations. The structure of the physical observables is an aspect that comes in addition; in particular, physical symmetries correspond to unitary group actions. In order to justify the choice of a Hilbert space as a mathematical model, we thus cannot simply refer to intuition. Rather, we should consider the Hilbert space as a tool that is, in a sense, universally applicable for representing the various kinds of structures that arise in physics. The question seems natural whether we can give a plausible reason for its usefulness and versatility.

It would be helpful to have a means to derive the Hilbert space model from structures that are of an elementary nature and easy to accept as the starting point for physical modelling. The concern of structural reduction has been discussed in the framework of the so-called logico-algebraic approach to the foundation of quantum mechanics. The idea is to associate with a Hilbert space a sort of ``logic''. Indeed, statements about a quantum physical system that can be verified by experiment correspond to the closed subspaces and the collection of all closed subspaces forms an ortholattice. Moreover, the Hilbert space can be reconstructed from this ortholattice. The efforts to describe the Hilbert space lattice-theoretically were to some extent successful, but the conditions are technically involved \cite{Wlb}. Trying to go to the limits, one may instead focus solely on the binary relation of orthogonality among the $1$-dimensional subspaces of a Hilbert space, which model the pure states of a quantum physical system. To this end, David Foulis and his collaborators once suggested a graph-theoretical approach. They coined the notion of an {\it orthogonality space}, defined to be a set equipped with a symmetric, irreflexive binary relation $\perp$, called the {\it orthogonality relation} \cite{Dac,Wlc}. An orthogonality space is thus nothing but an undirected graph. To describe the complex Hilbert space as such a structure, however, is again not a straightforward matter \cite{Vet1,Vet2}.

The mentioned endeavours to characterise the Hilbert space have focused on a single object in isolation -- a certain ortholattice, a certain orthogonality space. In the present paper, we go instead the ``categorical'' way, not considering the single objects as most important but the mutual interrelation between them. We are inspired by the categorical approach to the discussion on quantum foundations, which was initiated by S.~Abramsky and B.~Coecke \cite{AbCo} and has become a very active field of research since then \cite{HeVi}. In our context particularly remarkably, C.~Heunen and A.~Kornell succeeded to describe the category of Hilbert space in purely categorical terms \cite{HeKo}.

The present contribution is to be seen in the context of both the logico-algebraic and the categorical approach to the foundational issues in quantum physics. We focus on concrete objects, namely, on orthosets. An orthoset is essentially the same as an orthogonality space in the sense of Foulis. However, we slightly modify the definition, requiring the presence of an additional element $0$ that is assumed to be orthogonal to all elements. We moreover propagate a particular sort of maps to take on the role of morphisms. A map $f \colon X \to Y$ between orthosets is called {\it adjointable} if there is a further map $g \colon Y \to X$ such that, for any $x \in X$ and $y \in Y$, $f(x) \perp y$ if and only if $x \perp g(y)$. The map $g$ is in this case an {\it adjoint} of $f$. A collection of orthosets together with the adjointable maps between them form a category. Orthosets and adjointable maps are discussed in our previous papers \cite{PaVe4,PaVe5}, which have laid the basis of the present work.

The category $\Hil{\Complexes}$ of complex Hilbert space and bounded linear maps is an example of a category of this sort. Indeed, each complex Hilbert space is an orthoset: the orthogonality relation is the familiar one and the $0$ is the zero vector. Moreover, any bounded linear map $\phi \colon H_1 \to H_2$ between Hilbert spaces is, when seen as a map between orthosets, adjointable. In fact, $\phi$ possesses the adjoint $\phi\adj \colon H_2 \to H_1$, the adjoint of $\phi$ in the common sense. The assignment $\phi \mapsto \phi\adj$ gives rise to an involutive contravariant endofunctor, or a {\it dagger}, on $\Hil{\Complexes}$. In other words, $\Hil{\Complexes}$ is actually a dagger category \cite{Sel}. This paper includes a characterisation of $\Hil{\Complexes}$ as a dagger category of orthosets.

Our approach is as follows. We consider a category $\C$ whose objects are (certain) orthosets, whose morphisms are (certain adjointable) maps between them, and whose dagger assigns to each map an adjoint. We start with an investigation of the consequences of three conditions on $\C$ that we consider as basic. Roughly speaking, the first condition describes how to compose two objects to a new one, the second one how to decompose an object into two, and according to the third one there is a unique ``atomic'' object. These hypotheses are sufficient to show that the objects of $\C$ are orthosets arising from orthomodular spaces over a certain $\star$-sfield (involution skew field) $F$. We say that an orthoset $X$ has rank $n \in \Naturals$ if the maximal number of mutually orthogonal non-zero elements of $X$ is $n$, and if there is no such $n$ we say that $X$ has infinite rank. If $\C$ contains an orthoset of infinite rank, we can, thanks to Sol\` er's Theorem, be even more specific: the $\C$-objects then arise from Hilbert spaces over one of the classical $\star$-sfields $\Reals$, $\Complexes$, or $\Quaternions$.

In a next step, we extend our hypotheses. We sharpen the assumption on the ``atomic'' object and we add a condition saying that, for each pair of objects, one is contained as a subspace in the other one. For a $\star$-sfield $F$, let $\OMS{F}$ be the dagger category of uniform orthomodular spaces over $F$ and adjointable linear maps. Here, an orthomodular space is called {\it uniform} if each $1$-dimensional subspace contains a unit vector. Moreover, let $\OMSfin{F}$ be the full subcategory of $\OMS{F}$ consisting of the finite-dimensional spaces. In the case when the orthosets in $\C$ have finite rank and under a certain condition on the $\star$-sfield (fulfilled by $\Reals$, $\Complexes$, $\Quaternions$), we show that $\C$ is unitarily dagger equivalent to $\OMSfin{F}$. We finally add a fifth condition according to which any dagger automorphism $f$ has a strict square root, that is, given $f$ there is a further dagger automorphism $g$ such that $g^2 = f$, and $f$ and $g$ commute with the same projections. We prove that, provided that $\C$ contains an orthoset of infinite rank, $\C$ is unitarily dagger equivalent with $\OMS{\Complexes}$, which coincides with $\Hil{\Complexes}$. That is, $\C$ then consists of complex Hilbert spaces and bounded linear maps.

The results of this paper require rather extensive preparations. The preliminaries are developed in Sections~\ref{sec:orthosets}--\ref{sec:dagger-categories}. We start by compiling the relevant facts about orthosets in Section~\ref{sec:orthosets} and about adjointable maps in Section~\ref{sec:adjointable-maps}. Orthosets are intended to represent orthomodular spaces, which are discussed in Sections~\ref{sec:orthomodular-spaces} and~\ref{sec:adjointable-linear-maps}. Finally, Section~\ref{sec:dagger-categories} provides the necessary background regarding dagger categories.

Sections~\ref{sec:three-basic-hypotheses}--\ref{sec:extended-set-of-hypotheses} constitute the main part of the paper. In Section~\ref{sec:three-basic-hypotheses}, we show that three basic conditions are enough to show that a dagger category of orthosets and adjointable maps can be regarded as consisting of orthomodular spaces. In Section~\ref{sec:the-functor-L}, we construct a functor $\L \colon \C \to \OMS{F}$, where $F$ is a $\star$-sfield, and we investigate its properties. In Section~\ref{sec:extended-set-of-hypotheses}, we sharpen our set of hypotheses and establish in particular that the functor $\L$ is faithful. Under the additional assumptions that dagger automorphisms possess strict square roots and that $\C$ contains an orthoset of infinite rank, we prove the unitary dagger equivalence of $\C$ with the dagger category of complex Hilbert spaces.

\section{Orthosets}
\label{sec:orthosets}

Following Foulis and his collaborators, an orthogonality space is a structure based on a binary relation $\perp$ that is supposed to be symmetric and irreflexive. Although our motivations correspond to those of Foulis, we deviate slightly from the original definition. Namely, we assume the presence of an additional element $0$ that is orthogonal to all elements. Apart from that, we use the more compact notion of an ``orthoset'' instead of an ``orthogonality space''.

\begin{definition}
An {\it orthoset} is a non-empty set $X$ equipped with a binary relation $\perp$ called the {\it orthogonality relation} and with a constant $0$ called {\it falsity}. The following properties are assumed to hold:
\begin{itemize}

\item[\rm (O1)] $x \perp y$ implies $y \perp x$ for any $x, y \in X$,

\item[\rm (O2)] $x \perp x$ only if $x = 0$, 

\item[\rm (O3)] $0 \perp x$ for any $x \in X$.

\end{itemize}
An element of $X$ distinct from falsity is called {\it proper}; we put $X\withoutzero = X \setminus \{0\}$. Elements $x, y \in X$ such that $x \perp y$ are called {\it orthogonal}.
\end{definition}

The motivating examples of orthosets arise from Hilbert spaces.

\begin{example} \label{ex:basic-1}
Let $H$ be a complex Hilbert space. Then $H$, equipped with the usual orthogonality relation and with the zero vector as falsity,  is an orthoset.

We denote the subspace spanned by a vector $u \in H$ by $\lin u$. Let us define
\[ \P(H) \;=\; \{ \lin u \colon u \in H \}. \]
That is, $\P(H)$ is the set of all at most $1$-dimensional subspaces of $H$. Equipped with the usual orthogonality relation and with the zero linear subspace as falsity, $\P(H)$ is likewise an orthoset.
\end{example}

We will compile some definitions and facts around orthosets needed for this work. To begin with, let us review what could be considered as ``separation axioms'' for orthosets.

\begin{definition} \label{def:separation-axioms}
Let $X$ be an orthoset.
\begin{itemize}

\item[\rm (i)] We call $X$ {\it irredundant} if, for any distinct proper elements $x, y \in X$, there is a $z \in X$ such that either $z \perp x$ but $z \notperp y$, or $z \perp y$ but $z \notperp x$.

\item[\rm (ii)] We call $X$ {\it atomistic} if, for any proper elements $x, y \in X$, the following holds: If there is a $z \in X$ such that $z \perp x$ but $z \notperp y$, then there is also an $z' \in X$ such that $z' \perp y$ but $z' \notperp x$.

\end{itemize}
\end{definition}

For an element $x$ of an orthoset $X$, let $\{x\}\c$ be the set of all elements orthogonal to $x$. We observe that $X$ is irredundant if, for any $x, y \in X\withoutzero$, $\{x\}\c = \{y\}\c$ implies $x = y$, and $X$ is atomistic if, for any $x, y \in X\withoutzero$, $\{x\}\c \subseteq \{y\}\c$ implies $\{x\}\c = \{y\}\c$.

We may associate with any orthoset an irredundant one as follows. Let us call two elements $x$ and $y$ of $X$ {\it equivalent} if $\{x\}\c = \{y\}\c$; we write $x \prl y$ in this case. We denote the equivalence class of some $x \in X$ by $\class x$ and we call
\[ P(X) \;=\; \{ \class x \colon x \in X \} \]
the {\it irredundant quotient} of $X$. In the obvious sense, $P(X)$ is an irredundant orthoset \cite[Proposition 2.8]{PaVe4}.

\begin{example} \label{ex:basic-2}
Let $H$ be a Hilbert space. In accordance with Example~\ref{ex:basic-1}, we view $H$ as an orthoset. We observe that $H$ is atomistic.

For $u \in H$, we have
\begin{equation} \label{fml:basic-2}
\class u \;=\; \{ \alpha \, u \colon \alpha \neq 0 \}
\;=\; \begin{cases} \lin u \setminus \{0\} & \text{if $u \neq 0$,} \\
\{0\} & \text{if $u = 0$} \end{cases}
\end{equation}
The irredundant quotient $P(H) = \{ \class u \colon u \in H \}$ can thus can be identified with $\P(H)$, the collection of subspaces of $H$ spanned by single vectors. Indeed, we obtain $P(H)$ from $\P(H)$ by removing the zero vector from every non-zero subspace.
\end{example}

Orthosets can be described to a good extent by lattice-theoretic tools. The simple idea is as follows. For any subset $A$ of an orthoset $X$, the {\it orthocomplement} of $A$ is
\[ A\c \;=\; \{ x \in X \colon x \perp y \text{ for all } y \in A \}. \]
The double orthocomplementation $\cc$ is a closure operator on $X$ and we call the sets $A \subseteq X$ such that $A = A\cc$ {\it orthoclosed}. We note that $0$ is contained in any orthoclosed set. We denote the collection of all orthoclosed subsets of $X$ by ${\mathsf C}(X)$. Partially ordered by set-theoretic inclusion and equipped with the orthocomplementation~$\c$, ${\mathsf C}(X)$ is a complete ortholattice, $\{0\}$ and $X$ being the bottom and top elements.

By a {\it subspace} of an orthoset $X$, we mean an orthoclosed set $A \subseteq X$ equipped with the restriction of the orthogonality relation and the constant $0$. Evidently, $A$ becomes in this way an orthoset as well. The orthocomplementation on $A$ will in this case be denoted by $\ce{A}$, that is, for $B \subseteq A$ we put $B\ce{A} = B\c \cap A$.

Let $A_1, \ldots, A_k$ be subspaces of $X$. We call $(A_1, \ldots, A_k)$ a {\it decomposition} of $X$ if $A_i = \big(\bigvee_{j \neq i} A_j\big)\c$ for each $i = 1, \ldots, k$ \cite{PaVe2}. In case when $k=2$, $A_1$ and $A_2$ are mutual orthocomplements and we refer to them as {\it complementary subspaces}.

Finally, an orthoset $X$ is called {\it reducible} if $X$ possesses complementary subspaces $A$ and $B$ such that both contain a proper element and $A \cup B = X$. Otherwise, we say that $X$ is {\it irreducible}.

\begin{lemma} \label{lem:Dacey-space}
Let $X$ be an orthoset. Then the following are equivalent:
\begin{itemize}

\item[\rm (a)] For any collection $A_1, \ldots, A_k$ of mutually orthogonal subspaces of $X$ whose join is $X$, $(A_1, \ldots, A_k)$ is a decomposition.

\item[\rm (b)] ${\mathsf C}(X)$ is an orthomodular lattice.

\item[\rm (c)] For any subspace $A$ of $X$ and any $B \subseteq A$, we have $B\cce{A} = B\cc$.

\item[\rm (d)] For any subspace $A$ of $X$, ${\mathsf C}(A) = \{ B \in {\mathsf C}(X) \colon B \subseteq A \}$.

\item[\rm (e)] For any $A \in {\mathsf C}(X)$ and any maximal $\perp$-set $D$ contained in $A$, we have that $A = D\cc$.

\end{itemize}
\end{lemma}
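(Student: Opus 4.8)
The plan is to prove the cyclic chain (a) $\Rightarrow$ (b) $\Rightarrow$ (c) $\Rightarrow$ (d) $\Rightarrow$ (e) $\Rightarrow$ (a). Before starting I would record three elementary facts valid in any orthoset, as they get used repeatedly. First, for $B \subseteq A$ with $A$ orthoclosed one always has $B\cc \subseteq B\cce{A} \subseteq A$, since $B\ce{A} = B\c \cap A \subseteq B\c$ forces $B\cce{A} \supseteq B\cc$, while $B\cce{A} \subseteq A$ holds by construction; thus in (c) only the inclusion $B\cce{A} \subseteq B\cc$ carries content. Second, if $D$ is a maximal $\perp$-set of a subspace $A$, then $D\ce{A} = D\c \cap A = \{0\}$ by maximality (any proper $x \in A$ orthogonal to all of $D$ could be adjoined to $D$), whence $D\cce{A} = \{0\}\ce{A} = A$. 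Third, the join in ${\mathsf C}(X)$ of a family of orthoclosed sets is the double orthocomplement of their union, so that $\bigvee_j D_j\cc = \big(\bigcup_j D_j\big)\cc$.

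For (a) $\Rightarrow$ (b), I would verify orthomodularity of ${\mathsf C}(X)$ in the form that $A \subseteq B$ implies $B = A \vee (A\c \cap B)$. The key idea is to feed (a) the triple $(A,\, A\c \cap B,\, B\c)$: these are pairwise orthogonal subspaces, and a short computation shows the orthocomplement of their join is $\{0\}$, so their join is $X$. By (a) the triple is a decomposition, and its third component gives $B\c = (A \vee (A\c \cap B))\c$, which upon complementation is exactly $B = A \vee (A\c \cap B)$. For (b) $\Rightarrow$ (c) I would invoke the standard fact that in an orthomodular lattice each interval $[\{0\},A]$ is an ortholattice under the relativised complement $C \mapsto C\c \wedge A = C\ce{A}$; in particular this map is an involution, so $C\cce{A} = C$ for every orthoclosed $C \subseteq A$. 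Writing $C = B\cc$, one has $B\c = C\c$, hence $B\ce{A} = C\ce{A}$ and $B\cce{A} = C\cce{A} = C = B\cc$, which is (c). (Alternatively one applies orthomodularity directly to $C \subseteq A$, then to $C\ce{A} \subseteq A$, and once more.)

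The step (c) $\Rightarrow$ (d) is bookkeeping: by (c), a set $B \subseteq A$ satisfies $B = B\cce{A}$ precisely when $B = B\cc$, so ${\mathsf C}(A)$ coincides with $\{B \in {\mathsf C}(X) \colon B \subseteq A\}$. For (d) $\Rightarrow$ (e), fix $A \in {\mathsf C}(X)$ and a maximal $\perp$-set $D$ in $A$. By the second preliminary fact, $D\cce{A} = A$. Since $D\cc$ is $\perp$-closed and contained in $A$, condition (d) makes it $\perp_A$-closed; as $D\cce{A}$ is the smallest $\perp_A$-closed set containing $D$, we get $D\cce{A} \subseteq D\cc$. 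Combined with $D\cc \subseteq A = D\cce{A}$ this yields $A = D\cc$, which is (e).

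Finally, for (e) $\Rightarrow$ (a), fix mutually orthogonal subspaces $A_1,\dots,A_k$ with $\bigvee_i A_i = X$, fix $i$, and put $B_i = \bigvee_{j \neq i} A_j$; then $A_i \subseteq B_i\c$ is automatic and equality is what must be shown. Using Zorn's Lemma choose a maximal $\perp$-set $D_j$ in each $A_j$; by (e), $A_j = D_j\cc$, and mutual orthogonality makes $D := \bigcup_j D_j$ a $\perp$-set with $D\cc = \bigvee_j A_j = X$, so $D\c = \{0\}$. Now extend $D_i$ to a maximal $\perp$-set $E$ of $B_i\c$; any $x \in E \setminus D_i$ is a proper element of $B_i\c = \big(\bigvee_{j\neq i}A_j\big)\c$, hence orthogonal to every $A_j$ with $j \neq i$ and to $D_i$, thus to all of $D$, forcing $x \in D\c = \{0\}$, a contradiction. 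Therefore $E = D_i$ and $B_i\c = E\cc = D_i\cc = A_i$, giving (a). I expect the genuine obstacles to be, first, keeping relative ($\perp_A$) and absolute ($\perp$) orthocomplementation cleanly separated throughout, and second, the orthomodularity arguments in (a) $\Rightarrow$ (b) and (b) $\Rightarrow$ (c) — in particular spotting the triple $(A,\, A\c\cap B,\, B\c)$ and invoking the interval-ortholattice property; once the preliminary facts about maximal $\perp$-sets and joins of closures are in place, the remaining steps are largely formal.
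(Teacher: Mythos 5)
Your proof is correct, and it takes a genuinely more self-contained route than the paper. The paper only argues the equivalence of (a) and (b) directly: its (a) $\Rightarrow$ (b) is essentially your argument — for $B \subseteq A$ it feeds the triple $A\c$, $B$, $A \cap B\c$ into (a), which is your triple $(A,\, A\c \cap B,\, B\c)$ with the roles of the two orthoclosed sets interchanged — and its (b) $\Rightarrow$ (a) invokes the fact that mutually orthogonal elements of an orthomodular lattice generate a Boolean subalgebra; the remaining equivalences are delegated to the literature, namely (b) $\Leftrightarrow$ (c) $\Leftrightarrow$ (d) to \cite[Lemma 2.15]{PaVe4} and (b) $\Leftrightarrow$ (e) (Dacey's criterion) to \cite{Wlc}. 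You instead close a single cycle (a) $\Rightarrow$ (b) $\Rightarrow$ (c) $\Rightarrow$ (d) $\Rightarrow$ (e) $\Rightarrow$ (a) with full arguments: (b) $\Rightarrow$ (c) via the interval characterisation of orthomodularity (the relative complement $C \mapsto C\c \cap A$ is an involution on the interval $[\{0\},A]$ of ${\mathsf C}(X)$), and, most notably, (e) $\Rightarrow$ (a) by a direct Zorn/maximal-$\perp$-set argument, which closes the loop without either the Boolean-subalgebra fact or any citation. Your supporting computations are all sound: the orthocomplement of the join of your triple is $\{0\}$ by De Morgan plus irreflexivity; $D\ce{A} = \{0\}$ for a maximal $\perp$-set $D$ in $A$; $\bigvee_j D_j\cc = \big(\bigcup_j D_j\big)\cc$ by standard closure-operator reasoning; and in (d) $\Rightarrow$ (e) the minimality of $D\cce{A}$ among $\perp_A$-closed sets containing $D$ is used correctly. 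What your approach buys is a proof readable without consulting \cite{PaVe4} and \cite{Wlc}; what the paper's buys is brevity, proving only the implication specific to its decomposition notion and treating the rest as known orthomodular-lattice theory.
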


\begin{proof}
(a) $\Rightarrow$ (b): Let $A, B \in {\mathsf C}(X)$ such that $B \subseteq A$. Then $A\c$, $B$, and $A \cap B\c$ are mutually orthogonal subspaces whose join is $X$. Hence (a) implies that $(A\c, B, A \cap B\c)$ is a decomposition of $X$, and this means $A = B \vee (A \cap B\c)$.

(b) $\Rightarrow$ (a): Mutually orthogonal elements of an orthomodular lattice generate a Boolean subalgebra.

For the equivalence of (b), (c), and (d), see \cite[Lemma 2.15]{PaVe4}. Finally, for the equivalence of (b) with {\it Dacey's criterion} (e), see, e.g., \cite{Wlc}.
\end{proof}

An orthoset $X$ fulfilling one of the equivalent properties of Lemma~\ref{lem:Dacey-space} is called a {\it Dacey space}. Note that in this case $P(X)$ is Dacey as well, because ${\mathsf C}(X)$ is isomorphic to ${\mathsf C}(P(X))$. Moreover, as seen in the next two lemmas, properties of Dacey spaces are often ``inherited'' to their subspaces.

We recall that a lattice with $0$ is said to have the {\it covering property} if, for any element $a$ and atom $p \nleq a$, $\,a \vee p$ covers $a$. By an {\it AC} lattice, we mean an atomistic lattice with the covering property.

\begin{lemma} \label{lem:subspaces-of-Dacey-spaces}
Let $A$ be a subspace of a Dacey space $X$.
\begin{itemize}

\item[\rm (i)] $A$ is Dacey as well.

\item[\rm (iii)] If $X$ is atomistic, so is $A$.

\item[\rm (iv)] If ${\mathsf C}(X)$ has the covering property, so has ${\mathsf C}(A)$.

\end{itemize}
\end{lemma}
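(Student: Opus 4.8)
The plan is to reduce all three parts to order-theoretic statements about the ortholattice ${\mathsf C}(X)$, exploiting the description of ${\mathsf C}(A)$ provided by the Dacey hypothesis. The key observation is that, since $X$ is Dacey, Lemma~\ref{lem:Dacey-space}(d) identifies ${\mathsf C}(A)$, as a set, with $\{ B \in {\mathsf C}(X) \colon B \subseteq A \}$, that is, with the interval $[\{0\}, A]$ of ${\mathsf C}(X)$. This identification is order-preserving, since the order on both sides is inclusion; moreover it respects the orthocomplementations, because for $B \subseteq A$ the orthocomplement of $B$ in $A$ is $B\ce{A} = B\c \cap A = B\c \wedge A$, which is precisely the relative orthocomplement of $B$ inside the interval $[\{0\}, A]$. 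Thus ${\mathsf C}(A)$ is nothing but the interval $[\{0\}, A]$ of ${\mathsf C}(X)$ equipped with its relative orthocomplementation, and all three claims become assertions that this interval inherits a property from ${\mathsf C}(X)$.

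For part (i), I would invoke the standard fact that an interval $[\{0\}, A]$ of an orthomodular lattice, endowed with the relative orthocomplementation $B \mapsto B\c \wedge A$, is again orthomodular: the verifications that this map is an order-reversing involution with $B \wedge (B\c \wedge A) = \{0\}$ and $B \vee (B\c \wedge A) = A$, and that the orthomodular law is inherited, are immediate consequences of the orthomodular law in ${\mathsf C}(X)$. Since $X$ is Dacey, ${\mathsf C}(X)$ is orthomodular by Lemma~\ref{lem:Dacey-space}(b), hence so is ${\mathsf C}(A)$, and therefore $A$ is Dacey.

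For part (iv), I would note that, within the interval $[\{0\}, A]$, the partial order and the joins agree with those of ${\mathsf C}(X)$ (the join of a family bounded above by $A$ is computed identically in ${\mathsf C}(X)$ and in the interval), and that the atoms of the interval are exactly the atoms of ${\mathsf C}(X)$ lying below $A$. Given an atom $p \leq A$ and an element $a \leq A$ with $p \nleq a$, the covering property of ${\mathsf C}(X)$ yields that $a \vee p$ covers $a$ in ${\mathsf C}(X)$; since $a \vee p \leq A$ and the covering relation is order-theoretic and hence preserved in the interval, $a \vee p$ covers $a$ in ${\mathsf C}(A)$ as well. Thus ${\mathsf C}(A)$ has the covering property.

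For part (iii), the notion of being atomistic is a separation condition on the orthoset itself, so here I would argue directly, using Lemma~\ref{lem:Dacey-space}(c), which for the Dacey space $X$ gives $B\cce{A} = B\cc$ for every $B \subseteq A$. Recall that $A$ is atomistic iff, for all $x, y \in A\withoutzero$, the inclusion $\{x\}\ce{A} \subseteq \{y\}\ce{A}$ forces equality. Assuming such an inclusion and applying $\ce{A}$, I get $\{y\}\cce{A} \subseteq \{x\}\cce{A}$, whence $\{y\}\cc \subseteq \{x\}\cc$ by (c); applying $\c$ in $X$ and using $\ccc = \c$ then gives $\{x\}\c \subseteq \{y\}\c$. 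As $X$ is atomistic, this inclusion is an equality, so $\{x\}\cc = \{y\}\cc$; invoking (c) once more and applying $\ce{A}$ returns $\{x\}\ce{A} = \{y\}\ce{A}$, as required. The only point demanding care throughout is the repeated passage between the orthocomplementation $\c$ of $X$ and the relative one $\ce{A}$ of $A$ — equivalently, checking that the interval identification respects joins, orthocomplements, atoms and covers — which is exactly what Lemma~\ref{lem:Dacey-space}(c),(d) licenses; beyond this bookkeeping no genuinely hard step arises, the whole lemma being an inheritance argument.
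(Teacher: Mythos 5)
Your proposal is correct. Note, however, that the paper does not prove this lemma internally at all: its ``proof'' is a citation to an external reference (Lemma~2.16 of the authors' earlier paper on categories of orthosets and adjointable maps), so your argument cannot be compared line-by-line with an in-paper one; what you have supplied is a genuine self-contained proof. Your route -- identifying ${\mathsf C}(A)$ via Lemma~\ref{lem:Dacey-space}(d) with the interval $[\{0\},A]$ of ${\mathsf C}(X)$ carrying the relative orthocomplement $B \mapsto B\c \wedge A$, and then transporting orthomodularity, atoms, joins and covers along this identification -- is the natural lattice-theoretic argument, and all three reductions are sound: for (i) the standard fact that an interval of an orthomodular lattice is orthomodular under the relative complement (the involution $(B\c \wedge A)\c \wedge A = B$ and the complement law $B \vee (B\c \wedge A) = A$ are exactly the orthomodular law plus Foulis--Holland distributivity for commuting elements, so ``immediate'' is fair but this is the one spot where orthomodularity is genuinely used, not just order theory); for (iv) the observations that the interval is a down-set, hence its atoms are the atoms of ${\mathsf C}(X)$ below $A$, its joins are computed as in ${\mathsf C}(X)$, and covering passes down to a subposet that is an order ideal; and for (iii) the direct computation with $\ce{A}$ versus $\c$ using Lemma~\ref{lem:Dacey-space}(c), together with the triple-orthocomplement identity, which correctly converts the orthoset-level atomisticity of $X$ into that of $A$. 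No gaps.
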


\begin{proof}
See \cite[Lemma 2.16]{PaVe4}.
\end{proof}

By a {\it $\perp$-set}, we mean a subset of an orthoset $X$ consisting of mutually orthogonal proper elements. The supremum of the cardinalities of $\perp$-sets in $X$ is called the {\it rank} of $X$. We tacitly assume in this paper that all orthosets have at most countable rank. We note that the rank of the irredundant quotient $P(X)$ coincides with the rank of $X$.

An orthoset of rank $0$ consists of the falsity alone. We will refer to the orthoset $\Zero = \{0\}$ as the {\it zero orthoset}. Moreover, an orthoset $X$ of rank $1$ does not contain any pair of orthogonal proper elements, hence $X$ is just a set with at least two elements one of which is designated as the falsity. We call $X$ in this case a {\it singleton}.

An orthoset $X$ possessing a proper element $z$ such that no proper element of $X$ is orthogonal to $z$ will be called {\it unital}. Obviously, we have $X = \{z\}\cc$ in this case. Note that any singleton is a unital orthoset.

\begin{lemma} \label{lem:unital}
Let $X$ be an orthoset.
\begin{itemize}

\item[\rm (i)] For any $x \in X\withoutzero$, the subspace $\{x\}\cc$ is unital.

If $X$ is Dacey, each unital subspace is of this form.

\item[\rm (ii)] Let $X$ be atomistic. Then $\{x\}\cc$ is a singleton for any $x \in X\withoutzero$ and in fact we have $\{x\}\cc = \class x \cup \{0\}$. Moreover, $\{ \{x\}\cc \colon x \in X\withoutzero \}$ is the set of atoms of ${\mathsf C}(X)$.

\item[\rm (iii)] $X$ is atomistic and unital if and only if $X$ is a singleton.

\end{itemize}
\end{lemma}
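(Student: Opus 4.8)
The plan is to treat the three parts in order, leaning throughout on the two reformulations recorded just after Definition~\ref{def:separation-axioms}---that $X$ is atomistic precisely when $\{x\}\c \subseteq \{y\}\c$ forces $\{x\}\c = \{y\}\c$ for proper $x,y$---together with the elementary observation that any $w \in \{x\}\c \cap \{x\}\cc$ satisfies $w \perp w$ and hence equals $0$ by (O2).

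For (i) I would exhibit $x$ itself as the distinguished element of the subspace $\{x\}\cc$. Since $\cc$ is a closure operator we have $x \in \{x\}\cc$, and if some $w \in \{x\}\cc$ satisfied $w \perp x$ then $w \in \{x\}\c \cap \{x\}\cc$, forcing $w = 0$; thus no proper element of $\{x\}\cc$ is orthogonal to $x$, which is exactly unitality. For the converse under the Dacey hypothesis, let $A$ be a unital subspace with witness $z$. Unitality means $\{z\}\ce{A} = \{0\}$, and since $\{0\}\ce{A} = A$ this gives $A = \{z\}\cce{A}$; invoking Lemma~\ref{lem:Dacey-space}(c), which yields $\{z\}\cce{A} = \{z\}\cc$, I conclude $A = \{z\}\cc$, as required.

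For (ii) I would first establish the identity $\{x\}\cc = \class x \cup \{0\}$. The inclusion $\supseteq$ is immediate, since $0 \in \{x\}\cc$ always and $y \in \class x$ means $\{y\}\c = \{x\}\c$, whence $y \in \{y\}\cc = \{x\}\cc$. For $\subseteq$, take a proper $y \in \{x\}\cc$; then $\{x\}\c \subseteq \{y\}\c$, and the atomistic reformulation upgrades this to $\{x\}\c = \{y\}\c$, i.e.\ $y \in \class x$. That $\{x\}\cc$ is a singleton then follows because its proper elements all lie in $\class x$ and no two such can be orthogonal: if $y,y' \in \class x$ with $y' \perp y$, then $y' \in \{y\}\c = \{x\}\c$ gives $y' \perp x$, whereas $\{y'\}\c = \{x\}\c$ together with $x \notperp x$ gives $x \notin \{y'\}\c$, i.e.\ $y' \notperp x$, a contradiction; hence $\{x\}\cc$ has rank $1$ and contains at least $x$ and $0$. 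For the statement about atoms I would argue both inclusions: each $\{x\}\cc$ is orthoclosed and nonzero, and any nonzero orthoclosed $B \subseteq \{x\}\cc$ contains a proper $y$ with $\{x\}\cc = \{y\}\cc \subseteq B$, forcing $B = \{x\}\cc$; conversely any atom $B$ contains a proper $y$, so $\{0\} \neq \{y\}\cc \subseteq B$ yields $B = \{y\}\cc$ by minimality.

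Finally, (iii) should fall out of (ii). If $X$ is a singleton then $\{x\}\c = \{0\}$ for every proper $x$, so $X$ is trivially unital and the atomistic condition holds vacuously. Conversely, if $X$ is unital with witness $z$ then $X = \{z\}\cc$, and if $X$ is moreover atomistic, part (ii) identifies $\{z\}\cc$ as a singleton. I expect the only genuinely delicate point to be the converse in (i): one must correctly read unitality of the subspace $A$ as the relative-orthocomplement identity $\{z\}\cce{A} = A$ and then discharge it using the Dacey equality $\{z\}\cce{A} = \{z\}\cc$ from Lemma~\ref{lem:Dacey-space}(c); everything else is a short manipulation with the closure operator and the two reformulations above.
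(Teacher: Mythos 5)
Your proposal is correct and follows essentially the same route as the paper's proof: part (i) exhibits $x$ as the unitality witness and settles the converse via criterion (c) of Lemma~\ref{lem:Dacey-space} exactly as the paper does, part (ii) establishes $\{x\}\cc = \class x \cup \{0\}$ by the same underlying equivalences (the paper writes them as a single chain of iffs), and part (iii) is deduced from (ii) in the same way. The only difference is that you spell out details the paper leaves implicit, such as verifying that $\class x$ contains no orthogonal pair so that $\{x\}\cc$ is genuinely a singleton.
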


\begin{proof}
Ad (i): Put $A = \{x\}\cc$. Then $\{x\}\ce{A} = \{0\}$ and hence $A$ is unital.

If $X$ is Dacey and $A$ a unital subspace of $X$, then there is a $z \in A$ such that $A = \{z\}\cce{A}$. By criterion (c) of Lemma~\ref{lem:Dacey-space}, we then have $A = \{z\}\cc$.

Ad (ii): Let $x \in X\withoutzero$. Then, for any $y \in X\withoutzero$, $y \in \{x\}\cc$ iff $\{y\}\cc \subseteq \{x\}\cc$ iff $\{x\}\c \subseteq \{y\}\c$ iff $\{x\}\c = \{y\}\c$ iff $x \prl y$. Hence $\{x\}\cc = \class x \cup \{0\}$.

This also shows that $\{x\}\cc$ is an atom of ${\mathsf C}(X)$. Clearly, any atom of ${\mathsf C}(X)$ is of this form.

Ad (iii): The ``only if'' part is clear from part (ii), and the ``if'' part is obvious.
\end{proof}

In what follows, the sum of cardinal numbers is understood in the usual set-theoretical way.

\begin{lemma} \label{lem:rank-of-subspaces-of-Dacey-spaces}
Let $X$ be an atomistic Dacey space and assume that ${\mathsf C}(X)$ has the covering property.
\begin{itemize}

\item[\rm (i)] Let $D$ be a maximal $\perp$-set in a subspace $A$ of $X$. Then the rank of $A$ is the cardinality of $D$.

\item[\rm (ii)] Let $(A,B)$ be a decomposition of $X$. Then the rank of $X$ is the sum of the rank of $A$ and the rank of $B$.

\item[\rm (iii)] Let $X$ be of rank $n \in \Naturals \cup \{\aleph_0\}$ and let $n_1, n_2 \in \Naturals \cup \{\aleph_0\}$ be such that $n = n_1 + n_2$. Then $X$ possesses a decomposition $(A,B)$ such that $A$ has rank $n_1$ and $B$ has rank $n_2$.

\end{itemize}
\end{lemma}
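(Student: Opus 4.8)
The plan is to prove the three parts of Lemma~\ref{lem:rank-of-subspaces-of-Dacey-spaces} by systematically exploiting the lattice-theoretic reformulation. The standing hypotheses---that $X$ is an atomistic Dacey space whose orthoclosed-set lattice has the covering property---mean precisely that ${\mathsf C}(X)$ is an AC lattice. By Lemma~\ref{lem:subspaces-of-Dacey-spaces}, every subspace $A$ inherits all three properties, so ${\mathsf C}(A)$ is an AC lattice too, and by criterion (d) of Lemma~\ref{lem:Dacey-space} we may identify ${\mathsf C}(A)$ with the interval $[\{0\},A]$ inside ${\mathsf C}(X)$. The guiding principle is that the rank of a subspace is an order-theoretic invariant: it should equal the height (length of a maximal chain) of the corresponding AC lattice, which in an AC lattice is well-behaved thanks to the Jordan--H\"older-type theory for the covering relation.

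\textbf{Part (i).} Let $D$ be a maximal $\perp$-set in $A$. By Lemma~\ref{lem:unital}(ii), each $z \in D$ gives rise to an atom $\{z\}\cc$ of ${\mathsf C}(X)$ lying below $A$, and distinct elements of $D$ give orthogonal, hence distinct, atoms. I would first argue that the join $\bigvee_{z \in D} \{z\}\cc$ equals $A$: by Dacey's criterion (e) of Lemma~\ref{lem:Dacey-space}, a maximal $\perp$-set $D$ in the orthoclosed set $A$ satisfies $A = D\cc$, and $D\cc = \bigvee_{z\in D}\{z\}\cc$. Next I would use the covering property to build, by successively adjoining these atoms, a maximal chain in $[\{0\},A]$ whose length is exactly $|D|$: each step $\{z_1,\dots,z_k\}\cc \vee \{z_{k+1}\}\cc$ covers its predecessor because $\{z_{k+1}\}\cc$ is an atom not below the previous join (orthogonality forbids it). Conversely, mutually orthogonal atoms are independent, so any $\perp$-set in $A$ has cardinality at most the height of $[\{0\},A]$; since the height is independent of the chosen maximal chain, every maximal $\perp$-set has the same cardinality, namely the rank of $A$.

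\textbf{Part (ii)} follows by additivity of height over the decomposition. Given a decomposition $(A,B)$, the elements $A$ and $B$ are complementary orthoclosed sets, so taking a maximal $\perp$-set $D_A$ in $A$ and $D_B$ in $B$, their disjoint union $D_A \disjointunion D_B$ is a $\perp$-set in $X$; I would check it is \emph{maximal} (any proper element orthogonal to all of $D_A \disjointunion D_B$ would lie in $A\c \cap B\c = \{0\}$, using $A\vee B = X$ and orthomodularity), and then part (i) applied to $X$, $A$, and $B$ gives $\operatorname{rank} X = |D_A| + |D_B| = \operatorname{rank} A + \operatorname{rank} B$, the cardinal sum being the usual set-theoretic one.

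\textbf{Part (iii)} is the converse construction and I expect it to be the main obstacle, particularly in the infinite-rank case $n = \aleph_0$. Fix a maximal $\perp$-set $D$ in $X$, of cardinality $n$, and partition it as $D = D_1 \disjointunion D_2$ with $|D_1| = n_1$ and $|D_2| = n_2$ (such a partition of a set of cardinality $n$ into parts of the prescribed cardinalities exists precisely because $n = n_1 + n_2$ as cardinals). Setting $A = D_1\cc$ and $B = D_2\cc$, I would verify that $(A,B)$ is a decomposition: clearly $A$ and $B$ are orthoclosed, and $A \perp B$ since every element of $D_1$ is orthogonal to every element of $D_2$; the crux is showing $A = B\c$, equivalently that $D_1\cc = D_2\ccc = D_2\c$, which amounts to $(D_1 \cup D_2)\c = \{0\}$ together with orthomodularity of ${\mathsf C}(X)$. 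The delicate point is that maximality of $D$ only gives $D\c = \{0\}$ directly when $D$ is finite via Dacey's criterion; in the infinite case one must invoke that $D$ is a maximal $\perp$-set to conclude no proper element is orthogonal to all of $D$, hence $D\cc = X$, and then the orthomodular law lets one split this join along the partition to obtain $B\c = D_1\cc = A$. Finally, part (i) confirms that $A$ has rank $|D_1| = n_1$ and $B$ has rank $|D_2| = n_2$, completing the proof.
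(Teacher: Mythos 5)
Your proposal is correct and takes essentially the same route as the paper: part (i) reduces to the standard AC-lattice fact that a family of atoms which is independent and joins to the top has a uniquely determined cardinality (the paper cites Maeda--Maeda, Lemma (8.3), where you instead sketch the underlying Jordan--H\"older chain argument), and parts (ii) and (iii) -- joining maximal $\perp$-sets of the two components, respectively partitioning a maximal $\perp$-set and taking double orthocomplements, with orthomodularity giving the decomposition -- match the paper's proof step for step. One small caveat: your chain/height argument in (i) tacitly assumes $D$ is finite (uniqueness of the length of maximal chains fails in lattices of infinite length), so the countably infinite case should be dispatched separately, as the paper does, by noting that the standing assumption of at most countable rank forces the rank to be $\aleph_0$ whenever $D$ is infinite.
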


\begin{proof}
Ad (i): By Lemma~\ref{lem:subspaces-of-Dacey-spaces}, it suffices to show the assertion for $A = X$. So let $D$ be a maximal $\perp$-set in $X$.

As we assume all orthosets to have rank $\leq \aleph_0$, $D$ is finite or countably infinite. In the latter case, $X$ clearly has rank $\aleph_0$. Assume that $D$ is finite. By Lemma~\ref{lem:unital}(ii), $\{ \{x\}\cc \colon x \in D\}$, is a finite collection of atoms of ${\mathsf C}(X)$. Moreover, $\{x\}\cc \cap \bigvee_{y \in D \setminus \{x\}} \{y\}\cc = \{0\}$ for any $x \in D$ and $\bigvee_{x \in D} \{x\}\cc = D\cc = X$. As ${\mathsf C}(X)$ has the covering property, any collection of atoms with the same two properties is finite and has in fact the same cardinality as $D$ \cite[Lemma (8.3)]{MaMa}. The assertion follows.

Ad (ii): Let $D$ be a maximal $\perp$-set in $A$ and $E$ a maximal $\perp$-set in $B$. Then $D \cup E$ is a maximal $\perp$-set in $X$ because, by criterion (e) of Lemma~\ref{lem:Dacey-space}, $X = A \vee B = D\cc \vee E\cc = (D \cup E)\cc$, that is, $(D \cup E)\c = \{0\}$. Hence the assertion follows from part (i).

Ad (iii): By part (i), any maximal $\perp$-set has $n$ elements. Hence there are $D, E \subseteq X$ such that $D \cup E$ is a maximal $\perp$-set, $D$ has $n_1$ elements, and $E$ has $n_2$ elements. Putting $A = D\cc$ and $B = E\cc$, we have that $(A,B)$ is a decomposition of $X$. Moreover, $D$ is a maximal $\perp$-set in $A$ and hence $A$ has rank $n_1$ by part (i). Similarly, $B$ has rank $n_2$.
\end{proof}

\section{Adjointable maps}
\label{sec:adjointable-maps}

In this section, we compile some basic facts about the maps between orthosets that we will choose as morphisms of categories. A more detailed discussion, including most of the proofs of this section, can be found in \cite{PaVe4}.

In what follows, $X$ and $Y$ are orthosets.

\begin{definition} \label{def:adjoint}
We say that $g \colon Y \to X$ is an {\it adjoint} of $f \colon X \to Y$ if, for any $x \in X$ and $y \in Y$
\[ f(x) \perp y \quad\text{if and only if}\quad x \perp g(y). \]
Moreover, a map $f \colon X \to X$ is called {\it self-adjoint} if $f$ is an adjoint of itself.
\end{definition}

We will exclusively deal with maps between orthosets that possess an adjoint and we call these maps {\it adjointable}. Note that adjointness is a symmetric property and we will occasionally speak of two mutually adjoint maps as an {\it adjoint pair}.

Clearly, the identity map $\id \colon X \to X$ is self-adjoint, and if $f\adj$ is an adjoint of $f \colon X \to Y$ and $g\adj$ is an adjoint of $g \colon Y \to Z$, then $f\adj \circ g\adj$ is an adjoint of $g \circ f$.

We call two maps $f, f' \colon X \to Y$ between orthosets {\it equivalent} if $f(x) \prl f'(x)$ for all $x \in X$. We write $f \prl f'$ in this case. The next lemma shows that adjoints are uniquely determined up to equivalence.

\begin{lemma} \label{lem:irredundancy-adjoints-1}
Let $f \colon X \to Y$ and $g \colon Y \to X$ be an adjoint pair of maps. Then $g' \colon Y \to X$ is an adjoint of $f$ if and only if $g' \prl g$. Consequently, if $X$ is irredundant, then $g$ is the unique adjoint of $f$.
\end{lemma}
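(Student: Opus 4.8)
The plan is to reduce everything to the elementary observation that being an adjoint of $f$ is a biconditional linking $f$ to the orthocomplements of the values of the candidate map. First I would record that, by hypothesis, $f(x) \perp y \iff x \perp g(y)$ for all $x \in X$ and $y \in Y$.

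For the forward direction of the equivalence I would assume $g'$ is also an adjoint of $f$, so that $f(x) \perp y \iff x \perp g'(y)$ as well, and chain the two biconditionals to obtain $x \perp g(y) \iff x \perp g'(y)$ for all $x, y$. Fixing $y$ and letting $x$ range over $X$, this says precisely that $\{g(y)\}\c = \{g'(y)\}\c$, i.e.\ $g(y) \prl g'(y)$; since $y$ was arbitrary, $g' \prl g$. For the converse I would run this backwards: $g' \prl g$ gives $\{g(y)\}\c = \{g'(y)\}\c$ for each $y$, hence $x \perp g(y) \iff x \perp g'(y)$, and feeding this into the adjoint condition for $g$ produces $f(x) \perp y \iff x \perp g'(y)$, so that $g'$ is an adjoint of $f$.

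For the final clause I would apply the equivalence just proved to two adjoints $g, g'$ of $f$, obtaining $g(y) \prl g'(y)$ for every $y$. The one point requiring care is that irredundancy is stated only for \emph{proper} elements, so I must rule out the possibility that $\prl$ identifies a proper value with the falsity $0$. This is immediate from the axioms: by (O3) and (O1) we have $\{0\}\c = X$, whereas for proper $z$ the axiom (O2) gives $z \notin \{z\}\c$, so $\{z\}\c \subsetneq X$; hence $g(y) \prl g'(y)$ forces $g(y)$ and $g'(y)$ to be simultaneously $0$ or simultaneously proper. In the first case they agree trivially, and in the second irredundancy yields $g(y) = g'(y)$. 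Thus $g = g'$.

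There is no serious obstacle here; the argument is a direct translation of the definitions, the heart of it being that $x \perp g(y) \iff x \perp g'(y)$ for all $x$ is just a restatement of $g(y) \prl g'(y)$. The only subtlety---and the one place where an unwary proof could leave a gap---is the treatment of the falsity element in the uniqueness step, which the observation $\{0\}\c = X \neq \{z\}\c$ disposes of.
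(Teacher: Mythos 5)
Your proof is correct and is exactly the direct unwinding of Definition~\ref{def:adjoint} that the paper itself invokes, declaring the lemma ``immediate'' from it; your chaining of the two biconditionals to get $\{g(y)\}\c = \{g'(y)\}\c$, and the observation that $\{0\}\c = X$ separates falsity from proper elements in the uniqueness step, are precisely the details the paper leaves implicit.
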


\begin{proof}
This is immediate from Definition~\ref{def:adjoint}.
\end{proof}

Adjointable maps factor through the relation of equivalence.

\begin{proposition} \label{prop:quotient-map}
Any adjointable map $f \colon X \to Y$ preserves $\prl$. Hence we may define
\[ P(f) \colon P(X) \to P(Y) \komma \class x \mapsto \class{f(x)}. \]
Moreover, if $g \colon Y \to X$ is an adjoint of $f$, then $P(g)$ is the unique adjoint of $P(f)$.
\end{proposition}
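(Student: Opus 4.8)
The plan is to exploit the adjoint $g$ of $f$ in order to transfer orthocomplement computations from $Y$ back to $X$, since on its own an arbitrary map need not respect $\prl$.

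First I would establish that $f$ preserves $\prl$. Suppose $x \prl x'$, that is, $\{x\}\c = \{x'\}\c$, and fix an adjoint $g \colon Y \to X$ of $f$. For an arbitrary $y \in Y$, the defining property of the adjoint gives $y \in \{f(x)\}\c \iff f(x) \perp y \iff x \perp g(y) \iff g(y) \in \{x\}\c$, and likewise $y \in \{f(x')\}\c \iff g(y) \in \{x'\}\c$. Since $\{x\}\c = \{x'\}\c$, the two right-hand conditions coincide for every $y$, whence $\{f(x)\}\c = \{f(x')\}\c$, i.e. $f(x) \prl f(x')$. This is the only step carrying genuine content, and it is precisely here that adjointability is indispensable.

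Given preservation of $\prl$, the assignment $P(f) \colon \class x \mapsto \class{f(x)}$ is well defined: if $\class x = \class{x'}$ then $x \prl x'$, so $f(x) \prl f(x')$ and hence $\class{f(x)} = \class{f(x')}$. The same reasoning applied to $g$ makes $P(g)$ well defined. To verify that $P(g)$ is an adjoint of $P(f)$, I would recall that orthogonality on the irredundant quotient is given by $\class a \perp \class b \iff a \perp b$, which is legitimate precisely because $\prl$-equivalent elements share their orthocomplement. Then for $\class x \in P(X)$ and $\class y \in P(Y)$ one computes $P(f)(\class x) \perp \class y \iff f(x) \perp y \iff x \perp g(y) \iff \class x \perp P(g)(\class y)$, which is exactly the adjointness condition.

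Finally, since $P(X)$ is irredundant and the adjoint of $P(f)$ takes its values in $P(X)$, Lemma~\ref{lem:irredundancy-adjoints-1} guarantees that this adjoint is unique; thus $P(g)$ is \emph{the} adjoint of $P(f)$. I do not anticipate any real obstacle: the whole argument reduces to the single computation above together with the routine observation that $\perp$ descends to the quotient, after which uniqueness is immediate from irredundancy.
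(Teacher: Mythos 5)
Your proof is correct and is precisely the direct verification that the paper dismisses as ``readily checked'' (deferring details to \cite[Proposition 3.3]{PaVe4}): use the adjoint to see that $\{f(x)\}\c$ is determined by $\{x\}\c$, descend the adjointness equivalence to the quotients, and invoke irredundancy of $P(X)$ via Lemma~\ref{lem:irredundancy-adjoints-1} for uniqueness. No gaps; this is essentially the paper's intended argument.
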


\begin{proof}
Again, the assertion is readily checked, or see \cite[Proposition 3.3]{PaVe4}.
\end{proof}

Let $f \colon X \to Y$ be an adjointable map. In the sequel, $f\adj$ will always denote an adjoint of $f$. It is, however, clear from Lemma~\ref{lem:irredundancy-adjoints-1} that there exist further adjoints of $f$ unless $X$ is irredundant.

\begin{lemma} \label{lem:lattice-adjoint}
Let $f \colon X \to Y$ be adjointable.
\begin{itemize}

\item[\rm (i)] For any $A \subseteq X$, we have $f(A\cc) \subseteq f(A)\cc$.

\item[\rm (ii)] For any $A_i \in {\mathsf C}(X)$, $i \in I$, we have $f(\bigvee_i A_i)\cc \;=\; \bigvee_i f(A_i)\cc$.
\end{itemize}
\end{lemma}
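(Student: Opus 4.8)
The plan is to establish part (i) directly from the defining biconditional of an adjoint, and then to obtain part (ii) from part (i) using only the formal properties of the orthocomplementation.

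For part (i), I would fix an adjoint $g = f\adj$ of $f$ and chase the orthogonality relation back and forth through $g$. Let $x \in A\cc$; since $f(A)\cc$ consists exactly of those $z \in Y$ with $z \perp y$ for all $y \in f(A)\c$, it suffices to show $f(x) \perp y$ for an arbitrary $y \in f(A)\c$. For such a $y$ we have $f(a) \perp y$ for every $a \in A$, so the adjunction $f(a) \perp y \Leftrightarrow a \perp g(y)$ together with symmetry of $\perp$ gives $g(y) \in A\c$. As $x \in A\cc$, this yields $x \perp g(y)$, and one more application of the adjunction gives $f(x) \perp y$, as desired. The computation is short; the only point requiring care is the direction in which the biconditional of Definition~\ref{def:adjoint} is invoked, in combination with the symmetry of $\perp$.

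For part (ii), I would first reduce the claim to a statement about the ordinary union. Put $U = \bigcup_i A_i$. Since the join in the complete ortholattice ${\mathsf C}(X)$ is the double orthocomplement of the union, so that $\bigvee_i A_i = U\cc$, the left-hand side equals $f(U\cc)\cc$. For the right-hand side, $f$ distributes over unions, whence $\bigcup_i f(A_i) = f(U)$, and passing from each orthoclosed piece $f(A_i)\cc$ to $f(A_i)$ under a double orthocomplement is harmless: from $f(A_i) \subseteq f(A_i)\cc \subseteq f(U)\cc$ one reads off $\bigvee_i f(A_i)\cc = (\bigcup_i f(A_i)\cc)\cc = f(U)\cc$. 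Thus the asserted identity collapses to $f(U\cc)\cc = f(U)\cc$. One inclusion is exactly part (i) applied to $A = U$: from $f(U\cc) \subseteq f(U)\cc$ and the fact that $f(U)\cc$ is orthoclosed, taking double orthocomplements gives $f(U\cc)\cc \subseteq f(U)\cc$. The reverse inclusion is pure monotonicity, since $U \subseteq U\cc$ forces $f(U) \subseteq f(U\cc)$ and hence $f(U)\cc \subseteq f(U\cc)\cc$.

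I do not expect a genuine obstacle here: part (i) is a two-line adjunction chase, and once it is available part (ii) is essentially formal. The only thing to keep straight in part (ii) is the bookkeeping of which sets are already orthoclosed — using that the join is computed as the double orthocomplement of a union, that $f$ distributes over arbitrary unions but not in general over the closure, and that the operator $A \mapsto A\cc$ is monotone and idempotent.
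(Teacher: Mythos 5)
Your proof is correct. Note that the paper itself gives no argument for this lemma --- it simply cites \cite[Lemmas~3.5(ii), 3.7(ii)]{PaVe4} --- so there is nothing internal to compare against; your write-up supplies the standard argument that such a reference presumably contains. Part~(i) is exactly the expected adjunction chase: $y \in f(A)\c$ forces $g(y) \in A\c$ via the biconditional of Definition~\ref{def:adjoint}, and then $x \in A\cc$ gives $x \perp g(y)$, hence $f(x) \perp y$. Part~(ii) is handled cleanly by reducing both sides to orthocomplements of plain unions: writing $U = \bigcup_i A_i$, you correctly identify both sides with $f(U)\cc$, using that $\bigvee_i B_i = \bigl(\bigcup_i B_i\bigr)\cc$ in ${\mathsf C}(Y)$, that the image of a union is the union of images, that $\cc$ is a monotone idempotent closure, and part~(i) (applied to the not-necessarily-orthoclosed set $U$, which is legitimate since part~(i) is stated for arbitrary $A \subseteq X$) for the nontrivial inclusion $f(U\cc)\cc \subseteq f(U)\cc$. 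The bookkeeping you flag as the only delicate point is indeed the only delicate point, and you get it right.
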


\begin{proof}
See \cite[Lemmas~3.5(ii), ~3.7(ii)]{PaVe4}.
\end{proof}

Assume that $A$ is a subspace of the orthoset $X$. When switching to the irredundant quotients, subsethood might no longer hold: the partition of the orthoset $A$ into its equivalence classes might be coarser than the partition of $A$ into those equivalence classes of $X$ that lie in $A$. A condition involving adjointability helps to prevent this undesired situation.

\begin{lemma} \label{lem:PA-subspace-of-PX}
Let $A$ be a subspace of $X$ and assume that the map $\iota \colon A \to X \komma x \mapsto x$ is adjointable. Then $P(A)$ is a subspace of $P(X)$.
\end{lemma}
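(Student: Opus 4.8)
The plan is to realise $P(A)$ as the subset $q(A) = \{\class x \colon x \in A\}$ of $P(X)$, where $q \colon X \to P(X) \komma x \mapsto \class x$ is the quotient map, and then to check that this subset is orthoclosed. Since $\iota$ is adjointable, Proposition~\ref{prop:quotient-map} already tells us that the induced map $P(\iota) \colon P(A) \to P(X) \komma \class x \mapsto \class x$ is well defined, where on the left $\class x$ denotes the equivalence class of $x$ in $A$ and on the right its class in $X$. The crux is to see that these two notions of equivalence class actually coincide on $A$, which is exactly the point the paragraph before the statement warns may fail without an adjointability hypothesis.

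Concretely, for $x, y \in A$ I want to show that $\{x\}\ce{A} = \{y\}\ce{A}$ holds if and only if $\{x\}\c = \{y\}\c$. The implication from right to left is immediate, since $\{x\}\ce{A} = \{x\}\c \cap A$ by definition of the relative orthocomplement. The converse is precisely the assertion that $\iota$ preserves $\prl$, guaranteed by Proposition~\ref{prop:quotient-map}; this is the step in which adjointability is indispensable. To see it directly, let $g \colon X \to A$ be an adjoint of $\iota$ and assume $\{x\}\ce{A} = \{y\}\ce{A}$. If $w \in X$ satisfies $w \perp x$, then $x \perp g(w)$ by adjointness, so $g(w) \in \{x\}\c \cap A = \{y\}\c \cap A$, whence $y \perp g(w)$ and, applying adjointness once more, $w \perp y$. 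By symmetry $\{x\}\c = \{y\}\c$.

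With the two equivalence relations agreeing on $A$, the map $P(\iota)$ is injective, and its image is exactly $q(A) = \{\class x \colon x \in A\}$. It remains to verify that $q(A)$ is a subspace of $P(X)$, that is, orthoclosed. For this I would use that $A$, being a subspace of $X$, is orthoclosed, $A = A\cc$, together with the compatibility of $q$ with orthocomplementation: for any $\prl$-saturated $B \subseteq X$ one has $q(B)\c = q(B\c)$ in $P(X)$, since orthogonality of classes is decided on representatives. This is the isomorphism ${\mathsf C}(X) \cong {\mathsf C}(P(X))$ already recorded above. Applying it twice gives $q(A)\cc = q(A\cc) = q(A)$, so $q(A)$ is orthoclosed.

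Finally, $P(\iota)$ preserves the falsity and both preserves and reflects orthogonality, because in $P(A)$ as well as in $P(X)$ two classes are orthogonal precisely when representatives in $X$ are orthogonal. Hence $P(\iota)$ is an isomorphism of orthosets from $P(A)$ onto the orthoclosed subset $q(A)$ of $P(X)$, which is the desired conclusion. I expect the only genuine obstacle to be the coincidence of the two equivalence relations established in the second paragraph; everything else is a direct consequence of $A$ being orthoclosed and of the correspondence between ${\mathsf C}(X)$ and ${\mathsf C}(P(X))$.
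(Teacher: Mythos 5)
Your proposal is correct and takes essentially the same route as the paper's proof: adjointability of $\iota$ (via Proposition~\ref{prop:quotient-map}, or equivalently your direct computation with the adjoint $g$) shows that the equivalence relations of $A$ and of $X$ agree on $A$, and the orthoclosedness of the image follows from your identity $q(B)\c = q(B\c)$, which is precisely the paper's computation $P(A) = \{\class y \colon y \in A\c\}\c$. One small point to make explicit: for $P(A)$ to be \emph{literally} a subset of $P(X)$ (the form in which the lemma is used later), you should also record that for $x \in A$ the equivalence class of $x$ in $X$ is contained in $A$ --- immediate, since $\{y\}\c = \{x\}\c$ gives $y \in \{y\}\cc = \{x\}\cc \subseteq A\cc = A$ --- so that classes in $A$ coincide, as sets, with classes in $X$, rather than merely corresponding bijectively under $P(\iota)$.
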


\begin{proof}
Let $x \in A$ and $y \in X$. We claim that $x$ and $y$ are equivalent elements of $X$ if and only if $y \in A$ and $x$ and $y$ are equivalent elements of $A$. Indeed, $\{x\}\c = \{y\}\c$ implies $y \in \{y\}\cc = \{x\}\cc \subseteq A$ and $\{x\}\ce{A} = \{y\}\ce{A}$. Conversely, if $x$ and $y$ are equivalent elements of $A$, then, by the adjointability of $\iota$ and Proposition \ref{prop:quotient-map}, $x$ and $y$ are equivalent elements of $X$.

We conclude that $P(A) = \{ \class x \colon x \in A\} \subseteq P(X)$. Moreover, in $P(X)$ we have $P(A) = \{ \class x \colon x \perp A\c \} = \{ \class y \colon y \in A\c \}\c \in {\mathsf C}(P(X))$.
\end{proof}

The {\it kernel} of an adjointable map $f \colon X \to Y$ is the set
\[ \kernel f \;=\; \{ x \in X \colon f(x) = 0 \}. \]
Note that the adjointability implies $f(0) = 0$. We say that $f$ has a {\it zero kernel} if the kernel of $f$ consists of $0$ alone. Furthermore, the {\it image} of $f$ is
\[ \image f \;=\; \{ f(x) \colon x \in X \}. \]
The rank of the subspace $(\image f)\cc$ of $Y$ is called the {\it rank} of $f$.

\begin{lemma} \label{lem:injective-surjective}
Let $f \colon X \to Y$ and $g \colon Y \to X$ be an adjoint pair of maps. Then
\[ \kernel f \;=\; (\image g)\c, \qquad \kernel g \;=\; (\image f)\c. \]
\end{lemma}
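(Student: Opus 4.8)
The plan is to prove the first identity $\kernel f = (\image g)\c$ directly from the definitions; the second one, $\kernel g = (\image f)\c$, then follows immediately by interchanging the roles of $f$ and $g$, since adjointness is a symmetric relation between the two maps.

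First I would fix an arbitrary $x \in X$ and unwind what membership in $(\image g)\c$ means. By the definition of orthocomplement, $x \in (\image g)\c$ holds if and only if $x \perp g(y)$ for every $y \in Y$. At this point I would invoke the adjoint relation from Definition~\ref{def:adjoint}: for each $y \in Y$ we have the equivalence $x \perp g(y) \iff f(x) \perp y$. Hence $x \in (\image g)\c$ precisely when $f(x) \perp y$ for all $y \in Y$.

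The key observation is then that an element $z \in Y$ satisfying $z \perp y$ for all $y \in Y$ must be the falsity: taking $y = z$ gives $z \perp z$, so axiom (O2) forces $z = 0$. Applying this to $z = f(x)$, the condition ``$f(x) \perp y$ for all $y \in Y$'' is equivalent to $f(x) = 0$, that is, to $x \in \kernel f$. Chaining these equivalences yields $x \in (\image g)\c \iff x \in \kernel f$, which is the desired set equality.

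I do not expect any genuine obstacle here, since the argument is just a short chain of equivalences; the only point requiring care is recognising that the orthocomplement of the whole space reduces to $\{0\}$, which rests on the irreflexivity condition (O2). Because both displayed identities are the same statement with $f$ and $g$ swapped, no separate argument is needed for the second, and one may simply remark that it is obtained ``by symmetry''.
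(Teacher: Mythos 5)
Your proof is correct, and it is the expected direct verification: the paper itself gives no argument for this lemma, deferring instead to the citation \cite[Lemma 3.8(i)]{PaVe4}, so your self-contained chain of equivalences is exactly what that reference must contain. One cosmetic remark: the equivalence ``$f(x) \perp y$ for all $y \in Y$ iff $f(x) = 0$'' uses (O2) in the forward direction, as you note, but also (O3) in the backward direction (falsity is orthogonal to everything); both are immediate, so nothing is missing in substance.
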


\begin{proof}
See \cite[Lemma 3.8(i)]{PaVe4}.
\end{proof}

For an adjointable map $f \colon X \to Y$, we define its {\it zero-kernel restriction} $\zerokernel f$ to be $f$ restricted to the subspace $(\kernel f)\c$ of $X$ and corestricted to the subspace $(\image f)\cc$ of $Y$. That is, $\zerokernel f = f\big|_{(\kernel f)\c}^{(\image f)\cc}$. If $g \colon Y \to X$ is an adjoint of $f$, we have that $\zerokernel f$ and $\zerokernel g$ form an adjoint pair of maps between $(\image g)\cc$ and $(\image f)\cc$.

In the remainder of this section, we consider properties of maps involving the preservation of the orthogonality relation.

A bijection $f \colon X \to Y$ such that, for any $x_1, x_2 \in X$, $x_1 \perp x_2$ if and only if $f(x_1) \perp f(x_2)$ is called an {\it orthoisomorphism}. An orthoisomorphism of an orthoset with itself is called an {\it orthoautomorphism}.

\begin{proposition} \label{prop:unitary-maps}
Given $f \colon X \to Y$, the following are equivalent:
\begin{itemize}

\item[\rm (a)] $f$ is an orthoisomorphism;

\item[\rm (b)] $f$ is bijective and $f^{-1}$ is an adjoint of $f$;

\item[\rm (c)] $f$ is bijective and adjointable, and $g \circ f \prl \id_X$ for any adjoint $g$ of $f$.

\end{itemize}
In this case, also $P(f) \colon P(X) \to P(Y)$ is an orthoisomorphism.
\end{proposition}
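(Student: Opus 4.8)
The plan is to prove the equivalence cyclically, $(a) \Rightarrow (b) \Rightarrow (c) \Rightarrow (a)$, and then to read off the statement about $P(f)$ from characterisation $(b)$. A fact I would use repeatedly is that $\prl$ is compatible with orthogonality: if $a \prl b$, then by definition $\{a\}\c = \{b\}\c$, so $z \perp a$ iff $z \perp b$ for every $z$.

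For $(a) \Rightarrow (b)$, assuming $f$ is an orthoisomorphism, it is in particular bijective, so $f^{-1}$ exists. To check that $f^{-1}$ is an adjoint I would write any $y \in Y$ as $y = f(f^{-1}(y))$ and apply the orthoisomorphism property: for $x \in X$, the relation $f(x) \perp y = f(f^{-1}(y))$ holds iff $x \perp f^{-1}(y)$, which is precisely the condition of Definition \ref{def:adjoint}. For $(b) \Rightarrow (c)$, since $f^{-1}$ is an adjoint, $f$ is adjointable; for any further adjoint $g$ of $f$, Lemma \ref{lem:irredundancy-adjoints-1} gives $g \prl f^{-1}$, that is, $g(y) \prl f^{-1}(y)$ for all $y$, and substituting $y = f(x)$ yields $g(f(x)) \prl f^{-1}(f(x)) = x$, i.e.\ $g \circ f \prl \id_X$.

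The substantive step is $(c) \Rightarrow (a)$. As $f$ is adjointable I fix an adjoint $g$, for which $g \circ f \prl \id_X$ by hypothesis. For $x_1, x_2 \in X$ I would run the chain
\[ f(x_1) \perp f(x_2) \;\iff\; x_1 \perp g(f(x_2)) \;\iff\; x_1 \perp x_2, \]
where the first step is adjointness of $g$ applied with $y = f(x_2)$, and the second uses $g(f(x_2)) \prl x_2$ together with the compatibility of $\prl$ with $\perp$ noted above. Since $f$ is bijective by assumption, this exhibits $f$ as an orthoisomorphism. For the closing assertion I would again use $(b)$: because $f^{-1}$ is an adjoint of $f$, Proposition \ref{prop:quotient-map} makes $P(f^{-1})$ an adjoint of $P(f)$, and since $P$ visibly respects composition and identities, $P(f^{-1})$ is at the same time a two-sided inverse of $P(f)$. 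Thus $P(f)$ is bijective with its inverse serving as an adjoint, i.e.\ $P(f)$ satisfies $(b)$, and the equivalence already proved shows $P(f)$ to be an orthoisomorphism.

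I anticipate no genuine difficulty; the only thing demanding care is the bookkeeping with $\prl$, namely checking at each substitution that replacing an element by a $\prl$-equivalent one leaves every orthogonality statement intact. It is exactly this compatibility that powers $(c) \Rightarrow (a)$, collapsing the one-directional information carried by an adjoint into the full biconditional required of an orthoisomorphism.
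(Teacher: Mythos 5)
Your proof is correct. Note that the paper itself gives no in-text argument for the equivalences: it simply cites Proposition~4.2 of \cite{PaVe4} and declares the final assertion ``readily checked'', so your self-contained cyclic proof $(a) \Rightarrow (b) \Rightarrow (c) \Rightarrow (a)$ fills in what the paper delegates to a reference. Each step checks out: the substitution $y = f(f^{-1}(y))$ for $(a) \Rightarrow (b)$, the use of Lemma~\ref{lem:irredundancy-adjoints-1} for $(b) \Rightarrow (c)$, and the two-link chain in $(c) \Rightarrow (a)$, where the key observation that $g(f(x_2)) \prl x_2$ forces $\{g(f(x_2))\}\c = \{x_2\}\c$ is exactly the right way to convert equivalence of elements into equivalence of orthogonality statements. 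Your treatment of the closing assertion is also sound; the one point you pass over silently, and could make explicit, is that $P(f^{-1})$ is well defined because $f^{-1}$ is itself adjointable (adjointness is a symmetric relation, so $f$ serves as an adjoint of $f^{-1}$), after which Proposition~\ref{prop:quotient-map} and the functoriality of $P$ on adjointable maps give that $P(f^{-1})$ is simultaneously an adjoint and a two-sided inverse of $P(f)$, so that $P(f)$ satisfies condition $(b)$. The whole argument uses only tools stated in the paper, so as a replacement for the external citation it is entirely adequate.
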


\begin{proof}
For the equivalences, see \cite[Proposition 4.2]{PaVe4}. The last assertion is readily checked.
\end{proof}

We call a map $f \colon X \to Y$ a {\it partial orthometry} if $f$ possesses an adjoint $g \colon Y \to X$ such that $\zerokernel f$ and $\zerokernel g$ are mutually inverse orthoisomorphisms. We refer in this case to $g$ as a {\it generalised inverse} of $f$. Note that this means the following: $(\image g, \kernel f)$ is a decomposition of $X$, $(\image f, \kernel g)$ is a decomposition of $Y$, and $f$ and $g$ establish mutually inverse orthoisomorphisms between $\image g$ and $\image f$. Moreover, we call an injective partial orthometry an {\it orthometry}.

\begin{proposition} \label{prop:partial-isometries}
Let $f \colon X \to Y$ and $g \colon Y \to X$ be an adjoint pair of maps.
\begin{itemize}

\item[\rm (i)] The following are equivalent:
\begin{itemize}

\item[\rm (a)] $f$ is a partial orthometry and $g$ is a generalised inverse of $f$.

\item[\rm (b)] $\image f$ and $\image g$ are orthoclosed, and $f \circ g \circ f = f$ as well as $g \circ f \circ g = g$.

\end{itemize}
In this case, $P(f) \colon P(X) \to P(Y)$ is likewise a partial orthometry and \linebreak $P(g) \colon P(Y) \to P(X)$ is its generalised inverse. Moreover, $\image P(f) = P(\image f)$ and $\image P(g) = P(\image g)$.

\item[\rm (ii)] The following are equivalent:

\begin{itemize}

\item[\rm (a)] $f$ is an orthometry and $g$ is a generalised inverse of $f$.

\item[\rm (b)] $\image f$ is orthoclosed and $g \circ f = \id_X$.

\end{itemize}
In this case, also $P(f) \colon P(X) \to P(Y)$ is an orthometry. Moreover, $P(\image f) = \image P(f)$.

\end{itemize}
\end{proposition}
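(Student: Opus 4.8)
The plan is to establish part~(i) first and then read off part~(ii) from it, using throughout the description of kernels from Lemma~\ref{lem:injective-surjective}, namely $\kernel f = (\image g)\c$ and $\kernel g = (\image f)\c$, together with the remark preceding the proposition that $\zerokernel f$ and $\zerokernel g$ always form an adjoint pair between $(\image g)\cc$ and $(\image f)\cc$.

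For (i)(a)$\Rightarrow$(b) I would argue as follows. If $f$ is a partial orthometry with generalised inverse $g$, then by the decompositions $(\image g, \kernel f)$ and $(\image f, \kernel g)$ noted in the definition, both $\image g = (\kernel f)\c$ and $\image f = (\kernel g)\c$ are orthoclosed. To verify $f \circ g \circ f = f$, take $x \in X$; since $f(x) \in \image f = (\kernel g)\c$, the globally defined $g$ agrees on $f(x)$ with $\zerokernel g$, so $g(f(x)) = \zerokernel g(f(x)) \in \image g = (\kernel f)\c$, and therefore $f(g(f(x))) = \zerokernel f(\zerokernel g(f(x))) = f(x)$ because $\zerokernel f$ and $\zerokernel g$ are mutually inverse. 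The identity $g \circ f \circ g = g$ follows by the symmetric computation. For (b)$\Rightarrow$(a), orthoclosedness of $\image f$ and $\image g$ gives $(\kernel f)\c = \image g$ and $(\kernel g)\c = \image f$, so $\zerokernel f$ and $\zerokernel g$ are maps between $\image g$ and $\image f$; the equations $f g f = f$ and $g f g = g$ then show directly that they are mutually inverse bijections. Since $\zerokernel g$ is by the preceding remark an adjoint of $\zerokernel f$ and simultaneously its inverse, Proposition~\ref{prop:unitary-maps}(b) yields that $\zerokernel f$, and likewise $\zerokernel g$, is an orthoisomorphism, so $f$ is a partial orthometry with generalised inverse $g$.

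For the ``in this case'' assertions of~(i) I would invoke functoriality of $P$ and Proposition~\ref{prop:quotient-map}: applying $P$ to the equations of~(b) gives $P(f) \circ P(g) \circ P(f) = P(f)$ and $P(g) \circ P(f) \circ P(g) = P(g)$, with $P(g)$ the adjoint of $P(f)$. Since the lattice isomorphism ${\mathsf C}(Y) \cong {\mathsf C}(P(Y))$ carries the orthoclosed set $\image f$ to the orthoclosed set $\{ \class y \colon y \in \image f \} = \image P(f) = P(\image f)$, and symmetrically for $g$, the pair $(P(f), P(g))$ satisfies condition~(b) over $P(X), P(Y)$; the already-proved implication (b)$\Rightarrow$(a) then makes $P(f)$ a partial orthometry with generalised inverse $P(g)$, and the image identities hold as noted.

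Part~(ii) should then drop out by tracking injectivity. If $f$ is an orthometry, then $\kernel f = \{0\}$, so $\image g = (\kernel f)\c = X$ and the computation of~(i) specialises to $g(f(x)) = \zerokernel g(\zerokernel f(x)) = x$, giving $g \circ f = \id_X$ alongside the orthoclosedness of $\image f$ from~(i). Conversely, $g \circ f = \id_X$ forces $f$ injective and $g$ surjective, so $\image g = X$ is orthoclosed, while $f g f = f(g f) = f$ and $g f g = (g f)g = g$ deliver condition~(i)(b); hence $f$ is an injective partial orthometry, i.e.\ an orthometry, and applying $P$ with $P(g) \circ P(f) = \id_{P(X)}$ shows $P(f)$ injective, so an orthometry, with $P(\image f) = \image P(f)$ as before. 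The only genuinely delicate points, both bookkeeping rather than conceptual, are checking that the global map $g$ restricted to $\image f$ really reproduces $\zerokernel g$ (which rests on $(\kernel g)\c = \image f$) and recognising that the adjoint pair $(\zerokernel f, \zerokernel g)$ supplied by the earlier remark coincides with the inverse pair, so that Proposition~\ref{prop:unitary-maps} applies.
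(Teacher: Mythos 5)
Your treatment of the two equivalences (i)(a)$\Leftrightarrow$(b) and (ii)(a)$\Leftrightarrow$(b) is correct, and it is in fact more self-contained than the paper's, which simply cites \cite[Propositions~4.4 and~4.5]{PaVe4} for these; your bookkeeping with $\zerokernel f$, $\zerokernel g$, Lemma~\ref{lem:injective-surjective} and Proposition~\ref{prop:unitary-maps}(b) is sound, as is your reduction of part~(ii) to part~(i) and the derivation that $P(f)$ is a partial orthometry (resp.\ orthometry) with generalised inverse $P(g)$ via functoriality of $P$ and the already-proved implication (b)$\Rightarrow$(a).

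There is, however, a genuine gap in the ``Moreover'' assertions. You write the chain $\{ \class y \colon y \in \image f \} = \image P(f) = P(\image f)$ as though the second equality were notational; it is not. Here $P(\image f)$ is the irredundant quotient of the subspace $\image f$ regarded as an orthoset in its own right, so its elements are equivalence classes formed with respect to the internal orthocomplementation, i.e.\ $y \sim y'$ iff $\{y\}\ce{\image f} = \{y'\}\ce{\image f}$, whereas the elements of $\image P(f)$ are $Y$-equivalence classes of elements of $\image f$. As the paper stresses immediately before Lemma~\ref{lem:PA-subspace-of-PX}, the partition of a subspace into its own classes can be strictly coarser than the partition into the ambient classes lying in it, and orthoclosedness of $\image f$ alone does not exclude this. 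So the identity $\image P(f) = P(\image f)$ --- which is precisely one of the assertions to be proved --- is being assumed rather than shown. The paper closes this gap by noting that for $x \in \image f$ and $y \in Y$ one has $x \perp y$ iff $f(g(x)) \perp y$ iff $x \perp f(g(y))$ (using $f \circ g \circ f = f$ and adjointness), so the inclusion map $\image f \to Y$ has the adjoint $(f \circ g)|^{\image f}$ and is therefore adjointable; Lemma~\ref{lem:PA-subspace-of-PX} then gives that $P(\image f)$ is a subspace of $P(Y)$, whence $P(\image f) = \{ \class{f(x)} \colon x \in X \} = \image P(f)$, and symmetrically for $g$. Your proof needs this short additional step (and the analogous one in part~(ii)); everything else stands.
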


\begin{proof}
Ad (i): For the equivalence of (a) and (b), see \cite[Proposition~4.4]{PaVe4}.

Assume (a). As $\image f$ is an orthoclosed subset of $Y$, $\,\image P(f) = \{ \class{f(x)} \colon x \in X \}$ is an orthoclosed subset of $P(Y)$. Similarly we see that also $\image P(g)$ is orthoclosed. Furthermore, $P(f) \circ P(g) \circ P(f) = P(f \circ g \circ f) = P(f)$ and similarly $P(g) \circ P(f) \circ P(g) = P(g)$. Hence $P(f)$ is a partial orthometry and $P(g)$ its generalised inverse.

For any $x \in \image f$ and $y \in Y$, we have $x \perp y$ iff $f(g(x)) \perp y$ iff $x \perp f(g(y))$. Hence the inclusion map $\image f \to Y$ has the adjoint $(f \circ g)|^{\image f}$. By Lemma~\ref{lem:PA-subspace-of-PX}, it follows $P(\image f) \subseteq P(Y)$. Hence $P(\image f) = \{ \class{f(x)} \colon x \in X \} = \image P(f)$. Similarly, we see that $P(\image g) = \image P(g)$.

Ad (ii): For the equivalence, see \cite[Proposition~4.5]{PaVe4}. The additional assertion is seen similarly as in part (i).
\end{proof}

Let $A$ be a subspace of an orthoset $X$. We refer to $\iota \colon A \to X \komma x \mapsto x$ as the {\it inclusion map} of $A$ into $X$. We readily check that $\iota$ is an orthometry if and only if $\iota$ is adjointable \cite[Lemma~4.8(ii)]{PaVe4}. In this case, we call a generalised inverse of $\iota$ a {\it Sasaki map} of $X$ onto $A$. In other words, a map $\sigma \colon X \to A$ is a Sasaki map if $\sigma$ is an adjoint of $\iota$ such that $\sigma|_A = \id_A$.

Let $A$ still be a subspace of $X$. A partial orthometry $p \colon X \to X$ such that $\zerokernel p = \id_A$ is called a {\it projection} of $X$ onto $A$.

\begin{lemma} \label{lem:projections}
Let $p \colon X \to X$. The following are equivalent:
\begin{itemize}

\item[\rm (a)] $p$ is a projection.

\item[\rm (b)] There is a Sasaki map $\sigma$ of $X$ onto a subspace $A$ of $X$ such that $p = \iota \circ \sigma$, where $\iota \colon A \to X$ is the inclusion map.

\item[\rm (c)] $p$ is idempotent and self-adjoint, and $\image p$ is orthoclosed.

\end{itemize}
\end{lemma}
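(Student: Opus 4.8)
The plan is to prove the cycle of implications (a) $\Rightarrow$ (b) $\Rightarrow$ (c) $\Rightarrow$ (a), relying throughout on the characterisation of partial orthometries in Proposition~\ref{prop:partial-isometries}(i), on the kernel–image identities of Lemma~\ref{lem:injective-surjective}, and on the composition rule for adjoints (the adjoint of $g \circ f$ is $f\adj \circ g\adj$).

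For (a) $\Rightarrow$ (b), I would assume $p$ is a projection onto a subspace $A$, so that $p$ is a partial orthometry with some generalised inverse $g$ and $\zerokernel p = \id_A$. The first task is to unwind this identity: since $\zerokernel p$ has domain $(\kernel p)\c$ and codomain $(\image p)\cc$, the equation $\zerokernel p = \id_A$ forces $(\kernel p)\c = (\image p)\cc = A$ and $p|_A = \id_A$; combined with $\image p \subseteq (\image p)\cc$ this yields $\image p = A$. Because $g$ is a generalised inverse, $\zerokernel g = (\zerokernel p)^{-1} = \id_A$, so likewise $g|_A = \id_A$. I then define $\sigma \colon X \to A \komma x \mapsto p(x)$, which is well defined as $\image p = A$, and verify it is a Sasaki map of $X$ onto $A$. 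Adjointness of $\sigma$ with the inclusion $\iota$ amounts, for $a \in A$ and $x \in X$, to the equivalence $a \perp x \iff a \perp p(x)$; this follows from the adjoint relation $p(x) \perp a \iff x \perp g(a)$ together with $g(a) = a$. Since moreover $\sigma|_A = p|_A = \id_A$ and $p = \iota \circ \sigma$, we obtain (b).

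For (b) $\Rightarrow$ (c), let $\sigma$ be a Sasaki map of $X$ onto $A$, that is, a generalised inverse of $\iota$; then $\iota$ and $\sigma$ form an adjoint pair and $\sigma \circ \iota = \id_A$. The cleanest point of the argument is that self-adjointness of $p = \iota \circ \sigma$ is automatic: since $\sigma$ is an adjoint of $\iota$ and $\iota$ is an adjoint of $\sigma$, the composition rule shows that $\iota \circ \sigma$ is an adjoint of $\iota \circ \sigma$, i.e.\ $p$ is self-adjoint. Idempotency follows from $\sigma \circ \iota = \id_A$, giving $p \circ p = \iota \circ (\sigma \circ \iota) \circ \sigma = \iota \circ \sigma = p$. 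Finally $\sigma$ maps onto $A$ (as $\sigma|_A = \id_A$), whence $\image p = \iota(A) = A$ is orthoclosed, establishing (c).

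For (c) $\Rightarrow$ (a), put $A = \image p$. As $p$ is self-adjoint it is its own adjoint, and idempotency gives $p \circ p \circ p = p$; since $\image p = A$ is orthoclosed, Proposition~\ref{prop:partial-isometries}(i) applied with $f = g = p$ shows that $p$ is a partial orthometry with $p$ itself as generalised inverse. To identify it as a projection onto $A$, I use Lemma~\ref{lem:injective-surjective} to get $\kernel p = (\image p)\c$, so that $(\kernel p)\c = A\cc = A = (\image p)\cc$, while idempotency gives $p|_A = \id_A$; hence $\zerokernel p = \id_A$, i.e.\ $p$ is a projection onto $A$. The only genuinely delicate bookkeeping lies in (a) $\Rightarrow$ (b) — extracting $\image p = A$ and $g|_A = \id_A$ from $\zerokernel p = \id_A$ and the generalised-inverse property, and then confirming that $\sigma$ is a true \emph{adjoint} of $\iota$ rather than merely a set-theoretic section; everything else reduces to Proposition~\ref{prop:partial-isometries} and the composition rule for adjoints.
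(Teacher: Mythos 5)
Your proposal is correct, and all three implications go through as written. A point of comparison worth noting: the paper itself does not prove Lemma~\ref{lem:projections} at all --- it defers to the external reference \cite[Lemma 4.9]{PaVe4} --- so your argument supplies a self-contained derivation from the paper's own preliminaries, which is exactly what such a citation suppresses. Checking the details: in (a) $\Rightarrow$ (b) your unwinding of $\zerokernel p = \id_A$ is sound, since equality of maps forces $(\kernel p)\c = (\image p)\cc = A$, and mutual inverseness of $\zerokernel p$ and $\zerokernel g$ likewise forces $(\kernel g)\c = A$ and $g|_A = \id_A$, which is precisely what makes the adjointness computation $a \perp p(x) \iff x \perp g(a) = a$ work; in (b) $\Rightarrow$ (c) the observation that self-adjointness of $\iota \circ \sigma$ is automatic from the composition rule (the adjoint of $\iota \circ \sigma$ is $\sigma\adj \circ \iota\adj = \iota \circ \sigma$) is the right shortcut; and in (c) $\Rightarrow$ (a) the application of Proposition~\ref{prop:partial-isometries}(i) with $f = g = p$, followed by Lemma~\ref{lem:injective-surjective} to identify $(\kernel p)\c = A\cc = A$ and idempotency to get $p|_A = \id_A$, correctly reassembles $\zerokernel p = \id_A$. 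No gaps.
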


\begin{proof}
See \cite[Lemma 4.9]{PaVe4}.
\end{proof}

The following proposition explains how Dacey spaces occur in the present context.

\begin{proposition} \label{prop:adjoints-and-Dacey}
Assume that for any subspace $A$ of $X$, the inclusion map $\iota \colon A \to X$ is adjointable. Then $X$ is a Dacey space. Moreover, the following hold:
\begin{itemize}

\item[\rm (i)] If $X$ is atomistic, then ${\mathsf C}(X)$ is an AC orthomodular lattice.

\item[\rm (ii)] For any subspace $A$, $\,P(A)$ is a subspace of $P(X)$.

\end{itemize}
\end{proposition}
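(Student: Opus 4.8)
The plan is to treat the three assertions using the Sasaki maps supplied by the adjointability hypothesis. Recall that, by the remark preceding Lemma~\ref{lem:projections}, the adjointability of an inclusion $\iota \colon A \to X$ makes $\iota$ an orthometry and hence furnishes a Sasaki map $\sigma \colon X \to A$, i.e.\ an adjoint of $\iota$ with $\sigma|_A = \id_A$; applying Lemma~\ref{lem:injective-surjective} to the adjoint pair $(\iota,\sigma)$ gives $\kernel \sigma = (\image \iota)\c = A\c$. I use such maps throughout. Assertion~(ii) is then immediate: the hypothesis states precisely that every inclusion $\iota \colon A \to X$ is adjointable, so Lemma~\ref{lem:PA-subspace-of-PX} yields at once that $P(A)$ is a subspace of $P(X)$.

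For the Dacey property I would verify criterion~(e) of Lemma~\ref{lem:Dacey-space}. Fix $A \in {\mathsf C}(X)$ and a maximal $\perp$-set $D \subseteq A$. Since $D \subseteq A$ we have $D\cc \subseteq A$, so only $A \subseteq D\cc$, equivalently $D\c \subseteq A\c$, remains. Here the Sasaki map $\sigma \colon X \to A$ does the work: for $z \in D\c$ and any $d \in D \subseteq A$, adjointness gives $d \perp z$ iff $d \perp \sigma(z)$, whence $\sigma(z) \perp d$ for all $d \in D$. Thus $\sigma(z) \in D\c \cap A$, which is $\{0\}$ by maximality of $D$ in $A$; so $\sigma(z) = 0$, that is, $z \in \kernel \sigma = A\c$. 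This yields $D\c \subseteq A\c$ and hence $A = D\cc$, so $X$ is Dacey.

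For~(i), ${\mathsf C}(X)$ is already orthomodular by criterion~(b) of Lemma~\ref{lem:Dacey-space}. Atomisticity is routine: when $X$ is atomistic, Lemma~\ref{lem:unital}(ii) identifies the atoms of ${\mathsf C}(X)$ as the sets $\{x\}\cc$, $x \in X\withoutzero$, and for any $A \in {\mathsf C}(X)$ the join $\bigvee \{ \{x\}\cc : x \in A\withoutzero \}$ contains each $\{x\}\cc$, hence all proper elements of $A$, while being contained in $A$, so it equals $A$. The real work is the covering property, which I expect to be the main obstacle. Given $A \in {\mathsf C}(X)$ and an atom $p = \{x\}\cc \nleq A$ (so $x \notin A$), I would pass to the Sasaki map $\sigma \colon X \to A\c$ and set $z = \sigma(x)$; since $\kernel \sigma = A$ and $x \notin A$, the element $z$ is proper and lies in $A\c$. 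Using adjointness of $\sigma$ (for $u \in A\c$, $u \perp x$ iff $u \perp z$) one checks by a two-sided orthocomplement computation that $A \vee \{x\}\cc = A \vee \{z\}\cc$.

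Now $\{z\}\cc$ is an atom orthogonal to $A$, so $A$, $A\c$ and $\{z\}\cc$ pairwise commute in the orthomodular lattice ${\mathsf C}(X)$ and generate a Boolean subalgebra; consequently the interval $[A,\, A \vee \{z\}\cc]$ is isomorphic to $[\{0\},\, \{z\}\cc]$, which has exactly two elements because $\{z\}\cc$ is an atom. Hence $A \vee p = A \vee \{z\}\cc$ covers $A$, and ${\mathsf C}(X)$ has the covering property, completing~(i). The delicate points are the identity $A \vee \{x\}\cc = A \vee \{z\}\cc$ — reducing a general atom to one orthogonal to $A$ via the Sasaki map onto $A\c$ — and the passage to the two-element interval $[\{0\},\, \{z\}\cc]$ through commutation; both rest on the orthomodularity established in the Dacey step.
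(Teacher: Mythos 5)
Your proposal is correct, and it is worth noting that it is genuinely more self-contained than the paper's own treatment: the paper proves nothing here in-house, its proof of the Dacey property and of part (i) being a citation of \cite[Lemma 4.10]{PaVe4}, with only part (ii) derived from Lemma~\ref{lem:PA-subspace-of-PX} --- exactly as you do. Your argument for the remaining claims checks out at every step. Dacey's criterion (e) does follow from the Sasaki map $\sigma \colon X \to A$: for $z \in D\c$, adjointness of the pair $(\iota,\sigma)$ forces $\sigma(z) \in D\c \cap A$, which is $\{0\}$ by maximality of $D$ in $A$, so $D\c \subseteq \kernel\sigma = A\c$ and $A = D\cc$. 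Atomisticity of ${\mathsf C}(X)$ is the standard join-of-atoms argument via Lemma~\ref{lem:unital}(ii). For the covering property, your reduction of an arbitrary atom $\{x\}\cc \nleq A$ to the atom $\{z\}\cc$ orthogonal to $A$, where $z = \sigma(x)$ for the Sasaki map $\sigma \colon X \to A\c$, is justified precisely as you indicate: $A \vee \{x\}\cc = (A\c \cap \{x\}\c)\c$, and $A\c \cap \{x\}\c = A\c \cap \{z\}\c$ because for $u \in A\c$ adjointness gives $u \perp x$ iff $u \perp z$; the final step, that $[A,\, A \vee \{z\}\cc]$ is order-isomorphic to the two-element interval $[\{0\},\, \{z\}\cc]$ via $c \mapsto A \vee c$ and $d \mapsto d \wedge \{z\}\cc$, is the standard orthomodular-lattice fact, provable from orthomodularity together with Foulis--Holland distributivity for the pairwise commuting triple $A$, $A\c$, $\{z\}\cc$. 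What the two routes buy: the paper's citation keeps the text short but leaves the reader dependent on \cite{PaVe4}; your proof makes the proposition self-contained within the toolkit the paper itself states (Lemmas~\ref{lem:Dacey-space}, \ref{lem:unital}, \ref{lem:injective-surjective}, \ref{lem:PA-subspace-of-PX} and the Sasaki-map remark before Lemma~\ref{lem:projections}), at the modest cost of invoking orthomodular-lattice folklore that the paper uses only implicitly.
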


\begin{proof}
See \cite[Lemma 4.10]{PaVe4}. Part (ii) follows from Lemma~\ref{lem:PA-subspace-of-PX}.
\end{proof}

The following lemma will in what follows be particularly important.

\begin{lemma} \label{lem:image-of-partial-isometry}
Let $X$ and $Y$ be atomistic Dacey spaces and let $f \colon X \to Y$ and $g \colon Y \to X$ be an adjoint pair of maps.
\begin{itemize}

\item[\rm (i)] If $f \circ g \circ f \prl f$, then $P(f) \colon P(X) \to P(Y)$ is a partial orthometry and $P(g)$ is its generalised inverse.

\item[\rm (ii)] If $g \circ f \prl \id_X$, then $P(f) \colon P(X) \to P(Y)$ is an orthometry and $P(g)$ is its generalised inverse.

\end{itemize}
\end{lemma}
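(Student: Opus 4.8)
The plan is to transport the whole situation to the irredundant quotients $P(X)$ and $P(Y)$, where adjoints are unique and the hypothesis $f \circ g \circ f \prl f$ becomes an honest equation, and then to invoke Proposition~\ref{prop:partial-isometries}(i). Since $X$ and $Y$ are Dacey, so are $P(X)$ and $P(Y)$; both are irredundant by construction and remain atomistic. By Proposition~\ref{prop:quotient-map}, $P(f)$ and $P(g)$ form an adjoint pair, and because $P(X)$ and $P(Y)$ are irredundant, each is the \emph{unique} adjoint of the other (Lemma~\ref{lem:irredundancy-adjoints-1}). Using that $P$ respects composition, $P(h \circ k) = P(h) \circ P(k)$, and that $P(h) = P(h')$ precisely when $h \prl h'$, the hypothesis $f \circ g \circ f \prl f$ translates into $P(f) \circ P(g) \circ P(f) = P(f)$.

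Next I would obtain the dual identity $P(g) \circ P(f) \circ P(g) = P(g)$ for free. In the irredundant quotients, passage to the adjoint is a well-defined, order-reversing involution sending $P(f)$ to $P(g)$ and $P(g)$ to $P(f)$. Applying it to $P(f) \circ P(g) \circ P(f) = P(f)$, and using that the adjoint of a composite is the reversed composite of the adjoints (recalled after Definition~\ref{def:adjoint}) together with uniqueness of adjoints, the left-hand side becomes $P(g) \circ P(f) \circ P(g)$ and the right-hand side becomes $P(g)$, as desired.

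With both relations in place, Proposition~\ref{prop:partial-isometries}(i) reduces the proof of (i) to checking that $\image P(f)$ and $\image P(g)$ are orthoclosed in $P(Y)$ and $P(X)$, respectively, and \emph{this I expect to be the main obstacle}. Computing orthocomplements in $P(Y)$ shows $(\image P(f))\c = \{\class y \colon y \in (\image f)\c\}$, hence $(\image P(f))\cc = \{\class y \colon y \in (\image f)\cc\}$, so orthoclosedness of $\image P(f)$ is equivalent to the assertion that every element of $(\image f)\cc$ is $\prl$-equivalent to some element of $\image f$. Equivalently, the map $p := P(f) \circ P(g)$, which the two relations render idempotent and self-adjoint and for which $\image p = \image P(f)$, should be a projection, i.e. $\image p = (\kernel p)\c$. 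The inclusion $\image p \subseteq (\kernel p)\c$ is immediate from self-adjointness via Lemma~\ref{lem:injective-surjective}. For the converse I would fix a maximal $\perp$-set $D$ in $\image p$: since $p$ fixes $\image p$ pointwise and is self-adjoint, any proper $w \in (\kernel p)\c$ with $w \perp D$ satisfies $p(w) \perp D$ and $p(w) \in \image p$, which forces $p(w) = 0$ and hence $w = 0$; thus $D$ is a maximal $\perp$-set in the orthoclosed subspace $(\image p)\cc = (\kernel p)\c$, and Dacey's criterion (Lemma~\ref{lem:Dacey-space}(e)) gives $(\kernel p)\c = D\cc$. The delicate point, where the atomistic Dacey structure must be exploited in full, is to upgrade this from the equality of closures to the pointwise statement that every atom of $(\kernel p)\c$ is already fixed by $p$ — the abstract counterpart of the fact that a partial isometry of Hilbert spaces has closed range, and the place where $f \circ g \circ f \prl f$ does its essential work. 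Granting orthoclosedness of both images, Proposition~\ref{prop:partial-isometries}(i) yields that $P(f)$ is a partial orthometry with generalised inverse $P(g)$, proving (i).

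For (ii), I would first note that $g \circ f \prl \id_X$ implies $f \circ g \circ f \prl f$: since $f$ preserves $\prl$ (Proposition~\ref{prop:quotient-map}), applying $f$ to $g(f(x)) \prl x$ gives $f(g(f(x))) \prl f(x)$. Hence part~(i) applies, so $P(f)$ is already a partial orthometry with generalised inverse $P(g)$ and, in particular, $\image P(f)$ is orthoclosed. Moreover $g \circ f \prl \id_X$ gives $P(g) \circ P(f) = \id_{P(X)}$. With $\image P(f)$ orthoclosed and $P(g) \circ P(f) = \id_{P(X)}$ in hand, Proposition~\ref{prop:partial-isometries}(ii) upgrades the conclusion to: $P(f)$ is an orthometry with generalised inverse $P(g)$.
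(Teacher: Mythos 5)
Your reductions are sound, but the proof has a genuine gap, and you have located it yourself: everything after ``Granting orthoclosedness of both images'' is conditional on a claim you never establish. The transfer to the quotient ($P(f)\circ P(g)\circ P(f)=P(f)$, and $P(g)\circ P(f)\circ P(g)=P(g)$ by taking adjoints), the reduction via Proposition~\ref{prop:partial-isometries}(i) to orthoclosedness of $\image P(f)$ and $\image P(g)$, and the derivation of part~(ii) from part~(i) are all correct. But the orthoclosedness claim is precisely the mathematical content of the lemma --- note that Lemma~\ref{lem:projections}(c) must list orthoclosedness of the image as a hypothesis \emph{separate} from idempotence and self-adjointness, so it cannot come for free --- and your intermediate work adds nothing towards it: the equality $(\image p)\cc = (\kernel p)\c$ already follows in one line from Lemma~\ref{lem:injective-surjective} (take orthocomplements in $\kernel p = (\image p)\c$), so the maximal $\perp$-set/Dacey detour only re-derives what you had. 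Atomisticity, the hypothesis that makes the lemma true, is never actually used in any step you carry out.

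Moreover, the missing step cannot be filled from the facts you have on the page. What you know about a proper $u \in (\kernel p)\c$ and its image $p(u)$ is that they have the same orthogonality pattern relative to $\image p$ (for $w \in \image p$ one has $w \perp u$ iff $p(w) \perp u$ iff $w \perp p(u)$, using self-adjointness and $p(w)=w$), and that $\image p$ is ``dense'' in $(\kernel p)\c$ in the sense that no proper element of $(\kernel p)\c$ is orthogonal to all of it. That is not enough, even in a Hilbert space: if $\{e_n\}_n$ is an orthonormal basis, then $D = \{e_n \colon n \in \Naturals\}$ satisfies $D\c = \{0\}$, yet $e_1+e_2$ and $e_1-e_2$ have identical orthogonality patterns on $D$ while $(e_1+e_2) \notprl (e_1-e_2)$. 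So one must exploit what distinguishes $\image p$ from an arbitrary dense subset --- it is the image of a self-adjoint idempotent, the orthoset surrogate for a \emph{linear} subspace --- in combination with atomisticity, and this is exactly where your argument stops. For what it is worth, the paper itself offers no proof here either: it cites \cite{PaVe4} (Lemma 4.11), and it is that cited argument, not routine bookkeeping, that a self-contained proof would have to reproduce.
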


\begin{proof}
See \cite[Lemma 4.11]{PaVe4}.
\end{proof}

\section{Orthomodular spaces}
\label{sec:orthomodular-spaces}

By Example~\ref{ex:basic-1}, any Hilbert space has the natural structure of an orthoset. We compile in this section the relevant facts about those inner-product spaces that do as well. The material is in large part taken from \cite{PaVe5} and adjusted to our present purposes.

By a \emph{$\ast$-sfield}, we mean a sfield (i.e., a skew field) equipped with an involutory antiautomorphism. Examples of $\ast$-sfields include $\Reals$, $\Complexes$, or $\Quaternions$, equipped with their respective standard antiautomorphisms. We note that the star is part of the notion and not necessarily used otherwise. By default, however, we do denote the involution by $\ast$ (written exponentially).

Let $H$ be a linear space over the $\ast$-sfield $F$. A \emph{Hermitian form} on $H$ is a symmetric sesquilinear form, that is, a map $\herm{\cdot}{\cdot} \colon H \times H \to F$ such that, for any $u, v, w \in H$ and $\alpha, \beta \in F$,
\begin{align*}
& \herm{\alpha u + \beta v}{w} \;=\; \alpha \herm{u}{w} + \beta \herm{v}{w}, \\
& \herm{w}{\alpha u + \beta v} \;=\;
                               \herm{w}{u} \alpha\inv + \herm{w}{v} \beta\inv, \\
& \herm{u}{v} \;=\; \herm{v}{u}\inv.
\end{align*}
We additionally assume that Hermitian forms are \emph{anisotropic}, that is, $\herm{u}{u} = 0$ implies $u = 0$. Endowed with an (anisotropic) Hermitian form, $H$ is referred to as a \emph{Hermitian space}.

We call vectors $u, v \in H$ {\it orthogonal} if $\herm u v = 0$ and we write $u \perp v$ in this case. We tacitly assume Hermitian spaces to contain at most countably many pairwise orthogonal vectors. The cardinality of any maximal set of pairwise orthogonal non-zero vectors is unique and referred to as the {\it dimension} of $H$.

It is clear that Examples~\ref{ex:basic-1} and~\ref{ex:basic-2} apply to this more general context as well.

\begin{example} \label{ex:Hermitian-space}
Let $H$ be a Hermitian space. According to our definition, the Hermitian form is symmetric as well as anisotropic. It follows that $H$, equipped with the orthogonality relation $\perp$ and the zero vector $0$, is an orthoset. Note that the rank of $H$ as an orthoset coincides with the dimension of $H$ as a Hermitian space.

The irredundant quotient $P(H)$ of $H$ consists of the non-zero multiples of the non-zero vectors as well as the zero subspace, cf.\ {\rm (\ref{fml:basic-2})}. Indeed, with the notation of Example~\ref{ex:basic-1}, we have for any $u \in H\withoutzero$ that $v \in \class u$ iff $\{v\}\cc = \{u\}\cc$ iff $\lin v = \lin u$ iff $v \in \lin u \setminus \{0\}$. Hence $\class u = \lin u \setminus \{0\}$.
\end{example}

By a {\it subspace} $S$ of a Hermitian space $H$, we mean an orthoclosed subset of $H$, equipped with the restriction of the linear structure and the Hermitian form. Clearly, $S$ becomes in this way a Hermitian space as well. We write $\lin{u_1, \ldots, u_k}$ for the linear span of vectors $u_1, \ldots, u_k \in H$ and we note that this is a subspace of $H$. Indeed, we have $\lin{u_1, \ldots, u_k} = \{u_1, \ldots, u_k\}\cc$.

Note that our notion of a subspace for Hermitian spaces is consistent with the notion for orthosets. However, the decomposability of a Hermitian space $H$ into subspaces may not be equivalent to the decomposability of $H$ as an orthoset. In this paper we are exclusively interested in Hermitian spaces for which the meaning of decomposability coincides in both cases.

Given two Hermitian spaces $H_1$ and $H_2$ over the same $\ast$-sfield $F$, we define their direct sum $H_1 \oplus H_2$ in the obvious way. We consider $H_1$ and $H_2$ as subspaces of $H_1 \oplus H_2$. Note that $(H_1, H_2)$ is a decomposition of $H_1 \oplus H_2$ seen as an orthoset, because $H_1$ and $H_2$ are mutual orthocomplements.

Conversely, let $(S,S\c)$ be a decomposition of a Hermitian space $H$ seen as an orthoset. Note that then $S$ and $S\c$ are subspaces of $H$ such that $S \cap S\c = \{0\}$. In case when $H = S + S\c$, that is, if $H$ is the direct sum of $S$ and $S\c$, we call $S$ {\it splitting}. An {\it orthomodular space} is a Hermitian space $H$ such that every subspace of $H$ is splitting. This means that the decompositions of the orthoset $H$ coincide with the decompositions of the Hermitian space $H$ into direct sums of subspaces.

We finally require orthomodular spaces to fulfil a homogeneity condition. We call $\herm u u$ the {\it length} of a vector $u$ of an orthomodular space $H$. A {\it unit vector} is a vector of length $1$. We call $H$ {\it uniform} if every $1$-dimensional subspace of $H$ contains a unit vector.

\begin{theorem} \label{thm:Soler}
\begin{itemize}

\item[\rm (i)] Any finite-dimensional Hermitian space is orthomodular.

\item[\rm (ii)] A Hilbert space over $\Reals$, $\Complexes$, or $\Quaternions$ is a uniform orthomodular space.

\item[\rm (iii)] Any infinite-dimensional uniform orthomodular space is a Hilbert space over $\Reals$, $\Complexes$, or $\Quaternions$.

\item[\rm (iv)] Any orthomodular space over $\Reals$, $\Complexes$, or $\Quaternions$ is a Hilbert space.

\end{itemize}
\end{theorem}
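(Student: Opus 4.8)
The four parts rest on quite different ingredients, so I would treat them in turn.

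For part (i), I would construct an orthogonal basis by induction on the dimension. The decisive point is anisotropy: since $\herm u u \neq 0$ for every $u \neq 0$, the length of any nonzero vector is invertible in $F$, so the Gram--Schmidt projection onto the line $\lin u$ is well defined. Splitting off one basis vector at a time yields an orthogonal basis of $H$. Applying the same construction inside an arbitrary linear subspace $S$ and extending the resulting orthogonal basis to one of $H$, the added vectors span a complement orthogonal to $S$, so $H = S + S\c$ and in particular $S = S\cc$. Since in a Hermitian space orthocomplements are automatically linear subspaces, the orthoclosed subsets are exactly the linear subspaces, and all of them split; hence $H$ is orthomodular.

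For part (ii), completeness of $H$ together with the classical projection theorem gives $H = S \oplus S\c$ for every topologically closed subspace $S$; since for a Hilbert space the orthoclosed subspaces (those with $S = S\cc$) coincide with the closed ones, this is exactly orthomodularity. Uniformity is then immediate: for $u \neq 0$ the length $\herm u u$ is a positive real number, hence has a positive square root $\lambda$, and because $\lambda$ is fixed by the involution the rescaled vector $\lambda^{-1} u$ lies in $\lin u$ and has length $1$.

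Part (iii) is where I would rely on a genuinely deep input, namely Sol\`er's theorem. First I would manufacture an infinite orthonormal sequence: infinite rank provides an infinite pairwise orthogonal family of nonzero vectors, and uniformity lets me replace each member by a unit vector in the same line, producing vectors $(e_i)_{i \in \Naturals}$ with $\herm{e_i}{e_j} = 0$ for $i \neq j$ and $\herm{e_i}{e_i} = 1$. Sol\`er's theorem then asserts that an orthomodular space carrying such an infinite orthonormal sequence must have $F \in \{\Reals, \Complexes, \Quaternions\}$ and be a Hilbert space; I would cite this rather than reprove it.

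Finally, for part (iv) I would separate the two dimensional regimes. In finite dimensions a Hermitian space over $\Reals$, $\Complexes$, or $\Quaternions$ is automatically complete, hence already a Hilbert space. In infinite dimensions the content is the Amemiya--Araki theorem (together with its quaternionic counterpart): an inner-product space over one of these fields in which every orthoclosed subspace splits is necessarily complete. One first observes that over these fields an anisotropic form is forced to be definite (an indefinite anisotropic form would admit an isotropic vector already in a two-dimensional subspace), so that, after possibly replacing $\herm\cdot\cdot$ by its negative, $H$ is a genuine pre-Hilbert space, and then invokes completeness. The main obstacles are thus concentrated in parts (iii) and (iv): the structural identification of $F$ in part (iii) is exactly Sol\`er's theorem, and the completeness in part (iv) is exactly Amemiya--Araki; both are classical results that I would cite, while parts (i) and (ii) are routine.
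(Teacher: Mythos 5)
Your proposal is correct and follows essentially the same route as the paper: an orthogonal-basis-extension argument for (i), the projection theorem plus rescaling by a positive square root for (ii), and citations of Sol\`er and Amemiya--Araki for (iii) and (iv). The additional care you take (verifying Sol\`er's orthonormal-sequence hypothesis from uniformity, and settling definiteness of the form before invoking Amemiya--Araki) only fills in details that the paper's proof leaves implicit.
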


\begin{proof}
Ad (i): Let $S$ be a subspace of a finitely-dimensional Hermitian space $H$. Then $S$ possesses an orthogonal basis, which can be extended to an orthogonal basis of $H$. Hence $S$ is splitting.

Ad (ii): Any orthoclosed subset $S$ of a Hilbert space $H$ is closed w.r.t.\ the metric on $H$ and from the completeness of the metric it follows that $S$ is a splitting subspace. Moreover, for any $u \in H \setminus \{0\}$, $\herm u u$ is a positive real and $\frac 1 {\sqrt{\herm u u}} u$ has length $1$.

Ad (iii): This is Sol\` er's Theorem \cite{Sol}.

Ad (iv): By the Amemiya-Araki Theorem, any orthomodular space over $\Complexes$ is complete and hence a Hilbert space \cite{AmAr}. The proof works for $\Reals$ and $\Quaternions$ as well.
\end{proof}

A $\ast$-sfield $F$ is called {\it Pythagorean} if, for any $\alpha, \beta \in F$, there is a $\gamma \in F$ such that $\alpha \alpha\inv + \beta \beta\inv = \gamma \gamma\inv$. Furthermore, $F$ is called {\it formally real} if, for any $\alpha_1, \ldots, \alpha_k \in F$, $k \geq 1$, we have that $\alpha_1 {\alpha_1}\inv + \ldots + \alpha_k {\alpha_k}\inv = 0$ implies $\alpha_1 = \ldots = \alpha_k = 0$.

\begin{lemma} \label{lem:sfield-of-transitive-orthomodular-spaces}
Let $H$ be a uniform orthomodular space over a $\ast$-sfield $F$ and assume that $H$ is at least $2$-dimensional. Then $F$ is Pythagorean and formally real.

Conversely, let $F$ be a Pythagorean and formally real $\ast$-sfield and let $n \in \Naturals \setminus \{0\}$. Then $F^n$, equipped with the standard inner product, is a uniform orthomodular space.
\end{lemma}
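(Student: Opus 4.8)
The lemma splits into two implications, and in both the bridge between the arithmetic of $F$ and the geometry of $H$ is the elementary identity $\herm u u = \sum_i \alpha_i\alpha_i\inv$ for a vector $u = \sum_i \alpha_i e_i$ written over an orthonormal family, where I call vectors orthonormal if they are mutually orthogonal of length $1$. For the forward direction I would first extract two orthonormal vectors: since $\dim H \geq 2$ there are orthogonal non-zero $f_1, f_2 \in H$, and uniformity lets me replace each $\lin{f_i}$ by a unit vector $e_i \in \lin{f_i}$, the orthogonality of $e_1$ and $e_2$ being inherited from that of $f_1, f_2$. Then for any $\alpha, \beta \in F$ the vector $u = \alpha e_1 + \beta e_2$ has length $\herm u u = \alpha\alpha\inv + \beta\beta\inv$.

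To obtain the Pythagorean property, fix $\alpha, \beta \in F$. If $u = 0$ then $\alpha = \beta = 0$ and $\gamma = 0$ works; otherwise uniformity supplies a unit vector $\lambda u \in \lin u$, so that $\lambda(\alpha\alpha\inv + \beta\beta\inv)\lambda\inv = 1$, and rearranging yields $\alpha\alpha\inv + \beta\beta\inv = \gamma\gamma\inv$ with $\gamma = \lambda^{-1}$, using $(\lambda\inv)^{-1} = (\lambda^{-1})\inv$. A routine induction then upgrades this to the statement that every finite sum $\sum_{i=1}^k \alpha_i\alpha_i\inv$ collapses to a single $\gamma\gamma\inv$.

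Formal reality is where the one genuinely non-mechanical idea enters. Suppose $\sum_{i=1}^k \alpha_i\alpha_i\inv = 0$ and fix an index $j$. Using the induction just mentioned I write the remaining terms as $\sum_{i \neq j}\alpha_i\alpha_i\inv = \delta\delta\inv$, so that $\alpha_j\alpha_j\inv + \delta\delta\inv = 0$ is exactly the length of $\alpha_j e_1 + \delta e_2$; anisotropy then forces this vector, and hence its coefficient $\alpha_j$, to vanish. Running $j$ over all indices gives $\alpha_1 = \ldots = \alpha_k = 0$. I expect this reduction to be the only point requiring care: although formal reality quantifies over arbitrarily many scalars, the Pythagorean property is what compresses the condition into the mere two orthonormal dimensions that $H$ is guaranteed to provide.

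For the converse, equip $F^n$ with the standard form $\herm u v = \sum_{i=1}^n u_i v_i\inv$. Symmetry and sesquilinearity are immediate from the antiautomorphism axioms, and anisotropy is precisely formal reality applied to $\herm u u = \sum_i u_i u_i\inv$; thus $F^n$ is a Hermitian space. Being finite-dimensional, it is orthomodular by Theorem~\ref{thm:Soler}(i), so it only remains to verify uniformity. Given a non-zero $u$, the Pythagorean property writes $\herm u u = \sum_i u_i u_i\inv = \gamma\gamma\inv$ for some $\gamma$, and $\gamma \neq 0$ since $\herm u u \neq 0$ by anisotropy; then $\gamma^{-1} u$ is a unit vector lying in $\lin u$, as required.
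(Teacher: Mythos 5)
Your proposal is correct. Note that the paper does not actually prove this lemma at all --- its ``proof'' is a pointer to Lemmas~3.1--3.3 of \cite{KPV} --- so what you give is a genuinely different, self-contained route relying only on the paper's definitions and on Theorem~\ref{thm:Soler}(i). Every step checks out: the orthonormal pair $e_1, e_2$ obtained by rescaling two orthogonal non-zero vectors, the identity $\herm{\alpha e_1 + \beta e_2}{\alpha e_1 + \beta e_2} = \alpha\alpha\inv + \beta\beta\inv$, Pythagoreanness from uniformity of $\lin{u}$ (including the correct handling of $(\lambda^{-1})\inv = (\lambda\inv)^{-1}$ and of the degenerate case $u = 0$), formal reality from anisotropy after collapsing $\sum_{i\neq j}\alpha_i\alpha_i\inv$ to a single $\delta\delta\inv$, and, in the converse direction, anisotropy of the standard form on $F^n$ from formal reality, orthomodularity from Theorem~\ref{thm:Soler}(i), and uniformity from iterated Pythagoreanness together with the observation that $\gamma \neq 0$. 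Comparing the two approaches: the citation buys brevity, while your version makes the logical dependencies transparent --- in particular, your forward direction uses only uniformity and anisotropy, never orthomodularity, so the lemma's hypothesis is visibly stronger than needed --- and it keeps the paper self-contained. One cosmetic point: in the formal-reality step the case $k = 1$ leaves an empty complementary sum, so your collapsing statement should be read as including sums of zero terms (take $\delta = 0$, or note directly that $\alpha_1\alpha_1\inv$ is the length of $\alpha_1 e_1$); this is a trivial edge case, not a gap.
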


\begin{proof}
See \cite[Lemma~3.1,~3.2,~3.3]{KPV}.
\end{proof}

\begin{lemma} \label{lem:direct-sum-of-orthomodular-spaces}
\begin{itemize}

\item[\rm (i)] Let $H_1$ and $H_2$ be uniform orthomodular spaces over the same Pythagore\-an $\ast$-sfield $F$. Then $H_1 \oplus H_2$ is likewise a uniform orthomodular space.

\item[\rm (ii)] Let $H$ be a uniform orthomodular space. Then any subspace of $H$ is uniform and orthomodular as well.

\end{itemize}
\end{lemma}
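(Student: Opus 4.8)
I would equip $H_1 \oplus H_2$ with the orthogonal sum form $\herm{(u_1,u_2)}{(v_1,v_2)} = \herm{u_1}{v_1} + \herm{u_2}{v_2}$, so that $H_1$ and $H_2$ become mutually orthocomplemented subspaces and $(H_1,H_2)$ is a decomposition of the resulting orthoset. The first task is to verify that this form is a genuine (anisotropic) Hermitian form. By uniformity of $H_i$, every nonzero $u_i$ rescales to a unit vector, so its length has the form $\herm{u_i}{u_i} = \mu_i \mu_i\inv$ with $\mu_i \neq 0$; a vanishing total length then reads $\mu_1 \mu_1\inv + \mu_2 \mu_2\inv = 0$, and since $F$ is formally real (Lemma~\ref{lem:sfield-of-transitive-orthomodular-spaces}) this forces $\mu_1 = \mu_2 = 0$. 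Thus the form is anisotropic and $H_1 \oplus H_2$ is a Hermitian space. Uniformity of the sum is where the Pythagorean hypothesis enters: for $u = (u_1,u_2) \neq 0$ we have $\herm u u = \mu_1 \mu_1\inv + \mu_2 \mu_2\inv$ (with $\mu_i = 0$ when $u_i = 0$), which by the Pythagorean property equals $\gamma \gamma\inv$; anisotropy gives $\gamma \neq 0$, and rescaling $u$ by $\gamma^{-1}$ produces a unit vector in $\lin u$.

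The main obstacle is \emph{orthomodularity} of the sum, since an orthoclosed subspace of $H_1 \oplus H_2$ need not respect the decomposition and so cannot be split ``coordinatewise''. I would avoid any direct construction by splitting on the dimension. If $H_1 \oplus H_2$ is finite-dimensional, Theorem~\ref{thm:Soler}(i) applies verbatim. If it is infinite-dimensional, then some factor, say $H_1$, is infinite-dimensional, whence by Theorem~\ref{thm:Soler}(iii) $F$ is one of $\Reals$, $\Complexes$, $\Quaternions$ and $H_1$ is a Hilbert space; the other factor $H_2$, being a uniform orthomodular space over the same $F$, is a Hilbert space as well (by Theorem~\ref{thm:Soler}(iii) again if infinite-dimensional, trivially if finite-dimensional). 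The orthogonal direct sum of two Hilbert spaces over a classical $\ast$-sfield is complete, hence a Hilbert space, and Theorem~\ref{thm:Soler}(ii) returns that it is a uniform orthomodular space. This closes part~(i).

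\textbf{Part (ii).} Let $S$ be a subspace of the uniform orthomodular space $H$. Uniformity of $S$ is immediate, since a $1$-dimensional subspace of $S$ is a $1$-dimensional subspace of $H$ and hence contains a unit vector. For orthomodularity I would first observe that $H$ is a Dacey space: every orthoclosed subspace of $H$ splits, and restricting such a splitting to a larger orthoclosed subspace makes ${\mathsf C}(H)$ an orthomodular lattice, so $H$ is Dacey by Lemma~\ref{lem:Dacey-space}. Consequently, by Lemma~\ref{lem:Dacey-space}(d), every subspace $T$ of $S$ (every $T \in {\mathsf C}(S)$) is already orthoclosed in $H$. Since $H$ is orthomodular, $T$ is splitting in $H$, giving the linear decomposition $H = T \oplus T\c$.

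It remains to descend this splitting to $S$. Given $w \in S$, write $w = t + t'$ with $t \in T \subseteq S$ and $t' \in T\c$; as $S$ is a linear subspace (orthoclosed sets, being double orthocomplements, are kernels of linear functionals), $t' = w - t \in S$, so $t' \in S \cap T\c = T\ce{S}$. Together with $T \cap T\ce{S} = \{0\}$ this yields $S = T \oplus T\ce{S}$, i.e.\ $T$ splits $S$. Hence every subspace of $S$ is splitting, so $S$ is orthomodular, completing part~(ii). The delicate point throughout is thus the orthomodularity of the direct sum in~(i), which the dimension dichotomy reduces entirely to the two cases already settled by Sol\`er's Theorem.
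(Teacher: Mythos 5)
Your proof is correct, and in part (i) it takes essentially the paper's own route: the dimension dichotomy, with Theorem~\ref{thm:Soler}(iii) and (ii) disposing of the infinite-dimensional case and Theorem~\ref{thm:Soler}(i) plus the Pythagorean rescaling $\gamma^{-1}u$ handling the finite-dimensional one. Two differences are worth recording. First, you make explicit a point the paper passes over in silence: that the orthogonal sum form must be verified to be anisotropic before $H_1 \oplus H_2$ is a Hermitian space at all. Your appeal to formal reality via Lemma~\ref{lem:sfield-of-transitive-orthomodular-spaces} settles this, but note that this lemma requires an at least $2$-dimensional space; if both $H_1$ and $H_2$ are at most $1$-dimensional, formal reality is not available from the stated hypotheses, and indeed ``Pythagorean'' alone does not suffice there ($\Complexes$ with the identity involution is Pythagorean but not formally real, and the sum of two copies of the $1$-dimensional space over it is isotropic). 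This is a latent defect of the lemma as stated which the paper's proof shares --- it too implicitly uses anisotropy when it inverts $\gamma$ and when it invokes Theorem~\ref{thm:Soler}(i) --- so it is not a gap of yours relative to the paper, but rather a point where your extra care exposes an edge case. Second, in part (ii) you take a genuinely different route: the paper simply cites Lemma~1 of Sol\`er's paper for the orthomodularity of a subspace $S$, whereas you give a self-contained argument --- $H$ is Dacey because splittings restrict to give orthomodularity of ${\mathsf C}(H)$, so by Lemma~\ref{lem:Dacey-space}(d) every $T \in {\mathsf C}(S)$ is orthoclosed in $H$, hence splits $H$, and the splitting descends to $S$ since orthoclosed sets are linear subspaces. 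This is correct and buys independence from the external reference at the cost of some length. One cosmetic slip: orthoclosed sets are \emph{intersections} of kernels of the functionals $x \mapsto \herm{x}{a}$, $a \in S\c$, not single kernels, though their linearity follows either way.
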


\begin{proof}
Ad (i): If $H_1$ or $H_2$ is infinite-dimensional, then $H_1$ and $H_2$ are, by Theorem~\ref{thm:Soler}(iii), Hilbert spaces and the assertion follows by Theorem~\ref{thm:Soler}(ii).

Assume that $H_1$ and $H_2$ are finite-dimen\-sional. Then $H_1 \oplus H_2$ is finite-dimensional as well and hence, by Theorem~\ref{thm:Soler}(i), orthomodular. Moreover, let $x \in (H_1 \oplus H_2) \setminus \{0\}$. Then $x = \alpha_1 e_1 + \alpha_2 e_2$ for unit vectors $e_1 \in H_1$ and $e_2 \in H_2$. As $F$ is assumed to be Pythagorean, there is a $\gamma \in F$ such that $\alpha_1 \alpha_1\inv + \alpha_2 \alpha_2\inv = \gamma \gamma\inv$. Then $\gamma^{-1} x$ is a unit vector.

Ad (ii): Let $S$ be a subspace of $H$. Obviously, $S$ is uniform. For the orthomodularity of $S$, see \cite[Lemma~1]{Sol}.
\end{proof}

Let $U$ be a unitary operator acting on a Hilbert space $H$. By a {\it strict square root} of $U$ we mean a further unitary operator $V$ such that $V^2 = U$ and for any closed subspace $S$ of $H$, $S$ is reducing for $U$ if and only if $S$ is reducing for $V$.

The existence of strict square roots is a criterion that allows us to distinguish the complex Hilbert spaces from those over the reals or quaternions.

\begin{lemma} \label{lem:strict-square-root-and-C}
Let $H$ be an at least $2$-dimensional Hilbert space over $F \in \{ \Reals, \Complexes, \Quaternions\}$. Then the following are equivalent:
\begin{itemize}

\item[\rm (a)] Every unitary map on $H$ possesses a strict square root.

\item[\rm (b)] $-\id_H$ possesses a strict square root.

\item[\rm (c)] $F = \Complexes$.

\end{itemize}
\end{lemma}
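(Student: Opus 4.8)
The plan is to prove the cycle (a) $\Rightarrow$ (b) $\Rightarrow$ (c) $\Rightarrow$ (a). The implication (a) $\Rightarrow$ (b) is immediate, since $-\id_H$ is itself a unitary map, so (a) applies to it. The substance lies in the other two implications, and the organising observation throughout is that a strict square root $V$ of a unitary $U$ must generate exactly the same reducing subspaces as $U$; equivalently, $V$ and $U$ commute with precisely the same orthogonal projections.

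For (c) $\Rightarrow$ (a), I would invoke the spectral theorem for unitary operators on a complex Hilbert space. Writing a given unitary as $U = \int_{[0,2\pi)} e^{i\theta}\,dE(\theta)$ for its spectral resolution, I set $V = \int_{[0,2\pi)} e^{i\theta/2}\,dE(\theta)$, using the principal half-angle branch. Then $V$ is unitary and $V^2 = U$. The key point is that $V = f(U)$ for the Borel function $f$ with $f(e^{i\theta}) = e^{i\theta/2}$, so $V$ lies in the von Neumann algebra generated by $U$; hence every projection commuting with $U$ commutes with $V$, and conversely any projection commuting with $V$ commutes with $V^2 = U$. Thus $U$ and $V$ have identical commutants, hence identical reducing subspaces, so $V$ is a strict square root of $U$. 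As $U$ was arbitrary, (a) follows.

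The main obstacle is (b) $\Rightarrow$ (c), which I would establish contrapositively: assuming $F \in \{\Reals, \Quaternions\}$, I show that $-\id_H$ admits no strict square root. Since $-\id_H$ commutes with every projection, \emph{every} closed subspace is reducing for it, so a strict square root $V$ must leave every closed subspace invariant, in particular every line $\lin x$. Choosing orthonormal $e_1, e_2$ (available as $\dim H \geq 2$ and $H$ is uniform), invariance of $\lin{e_1}$, $\lin{e_2}$ and $\lin{e_1 + e_2}$ forces $V e_1 = \mu e_1$ and $V e_2 = \mu e_2$ for a common scalar $\mu \in F$. In the real case $\mu \in \Reals$ automatically; in the quaternionic case I exploit invariance of the lines $\lin{e_1 + q e_2}$ for unit quaternions $q$: by linearity $V(e_1 + q e_2) = \mu e_1 + q\mu e_2$, and comparison with a scalar multiple $\nu(e_1 + q e_2)$ yields $q\mu = \mu q$ for all $q$, so $\mu$ is central, i.e.\ $\mu \in \Reals$. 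In either case $V$ restricts to $\mu\,\id$ on $\lin{e_1, e_2}$ with $\mu \in \Reals$, whence $V^2 = \mu^2\,\id \neq -\id$ there, a contradiction. The delicate feature is precisely the quaternionic situation: although $-1$ is a square in $\Quaternions$, so that ordinary unitary square roots of $-\id_H$ do exist (arising from a quaternion unit squaring to $-1$), none of them is central, and hence none can share \emph{all} reducing subspaces with the central operator $-\id_H$. This is exactly what fails to distinguish $\Reals$ from $\Quaternions$ at the level of bare square roots but does distinguish $\Complexes$ once centrality is imposed.
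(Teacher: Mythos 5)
Your proof is correct and takes essentially the same route as the paper's: the implication (b) $\Rightarrow$ (c) via invariance of all $1$-dimensional subspaces forcing the square root to act as a \emph{central} scalar (your contrapositive version with $e_1$, $e_2$ and the lines $\lin{e_1+qe_2}$ just spells out the centrality argument the paper compresses into ``$V=\alpha\,\id_H$ with $\alpha$ central and $\alpha^2=-1$''), and (c) $\Rightarrow$ (a) via the half-angle Borel function together with the double-commutant argument. No gaps.
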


\begin{proof}
(a) $\Rightarrow$ (b): This is trivial.

(b) $\Rightarrow$ (c): Assume that $V$ is a strict square root of $U = -\id_H$. As all $1$-dimensional subspaces are invariant for $U$, so they are for $V$ and hence there is an $\alpha \in F$ such that $V = \alpha \, \id_H$. But then $\alpha$ is an element of the centre of $F$ such that $\alpha^2 = -1$. We conclude that $F = \Complexes$.

(c) $\Rightarrow$ (a): Let $U$ be an arbitrary unitary operator on the complex Hilbert space $H$. Then $\spectrum U$, the spectrum of $U$, is contained in the unit circle. Let $h \colon \spectrum U \to \Complexes$ be defined by $h(e^{it}) = e^{\frac{it}2}$, where $0 \leq t < 2\pi$. Using Borel function calculus, we put $V = h(U)$. Then $h(U)^2 = h^2(U) = U$. Moreover, if a projection $P$ commutes with $h(U)$, then clearly also with $U$. Conversely, $h(U)$ is in the double commutant of $U$. Hence, if $P$ commutes with $U$, then also with $h(U)$.
\end{proof}

Lattice-theoretically, orthomodular spaces of dimension $\geq 4$ can be described as follows. We note that the assumption on the lattice to fulfil the covering property can be replaced with a weaker condition \cite[Theorem~3.10]{PaVe3}.

We call an ortholattice $L$ {\it irreducible} if $L$ is not directly decomposable.

\begin{theorem} \label{thm:orthomodular-space-as-orthoset}
Let $L$ be a complete, irreducible, AC orthomodular lattice of length $\geq 4$. Then there is an orthomodular space $H$ such that $L$ is isomorphic to ${\mathsf C}(H)$.
\end{theorem}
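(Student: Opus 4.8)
The plan is to read this statement as a lattice-theoretic coordinatisation theorem in the tradition of Birkhoff--von Neumann and Piron, and to prove it in two stages: first coordinatise the lattice $L$ by a vector space over some sfield, momentarily disregarding the orthocomplementation, and then reconstruct an anisotropic Hermitian form out of the orthocomplementation $\c$. The reason for the two-stage split is that the order structure and the orthogonality structure are governed by different classical machineries — the fundamental theorem of projective geometry for the former, Baer's theory of polarities for the latter — and it is cleanest to invoke them one at a time.

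First I would treat $L$ purely as a projective geometry. Since $L$ is atomistic with the covering property, every element is the join of the atoms below it, and the atoms, together with the elements covering an atom, form a projective geometry whose flats are exactly the elements of $L$. Irreducibility of $L$ says that this geometry is connected, i.e.\ not a direct product of proper subgeometries, and the hypothesis that $L$ has length $\geq 4$ forces its projective dimension to be $\geq 3$. A projective geometry of dimension $\geq 3$ is automatically Desarguesian, so by the coordinatisation theorem for such geometries (in the lattice-theoretic form developed in \cite{MaMa}) there is a left vector space $V$ over an sfield $F$ and an order isomorphism of $L$ onto the lattice of ``closed'' subspaces of $V$, with the atoms of $L$ corresponding to the one-dimensional subspaces $\lin u$, $u \in V \setminus \{0\}$, and finite-length elements to finite-dimensional subspaces. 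Completeness of $L$ ensures that the image is closed under arbitrary joins and meets.

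Next I would transport the orthocomplementation $\c$ across this identification. On atoms it induces a symmetric, irreflexive orthogonality relation: for $p = \lin u$ and $q = \lin v$ put $u \perp v$ iff $q \leq p\c$. The law $p \wedge p\c = 0$ (no atom lies below its own orthocomplement) makes this relation anisotropic, that is, no non-zero vector is orthogonal to itself. The decisive and most delicate step is to represent this orthogonality by a form: one shows that $\c$ is a \emph{polarity} of the Desarguesian projective space of dimension $\geq 3$, and then that every such polarity is induced by a nondegenerate $\ast$-Hermitian sesquilinear form $\herm\cdot\cdot \colon V \times V \to F$ for a suitable involutory antiautomorphism $\ast$ of $F$. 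Producing the involution $\ast$ and the form $\herm\cdot\cdot$, and checking that together they reproduce $\c$, is the heart of the argument — this is exactly the content supplied classically by Birkhoff--von Neumann in finite dimensions and by Baer's theorem on polarities in general — and it is where I expect the genuine difficulty (and hence a citation rather than a self-contained computation) to lie. Anisotropy of the relation then reads off as $\herm u u \neq 0$ for $u \neq 0$, so that $H = (V, \herm\cdot\cdot)$ is a Hermitian space in the sense of Section~\ref{sec:orthomodular-spaces}.

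Finally I would use orthomodularity to upgrade $H$ to an orthomodular space and to match $L$ with all of ${\mathsf C}(H)$. By construction the elements of $L$ are identified with orthoclosed subspaces of $H$, and the orthomodular identity $a = b \vee (a \wedge b\c)$ for $b \leq a$, applied with $a$ the top element, translates into $V = S + S\c$ for every closed subspace $S$; hence every subspace of $H$ is splitting, which is precisely the defining property of an orthomodular space. Conversely, every member of ${\mathsf C}(H)$ is a $\c$-closed subspace and therefore, by completeness of $L$ and the fact that $L$ already carries all joins of atoms, corresponds to an element of $L$. Thus $L \cong {\mathsf C}(H)$, as claimed. To summarise, once the projective coordinatisation and the polarity-to-form theorems are granted, the remaining steps are essentially bookkeeping; the one genuinely hard point is the passage from the orthocomplementation to the Hermitian form together with its involution.
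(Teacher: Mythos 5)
The route you sketch --- coordinatise the projective geometry of atoms, invoke Baer/Birkhoff--von Neumann to turn the orthocomplementation into an anisotropic Hermitian form, then use orthomodularity to obtain the splitting property --- is the classical argument, and it is worth noting that the paper does not reprove this result at all: its proof is a single citation to \cite[Theorem~(34.5)]{MaMa}, whose internal proof is essentially the outline you give. Reconstructing that proof is a legitimate alternative, but then the reconstruction has to be correct, and your final step is not.

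The genuine gap is in the passage from lattice orthomodularity to the splitting property. You claim that the orthomodular identity $a = b \vee (a \wedge b^\perp)$, ``applied with $a$ the top element,'' translates into $V = S + S^\perp$ for every closed subspace $S$. It does not. With $a$ the top and $b = S$ it gives only $S \vee S^\perp = 1$, where $\vee$ is the join in $L$, i.e.\ the $\perp\perp$-closure of the algebraic sum $S + S^\perp$. That identity holds in the lattice of orthoclosed subspaces of \emph{every} anisotropic Hermitian space: $(S \cup S^\perp)^\perp = S^\perp \cap S = \{0\}$, so $S \vee S^\perp = 1$ with no orthomodularity at all. In particular it holds for an incomplete inner-product space over $\Complexes$, whose subspace lattice is not orthomodular and where $S + S^\perp \neq V$ for suitable closed $S$ (this is the content of the Amemiya--Araki theorem \cite{AmAr}). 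Identifying the join with the algebraic sum presupposes exactly the splitting property you are trying to prove, so your last paragraph is circular. The correct derivation uses orthomodularity for arbitrary pairs together with atomisticity, the covering property, and anisotropy: given closed $S$ and $v \notin S$, the element $a = S \vee \lin{v}$ covers $S$; orthomodularity gives $a = S \vee (a \wedge S^\perp)$, so $a \wedge S^\perp$ contains an atom $\lin{w}$ and, by covering, $a = S \vee \lin{w}$; since $\herm{w}{w} \neq 0$, the line $\lin{w}$ splits off algebraically, one checks that $S + \lin{w}$ is orthoclosed (another orthomodular computation), and hence $v \in a = S + \lin{w} \subseteq S + S^\perp$. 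This is precisely the non-bookkeeping core of \cite[Theorem~(34.5)]{MaMa}.

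A smaller inaccuracy of the same kind: the flats of the projective geometry of atoms are \emph{not} ``exactly the elements of $L$.'' When $L$ has infinite length, the flats correspond to all linear subspaces of $V$, whereas $L$ corresponds only to the $\perp$-closed ones; the identification of $L$ inside the subspace lattice only becomes available once the form (or the dual-pair structure) has been constructed. This closed-versus-arbitrary-subspace distinction is the same one that invalidates your final step, so it needs to be handled carefully throughout, not treated as bookkeeping.
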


\begin{proof}
See~\cite[Theorem~(34.5)]{MaMa}.
\end{proof}

\section{Adjointable linear maps}
\label{sec:adjointable-linear-maps}

We now discuss the maps between orthomodular spaces that will be relevant for us. We fix an orthomodular space $H_1$ over the $\ast$-sfield $F_1$ and an orthomodular space $H_2$ over the $\ast$-sfield $F_2$. We denote the Hermitian forms by $\herm{\cdot}{\cdot}_1$ and $\herm{\cdot}{\cdot}_2$, respectively.

A map $\phi \colon H_1 \to H_2$ is called {\it quasilinear} if $\phi$ is additive and there is an isomorphism $\sigma \colon F_1 \to F_2$ such that $\phi(\alpha x) = \alpha^\sigma \phi(x)$ for any $x \in H_1$ and $\alpha \in F_1$. In case when $F_1$ and $F_2$ coincide and $\sigma$ is the identity, $\phi$ is called {\it linear}. The image of $\phi$ is denoted by $\image \phi$. In the finite-dimensional case, $\image \phi$ is a subspace of $H_2$ and we refer to its dimension as the {\it rank} of $\phi$.

Any quasilinear map $\phi \colon H_1 \to H_2$ may also be regarded as a map between orthosets. Seen as such, $\phi$ is compatible with the equivalence relation $\prl$ and hence induces the map
\begin{equation} \label{fml:induced-map}
P(\phi) \colon P(H_1) \to P(H_2) \komma \class u \mapsto \class{\phi(u)}.
\end{equation}
We are primarily interested in the case that $\phi$ and $P(\phi)$ are adjointable. Quasilinear maps between orthomodular spaces, however, do not have in general this property. We will hence restrict our considerations to a suitably restricted type of maps. We will consider the relevant notion for linear maps only.

Let $\phi \colon H_1 \to H_2$ be linear. A map $\phi\adj \colon H_2 \to H_1$ such that
\begin{equation} \label{fml:adjoint-of-linear-map}
\herm{\phi(u)}{v} \;=\; \herm{u}{\phi\adj(v)}
\quad\text{for any $u \in H_1$ and $v \in H_2$}
\end{equation}
is called the {\it linear adjoint} of $\phi$. In the finite-dimensional case, any linear map possesses a linear adjoint. A linear map between Hilbert spaces has a linear adjoint if and only if it is bounded. If a linear adjoint exists it is clearly unique. 

It turns out that linear adjoints are adjoints in the sense of Definition~\ref{def:adjoint} and that the adjointability of a linear map between orthomodular spaces has an unambiguous meaning.

\begin{theorem} \label{thm:adjoint-of-map-induced-by-linear-map}
Let $\phi \colon H_1 \to H_2$ be linear. The following are equivalent:
\begin{itemize}

\item[\rm (a)] $\phi$ possesses a linear adjoint.

\item[\rm (b)] $\phi$, seen as a map between orthosets, is adjointable.

\item[\rm (c)] $P(\phi)$ is adjointable.
\end{itemize}
In this case, the linear adjoint $\phi\adj$ is an adjoint of $\phi$ seen as a map between orthosets. Consequently, the unique adjoint of $P(\phi)$ is $P(\phi\adj)$.
\end{theorem}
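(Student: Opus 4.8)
The plan is to establish the three conditions cyclically, as $(a)\Rightarrow(b)\Rightarrow(c)\Rightarrow(a)$, extracting the two supplementary claims along the way. The implications $(a)\Rightarrow(b)$ and $(b)\Rightarrow(c)$ are short; the substance lies entirely in $(c)\Rightarrow(a)$, where a purely orthogonality-theoretic datum, namely the adjointability of $P(\phi)$, must be upgraded to the analytic datum of a linear adjoint.

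For $(a)\Rightarrow(b)$, suppose the linear adjoint $\phi\adj$ exists. Recalling that in an orthomodular space $u\perp v$ means precisely $\herm u v = 0$, the defining identity $\herm{\phi(u)}{v}=\herm{u}{\phi\adj(v)}$ immediately gives $\phi(u)\perp v \Leftrightarrow \herm{\phi(u)}{v}=0 \Leftrightarrow \herm{u}{\phi\adj(v)}=0 \Leftrightarrow u\perp\phi\adj(v)$. Thus $\phi\adj$ is an adjoint of $\phi$ in the sense of Definition~\ref{def:adjoint}, which is exactly the ``in this case'' assertion. For $(b)\Rightarrow(c)$ there is nothing to do beyond Proposition~\ref{prop:quotient-map}: an adjointable $\phi$ induces the adjointable map $P(\phi)$, and if $\phi\adj$ is an adjoint of $\phi$ then $P(\phi\adj)$ is the adjoint of $P(\phi)$, unique since $P(H_1)$ is irredundant. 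This also settles the final sentence of the statement.

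The heart of the proof is $(c)\Rightarrow(a)$. Let $\psi$ be an adjoint of $P(\phi)$ and, for each $v\in H_2$, choose a representative $w_v\in H_1$ of $\psi(\class v)$. Unwinding the adjointness condition and using that orthogonality descends to $\prl$-classes, I read off the set equality $\{\,u\in H_1 : \phi(u)\perp v\,\}=\{w_v\}\c$. The left-hand side is the kernel of the functional $f_v\colon u\mapsto\herm{\phi(u)}{v}$, which is additive and $F$-linear in $u$ because $\phi$ is linear and the form is linear in its first argument. Hence $\kernel f_v=\{w_v\}\c$. If $f_v=0$ I set $\phi\adj(v)=0$; otherwise $f_v$ is a nonzero functional, so $w_v\neq 0$ and $\{w_v\}\c=\kernel(\,u\mapsto\herm{u}{w_v}\,)$ is the same codimension-one subspace that is the kernel of $f_v$.

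Two $F$-linear functionals on $H_1$ sharing a common hyperplane as kernel differ by right multiplication by a scalar $c\in F$, so $\herm{\phi(u)}{v}=\herm{u}{w_v}\,c$ for all $u$. I then absorb $c$ using conjugate-linearity in the second argument, $\herm{u}{\beta w_v}=\herm{u}{w_v}\beta\inv$, by taking $\beta=c\inv$; this defines $\phi\adj(v)=c\inv\,w_v$ and yields $\herm{\phi(u)}{v}=\herm{u}{\phi\adj(v)}$ for all $u,v$. Anisotropy of the form makes $\phi\adj(v)$ the unique vector with this property, whence well-definedness, and additivity and homogeneity of $\phi\adj$ then follow formally from the same identity together with anisotropy. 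The main obstacle, and the only place where more than bookkeeping is required, is precisely this passage from the orthogonality-level statement $\kernel f_v=\{w_v\}\c$ to the representation of $f_v$ by a vector: recognising $\{w_v\}\c$ as a hyperplane and invoking the skew-field linear algebra that two functionals with equal hyperplane kernel are proportional. Everything else is routine verification.
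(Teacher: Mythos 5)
Your proposal is correct, but it takes a genuinely different (and more self-contained) route than the paper, whose own proof is essentially a citation: the equivalence of (a) and (c) is delegated to \cite[Theorem 3.23]{PaVe5}, the equivalence of (b) and (c) to \cite[Proposition 3.3]{PaVe4} (i.e.\ Proposition~\ref{prop:quotient-map}), and the closing assertions are read off from (\ref{fml:adjoint-of-linear-map}) and Proposition~\ref{prop:quotient-map}. What you do differently is to prove the one substantive implication, (c) $\Rightarrow$ (a), from scratch by a Riesz-representation-style argument: adjointness of $\psi$ to $P(\phi)$ identifies, for each $v \in H_2$, the kernel of the left-linear functional $f_v = \herm{\phi(\cdot)}{v}$ with $\{w_v\}\c$, the kernel of $\herm{\cdot}{w_v}$; two nonzero left-linear functionals over a sfield with a common kernel are right-proportional, $f_v = \herm{\cdot}{w_v}\,c$, and the scalar is absorbed into the second argument through the involution, giving $\phi\adj(v) = c\inv w_v$ with $\herm{u}{c\inv w_v} = \herm{u}{w_v}(c\inv)\inv = \herm{u}{w_v}\,c$; anisotropy then yields well-definedness, additivity and homogeneity. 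The details check out, including the two points where care is genuinely needed: the proportionality constant must sit on the right (since the functionals are left-linear), and the degenerate case $f_v = 0$ correctly forces $w_v = 0$ by anisotropy. Your version buys transparency and makes visible that this implication uses only sesquilinearity and anisotropy of the form -- neither orthomodularity nor any rank hypothesis, in contrast to Theorem~\ref{thm:adjointable-quasilinear}, where rank $\geq 3$ is essential -- at the cost of reproving what the paper outsources to \cite{PaVe5}. The implications (a) $\Rightarrow$ (b) and (b) $\Rightarrow$ (c) and the two supplementary claims are handled exactly as the paper intends, via (\ref{fml:adjoint-of-linear-map}) and Proposition~\ref{prop:quotient-map}.
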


\begin{proof}
The equivalence of (a) and (c) holds by \cite[Theorem 3.23]{PaVe5} and the equivalence of (b) and (c) holds by \cite[Proposition 3.3]{PaVe4}. The last assertions follow from (\ref{fml:adjoint-of-linear-map}) and Proposition~\ref{prop:quotient-map}.
\end{proof}

We note that the adjoints of decomposable maps are likewise decomposable.

\begin{lemma} \label{lem:linear-adjoints-of-sums}
Assume that $H_1 = R_1 \oplus S_1$ and $H_2 = R_2 \oplus S_2$. Let $\phi \colon R_1 \to R_2$ and $\psi \colon S_1 \to S_2$ be linear maps. Then $\phi \oplus \psi$ possesses a linear adjoint if and only if so do $\phi$ and $\psi$. In this case, $(\phi \oplus \psi)\adj = \phi\adj \oplus \psi\adj$.
\end{lemma}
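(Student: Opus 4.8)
The plan is to prove the biconditional in both directions, using the defining property (\ref{fml:adjoint-of-linear-map}) of the linear adjoint as the sole computational tool. Throughout, I write $u = r_1 + s_1$ for a generic vector of $H_1 = R_1 \oplus S_1$ with $r_1 \in R_1$, $s_1 \in S_1$, and similarly $v = r_2 + s_2$ in $H_2 = R_2 \oplus S_2$. The key structural fact I will lean on is that, in the direct-sum decomposition, $R_1 \perp S_1$ and $R_2 \perp S_2$, so the Hermitian form of $H_i$ restricted to cross terms vanishes; this makes the form on $H_1 \oplus S_1$ split as $\herm{r+s}{r'+s'} = \herm{r}{r'} + \herm{s}{s'}$.

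For the ``if'' direction, suppose $\phi$ and $\psi$ possess linear adjoints $\phi\adj \colon R_2 \to R_1$ and $\psi\adj \colon S_2 \to S_1$. I would define a candidate map $\chi \colon H_2 \to H_1$ by $\chi(r_2 + s_2) = \phi\adj(r_2) + \psi\adj(s_2)$, which is manifestly linear, and then verify directly that it satisfies (\ref{fml:adjoint-of-linear-map}) for $\phi \oplus \psi$. Concretely, $\herm{(\phi\oplus\psi)(r_1+s_1)}{r_2+s_2} = \herm{\phi(r_1)}{r_2}_2 + \herm{\psi(s_1)}{s_2}_2$, where the cross terms drop out because $\phi(r_1) \in R_2 \perp s_2$ and $\psi(s_1) \in S_2 \perp r_2$; applying the defining equations of $\phi\adj$ and $\psi\adj$ turns this into $\herm{r_1}{\phi\adj(r_2)}_1 + \herm{s_1}{\psi\adj(s_2)}_1 = \herm{r_1+s_1}{\chi(r_2+s_2)}_1$, again using orthogonality of the summands. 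By uniqueness of linear adjoints this shows $(\phi \oplus \psi)\adj = \chi = \phi\adj \oplus \psi\adj$, giving both the existence claim and the formula at once.

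For the ``only if'' direction, suppose $\phi \oplus \psi$ has a linear adjoint $\Phi\adj \colon H_2 \to H_1$. The task is to recover adjoints for the individual summands. The natural move is to set $\phi\adj := \pi_{R_1} \circ \Phi\adj|_{R_2}$, where $\pi_{R_1}$ is the orthogonal projection onto $R_1$; one then checks (\ref{fml:adjoint-of-linear-map}) for $\phi$ by testing against $r_1 \in R_1$ and $r_2 \in R_2$. Here I would use that $\herm{\phi(r_1)}{r_2}_2 = \herm{(\phi\oplus\psi)(r_1)}{r_2}_2 = \herm{r_1}{\Phi\adj(r_2)}_1$, and then observe that since $r_1 \in R_1 \perp S_1$, only the $R_1$-component of $\Phi\adj(r_2)$ contributes, so this equals $\herm{r_1}{\pi_{R_1}(\Phi\adj(r_2))}_1 = \herm{r_1}{\phi\adj(r_2)}_1$. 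The symmetric construction handles $\psi$.

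The main point requiring care — more subtle than an obstacle — is justifying that the orthogonal projection $\pi_{R_1}$ actually exists and is linear in this ``only if'' direction. This is where the orthomodular (splitting) hypothesis does its work: because $R_1$ is a subspace of the orthomodular space $H_1$ and $H_1 = R_1 \oplus S_1$ is a genuine direct-sum decomposition into mutual orthocomplements, every vector has a well-defined orthogonal decomposition, so $\pi_{R_1}$ is the corresponding linear projection. I would note this explicitly rather than treat it as automatic. Once the projections are in hand the verification is the same short computation as above, and uniqueness of linear adjoints again pins down the formula $(\phi \oplus \psi)\adj = \phi\adj \oplus \psi\adj$.
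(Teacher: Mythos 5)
Your proof is correct and follows essentially the same route as the paper: both directions rest on the defining equation of the linear adjoint together with the orthogonality of the summands $R_i \perp S_i$, with uniqueness of linear adjoints pinning down the formula $(\phi \oplus \psi)\adj = \phi\adj \oplus \psi\adj$. The only cosmetic difference is in the ``only if'' direction, where the paper first shows $\chi\adj(R_2) \subseteq R_1$ (so that a restriction--corestriction of $\chi\adj$ serves as the adjoint of $\phi$) while you compose with the projection $\pi_{R_1}$ instead; the two devices are interchangeable, and your uniqueness argument even recovers the paper's containment after the fact.
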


\begin{proof}
Assume that $\chi = \phi \oplus \psi$ has a linear adjoint. Then, for any $u \in S_1$ and $v \in R_2$, we have $\herm{u}{\chi\adj(v)}_1 = \herm{\chi(u)}{v}_2 = \herm{\psi(u)}{v}_2 = 0$ and hence $\chi\adj(R_2) \subseteq R_1$. Similarly, $\chi\adj(S_2) \subseteq S_1$. We conclude that $\chi\adj|_{R_2}^{R_1}$ is an adjoint of $\phi$ and $\chi\adj|_{S_2}^{S_1}$ is an adjoint of $\psi$. This shows the ``only if'' part. The ``if'' part and the last assertion are easily seen.
\end{proof}

The following theorem could be shortened to the slogan ``adjointability implies quasilinearity''. Indeed, it is a consequence of Faure and Fr\" olicher's version of the Fundamental Theorem of Projective Geometry \cite[Chapter 10]{FaFr} that any adjointable map from $P(H_1)$ to $P(H_2)$ whose image is not contained in a $2$-dimensional subspace is of the form $P(\phi)$ for some quasilinear map $\phi$.

\begin{theorem} \label{thm:adjointable-quasilinear}
Let $f \colon P(H_1) \to P(H_2)$ be an adjointable map of rank $\geq 3$. Then $f$ is induced by a quasilinear map from $H_1$ to $H_2$.
\end{theorem}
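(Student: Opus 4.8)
The plan is to reduce the statement to the Fundamental Theorem of Projective Geometry in the form established by Faure and Fr\"olicher \cite[Chapter~10]{FaFr}. Forgetting the orthogonality relation, $P(H_1)$ and $P(H_2)$ are just the projective geometries of the $F_1$- and $F_2$-vector spaces $H_1$ and $H_2$: their proper points are the one-dimensional subspaces and their lines are the two-dimensional subspaces. The task is therefore to recognise $f$ as a (non-degenerate) morphism of projective geometries and then to read off from the conclusion that the inducing semilinear map is quasilinear in our sense.

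First I would supply the two ingredients that the theorem needs. The kernel: by Lemma~\ref{lem:injective-surjective}, $\kernel f = (\image f\adj)\c$ is orthoclosed, hence a subspace, and this is the linear subspace on which the projective morphism is undefined. The non-degeneracy: the rank of $f$ is the rank of $(\image f)\cc$, and $(\image f)\cc$ is the smallest orthoclosed set containing $\image f$. Since a two-dimensional subspace is orthoclosed, $\image f$ lies in a two-dimensional subspace exactly when $(\image f)\cc$ has rank at most $2$. Thus the hypothesis that $f$ has rank $\geq 3$ says precisely that $\image f$ is not contained in a line of $P(H_2)$, which is the non-degeneracy condition.

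The core step is to check that $f$ preserves collinearity, and this is where adjointability enters, through Lemma~\ref{lem:lattice-adjoint}(ii). For two proper points $p, q$ of $P(H_1)$, viewed as atoms of ${\mathsf C}(P(H_1))$, that lemma gives
\[ f(p \vee q)\cc \;=\; f(p)\cc \vee f(q)\cc . \]
Hence if a point $r$ lies on the line $p \vee q$, then $f(r) \subseteq f(p \vee q)$, so $f(r)\cc \subseteq f(p)\cc \vee f(q)\cc$; that is, the image point $f(r)$ is $0$ or lies on the line through $f(p)$ and $f(q)$. Together with the fact that $\kernel f$ is a subspace, this is exactly the condition for $f$ to be a morphism of projective geometries. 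Applying the Fundamental Theorem to the non-degenerate morphism $f$ then yields an additive, $\sigma$-semilinear map $\phi \colon H_1 \to H_2$, for some homomorphism $\sigma \colon F_1 \to F_2$, with $f = P(\phi)$.

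The main obstacle I expect is upgrading this to the precise notion of quasilinearity used here, which requires $\sigma$ to be an \emph{isomorphism}, not merely a homomorphism of $\ast$-sfields. Injectivity is automatic, since a nonzero homomorphism of sfields is injective; surjectivity is the delicate point, and it is here that the symmetry of adjointness must be exploited. Concretely, I would first note $(\image f\adj)\cc = (\kernel f)\c$ and, using $f((\kernel f)\c)\cc = (\image f)\cc$ together with Lemma~\ref{lem:lattice-adjoint}(ii), that the rank of $(\kernel f)\c$ is at least the rank of $(\image f)\cc \geq 3$; thus $f\adj$ is again a non-degenerate adjointable map and is induced by a $\tau$-semilinear map $\psi$ with $\tau \colon F_2 \to F_1$. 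Comparing $\phi$ and $\psi$ on a frame of mutually orthogonal points should then force $\tau \circ \sigma = \id_{F_1}$ and $\sigma \circ \tau = \id_{F_2}$, whence $\sigma$ is an isomorphism and $\phi$ is quasilinear. A secondary point needing care is the faithful matching of our adjointable orthoset maps with Faure and Fr\"olicher's axioms for a morphism of projective geometries, in particular the behaviour of lines that meet the kernel.
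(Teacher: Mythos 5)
Your proposal is correct and takes essentially the same route as the paper: the paper's own proof is just the citation \cite[Theorem 3.21]{PaVe5}, and the text preceding the theorem explicitly describes that result as a consequence of Faure and Fr\"olicher's Fundamental Theorem of Projective Geometry --- precisely the reduction you carry out (kernel orthoclosed via Lemma~\ref{lem:injective-surjective}, rank $\geq 3$ as non-degeneracy, collinearity preservation via Lemma~\ref{lem:lattice-adjoint}(ii), and adjointability to upgrade the field homomorphism to an isomorphism). One small caveat: comparing $\phi$ with the semilinear map inducing $f\adj$ need not literally force $\tau \circ \sigma = \id_{F_1}$ and $\sigma \circ \tau = \id_{F_2}$ (in the noncommutative case one only obtains that $\sigma$ composed with an involution-twisted $\tau$ is an inner automorphism), but that relation already yields surjectivity of $\sigma$, which is all that quasilinearity requires.
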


\begin{proof}
See \cite[Theorem 3.21]{PaVe5}.
\end{proof}

Any non-zero multiple of a quasilinear map is quasilinear again and induces the same map between the associated irredundant quotients. Provided that the image is not contained in a $1$-dimensional subspace, there are no further maps with this property.

\begin{lemma} \label{lem:uniqueness-of-inducing-quasilinear-map}
Let $\phi \colon H_1 \to H_2$ be a quasilinear map of rank $\geq 2$. Then the quasilinear maps $\psi \colon H_1 \to H_2$ such that $P(\phi) = P(\psi)$ are exactly those of the form $\psi = \kappa \phi$ for some $\kappa \in F_2 \setminus \{0\}$.
\end{lemma}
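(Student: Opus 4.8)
The plan is to prove the two inclusions separately, the forward one being routine and the converse containing the real content.

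For the inclusion ``$\supseteq$'', I would first verify that every map of the form $\psi = \kappa\phi$ with $\kappa \in F_2 \setminus \{0\}$ is again quasilinear and satisfies $P(\psi) = P(\phi)$. Additivity of $\psi$ is inherited from $\phi$. Writing $\sigma \colon F_1 \to F_2$ for the isomorphism accompanying $\phi$, I would compute $\psi(\alpha u) = \kappa\,\alpha^\sigma\phi(u) = (\kappa\,\alpha^\sigma\kappa^{-1})\,\psi(u)$, so that $\psi$ is quasilinear with accompanying isomorphism $\alpha \mapsto \kappa\,\alpha^\sigma\kappa^{-1}$, the composite of $\sigma$ with conjugation by $\kappa$. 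Since $\kappa\phi(u)$ is a non-zero scalar multiple of $\phi(u)$ whenever $\phi(u) \neq 0$ (and equals $0$ otherwise), we get $\class{\psi(u)} = \class{\phi(u)}$ for every $u$, that is, $P(\psi) = P(\phi)$.

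For the converse, let $\psi$ be quasilinear with $P(\psi) = P(\phi)$, so that $\class{\psi(u)} = \class{\phi(u)}$ for all $u \in H_1$. Recalling from Example~\ref{ex:Hermitian-space} that $\class w = \lin w \setminus \{0\}$ for $w \neq 0$ while $\class 0 = \{0\}$, this says precisely: $\phi(u) = 0$ forces $\psi(u) = 0$, and whenever $\phi(u) \neq 0$ there is a (necessarily unique, as $H_2$ has no zero divisors) scalar $\lambda_u \in F_2 \setminus \{0\}$ with $\psi(u) = \lambda_u\,\phi(u)$. In particular $\kernel\phi = \kernel\psi$, and it remains only to show that $\lambda_u$ is independent of $u$. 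The key step uses additivity against linear independence: if $\phi(u)$ and $\phi(v)$ are linearly independent, then $\phi(u+v) = \phi(u)+\phi(v) \neq 0$, so expanding $\psi(u+v) = \psi(u)+\psi(v)$ gives
\[ \lambda_{u+v}\,\phi(u) + \lambda_{u+v}\,\phi(v) \;=\; \lambda_u\,\phi(u) + \lambda_v\,\phi(v), \]
and comparing coefficients of the independent vectors $\phi(u),\phi(v)$ forces $\lambda_u = \lambda_{u+v} = \lambda_v$. Thus $\lambda_u = \lambda_v$ whenever the two images are independent.

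It remains to reconcile the linearly dependent case, and this is where the hypothesis enters. If $\phi(u), \phi(v)$ are non-zero but lie on a common line, I would use that $\phi$ has rank $\geq 2$, so $\image\phi$ is not contained in a $1$-dimensional subspace: there is a $w$ with $\phi(w)$ independent from $\phi(u)$ (hence also from $\phi(v)$), and the previous paragraph yields $\lambda_u = \lambda_w = \lambda_v$. Consequently $\lambda_u$ assumes a single value $\kappa \in F_2 \setminus \{0\}$ on all of $H_1 \setminus \kernel\phi$; since both $\psi$ and $\kappa\phi$ vanish on $\kernel\phi$, we conclude $\psi = \kappa\phi$. The only genuinely delicate point is exactly this reduction of the dependent case to the independent one, which is why the rank assumption is indispensable: for a rank-$1$ map the scalars $\lambda_u$ could be prescribed independently on distinct lines and the conclusion would fail.
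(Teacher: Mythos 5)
Your proof is correct. The paper itself gives no argument for this lemma: it simply cites \cite[Proposition 6.3.6]{FaFr}, which is precisely this uniqueness statement. What you have written out is, in essence, the standard proof lying behind that citation: extract the pointwise scalars $\lambda_u$ from $\class{\psi(u)} = \class{\phi(u)}$, use additivity against linear independence to force $\lambda_u = \lambda_{u+v} = \lambda_v$ on pairs with $\phi(u), \phi(v)$ independent, and use the rank hypothesis to bridge the linearly dependent case through a third vector $w$. Two details you handled correctly that are easy to get wrong in the skew-field setting are worth noting: first, $\kappa\phi$ is quasilinear with accompanying isomorphism $\alpha \mapsto \kappa\,\alpha^\sigma\kappa^{-1}$ rather than $\sigma$ itself, so left multiplication by a nonzero scalar really does stay inside the class of quasilinear maps; second, your closing remark on why rank $1$ is not enough is accurate --- for instance $\phi = \id_{\Complexes}$ and $\psi$ the complex conjugation satisfy $P(\phi) = P(\psi)$ without being proportional. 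So your write-up buys a self-contained, elementary proof where the paper relies on an external reference; nothing is missing.
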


\begin{proof}
See \cite[Proposition 6.3.6]{FaFr}.
\end{proof}

We are interested in the situation when the quasilinear map inducing an adjointable map $f \colon P(H_1) \to P(H_2)$ can actually be chosen linear. The following corollary is based on the assumption that $H_1$ and $H_2$ possess a common subspace $S$ on which $f$ is the identity. It is in this case understood that the $\ast$-sfields $F_1$ and $F_2$ are the same and that the Hermitian forms coincide on $S$.

\begin{corollary} \label{cor:Fundamental-Theorem}
Assume that $H_1$ and $H_2$ possess a common at least $3$-dimensional subspace $S$. Let $f \colon P(H_1) \to P(H_2)$ be adjointable and such that $f|_{P(S)} = \id_{P(S)}$. Then there is a unique linear map $\phi$ such that $f = P(\phi)$ and $\phi|_S = \id_S$.
\end{corollary}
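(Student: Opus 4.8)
The plan is to reduce to the Fundamental Theorem of Projective Geometry in the form already packaged as Theorem~\ref{thm:adjointable-quasilinear}, and then to use the normalisation $f|_{P(S)} = \id_{P(S)}$ to upgrade the resulting quasilinear map to a linear one. Throughout I take the standing convention that $F_1 = F_2 = F$ and that the Hermitian forms agree on $S$.

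First I would check that $f$ has rank $\geq 3$. Since $f$ is the identity on $P(S)$, its image contains $P(S)$, which contains $\dim S \geq 3$ mutually orthogonal proper elements; these lie in $(\image f)\cc$, so the rank of $f$ is at least $3$. Hence Theorem~\ref{thm:adjointable-quasilinear} applies and produces a quasilinear map $\phi \colon H_1 \to H_2$, with associated $\ast$-sfield isomorphism $\sigma$ (an automorphism of $F$), such that $f = P(\phi)$.

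Next I would analyse $\phi$ on $S$. For $u \in S\withoutzero$ the condition $f|_{P(S)} = \id$ gives $\class{\phi(u)} = \class u$, hence, using $\class u = \lin u \setminus \{0\}$, we get $\phi(u) = \lambda_u u$ for some $\lambda_u \in F \setminus \{0\}$; in particular $\phi(S) \subseteq S$. Applying additivity to two independent vectors forces the scalars $\lambda_u$ to agree, and since $\dim S \geq 2$ this yields a single $\lambda$ with $\phi(u) = \lambda u$ for every $u \in S$. Comparing $\phi(\alpha u) = \alpha^\sigma \lambda u$ with $\phi(\alpha u) = \lambda \alpha u$ then gives the key relation $\alpha^\sigma = \lambda \alpha \lambda^{-1}$ for all $\alpha \in F$, i.e.\ $\sigma$ is conjugation by $\lambda$. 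I would now set $\phi' = \lambda^{-1}\phi$. A direct check using this relation shows $\phi'(\alpha x) = \alpha \phi'(x)$, so $\phi'$ is \emph{linear}, while $\phi'(u) = \lambda^{-1}\lambda u = u$ on $S$, so $\phi'|_S = \id_S$; and $P(\phi') = P(\phi) = f$ by Lemma~\ref{lem:uniqueness-of-inducing-quasilinear-map}. This settles existence. For uniqueness, if $\phi_1, \phi_2$ are linear with $P(\phi_i) = f$ and $\phi_i|_S = \id_S$, then both have rank $\geq 3$, so Lemma~\ref{lem:uniqueness-of-inducing-quasilinear-map} gives $\phi_2 = \kappa \phi_1$ for some $\kappa \in F \setminus \{0\}$; evaluating at a non-zero $u \in S$ yields $u = \kappa u$, whence $\kappa = 1$ and $\phi_1 = \phi_2$.

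I expect the only genuinely delicate point to be the passage from quasilinear to linear: one must verify that the ``identity on $S$'' hypothesis pins the automorphism $\sigma$ down to the inner automorphism implemented by $\lambda$, so that rescaling by $\lambda^{-1}$ simultaneously trivialises $\sigma$ and normalises $\phi$ to the identity on $S$. Everything else is the routine bookkeeping surrounding Theorem~\ref{thm:adjointable-quasilinear} and Lemma~\ref{lem:uniqueness-of-inducing-quasilinear-map}.
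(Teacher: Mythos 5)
Your proposal is correct and takes essentially the same route as the paper: apply Theorem~\ref{thm:adjointable-quasilinear} to obtain a quasilinear map inducing $f$, then use the hypothesis $f|_{P(S)} = \id_{P(S)}$ together with Lemma~\ref{lem:uniqueness-of-inducing-quasilinear-map} to see that its restriction to $S$ is $\kappa\,\id_S$ for some $\kappa \neq 0$, and rescale by $\kappa^{-1}$. Your explicit verifications (the rank check, the derivation that $\sigma$ is conjugation by $\lambda$ so that the rescaled map is genuinely linear, and the uniqueness step) only spell out details the paper leaves implicit.
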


\begin{proof}
By Theorem~\ref{thm:adjointable-quasilinear}, $f = P(\psi)$ for some quasilinear map $\psi \colon H_1 \to H_2$. Moreover, $P(\psi|_S)$ is the identity on $P(S)$, hence, by Lemma~\ref{lem:uniqueness-of-inducing-quasilinear-map}, $\psi|_S = \kappa \, \id_S$ for some $\kappa \neq 0$. We conclude that $\phi = \kappa^{-1} \psi$ is the map with the indicated properties.
\end{proof}

We call $\phi \colon H_1 \to H_2$ {\it quasiunitary} if $\phi$ is a bijective quasilinear map and there is a $\lambda \in F_2 \setminus \{0\}$ such that, for any $u, v \in H_1$,
\begin{equation} \label{fml:quasiunitary}
\herm{\phi(u)}{\phi(v)}_2 \;=\; {\herm{u}{v}_1}^\sigma \; \lambda,
\end{equation}
where $\sigma$ is the sfield isomorphism associated with $\phi$. If in this case $F_1 = F_2$, $\,\sigma$ is the identity, and $\lambda = 1$, we call $\phi$ {\it unitary}. For $H_1$ and $H_2$ to be {\it isomorphic} means that there is a unitary map between $H_1$ and $H_2$.

We note that if $\phi \colon H_1 \to H_2$ is quasiunitary, $F_1 = F_2$, and the restriction of $\phi$ to some non-zero subspace is unitary, then $\phi$ itself is unitary.

Finally, assume again that $F_1 = F_2$. We call a linear map $\phi \colon H_1 \to H_2$ a {\it linear isometry} if $\image \phi$ is a subspace and $\phi|^{\image \phi}$ is unitary.

\begin{lemma} \label{lem:unitary-maps-and-isometries}
Let $\phi \colon H_1 \to H_2$ be an adjointable linear map.
\begin{itemize}

\item[\rm (i)] $\phi$ is unitary if and only if $\phi$ is bijective and $\phi\adj = \phi^{-1}$. In this case, $\phi$ is an orthoisomorphism.

\item[\rm (ii)] $\phi$ is a linear isometry if and only if $\phi\adj \circ \phi = \id_{H_1}$. In this case, $\phi$ is an orthometry.

\end{itemize}
\end{lemma}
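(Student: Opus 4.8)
The plan is to reduce everything to the \emph{non-degeneracy} of the Hermitian forms, which is immediate from anisotropy: if $\herm{u}{a}_1 = \herm{u}{b}_1$ for all $u \in H_1$, then $\herm{u}{a-b}_1 = 0$ for all $u$, and the choice $u = a-b$ forces $a = b$. I would record this together with two reformulations that make the two equivalences symmetric: the defining relation (\ref{fml:adjoint-of-linear-map}) of the linear adjoint, and the fact that for a \emph{linear} map ``unitary'' amounts to being bijective and satisfying $\herm{\phi(u)}{\phi(v)}_2 = \herm{u}{v}_1$ for all $u,v$ (the special case $F_1 = F_2$, $\sigma = \id$, $\lambda = 1$ of (\ref{fml:quasiunitary})), while ``linear isometry'' amounts to the same form-preservation together with $\image\phi$ being orthoclosed.

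For part~(i), in the forward direction I assume $\phi$ unitary, hence bijective; writing an arbitrary argument as $\phi(v)$ by surjectivity and using (\ref{fml:adjoint-of-linear-map}), I compute $\herm{u}{\phi\adj(\phi(v))}_1 = \herm{\phi(u)}{\phi(v)}_2 = \herm{u}{v}_1$, so non-degeneracy yields $\phi\adj \circ \phi = \id_{H_1}$ and thus $\phi\adj = \phi^{-1}$. Conversely, if $\phi$ is bijective with $\phi\adj = \phi^{-1}$, then $\herm{\phi(u)}{\phi(v)}_2 = \herm{u}{\phi\adj(\phi(v))}_1 = \herm{u}{v}_1$, so $\phi$ preserves the form and is unitary. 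That $\phi$ is then an orthoisomorphism follows directly from Proposition~\ref{prop:unitary-maps}(b), since $\phi^{-1} = \phi\adj$ is an adjoint of the bijection $\phi$.

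For part~(ii), the forward direction repeats the same computation: if $\phi$ is a linear isometry then $\phi|^{\image\phi}$ is unitary, so $\phi$ preserves the form, and the argument of (i) gives $\phi\adj \circ \phi = \id_{H_1}$. The substantial direction is the converse, and the main obstacle there is showing that $\image\phi$ is orthoclosed. Assuming $\phi\adj \circ \phi = \id_{H_1}$, form-preservation holds as above and forces $\phi$ injective by anisotropy; it remains to see that $\image\phi$ is a subspace, for then $\phi|^{\image\phi}$ is a bijective form-preserving linear map, i.e.\ unitary, and $\phi$ is a linear isometry. I would argue via $p = \phi \circ \phi\adj$: using $\phi\adj \circ \phi = \id_{H_1}$ together with $\phi\adjadj = \phi$ (uniqueness of the linear adjoint), one checks that $p$ is idempotent and self-adjoint with $\image p = \image\phi$. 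Self-adjointness gives $\kernel p \perp \image p$, idempotency gives the algebraic splitting $H_2 = \image p \oplus \kernel p$, and a short anisotropy calculation then yields $(\image p)\c = \kernel p$ and hence $(\image p)\cc = (\kernel p)\c = \image p$, so $\image\phi$ is orthoclosed.

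Finally, once $\image\phi$ is orthoclosed and $\phi\adj \circ \phi = \id_{H_1}$, Proposition~\ref{prop:partial-isometries}(ii), applied with $f = \phi$ and $g = \phi\adj$, shows that $\phi$, regarded as a map between orthosets, is an orthometry with generalised inverse $\phi\adj$. I expect the whole argument to be routine except for the orthoclosedness of $\image\phi$, which is where orthomodularity (through anisotropy and the resulting orthogonal splitting) does the real work.
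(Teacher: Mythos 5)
Your proposal is correct, and it matches the paper's approach in the only sense possible: the paper's own proof consists of the single word ``Elementary'', and your argument supplies exactly the routine verification being omitted --- non-degeneracy of the anisotropic forms for both equivalences, the self-adjoint idempotent $p = \phi \circ \phi\adj$ to establish that $\image \phi$ is orthoclosed, and Propositions~\ref{prop:unitary-maps}(b) and~\ref{prop:partial-isometries}(ii) (via Theorem~\ref{thm:adjoint-of-map-induced-by-linear-map}, which makes the linear adjoint an orthoset adjoint) for the orthoisomorphism and orthometry claims. One cosmetic remark: your closing sentence credits orthomodularity with doing the real work, but your own splitting argument never invokes it --- anisotropy together with the idempotent $p$ suffices, which is precisely why the paper can afford to call the lemma elementary.
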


\begin{proof}
Elementary.
\end{proof}

Finally, we call a linear endomorphism $\pi$ of an orthomodular space $H$ a {\it projection} if $H$ has a splitting subspace $S$ such that, for any $u \in H$, $\pi(u)$ is the component of $u$ in $S$.

\begin{lemma} \label{lem:projections-in-orthomodular-spaces}
Let $H$ be an orthomodular space and let $\pi \colon H \to H$ be an adjointable linear map. Then $\pi$ is a projection if and only if $\pi$ is idempotent and self-adjoint.
\end{lemma}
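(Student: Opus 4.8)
The plan is to prove both implications by direct computation with the Hermitian form, using anisotropy as the essential tool for the harder ``if'' direction.

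The ``only if'' direction is routine. Suppose $\pi$ is the projection onto a splitting subspace $S$, so every $u \in H$ decomposes uniquely as $u = u_S + u_{S\c}$ with $u_S \in S$, $u_{S\c} \in S\c$, and $\pi(u) = u_S$. Idempotency follows because the $S$-component of $u_S \in S$ is $u_S$ itself, whence $\pi(\pi(u)) = \pi(u)$. For self-adjointness I would expand $\herm{\pi(u)}{v}$ and $\herm{u}{\pi(v)}$ using the decompositions of $u$ and $v$; the cross terms vanish since $S \perp S\c$, leaving $\herm{u_S}{v_S}$ in both cases.

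For the ``if'' direction, assume $\pi$ is idempotent and self-adjoint, and set $S = \image\pi$ (since $\pi\adj = \pi$ is linear, adjointability is automatic). I would first record two consequences of idempotency: $\image\pi = \{u : \pi(u) = u\}$, because any $\pi(w)$ is fixed by $\pi$, and $H = \image\pi \oplus \kernel\pi$ as linear spaces via $u = \pi(u) + (u - \pi(u))$ with $u - \pi(u) \in \kernel\pi$ and $\image\pi \cap \kernel\pi = \{0\}$. Self-adjointness then yields orthogonality: for $u \in \image\pi$ and $v \in \kernel\pi$, $\herm u v = \herm{\pi(u)}{v} = \herm{u}{\pi(v)} = 0$, so $\kernel\pi \subseteq (\image\pi)\c$.

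The key step is the reverse inclusion, where anisotropy enters. Given $w \in (\image\pi)\c$, I compute $\herm{\pi(w)}{\pi(w)} = \herm{w}{\pi(\pi(w))} = \herm{w}{\pi(w)} = 0$, the last equality because $\pi(w) \in \image\pi$ while $w \perp \image\pi$; anisotropy then forces $\pi(w) = 0$, i.e. $w \in \kernel\pi$. Hence $(\image\pi)\c = \kernel\pi$. Applying the same reasoning to the idempotent self-adjoint map $\id_H - \pi$, whose image is $\kernel\pi$ and whose kernel is $\image\pi$, gives $(\kernel\pi)\c = \image\pi$; in particular $(\image\pi)\cc = (\kernel\pi)\c = \image\pi$, so $S = \image\pi$ is orthoclosed with $S\c = \kernel\pi$. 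Together with the algebraic decomposition $H = \image\pi + \kernel\pi = S + S\c$, this shows $S$ is a splitting subspace, and $u = \pi(u) + (u - \pi(u))$ exhibits $\pi(u)$ as precisely the $S$-component of $u$, so $\pi$ is the projection onto $S$. The main obstacle is exactly this anisotropy argument: without it one could not exclude a nonzero $\pi(w)$ orthogonal to the image, and idempotency alone would not force the kernel to be the orthogonal complement.
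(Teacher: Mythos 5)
Your proof is correct: both directions are carried out by direct computation with the Hermitian form, and the anisotropy step $\herm{\pi(w)}{\pi(w)}=\herm{w}{\pi(w)}=0 \Rightarrow \pi(w)=0$ (applied also to $\id_H-\pi$) correctly establishes that $\image\pi$ is orthoclosed with $(\image\pi)\c=\kernel\pi$, hence splitting. The paper dismisses this lemma as ``Elementary'' without giving details, and your argument is precisely the standard elementary one it has in mind.
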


\begin{proof}
Elementary.
\end{proof}

We have the following version of Wigner's Theorem.

\begin{theorem} \label{thm:Wigner-iso}
Assume that $H_1$ and $H_2$ are at least $3$-dimensional. Then any orthoisomorphism between $P(H_1)$ and $P(H_2)$ is induced by a quasiunitary map between $H_1$ and $H_2$.
\end{theorem}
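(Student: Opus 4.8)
The plan is to split the proof, as in the usual treatment of Wigner's theorem, into a \emph{linear} part (handled by the Fundamental Theorem of Projective Geometry) and a \emph{metric} part (handled by the classical proportionality of Hermitian forms). Let $f \colon P(H_1) \to P(H_2)$ be an orthoisomorphism. First I would record that $f$ is adjointable: by Proposition~\ref{prop:unitary-maps} it is bijective and $f^{-1}$ is an adjoint of $f$. Since $f$ is onto, $\image f = P(H_2)$ is orthoclosed, so $(\image f)\cc = P(H_2)$ and the rank of $f$ equals $\dim H_2 \geq 3$. Hence Theorem~\ref{thm:adjointable-quasilinear} applies and produces a quasilinear map $\phi \colon H_1 \to H_2$, with associated $\ast$-sfield isomorphism $\sigma \colon F_1 \to F_2$, such that $f = P(\phi)$.

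Next I would verify that $\phi$ is bijective, so that it is a legitimate candidate for being quasiunitary. For injectivity, note first that $\kernel\phi = \{0\}$: if $\phi(u) = 0$ then $f(\class u) = \class 0 = 0$, and since $f$ is a bijection fixing $0$ (orthoisomorphisms send $0$ to $0$ by (O2)), we get $\class u = 0$, i.e.\ $u = 0$. If now $\phi(u) = \phi(v) \neq 0$, then $f(\class u) = f(\class v)$, whence $\class u = \class v$ and $u = \gamma v$ for some $\gamma \neq 0$; applying $\phi$ forces $\gamma^\sigma = 1$, so $\gamma = 1$ and $u = v$. For surjectivity, given $w \in H_2\withoutzero$, surjectivity of $f$ yields $u$ with $\class{\phi(u)} = \class w$, so $w = \delta\,\phi(u)$ for some $\delta \neq 0$; writing $\delta = \beta^\sigma$ (possible as $\sigma$ is onto) gives $w = \phi(\beta u)$. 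Thus $\phi$ is a quasilinear bijection, and consequently $f^{-1} = P(\phi^{-1})$.

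The core of the argument is to show that $\phi$ respects the Hermitian forms up to a single scalar. I would transport the form of $H_1$ to $H_2$ by setting, for $w, z \in H_2$,
\[ b(w,z) \;=\; \herm{\phi^{-1}(w)}{\phi^{-1}(z)}_1^{\,\sigma}. \]
A routine computation (using $\phi^{-1}(\alpha w) = \alpha^{\sigma^{-1}}\phi^{-1}(w)$) shows that $b$ is biadditive, left-linear over $F_2$, and anisotropic, hence a Hermitian form on $H_2$ with respect to the involution $\sigma \circ \inv \circ \sigma^{-1}$. Because both $f$ and $f^{-1}$ preserve orthogonality, $b$ and the native form $\herm{\cdot}{\cdot}_2$ induce the same orthogonality relation on $H_2$: indeed $b(w,z) = 0$ iff $\phi^{-1}(w) \perp \phi^{-1}(z)$ iff, as $f^{-1} = P(\phi^{-1})$ is an orthoisomorphism, $w \perp z$, iff $\herm{w}{z}_2 = 0$.

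Finally I would invoke the classical result that two anisotropic Hermitian forms on a space of dimension $\geq 3$ sharing the same orthogonality relation are proportional: there is a $\lambda \in F_2 \setminus \{0\}$ with $\herm{w}{z}_2 = b(w,z)\,\lambda$ for all $w, z$ (the proportionality simultaneously reconciling the two involutions by an inner twist). Substituting $w = \phi(u)$ and $z = \phi(v)$ turns this into $\herm{\phi(u)}{\phi(v)}_2 = \herm{u}{v}_1^{\,\sigma}\,\lambda$, which is exactly the defining identity~(\ref{fml:quasiunitary}) of a quasiunitary map. Hence $\phi$ is quasiunitary and $f = P(\phi)$, as desired. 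The hard part is precisely this last step, the proportionality of two forms with identical orthogonality; it genuinely requires the bound $\dim H_i \geq 3$ (below it the orthogonality relation no longer pins down the form up to a scalar), the very same bound that makes the projective-geometric input of Theorem~\ref{thm:adjointable-quasilinear} available. Everything else is bookkeeping around the induced maps $P(\cdot)$.
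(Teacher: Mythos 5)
Your proof is correct, but there is no internal argument in the paper to compare it with: the paper establishes Theorem~\ref{thm:Wigner-iso} purely by citing \cite[Theorem~4.7]{PaVe5}, and your proposal is in effect a reconstruction of the standard two-part argument underlying such a reference. The linear half proceeds exactly as the surrounding material invites: by Proposition~\ref{prop:unitary-maps} an orthoisomorphism $f$ is adjointable with adjoint $f^{-1}$, its rank is $\dim H_2 \geq 3$, so Theorem~\ref{thm:adjointable-quasilinear} yields a quasilinear $\phi$ with $f = P(\phi)$; your bijectivity check for $\phi$ (trivial kernel since $f$ is a bijection fixing falsity, $\gamma^\sigma = 1 \Rightarrow \gamma = 1$ for injectivity, surjectivity of $\sigma$ for surjectivity) is sound. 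The metric half --- transporting $\herm{\cdot}{\cdot}_1$ along $\phi^{-1}$ to an anisotropic Hermitian form $b$ on $H_2$ with respect to the involution $\alpha \mapsto ((\alpha^{\sigma^{-1}})\inv)^\sigma$, observing that $b$ and $\herm{\cdot}{\cdot}_2$ have the same orthogonality, and invoking proportionality --- is also correct, but note that the quoted proportionality theorem carries essentially all of the remaining content; citing it is legitimate (it is classical, cf.\ \cite{Bae}), though it deserves either a precise reference or the short proof: for $u \neq 0$ the functionals $b(\cdot,u)$ and $\herm{\cdot}{u}_2$ have the common kernel $\{u\}\c$, hence $\herm{\cdot}{u}_2 = b(\cdot,u)\,\lambda_u$ for some $\lambda_u \neq 0$, and additivity in $u$ together with $\{u\}\cc = \lin u$ shows that $\lambda_u$ does not depend on $u$. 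Finally, one misstatement in your closing commentary: the bound $\dim H_i \geq 3$ is \emph{not} what the proportionality step requires --- the argument just sketched works in dimension $2$ as well --- it is needed solely for the projective-geometric step, Theorem~\ref{thm:adjointable-quasilinear}. Indeed, in dimension $2$ the theorem fails because an orthoisomorphism of $P(H)$ is then just a bijection fixing falsity and respecting the pairing of each line with its unique orthocomplement, and most such bijections are induced by no quasilinear map at all, whereas two anisotropic forms with equal orthogonality are still proportional. This misattribution does not affect the validity of your proof.
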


\begin{proof}
See \cite[Theorem 4.7]{PaVe5}.
\end{proof}

Also in this case, we have uniqueness up to a non-zero factor.

\begin{lemma} \label{lem:uniqueness-of-inducing-quasiunitary-map}
Assume that $H_1$ and $H_2$ are at least $2$-dimensional and let $\phi \colon H_1 \to H_2$ be quasiunitary. Then the quasiunitary maps $\psi \colon H_1 \to H_2$ such that $P(\phi) = P(\psi)$ are exactly those of the form $\psi = \kappa \phi$ for some $\kappa \in F_2 \setminus \{0\}$.
\end{lemma}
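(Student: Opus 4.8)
The plan is to reduce the statement to its quasilinear analogue, Lemma~\ref{lem:uniqueness-of-inducing-quasilinear-map}, and to verify by a direct computation that every nonzero scalar multiple of $\phi$ is again quasiunitary.

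For the easy inclusion, I would first record that $\kappa\phi$ is quasilinear whenever $\phi$ is: if $\sigma$ is the sfield isomorphism associated with $\phi$, then $(\kappa\phi)(\alpha u) = \kappa\,\alpha^\sigma\,\phi(u) = (\kappa\,\alpha^\sigma\,\kappa^{-1})\,(\kappa\phi)(u)$, so $\kappa\phi$ is quasilinear with associated isomorphism $\sigma'\colon\alpha\mapsto\kappa\,\alpha^\sigma\,\kappa^{-1}$, the composite of $\sigma$ with conjugation by $\kappa$. Using the sesquilinearity of $\herm{\cdot}{\cdot}_2$ one has $\herm{\kappa\,\phi(u)}{\kappa\,\phi(v)}_2 = \kappa\,\herm{\phi(u)}{\phi(v)}_2\,\kappa\inv$, and substituting the quasiunitarity relation~(\ref{fml:quasiunitary}) for $\phi$ together with $\kappa\,{\herm{u}{v}_1}^{\sigma} = {\herm{u}{v}_1}^{\sigma'}\,\kappa$ yields $\herm{\kappa\,\phi(u)}{\kappa\,\phi(v)}_2 = {\herm{u}{v}_1}^{\sigma'}\,(\kappa\,\lambda\,\kappa\inv)$. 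Since $\kappa\,\lambda\,\kappa\inv\neq 0$, this exhibits $\kappa\phi$ as quasiunitary, and $P(\kappa\phi)=P(\phi)$ is immediate because $\kappa\,\phi(u)$ is a nonzero multiple of $\phi(u)$.

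For the reverse inclusion, I would note that a quasiunitary map is in particular quasilinear, and that $\phi$, being quasiunitary on an at least $2$-dimensional space $H_1$, is bijective; hence $\image\phi = H_2$ is not contained in any $1$-dimensional subspace, so $\phi$ has rank $\geq 2$. Lemma~\ref{lem:uniqueness-of-inducing-quasilinear-map} then applies and shows that every quasilinear $\psi$ with $P(\psi)=P(\phi)$ is of the form $\kappa\phi$ with $\kappa\neq 0$; in particular this holds for every such quasiunitary $\psi$. Combining the two inclusions gives the claim.

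The only genuine calculation is the verification that $\kappa\phi$ is quasiunitary, and the single point demanding care there is keeping track of how the associated isomorphism $\sigma$ and the scalar $\lambda$ are twisted by conjugation with $\kappa$; everything else is a direct appeal to the quasilinear uniqueness result.
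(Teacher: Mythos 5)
Your proposal is correct and takes essentially the same route as the paper: the paper's proof likewise consists of checking that $\kappa\phi$ is again quasiunitary for any $\kappa \neq 0$ and then invoking Lemma~\ref{lem:uniqueness-of-inducing-quasilinear-map}. You merely make explicit what the paper leaves as ``readily checked'' -- the twisted isomorphism $\sigma' \colon \alpha \mapsto \kappa\,\alpha^\sigma\,\kappa^{-1}$, the new scalar $\kappa\,\lambda\,\kappa\inv$, and the verification that $\phi$ has rank $\geq 2$ so that the quasilinear uniqueness lemma applies.
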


\begin{proof}
We readily check that, for any $\kappa \neq 0$, also $\kappa \phi$ is quasiunitary. Hence the assertion follows from Lemma~\ref{lem:uniqueness-of-inducing-quasilinear-map}.
\end{proof}

\begin{corollary} \label{cor:Wigner-auto}
Let $H$ be an at least $3$-dimensional orthomodular space over $F$ and let $f \colon P(H) \to P(H)$ be an orthoautomorphism. Assume moreover that there is an at least $2$-dimensional subspace $S$ of $H$ such that $f|_{P(S)} = \id_{P(S)}$. Then there is a unique unitary map $\phi \colon H \to H$ inducing $f$ such that $\phi|_S = \id_S$.
\end{corollary}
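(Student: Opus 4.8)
The plan is to obtain $\phi$ by first applying Wigner's Theorem on all of $H$ and then using the subspace $S$ only to normalise the resulting map. Note that we cannot invoke Corollary~\ref{cor:Fundamental-Theorem} directly, since $S$ is assumed to be merely $2$-dimensional rather than $3$-dimensional; this is precisely the point that forces a two-step argument.

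First I would apply Theorem~\ref{thm:Wigner-iso} to the orthoautomorphism $f$ of $P(H)$, which is legitimate because $H$ is at least $3$-dimensional. This yields a quasiunitary map $\psi \colon H \to H$ with $f = P(\psi)$. Since $f$ fixes every element of $P(S)$, for each proper $u \in S$ we have $\class{\psi(u)} = f(\class u) = \class u$, so $\psi(u)$ is a non-zero scalar multiple of $u$; in particular $\psi(S) \subseteq S$, and $\psi|_S \colon S \to S$ is a quasilinear map of rank $\geq 2$ (it is injective, as $\psi$ is bijective, and $\dim S \geq 2$) that induces the identity on $P(S)$. Applying Lemma~\ref{lem:uniqueness-of-inducing-quasilinear-map} to $\id_S$ and $\psi|_S$, which induce the same map $\id_{P(S)}$, gives $\psi|_S = \kappa\,\id_S$ for some $\kappa \in F \setminus \{0\}$.

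I would then set $\phi = \kappa^{-1}\psi$. This $\phi$ is again quasiunitary (a non-zero multiple of a quasiunitary map is quasiunitary), it still induces $f = P(\phi)$, and $\phi|_S = \kappa^{-1}\psi|_S = \id_S$. Since the restriction of the quasiunitary map $\phi$ to the non-zero subspace $S$ equals $\id_S$ and is therefore unitary, the remark preceding the corollary upgrades $\phi$ to a genuinely unitary map, settling existence. For uniqueness, suppose $\phi'$ is another unitary map with $P(\phi') = f$ and $\phi'|_S = \id_S$. Both $\phi$ and $\phi'$ are quasiunitary and induce $f$, so Lemma~\ref{lem:uniqueness-of-inducing-quasiunitary-map} (applicable as $\dim H \geq 2$) gives $\phi' = \kappa'\phi$ for some $\kappa' \neq 0$; restricting to a non-zero vector of $S$ and using $\phi|_S = \phi'|_S = \id_S$ forces $\kappa' = 1$, whence $\phi' = \phi$.

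The only real obstacle is that the hypothesis on $S$ is weaker than in Corollary~\ref{cor:Fundamental-Theorem}, so linearity of the inducing map (that is, $\sigma = \id$ and $\lambda = 1$) cannot be read off from a common $3$-dimensional subspace. Instead it is secured indirectly: the scalar ambiguity is removed by comparing $\psi|_S$ with $\id_S$ via Lemma~\ref{lem:uniqueness-of-inducing-quasilinear-map}, and then the passage from quasiunitary to unitary rests on the cited remark that a quasiunitary map which is unitary on a non-zero subspace is unitary everywhere. One should only take care to verify that $\psi$ maps $S$ into $S$ and that $\psi|_S$ has rank at least $2$, both of which follow immediately from $f|_{P(S)} = \id_{P(S)}$ and the bijectivity of $\psi$.
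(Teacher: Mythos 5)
Your proposal is correct and follows essentially the same route as the paper: the paper's proof simply says to combine Theorem~\ref{thm:Wigner-iso} with Lemma~\ref{lem:uniqueness-of-inducing-quasiunitary-map} and argue as in Corollary~\ref{cor:Fundamental-Theorem}, which is exactly the Wigner-then-normalise argument you spell out (your use of Lemma~\ref{lem:uniqueness-of-inducing-quasilinear-map} for the rescaling step is an inessential variant, since the quasiunitary uniqueness lemma is itself deduced from it). Your explicit appeal to the remark that a quasiunitary map which is unitary on a non-zero subspace is unitary everywhere, and your verification of the uniqueness clause, fill in precisely the details the paper leaves implicit.
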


\begin{proof}
Based on Theorem~\ref{thm:Wigner-iso} and Lemma~\ref{lem:uniqueness-of-inducing-quasiunitary-map}, we may argue as in case of Corollary~\ref{cor:Fundamental-Theorem}.
\end{proof}

In the following sense, orthometries can be represented by isometries.

\begin{lemma} \label{lem:extension-of-representation}
Assume that $H_1$ is at least $3$-dimensional and let $f \colon P(H_1) \to P(H_2)$ be an orthometry. Then there is an orthomodular space $H_2'$ containing $H_1$ as a subspace and an orthoisomorphism $r \colon P(H_2) \to P(H_2')$ such that $r \circ f = P(\iota)$, where $\iota \colon H_1 \to H_2'$ is the inclusion map.
\end{lemma}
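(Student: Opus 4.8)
The plan is to make $f$ concrete through Wigner's Theorem and then to manufacture $H_2'$ by transporting the part of $H_2$ complementary to the image of $f$ back to the $\ast$-sfield of $H_1$. Since $f \colon P(H_1) \to P(H_2)$ is an orthometry, Proposition~\ref{prop:partial-isometries}(ii) tells us that $\image f$ is orthoclosed and that the corestriction $f|^{\image f} \colon P(H_1) \to \image f$ is an orthoisomorphism (its generalised inverse, restricted to the image, is its inverse). Because $H_2$ is orthomodular, the orthoclosed subset $\image f$ of $P(H_2)$ is of the form $P(T)$ for a splitting subspace $T$ of $H_2$, so that $H_2 = T \oplus T\c$. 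As orthoisomorphisms preserve rank, $T$ is at least $3$-dimensional, just like $H_1$.

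Next I would represent the orthoisomorphism $f|^{P(T)} \colon P(H_1) \to P(T)$ by a map of spaces. By Theorem~\ref{thm:Wigner-iso} there is a quasiunitary map $\theta \colon H_1 \to T$ with $P(\theta) = f|^{P(T)}$; let $\sigma \colon F_1 \to F_2$ be its associated sfield isomorphism and $\lambda$ its constant. The inverse $\theta^{-1} \colon T \to H_1$ is then quasiunitary with sfield isomorphism $\sigma^{-1}$. To build a companion on the complement, I would pull the Hermitian structure of $T\c$ back along $\sigma^{-1}$: let $S'$ be the additive group $T\c$ made into an $F_1$-space by $\alpha \cdot v = \sigma(\alpha) v$ and equipped with the form $\herm v w_{S'} = \sigma^{-1}(\herm v w_2 \lambda^{-1})$. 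One checks that this is an anisotropic Hermitian space over $F_1$ and that the underlying-set identity $\mu \colon T\c \to S'$ is quasiunitary with the same sfield isomorphism $\sigma^{-1}$ and the same constant as $\theta^{-1}$. I then set $H_2' = H_1 \oplus S'$, a Hermitian space over $F_1$ containing $H_1$ as a subspace, with inclusion $\iota$.

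The map tying everything together is $\Theta = \theta^{-1} \oplus \mu \colon H_2 = T \oplus T\c \to H_1 \oplus S' = H_2'$. Because its two summands share the single sfield isomorphism $\sigma^{-1}$ and a common constant, $\Theta$ is a quasiunitary bijection. Hence $\Theta$ carries orthoclosed sets and direct-sum decompositions of $H_2$ to those of $H_2'$, so the orthomodularity of $H_2$ transports to $H_2'$, and $r := P(\Theta) \colon P(H_2) \to P(H_2')$ is an orthoisomorphism. It then remains to verify $r \circ f = P(\iota)$: for $u \in H_1$ we have $f(\class u) = \class{\theta(u)} \in P(T)$, whence $r(f(\class u)) = \class{\Theta(\theta(u))} = \class{\theta^{-1}(\theta(u))} = \class u = P(\iota)(\class u)$.

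I expect the main obstacle to be the bookkeeping around $\sigma$ and $\lambda$: one must set up $S'$ so precisely that $\theta^{-1}$ and $\mu$ carry the very same sfield isomorphism (so that $\Theta$ is genuinely quasilinear rather than merely additive) and the same scaling constant, and one must confirm that the transported form on $S'$ is again anisotropic. Justifying that the orthomodularity of $H_2'$ follows from that of $H_2$ via the quasiunitary bijection $\Theta$, rather than from a direct-sum closure property (which in the present generality is only available under the uniformity hypothesis of Lemma~\ref{lem:direct-sum-of-orthomodular-spaces}), is the conceptual crux.
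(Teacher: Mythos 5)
Your proof is correct, and it shares the paper's two key ingredients: Wigner's Theorem (Theorem~\ref{thm:Wigner-iso}) applied to the corestriction of $f$ onto its orthoclosed image, and the transport of structure along $\sigma^{-1}$ and $\lambda^{-1}$. The packaging, however, is genuinely different. The paper never decomposes $H_2$: it redefines the scalar multiplication ($\alpha \scmu u = \alpha^\sigma u$) and the form ($\herm{u}{v}_2' = (\herm{u}{v}_2\,\lambda^{-1})^{\sigma^{-1}}$) on \emph{all} of $H_2$, calls the result $H_2'$, and takes $r = P(\tau)$ with $\tau$ the set-theoretic identity; the map $\iota = \tau \circ \phi$ is then an isometry and the containment of $H_1$ is achieved by identification with its image. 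This makes every verification immediate: the orthogonality relation, the orthoclosed sets, and the lattice of subspaces of $H_2'$ are literally those of $H_2$, so anisotropy, orthomodularity, and the fact that $r$ is an orthoisomorphism all come for free. Your variant twists only $T\c$ and forms the external sum $H_1 \oplus S'$, which buys literal (rather than up-to-identification) containment of $H_1$, at the price of exactly the verifications you flag: that $\Theta = \theta^{-1} \oplus \mu$ is a single quasiunitary map (your bookkeeping is right -- both summands carry $\sigma^{-1}$ and the common constant $(\lambda^{\sigma^{-1}})^{-1}$, and the cross terms vanish because $T \perp T\c$ and $H_1 \perp S'$); that $H_1 \oplus S'$ is anisotropic, which is \emph{not} automatic for direct sums of Hermitian spaces but follows here by pulling the length function back along the bijection $\Theta$; and that orthomodularity transports along $\Theta$, which holds because a quasilinear bijection preserving orthogonality in both directions carries subspaces, orthocomplements, and splittings of $H_2$ to those of $H_2'$. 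Your observation that Lemma~\ref{lem:direct-sum-of-orthomodular-spaces} is unavailable here (no uniformity hypothesis on $H_1$ or $S'$) is exactly right and identifies the real danger point of your route; the paper sidesteps it entirely by never forming a direct sum.
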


\begin{proof}
By assumption, $\image f$ is orthoclosed and hence equal to $P(S)$ for a subspace $S$ of $H_2$. Then $f|^{\image f} \colon P(H_1) \to P(S)$ is an orthoisomorphism and by Theorem~\ref{thm:Wigner-iso} induced by a quasiunitary map $\check\phi \colon H_1 \to S$. Let $\sigma \colon F_1 \to F_2$ be the associated isomorphism of sfields and let $\lambda \in F_2 \setminus \{0\}$ be given according to (\ref{fml:quasiunitary}). We put $\phi \colon H_1 \to H_2 \komma u \mapsto \check\phi(u)$. Then $f = P(\phi)$.

We redefine as follows the linear structure as well as the Hermitian form on $H_2$ and we denote the resulting orthomodular space by $H_2'$. The addition of vectors remains unmodified, but $F_1$ is taken as the scalar $\ast$-sfield, the scalar multiplication being defined by $\alpha \scmu u = \alpha^\sigma u$ for $\alpha \in F_1$ and $u \in H_2'$. The new form is given by $\herm{u}{v}_2' = \big(\herm{u}{v}_2 \, \lambda^{-1}\big)^{\sigma^{-1}}$ for $u, v \in H_2'$.

Let $\tau \colon H_2 \to H_2'$ be the identity and $r = P(\tau)$. Since the orthogonality relation on $H_2'$ is the same as on $H_2$, $r$ is an orthoisomorphism between $P(H_2)$ and $P(H_2')$. We furthermore readily check that $\iota = \tau \circ \phi$ is an isometry from $H_1$ to $H_2'$, and we have $r \circ f = P(\tau \circ \phi) = P(\iota)$. Using $\iota$ to identify $H_1$ with a subspace of $H_2'$, the assertion follows.
\end{proof}

\subsubsection*{Orthomodular spaces and symmetries}

Hilbert spaces might be regarded as highly symmetric and hence our interest is focused on orthomodular spaces that likewise possess a reasonable amount of automorphisms. We call an orthomodular space $H$ {\it transitive} if for any non-zero vectors $u, v \in H$ there is a unitary map $\phi \colon H \to H$ such that $\phi(u) \in \lin v$.

\begin{lemma} \label{lem:transitivity-and-unit-vectors}
Let $H$ be an orthomodular space that contains a unit vector. Then $H$ is transitive if and only if $H$ is uniform.
\end{lemma}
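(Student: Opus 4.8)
The plan is to treat the two implications separately; the forward direction is immediate, and essentially all of the content sits in the reverse one.

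For the direction \emph{transitive $\Rightarrow$ uniform}, I would fix the unit vector $e$ guaranteed by the hypothesis and take an arbitrary nonzero vector $v \in H$. Transitivity supplies a unitary $\phi \colon H \to H$ with $\phi(e) \in \lin v$. Since a unitary map satisfies $\herm{\phi(e)}{\phi(e)} = \herm e e = 1$, the vector $\phi(e)$ has length $1$, so the line $\lin v$ contains a unit vector. As $v$ was arbitrary, $H$ is uniform.

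For the direction \emph{uniform $\Rightarrow$ transitive}, let $u, v \in H$ be nonzero. If $\lin u = \lin v$ the identity already witnesses transitivity, so I may assume $u$ and $v$ are linearly independent. By uniformity I choose unit vectors $e \in \lin u$ and $f \in \lin v$, which are then again independent, and set $S = \lin{e,f}$, a $2$-dimensional subspace; by Lemma~\ref{lem:direct-sum-of-orthomodular-spaces}(ii) it is itself a uniform orthomodular space. The crux is to build a unitary of $S$ carrying $e$ to $f$. The $1$-dimensional subspaces $\{e\}\ce{S}$ and $\{f\}\ce{S}$ each contain a unit vector, say $g$ and $h$, so that $\{e,g\}$ and $\{f,h\}$ are orthonormal bases of $S$; the linear map $\psi \colon S \to S$ determined by $e \mapsto f$ and $g \mapsto h$ then carries one orthonormal basis onto another and is therefore unitary. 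Because $H$ is orthomodular, $S$ is splitting, and I extend $\psi$ by the identity on $S\c$ to a unitary $\phi = \psi \oplus \id_{S\c}$ on $H$ with $\phi(e) = f$. Then $\phi(u) \in \lin{\phi(e)} = \lin f = \lin v$, as required.

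I expect the genuine obstacle to be the construction of $\phi$ over a general $\ast$-sfield, where one cannot simply rescale: a nonzero scalar multiple of a unit vector need not be a unit vector, and scalar multiplication is in general not unitary, so the naive recipe ``send $e$ to $f$ and correct by a scalar'' is unavailable. Uniformity is precisely what furnishes unit vectors in the lines $\{e\}\ce{S}$ and $\{f\}\ce{S}$, and orthomodularity is what lets me assemble the unitary on the plane $\lin{e,f}$ and transport it to all of $H$ by extending with the identity. The two steps that remain routine are the verification that a linear bijection taking one orthonormal basis to another preserves the Hermitian form, and the observation that reducing the linearly dependent case to the identity map sidesteps the awkward problem of realising a map of the form $e \mapsto \gamma e$ by a unitary.
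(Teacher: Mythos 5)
Your proof is correct and follows essentially the same route as the paper's: pass to unit-vector representatives via uniformity, build a unitary on the (at most $2$-dimensional) subspace they span, and extend it by the identity on the orthocomplement using the splitting property. The extra details you supply (completing to orthonormal bases of $S = \lin{e,f}$ via Lemma~\ref{lem:direct-sum-of-orthomodular-spaces}(ii), and treating the case $\lin u = \lin v$ by the identity) are exactly what the paper's terse one-line argument leaves implicit; the only overcaution is your worry about $e \mapsto \gamma e$, since the linear map $\alpha e \mapsto \alpha\gamma e$ is in fact unitary whenever $\gamma\gamma\inv = 1$.
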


\begin{proof}
The ``only if'' part is clear. To see the ``if'' part, let $u, v \in H$ be two unit vectors. As the subspace $\lin{u,v}$ is splitting, there is a unitary map $\phi$ such that $\phi(u) = v$ and $\phi(w) = w$ if $w \perp \{u,v\}$.
\end{proof}

The following lemma shows that the requirement for an orthomodular space to contain a unit vector is in our context not restrictive \cite{Hol2}.

\begin{lemma} \label{lem:rescaling}
Let $H$ be an orthomodular space. Then there is an orthomodular space $H'$ such that $P(H)$ and $P(H')$ are orthoisomorphic and $H'$ contains a unit vector. If $H$ is transitive, then $H'$ is transitive and consequently uniform.
\end{lemma}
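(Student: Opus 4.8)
The plan is to produce $H'$ by \emph{rescaling the Hermitian form} on the same underlying linear space, so that a chosen vector becomes a unit vector while the orthogonality relation is left untouched. Concretely, fix any $u_0 \in H \setminus \{0\}$ and set $\delta = \herm{u_0}{u_0}$. By the symmetry axiom $\delta\adj = \delta$, and by anisotropy $\delta \neq 0$. The naive attempt to define $\herm{u}{v}' = \herm{u}{v}\,\delta^{-1}$ over the \emph{same} $\ast$-sfield fails: right multiplication by $\delta^{-1}$ does not respect the conjugate-linearity axiom unless $\delta$ is central. The remedy, and the technical heart of the argument, is to adjust the involution simultaneously: I would equip $F$ with the new involutory antiautomorphism $\alpha \mapsto \delta\,\alpha\adj\,\delta^{-1}$ (one checks directly that this is again an involution because $\delta$ is self-adjoint) and take $H'$ to be the space carrying the form $\herm{u}{v}' = \herm{u}{v}\,\delta^{-1}$ over this new $\ast$-sfield, with addition and scalar multiplication unchanged.

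The first block of verifications is routine but must be carried out: $\herm{\cdot}{\cdot}'$ is left-linear in the first argument (immediate, since right multiplication by $\delta^{-1}$ commutes with left scalar multiplication); it is conjugate-linear in the second argument with respect to the new involution; and it is symmetric for the new involution, the computation $(\herm{v}{u}\,\delta^{-1})$ transformed by $\alpha \mapsto \delta\alpha\adj\delta^{-1}$ returning exactly $\herm{u}{v}\,\delta^{-1}$ using $\delta\adj = \delta$. Anisotropy is preserved since $\herm{u}{u}' = 0 \iff \herm{u}{u} = 0$. Crucially, $\herm{u}{v}' = 0 \iff \herm{u}{v} = 0$, so $H$ and $H'$ carry \emph{identical} orthogonality relations and linear structure; hence the orthoclosed sets, the decompositions, and the splitting subspaces coincide, so $H'$ is orthomodular precisely because $H$ is, and $P(H) = P(H')$ as orthosets (the identity being the required orthoisomorphism). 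Finally $\herm{u_0}{u_0}' = \delta\,\delta^{-1} = 1$, so $u_0$ is a unit vector of $H'$.

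For the transitivity clause I would observe that a linear bijection $\phi \colon H \to H$ (``linear'' means the same thing for $H$ and $H'$, as scalar multiplication is unchanged) preserves $\herm{\cdot}{\cdot}$ if and only if it preserves $\herm{\cdot}{\cdot}'$, since the two forms differ only by right multiplication by the invertible element $\delta^{-1}$. Thus $H$ and $H'$ have literally the same group of unitary maps, and the spans $\lin v$ are the same subspaces in both; consequently if $H$ is transitive then so is $H'$. Since $H'$ now contains a unit vector and is transitive, Lemma~\ref{lem:transitivity-and-unit-vectors} yields that $H'$ is uniform.

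The only genuinely delicate step is the construction itself: recognising that preserving sesquilinearity under a non-central rescaling forces a compensating conjugation of the sfield involution by $\delta$. Once the new $\ast$-sfield and form are set up correctly, every remaining assertion reduces to the observation that orthogonality, the linear structure, and the unitary group are all unchanged, so the properties of $H$ transfer verbatim to $H'$.
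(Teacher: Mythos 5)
Your proposal is correct and follows exactly the paper's construction: the paper likewise fixes $z \in H \setminus \{0\}$ with $\lambda = \herm z z$, replaces the involution by $\alpha \mapsto \lambda \alpha\inv \lambda^{-1}$, rescales the form to $\herm{\cdot}{\cdot}\,\lambda^{-1}$, and concludes transitivity of $H'$ from the coincidence of the unitary groups. The only difference is that you spell out the routine verifications (involutivity, sesquilinearity, symmetry, anisotropy) that the paper leaves implicit.
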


\begin{proof}
Let $z \in H \setminus \{0\}$ and $\lambda = \herm z z$. We define the orthomodular space $H'$ as follows: as a linear space, $H'$ coincides with $H$; the involution of the scalar sfield $F$ is given by $\alpha^{\star'} = \lambda \alpha\inv \lambda^{-1}$, $\alpha \in F$; and the Hermitian form is $\herm{\cdot}{\cdot}' = \herm{\cdot}{\cdot} \lambda^{-1}$. Then $z$ is a unit vector of $H'$.

Clearly, $P(H)$ and $P(H')$ are orthoisomorphic. Moreover, since the unitary groups of $H$ and $H'$ coincide, transitivity of $H$ implies the transitivity of $H'$.
\end{proof}

\section{Dagger categories}
\label{sec:dagger-categories}

It is our aim to describe Hilbert spaces as categories of orthosets. In this context, it is natural to work in the framework of categories that are augmented by an endofunctor called dagger \cite{AbCo,Sel}. We provide in this section a short introduction to the categorical notions and facts that are relevant for us.

A {\it dagger} on a category $\C$ is a functor $\adj \colon \C\op \to \C$ that is involutive and the identity on objects. A category equipped with a dagger is called a {\it dagger category}.

We are interested in the following example.
\begin{example} \label{ex:categories}
Let $F$ be a Pythagorean, formally real $\ast$-sfield. We define the dagger category $\OMS{F}$ as follows. An object of $\OMS{F}$ is a uniform orthomodular space over $F$. The morphisms of $\OMS{F}$ are the adjointable linear maps between orthomodular spaces, and the dagger is the linear adjoint given by {\rm (\ref{fml:adjoint-of-linear-map})}.

Moreover, assume that $F$ is a classical $\star$-sfield, that is, one of \/ $\Reals$, $\Complexes$, or $\Quaternions$ equipped with the respective canonical antiautomorphism. Let $\Hil{F}$ be the dagger category of Hilbert spaces over $F$, bounded linear maps between them, and the linear adjoint as the dagger.

In view of Theorem~\ref{thm:Soler}, we observe the following. If $F$ is not among the classical $\star$-sfields, $\OMS{F}$ contains finite-dimensional spaces only. If $F \in \{ \Reals, \Complexes, \Quaternions \}$, $\OMS{F}$ coincides with $\Hil{F}$.
\end{example}

Let $\C$ be a dagger category. A morphism $f \colon A \to B$ is called a {\it partial isometry} if $f \circ f\adj \circ f = f$, an {\it isometry} or a {\it dagger monomorphism} if $f\adj \circ f = \id_A$, and a {\it dagger isomorphism} if $f\adj \circ f = \id_A$ and $f \circ f\adj = \id_B$. A {\it dagger automorphism} of $A \in \C$ is a dagger isomorphism $f \colon A \to A$.

Let us assume that $\C$ has a {\it zero object} $0$. This means that there is, for any $A$, a unique morphism $0_{0,A} \colon 0 \to A$. We put $0_{A,0} = {0_{0,A}}\adj$ and note that this is the unique morphism $A \to 0$. For arbitrary objects $A$ and $B$, we put $0_{A,B} = 0_{0,B} \circ 0_{A,0}$, called the {\it zero map} from $A$ to $B$. Clearly, ${0_{A,B}}\adj = 0_{B,A}$.

Let $X \in \C$. A {\it dagger subobject} of $X$ is an object $Y$ together with a dagger monomorphism $h \colon Y \to X$. Note that $0_{0,X} \colon 0 \to X$ and $\id_X \colon X \to X$ are dagger subobjects of $X$. We call $X$ {\it dagger simple} if $X$ is non-zero and possesses, up to dagger isomorphism, no further subobjects. In other words, $X$ is dagger simple if $X \neq 0$ and any dagger monomorphism from a non-zero object to $X$ is a dagger isomorphism.

By a {\it dagger biproduct} of $A, B \in \C$, we mean a coproduct
\[ \begin{tikzcd} A \arrow[r, "\iota_A"] & A \oplus B & B \arrow[l, "\iota_B"'] \end{tikzcd} \]
such that $\iota_A, \iota_B$ are dagger monomorphisms and ${\iota_B}\adj \circ \iota_A = 0_{A,B}$. For the remainder of this section, we assume that any pair of objects in $\C$ possesses a dagger biproduct.

The dagger biproduct of a finite number $n$ of objects is defined in the expected way, understood to be $0$ in case $n = 0$. Up to dagger isomorphism, the dagger biproduct is associative and commutative. Moreover, $0$, the zero object, behaves neutrally: $\begin{tikzcd}[cramped] A \arrow[r, "\id_A"] & A & 0 \arrow[l, "\;\;0_{0,A}"'] \end{tikzcd}$ is obviously a dagger biproduct. Hence we may assume $A \oplus 0 = A$.

For morphisms $f \colon A \to C$ and $g \colon B \to D$, we let $f \oplus g$ be the unique morphism making the diagram
\[ \begin{tikzcd}
C \arrow[r, "\iota_C"] & C \oplus D & D \arrow[l, "\iota_D"'] \\
A \arrow[r, "\iota_A"] \arrow[u, "f"] & A \oplus B  \arrow[u, "f \oplus g"] & B \arrow[l, "\iota_B"'] \arrow[u, "g"]
\end{tikzcd} \]
commute. Also the dagger biproduct of morphisms is, up to dagger isomorphisms, commutative and associative, and for any $f \colon A \to B$ we have $f \oplus 0_{0,0} = f$.

According to \cite[Chapter 2]{HeVi}, the dagger commutes with $\oplus$. We provide a short direct proof.

\begin{lemma} \label{lem:f-oplus-isomorphy}
Let $f_1 \colon A_1 \to B_1$ and $f_2 \colon A_2 \to B_2$ be morphisms. Then $(f_1 \oplus f_2)\adj = {f_1}\adj \oplus {f_2}\adj$.
\end{lemma}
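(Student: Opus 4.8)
The plan is to prove that the two morphisms $(f_1\oplus f_2)\adj$ and $f_1\adj\oplus f_2\adj$, both of type $B_1\oplus B_2\to A_1\oplus A_2$, coincide, using only the coproduct universal property that comes with the biproducts together with the contravariant functoriality and involutivity of the dagger. As preparation I would record the ``orthonormality'' relations among the inclusions. Since $\iota_{A_1},\iota_{A_2}$ are dagger monomorphisms, $\iota_{A_i}\adj\circ\iota_{A_i}=\id_{A_i}$; and the defining condition $\iota_{A_2}\adj\circ\iota_{A_1}=0_{A_1,A_2}$, together with its dagger, shows that $\iota_{A_i}\adj\circ\iota_{A_j}$ is the zero map whenever $i\neq j$. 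The same relations hold for $\iota_{B_1},\iota_{B_2}$.

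Since both $(f_1\oplus f_2)\adj$ and $f_1\adj\oplus f_2\adj$ are morphisms out of the coproduct $B_1\oplus B_2$, it suffices, by the coproduct universal property, to show they agree after precomposition with $\iota_{B_1}$ and $\iota_{B_2}$. By the definition of the biproduct of morphisms, $(f_1\adj\oplus f_2\adj)\circ\iota_{B_j}=\iota_{A_j}\circ f_j\adj$, so the task reduces to establishing, for $j=1,2$,
\[ (f_1\oplus f_2)\adj\circ\iota_{B_j} \;=\; \iota_{A_j}\circ f_j\adj, \]
an identity I shall call $(\ast)$. Both sides of $(\ast)$ are morphisms \emph{into} $A_1\oplus A_2$, for which the coproduct property gives no direct handle. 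The key move is therefore to apply the dagger: since the dagger is an involutive bijection on hom-sets, $(\ast)$ is equivalent to $\iota_{B_j}\adj\circ(f_1\oplus f_2)=f_j\circ\iota_{A_j}\adj$, whose two sides are now morphisms \emph{out of} $A_1\oplus A_2$.

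It then remains to verify this last identity after precomposing with $\iota_{A_i}$, $i=1,2$, which is a direct computation. Using the defining relation $(f_1\oplus f_2)\circ\iota_{A_i}=\iota_{B_i}\circ f_i$ and the inclusion relations above, the left-hand side yields $\iota_{B_j}\adj\circ\iota_{B_i}\circ f_i$, which equals $f_j$ if $i=j$ and the zero map otherwise; the right-hand side yields $f_j\circ\iota_{A_j}\adj\circ\iota_{A_i}$, with the same value. Hence the daggered identity holds by the coproduct universal property of $A_1\oplus A_2$, whence $(\ast)$ follows, and with it the lemma. There is no deep obstacle here beyond the bookkeeping; the one point requiring a moment's thought is that the definition supplies only a coproduct (not a product) universal property, so that to compare two morphisms \emph{into} a biproduct one must first dagger them into morphisms \emph{out of} a biproduct, where the available universal property applies.
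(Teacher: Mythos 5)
Your proof is correct and takes essentially the same route as the paper's: both arguments verify that the two candidate morphisms $B_1 \oplus B_2 \to A_1 \oplus A_2$ have the same components against the biproduct inclusions, using $(f_1 \oplus f_2)\circ\iota_{A_i} = \iota_{B_i}\circ f_i$, the relations ${\iota}\adj\circ\iota = \id$ and ${\iota_j}\adj\circ\iota_i = 0$ for $i \neq j$, and the contravariant involutivity of the dagger. The only difference is organisational: the paper computes the matrix entries ${\iota_{A_j}}\adj\circ(\cdot)\circ\iota_{B_i}$ of both sides simultaneously and concludes from the coincidence (implicitly using that the biproduct is also a product via the dagger), whereas you make that dagger step explicit so that only the coproduct universal property is ever invoked -- a slightly more pedantic but equivalent bookkeeping of the same computation.
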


\begin{proof}
For $i = 1, 2$, we have $(f_1 \oplus f_2) \circ \iota_{A_i} = \iota_{B_i} \circ f_i$ and $({f_1}\adj \oplus {f_2}\adj) \circ \iota_{B_i} = \iota_{A_i} \circ {f_i}\adj$. We conclude that, for $i, j = 1, 2$,
\begin{align*}
& {\iota_{A_j}}\adj \circ ({f_1}\adj \oplus {f_2}\adj) \circ \iota_{B_i}
\;=\; {\iota_{A_j}}\adj \circ \iota_{A_i} \circ {f_i}\adj
\;=\; \begin{cases} {f_i}\adj & \text{if $i=j$,} \\
0_{B_i,A_j} & \text{if $i \neq j$;} \end{cases} \\
& {\iota_{A_j}}\adj \circ (f_1 \oplus f_2)\adj \circ \iota_{B_i}
\;=\; \big((f_1 \oplus f_2) \circ \iota_{A_j}\big)\adj \circ \iota_{B_i} \\
& \;=\; (\iota_{B_j} \circ f_j)\adj \circ \iota_{B_i}
\;=\; {f_j}\adj \circ {\iota_{B_j}}\adj \circ \iota_{B_i}
\;=\; \begin{cases} {f_i}\adj & \text{if $i=j$,} \\
0_{B_i,A_j} & \text{if $i \neq j$.} \end{cases}
\end{align*}
The coincidence implies the assertion.
\end{proof}

As the following lemma shows, the dagger biproduct of dagger isomorphisms is again a dagger isomorphism. The lemma is a special case of \cite[Theorem~4.6]{HeKa} and again, we include the short direct proof.

\begin{lemma} \label{lem:piecewise-dagger-isomorphy}
Let $f_1 \colon A_1 \to B_1$ and $f_2 \colon A_2 \to B_2$ be dagger isomorphisms. Then $f_1 \oplus f_2 \colon A_1 \oplus A_2 \to B_1 \oplus B_2$ is likewise a dagger isomorphism.
\end{lemma}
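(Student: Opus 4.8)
The plan is to reduce the statement to two things: the functoriality of $\oplus$ on morphisms, and Lemma~\ref{lem:f-oplus-isomorphy}, which tells us that the dagger commutes with the biproduct. First I would record the \emph{interchange law}: for composable morphisms $f_i \colon A_i \to B_i$ and $g_i \colon B_i \to C_i$ (with $i = 1,2$) one has
\[ (g_1 \oplus g_2) \circ (f_1 \oplus f_2) \;=\; (g_1 \circ f_1) \oplus (g_2 \circ f_2), \]
and moreover $\id_{A_1} \oplus \id_{A_2} = \id_{A_1 \oplus A_2}$. Both identities follow from the universal property of the coproduct $A_1 \oplus A_2$: precomposing either side of the first equation with $\iota_{A_i}$ yields $\iota_{C_i} \circ g_i \circ f_i$ after using the defining relations $(f_1 \oplus f_2) \circ \iota_{A_i} = \iota_{B_i} \circ f_i$ and $(g_1 \oplus g_2) \circ \iota_{B_i} = \iota_{C_i} \circ g_i$, so by the uniqueness part of the universal property the two sides coincide; the identity statement is checked the same way.

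With these tools in hand the computation is immediate. By Lemma~\ref{lem:f-oplus-isomorphy} we have $(f_1 \oplus f_2)\adj = {f_1}\adj \oplus {f_2}\adj$. Since each $f_i$ is a dagger isomorphism, ${f_i}\adj \circ f_i = \id_{A_i}$ and $f_i \circ {f_i}\adj = \id_{B_i}$. Applying the interchange law with $g_i = {f_i}\adj$ gives
\[ (f_1 \oplus f_2)\adj \circ (f_1 \oplus f_2) \;=\; ({f_1}\adj \circ f_1) \oplus ({f_2}\adj \circ f_2) \;=\; \id_{A_1} \oplus \id_{A_2} \;=\; \id_{A_1 \oplus A_2}, \]
and symmetrically, using $f_i \circ {f_i}\adj = \id_{B_i}$,
\[ (f_1 \oplus f_2) \circ (f_1 \oplus f_2)\adj \;=\; (f_1 \circ {f_1}\adj) \oplus (f_2 \circ {f_2}\adj) \;=\; \id_{B_1} \oplus \id_{B_2} \;=\; \id_{B_1 \oplus B_2}. \]
Hence $f_1 \oplus f_2$ satisfies both defining equations of a dagger isomorphism.

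The argument is essentially bookkeeping with the coproduct universal property, so I do not expect a serious obstacle. The one point requiring genuine care is the interchange law itself, since the paper introduces $\oplus$ on morphisms only object-wise through the defining commutative square and states only that it is commutative and associative up to dagger isomorphism, not that it is functorial. I would therefore spell out that verification explicitly (via uniqueness for the coproduct), after which the two displayed identities follow by direct substitution and the conclusion is immediate.
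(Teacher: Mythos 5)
Your proof is correct, but it is organised differently from the paper's. The paper proves this lemma by a direct diagram chase: it verifies that for all $i,j$ the composite ${\iota_{A_j}}\adj \circ (f_1 \oplus f_2)\adj \circ (f_1 \oplus f_2) \circ \iota_{A_i}$ agrees with ${\iota_{A_j}}\adj \circ \iota_{A_i}$, and then concludes $(f_1 \oplus f_2)\adj \circ (f_1 \oplus f_2) = \id$ by appealing to the uniqueness clauses of \emph{both} universal properties of the dagger biproduct (coproduct for precomposition with the $\iota_{A_i}$, product for postcomposition with the ${\iota_{A_j}}\adj$); notably, it never cites Lemma~\ref{lem:f-oplus-isomorphy}, even though that lemma immediately precedes it, and instead redoes the corresponding dagger computation inside the diagram. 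You instead factor the argument into two reusable pieces: the interchange law $(g_1 \oplus g_2) \circ (f_1 \oplus f_2) = (g_1 \circ f_1) \oplus (g_2 \circ f_2)$ together with $\id_{A_1} \oplus \id_{A_2} = \id_{A_1 \oplus A_2}$ (both correctly derived from coproduct uniqueness), and Lemma~\ref{lem:f-oplus-isomorphy}; the lemma then drops out as a one-line computation. Your caution about the interchange law is well placed --- the paper indeed never states functoriality of $\oplus$ on morphisms, only commutativity and associativity up to dagger isomorphism, so that verification is a genuine obligation in your route. What each approach buys: yours is more modular and makes the structural reason for the result transparent (dagger distributes over $\oplus$, and $\oplus$ is functorial), at the cost of proving an auxiliary law; the paper's stays entirely within the defining diagrams of the biproduct, at the cost of a longer chase that duplicates work already done in Lemma~\ref{lem:f-oplus-isomorphy}.
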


\begin{proof}
By definition, $f_1 \oplus f_2$ is the unique isomorphism such that $(f_1 \oplus f_2) \circ \iota_{A_i} = \iota_{B_i} \circ f_i$ for $i = 1, 2$. We have that, for any $i, j = 1, 2$, the following diagram commutes:
\[ \begin{tikzcd}
A_1 \oplus A_2 \arrow[rr, "f_1 \oplus f_2"]
&& B_1 \oplus B_2 \arrow[rr, "(f_1 \oplus f_2)\adj"] \arrow[dr, "{\iota_{B_j}}\adj"']
&& A_1 \oplus A_2 \arrow[dd, "{\iota_{A_j}}\adj"] \\
& B_i \arrow[ur, "\iota_{B_i}"'] && B_j \arrow[dr, "{f_j}\adj"] \\
A_i \arrow[uu, "\iota_{A_i}"] \arrow[ur, "f_i"] \arrow[rr, "\iota_{A_i}"]
&& A_1 \oplus A_2 \arrow[rr, "{\iota_{A_j}}\adj"] && A_j
\end{tikzcd} \]
As $A_1 \oplus A_2$ is a coproduct, ${\iota_{A_j}}\adj \colon A_1 \oplus A_2 \to A_j$ is, for $j = 1, 2$, the unique morphism making the diagram
\[ \begin{tikzcd}
A_1 \oplus A_2 \arrow[r, "f_1 \oplus f_2"] \arrow[drr, "{\iota_{A_j}}\adj" {yshift=-2pt}]
& B_1 \oplus B_2 \arrow[r, "(f_1 \oplus f_2)\adj"]
& A_1 \oplus A_2 \arrow[d, "{\iota_{A_j}}\adj"] \\
A_i \arrow[u, "\iota_{A_i}"] \arrow[r, "\iota_{A_i}"]
& A_1 \oplus A_2 \arrow[r, "{\iota_{A_j}}\adj"'] & A_j
\end{tikzcd} \]
commute for $i = 1, 2$, and since $A_1 \oplus A_2$ is also a product, we conclude that $(f_1 \oplus f_2)\adj \circ (f_1 \oplus f_2) = \id$. Similarly, we see that also $(f_1 \oplus f_2) \circ (f_1 \oplus f_2)\adj = \id$.
\end{proof}

\begin{lemma} \label{lem:isometries-in-dagger-biproduct}
\begin{itemize}

\item[\rm (i)] Let $\begin{tikzcd}[cramped] A \arrow[r, "\iota_A"] & A \oplus B & B \arrow[l, "\iota_B"'] \end{tikzcd}$ be a dagger biproduct. Then $\iota_A = \id_A \oplus 0_{0,B}$ and ${\iota_A}\adj = \id_A \oplus 0_{B,0}$, and similarly for $\iota_B$ and ${\iota_B}\adj$.

\item[\rm (ii)] For any dagger isomorphism $h \colon A \to A'$ and object $B$, $h \oplus 0_{0,B}$ is the coprojection of a dagger biproduct.

\end{itemize}
\end{lemma}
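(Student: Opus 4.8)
The plan is to reduce both parts to the universal property defining $f \oplus g$, together with the conventions $A \oplus 0 = A$ and $f \oplus 0_{0,0} = f$ recorded above, and then to invoke Lemma~\ref{lem:f-oplus-isomorphy} for the dagger statements.

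For part (i), I would first unwind the definition of $\id_A \oplus 0_{0,B}$. Since $A \oplus 0 = A$, the coprojections of the domain biproduct are $\id_A \colon A \to A$ and $0_{0,A} \colon 0 \to A$. Hence $\id_A \oplus 0_{0,B}$ is the unique morphism $\phi \colon A \to A \oplus B$ with $\phi \circ \id_A = \iota_A \circ \id_A$ and $\phi \circ 0_{0,A} = \iota_B \circ 0_{0,B}$. The first equation already forces $\phi = \iota_A$, and the second then holds automatically because both $\iota_A \circ 0_{0,A}$ and $\iota_B \circ 0_{0,B}$ are the unique morphism $0 \to A \oplus B$. This yields $\iota_A = \id_A \oplus 0_{0,B}$. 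The dagger statement is then immediate from Lemma~\ref{lem:f-oplus-isomorphy}: ${\iota_A}\adj = (\id_A \oplus 0_{0,B})\adj = {\id_A}\adj \oplus {0_{0,B}}\adj = \id_A \oplus 0_{B,0}$, using ${\id_A}\adj = \id_A$ and ${0_{0,B}}\adj = 0_{B,0}$. The assertions for $\iota_B$ and ${\iota_B}\adj$ follow by symmetry.

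For part (ii), the key observation is the factorization $h \oplus 0_{0,B} = \iota_{A'} \circ h$, obtained exactly as in part (i): the morphism $h \oplus 0_{0,B} \colon A \to A' \oplus B$ is the unique $\psi$ with $\psi \circ \id_A = \iota_{A'} \circ h$ and $\psi \circ 0_{0,A} = \iota_B \circ 0_{0,B}$, and the first equation already gives $\psi = \iota_{A'} \circ h$. It then remains to show that
\[ \begin{tikzcd} A \arrow[r, "\iota_{A'} \circ h"] & A' \oplus B & B \arrow[l, "\iota_B"'] \end{tikzcd} \]
is again a dagger biproduct. I would verify the three requirements in turn. For the coproduct property I would use that a dagger isomorphism $h$ is invertible with $h^{-1} = h\adj$, so precomposition with $h$ is a bijection on the relevant hom-sets and the universal property of $A' \oplus B$ transports across $h$. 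For the dagger-monomorphism property of the new coprojection I would compute $(\iota_{A'} \circ h)\adj \circ (\iota_{A'} \circ h) = h\adj \circ {\iota_{A'}}\adj \circ \iota_{A'} \circ h = h\adj \circ h = \id_A$, using that $\iota_{A'}$ is a dagger monomorphism. Finally, for the orthogonality condition I would compute ${\iota_B}\adj \circ \iota_{A'} \circ h = 0_{A',B} \circ h = 0_{A,B}$, using ${\iota_B}\adj \circ \iota_{A'} = 0_{A',B}$ together with $0_{A',0} \circ h = 0_{A,0}$, which holds because $0$ is a zero object.

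The two computations for the coprojection and the orthogonality condition in part (ii) are routine once the factorization is in place. The main obstacle is the verification of the universal (coproduct) property of the transported cocone: one must confirm that replacing the coprojection $\iota_{A'}$ by $\iota_{A'} \circ h$ still produces a coproduct. The clean way to see this is precisely that $h$, being a dagger isomorphism, is invertible, so any pair of test morphisms out of $A$ and $B$ factors uniquely through $A' \oplus B$ after composing with $h^{-1}$ on the $A$-leg; this is where the hypothesis that $h$ is a dagger isomorphism (rather than merely a dagger monomorphism) is essential.
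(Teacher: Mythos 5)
Your proof is correct and follows essentially the same route as the paper: part (i) via the uniqueness clause in the definition of $\oplus$ on morphisms together with Lemma~\ref{lem:f-oplus-isomorphy}, and part (ii) via the factorization $h \oplus 0_{0,B} = \iota_{A'} \circ h$. The only difference is that you spell out the verification that the transported cocone (coproduct property via invertibility of $h$, dagger monomorphism, orthogonality condition) is again a dagger biproduct, which the paper merely asserts.
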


\begin{proof}
Ad (i): From the commutative diagram
\[ \begin{tikzcd}
A \arrow[r, "\iota_A"]
& X & B \arrow[l, "\iota_B"'] \\
A \arrow[u, "\id_A"'] \arrow[r, "\id_A"']
& A \arrow[u, "\iota_A"'] & \; 0 \arrow[l, "0_{0,A}"] \arrow[u, "0_{0,B}"']
\end{tikzcd} \]
we observe that $\iota_A = \id_A \oplus 0_{0,B}$. By Lemma~\ref{lem:f-oplus-isomorphy}, it follows ${\iota_A}\adj = \id_A \oplus 0_{B,0}$.

Ad (ii): Consider $h \oplus 0_{0,B} \colon A \to A' \oplus B$, where $h$ and $B$ are as indicated. Then $h \oplus 0_{0,B} = \iota_{A'} \circ h$, where $\iota_{A'}$ is the coprojection of the dagger biproduct $A' \oplus B$. Moreover, $\begin{tikzcd}[cramped] A \arrow[r, "\iota_{A'} \circ h"] & A' \oplus A & B \arrow[l, "\iota_B"'] \end{tikzcd}$ is likewise a dagger biproduct.
\end{proof}

\begin{lemma} \label{lem:morphism-plus-zero}
Let $f \colon X_1 \to X_1$ be a morphism and $X = X_1 \oplus X_2$. Then $\iota_1 \circ f \circ {\iota_1}\adj = f \oplus 0$.
\end{lemma}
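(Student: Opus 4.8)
The plan is to identify both sides through the universal property of the coproduct $X = X_1 \oplus X_2$. By the defining diagram for $\oplus$ applied to $f \colon X_1 \to X_1$ and $0_{X_2,X_2} \colon X_2 \to X_2$, the morphism $f \oplus 0$ is the unique $h \colon X \to X$ satisfying $h \circ \iota_1 = \iota_1 \circ f$ and $h \circ \iota_2 = \iota_2 \circ 0_{X_2,X_2}$. Since $\iota_2 \circ 0_{X_2,X_2}$ factors through the zero object $0$, the right-hand side of the second equation is just the canonical zero map $0_{X_2,X}$. Hence it suffices to verify that $g := \iota_1 \circ f \circ {\iota_1}\adj$ satisfies these two equations; the claim then follows from uniqueness.

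For the first equation I would use that $\iota_1$ is a dagger monomorphism, i.e.\ ${\iota_1}\adj \circ \iota_1 = \id_{X_1}$, so that $g \circ \iota_1 = \iota_1 \circ f \circ {\iota_1}\adj \circ \iota_1 = \iota_1 \circ f$. For the second equation I would first dagger the biproduct condition ${\iota_2}\adj \circ \iota_1 = 0_{X_1,X_2}$ to obtain ${\iota_1}\adj \circ \iota_2 = 0_{X_2,X_1}$, using that the dagger reverses composition and ${0_{A,B}}\adj = 0_{B,A}$. Then $g \circ \iota_2 = \iota_1 \circ f \circ 0_{X_2,X_1}$, and since precomposing or postcomposing a zero map with any morphism again yields a zero map (both factor through $0$), this equals $0_{X_2,X}$.

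The only point that needs care — and it is the main, though minor, obstacle — is the bookkeeping of the various zero morphisms: one must confirm that $\iota_1 \circ f \circ 0_{X_2,X_1}$ really is the canonical zero map $0_{X_2,X}$ and that it agrees with $\iota_2 \circ 0_{X_2,X_2}$, both of which follow from the uniqueness of morphisms into and out of the zero object $0$. Once these identifications are in place, the coproduct universal property forces $g = f \oplus 0$, completing the argument. Alternatively, one could invoke Lemma~\ref{lem:isometries-in-dagger-biproduct}(i), which expresses $\iota_1$ and ${\iota_1}\adj$ as biproducts with zero maps, and then appeal to the compositional behaviour of $\oplus$; but the direct check via uniqueness is shorter and avoids relying on a functoriality statement for $\oplus$ that has not been recorded explicitly.
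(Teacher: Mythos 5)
Your proposal is correct and follows essentially the same route as the paper's own proof: both verify that $\iota_1 \circ f \circ {\iota_1}\adj$ and $f \oplus 0$ agree after precomposition with each coprojection $\iota_1$, $\iota_2$ (using ${\iota_1}\adj \circ \iota_1 = \id_{X_1}$ and ${\iota_1}\adj \circ \iota_2 = 0_{X_2,X_1}$) and then conclude by the uniqueness clause of the coproduct's universal property. Your write-up merely makes explicit the zero-morphism bookkeeping that the paper leaves implicit.
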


\begin{proof}
We have $\iota_1 \circ f \circ {\iota_1}\adj \circ \iota_1 = \iota_1 \circ f = (f \oplus 0) \circ \iota_1$ and $\iota_1 \circ f \circ {\iota_1}\adj \circ \iota_2 = 0_{X_2,X} = (f \oplus 0) \circ \iota_2$.
\end{proof}

Finally, let us recall the concept of a semiadditive category. This notion applies to categories in general and a discussion can be found in \cite[\S 40]{HeSt}.

A {\it semiadditive structure} on $\C$ is a binary operation $+$ on $\homset(X,Y)$ for each $X, Y \in \C$, subject to the following conditions: (i) equipped with $+$ and $0_{X,Y}$, $\homset(X,Y)$ is a commutative monoid, and (ii) the composition of morphisms is left and right distributive over $+$.

We will write $\Delta_X$ for the diagonal morphism of an object $X$, and $\nabla_X$ for the codiagonal morphism of $X$.

\begin{theorem} \label{thm:semiadditive-structure}
$\C$ possesses a unique semiadditive structure, given by
\begin{equation} \label{fml:semiadditive-structure}
f+g \;=\; \nabla_Y \circ (f \oplus g) \circ \Delta_X
\end{equation}
for morphisms $f, g \colon X \to Y$.
\end{theorem}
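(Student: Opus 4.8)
The plan is to split the statement into \emph{existence} (the displayed formula defines a semiadditive structure) and \emph{uniqueness} (any semiadditive structure coincides with it), reducing both to a handful of elementary properties of the diagonal, the codiagonal and the biproduct bifunctor. I write $\iota_1, \iota_2$ for the coprojections of a biproduct and $\pi_i = {\iota_i}\adj$ for the projections; recall that applying the dagger to the coproduct diagram exhibits $A \oplus B$ as a product, so that $\pi_j \circ \iota_i = \delta_{ij}\,\id$, that $\Delta_X \colon X \to X \oplus X$ is characterised by $\pi_i \circ \Delta_X = \id_X$, and that $\nabla_Y \colon Y \oplus Y \to Y$ is characterised by $\nabla_Y \circ \iota_i = \id_Y$.

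For existence I first record the facts I will use, each an immediate consequence of the universal properties: the bifunctoriality $(h \oplus h) \circ (f \oplus g) = (h \circ f) \oplus (h \circ g)$; the naturality of the codiagonal $h \circ \nabla_Y = \nabla_Z \circ (h \oplus h)$ and of the diagonal $\Delta_X \circ k = (k \oplus k) \circ \Delta_W$; and the symmetries $\nabla_Y \circ \tau_Y = \nabla_Y$ and $\tau_X \circ \Delta_X = \Delta_X$, where $\tau$ is the swap, together with $\tau_Y \circ (f \oplus g) = (g \oplus f) \circ \tau_X$. Commutativity of $+$ then falls out by inserting $\tau$; left and right distributivity follow by pulling $h$ (resp.\ $k$) through $\nabla$ (resp.\ $\Delta$) and using bifunctoriality. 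For the neutral law $f + 0_{X,Y} = f$, I compute $(f \oplus 0_{X,Y}) \circ \Delta_X$ by comparing its $\pi_1$- and $\pi_2$-components with those of $\iota_1 \circ f$; the product universal property gives $(f \oplus 0_{X,Y}) \circ \Delta_X = \iota_1 \circ f$, and $\nabla_Y \circ \iota_1 = \id_Y$ finishes it. Associativity is the one genuinely tedious point: I would introduce the triple biproduct with its triple diagonal $\Delta_3$ and codiagonal $\nabla_3$ and show that both $(f+g)+h$ and $f+(g+h)$ equal $\nabla_3 \circ (f \oplus g \oplus h) \circ \Delta_3$, invoking the coassociativity of $\Delta$ and the associativity of $\nabla$ supplied by the coherence of the biproduct.

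For uniqueness, let $+'$ be an arbitrary semiadditive structure, whose neutral elements are by definition the zero maps $0_{X,Y}$. The key step is to show $\Delta_X = \iota_1 +' \iota_2$: indeed $\pi_j \circ (\iota_1 +' \iota_2) = (\pi_j \circ \iota_1) +' (\pi_j \circ \iota_2) = \id_X$ by distributivity and $\pi_j \circ \iota_i = \delta_{ij}\,\id$, so $\iota_1 +' \iota_2$ satisfies the defining relations of $\Delta_X$ and the product property forces equality. Expanding the formula in the structure $+'$ now gives $(f \oplus g) \circ \Delta_X = (f \oplus g) \circ (\iota_1 +' \iota_2) = \iota_1 \circ f +' \iota_2 \circ g$, and then $\nabla_Y \circ (\iota_1 \circ f +' \iota_2 \circ g) = f +' g$, using distributivity and $\nabla_Y \circ \iota_i = \id_Y$. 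As the left-hand side is by definition $f + g$, we conclude $f +' g = f + g$, so $+'$ equals the structure already constructed.

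The main obstacle is essentially bookkeeping rather than conceptual depth. The delicate points are the coherence needed for associativity --- making sure the triple-diagonal and triple-codiagonal identities are genuinely available from the stated associativity of $\oplus$ --- and, in the uniqueness half, the discipline of deriving every intermediate identity (symmetry and naturality of $\Delta$ and $\nabla$, the component formulas $\pi_1 \circ (f \oplus g) = f \circ \pi_1$ and $\pi_2 \circ (f \oplus g) = g \circ \pi_2$) purely from the biproduct data and the abstract monoid and distributivity axioms, never from the very formula one is trying to single out.
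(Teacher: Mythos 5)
Your proof is correct, but it differs from the paper's in an essential respect: the paper does not prove this theorem at all --- it defers entirely to Herrlich--Strecker \cite[Proposition~40.12]{HeSt}, the classical fact that a category with a zero object and biproducts carries a unique semiadditive structure. What you have written is, in substance, the standard proof of that classical result, transplanted into the present dagger setting. Your uniqueness half is exactly the canonical argument: from distributivity, the relations ${\iota_j}\adj \circ \iota_i = \delta_{ij}\,\id$, and the requirement (built into the paper's definition of a semiadditive structure) that the neutral elements are the zero morphisms $0_{X,Y}$, deduce $\Delta_X = \iota_1 +' \iota_2$, and then expand $\nabla_Y \circ (f \oplus g) \circ \Delta_X$ in the structure $+'$ to obtain $f +' g$. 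The existence half (commutativity via the swap, the two distributivities via naturality of $\Delta$ and $\nabla$ together with bifunctoriality, the unit law by comparing components, associativity via the triple biproduct) is routine, and every auxiliary identity you list --- including $\pi_1 \circ (f \oplus g) = f \circ \pi_1$ --- is indeed derivable by composing with the coprojections and invoking the coproduct, respectively product, universal property; leaving associativity as a coherence sketch is therefore acceptable. One thing your route makes explicit that the paper leaves implicit: the cited result concerns ordinary biproducts, so one must check that a dagger biproduct in the paper's sense --- a coproduct with dagger-monic coprojections satisfying ${\iota_B}\adj \circ \iota_A = 0_{A,B}$ --- becomes a product under the daggered coprojections with the $\delta_{ij}$ identities; you state and use precisely this bridge. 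The trade-off is the usual one: the citation keeps the paper short and rests on a standard reference, while your argument makes the theorem self-contained within the paper's own axioms at the cost of a page of careful bookkeeping.
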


\begin{proof}
See \cite[Proposition~40.12]{HeSt}.
\end{proof}

For morphisms $f, g \colon X \to Y$ in $\C$, $f+g$ is thus specified by means of the following diagram, in which every triangle and every square commutes:
\begin{equation} \label{fml:addition}
\begin{tikzcd}
& X \arrow[dl, "\id"', shift right=0.5ex] \arrow[d, dashed, "\Delta_X"] \arrow[dr, "\id", shift left=0.5ex] \\
X \arrow[r, "\iota"', shift right=0.5ex] \arrow[d, "f"]
& X \oplus X \arrow[l, "\iota\adj"', shift right=0.5ex] \arrow[r, "\iota\adj", shift left=0.5ex] \arrow[d, dashed, "f \oplus g"]
& X \arrow[l, "\iota", shift left=0.5ex] \arrow[d, "g"] \\
Y \arrow[r, "\iota"'] \arrow[dr, "\id"', shift right=0.5ex]
& Y \oplus Y \arrow[d, dashed, "\nabla_Y"]
& Y \arrow[l, "\iota"] \arrow[dl, "\id", shift left=0.5ex] \\
& Y
\end{tikzcd}
\end{equation}

\begin{lemma} \label{lem:adjoints-and-plus}
Let $f, g \colon X \to Y$ be morphisms. Then $(f+g)\adj = f\adj + g\adj$.
\end{lemma}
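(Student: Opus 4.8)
The plan is to apply the dagger to the explicit formula (\ref{fml:semiadditive-structure}) and to exploit that the dagger is a contravariant involutive functor, together with the fact—already established in Lemma~\ref{lem:f-oplus-isomorphy}—that it commutes with $\oplus$. Writing $f + g = \nabla_Y \circ (f \oplus g) \circ \Delta_X$ and taking daggers, contravariance gives
\[ (f+g)\adj \;=\; \Delta_X\adj \circ (f \oplus g)\adj \circ \nabla_Y\adj, \]
and Lemma~\ref{lem:f-oplus-isomorphy} turns the middle factor into $f\adj \oplus g\adj$. Comparing with the formula for $f\adj + g\adj \colon Y \to X$, which by (\ref{fml:semiadditive-structure}) reads $\nabla_X \circ (f\adj \oplus g\adj) \circ \Delta_Y$, the whole statement reduces to the two identities $\Delta_X\adj = \nabla_X$ and $\nabla_Y\adj = \Delta_Y$.

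To establish these, I would use that in a dagger biproduct the product structure is carried by the projections ${\iota}\adj$, the daggers of the coprojections, as is implicit in the diagram (\ref{fml:addition}) and in the proof of Lemma~\ref{lem:piecewise-dagger-isomorphy}. The diagonal $\Delta_X$ is characterised by $\iota_i\adj \circ \Delta_X = \id_X$ for $i = 1, 2$. Applying the dagger and using that it is involutive and fixes identities yields $\Delta_X\adj \circ \iota_i = \id_X$ for $i = 1, 2$; but this is precisely the defining property of the codiagonal $\nabla_X$ with respect to the coproduct $X \oplus X$, so the coproduct universal property forces $\Delta_X\adj = \nabla_X$. The identity $\nabla_Y\adj = \Delta_Y$ is the dagger of this, or is proved symmetrically: from $\nabla_Y \circ \iota_i = \id_Y$ one obtains $\iota_i\adj \circ \nabla_Y\adj = \id_Y$, which is the universal property characterising $\Delta_Y$ as a morphism into the product $Y \oplus Y$.

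Substituting these two identities back then gives $(f+g)\adj = \nabla_X \circ (f\adj \oplus g\adj) \circ \Delta_Y = f\adj + g\adj$, completing the argument. I expect the only genuine subtlety to be the pair of identities $\Delta\adj = \nabla$; once the observation that the product projections are the daggers of the coprojections is in place, everything else is a formal manipulation with the functoriality of the dagger. An alternative, diagram-based route would be to verify directly that $(f+g)\adj$ satisfies the commuting diagram (\ref{fml:addition}) characterising $f\adj + g\adj$, but I find the reduction to $\Delta\adj = \nabla$ cleaner and more transparent.
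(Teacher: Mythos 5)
Your proposal is correct and follows essentially the same route as the paper: the paper's proof is exactly the chain $(f+g)\adj = {\Delta_X}\adj \circ (f \oplus g)\adj \circ {\nabla_Y}\adj = \nabla_X \circ (f\adj \oplus g\adj) \circ \Delta_Y = f\adj + g\adj$, using $(f\oplus g)\adj = f\adj \oplus g\adj$ from Lemma~\ref{lem:f-oplus-isomorphy}. The only difference is that you spell out the identities ${\Delta}\adj = \nabla$ and ${\nabla}\adj = \Delta$ via the (co)product universal properties, which the paper leaves implicit; this is a welcome addition, since those identities are genuinely needed and the paper's citation of Lemma~\ref{lem:piecewise-dagger-isomorphy} does not cover them.
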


\begin{proof}
We have
\[ (f+g)\adj \;=\; {\Delta_X}\adj \circ (f \oplus g)\adj \circ {\nabla_Y}\adj
\;=\; \nabla_X \circ (f\adj \oplus g\adj) \circ \Delta_Y \;=\; f\adj + g\adj \]
by Lemma~\ref{lem:piecewise-dagger-isomorphy}.
\end{proof}

\begin{lemma} \label{lem:plus-oplus}
For morphisms $f_1 \colon A_1 \to B_1$ and $f_2 \colon A_2 \to B_2$, we have
\[ (f_1 \oplus 0_{A_2,B_2}) + (0_{A_1,B_1} \oplus f_2) \;=\; f_1 \oplus f_2. \]
\end{lemma}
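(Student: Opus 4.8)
The plan is to avoid unfolding the definition (\ref{fml:semiadditive-structure}) of $+$ directly, which would force me to reason about the fourfold biproduct $(A_1 \oplus A_2) \oplus (A_1 \oplus A_2)$. Writing $A = A_1 \oplus A_2$ and $B = B_1 \oplus B_2$ with the evident coprojections $\iota_{A_1}, \iota_{A_2}$ and $\iota_{B_1}, \iota_{B_2}$, both sides of the asserted equation are morphisms $A \to B$. Since $A$ is a coproduct, it suffices to check that the two sides agree after composition on the right with each of $\iota_{A_1}$ and $\iota_{A_2}$.

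First I would record the defining equations of the biproduct morphisms read off from the diagram defining $f \oplus g$. Thus $(f_1 \oplus 0_{A_2,B_2}) \circ \iota_{A_1} = \iota_{B_1} \circ f_1$ while $(f_1 \oplus 0_{A_2,B_2}) \circ \iota_{A_2} = \iota_{B_2} \circ 0_{A_2,B_2} = 0_{A_2,B}$, the last step being the absorption of a zero map; symmetrically, $(0_{A_1,B_1} \oplus f_2) \circ \iota_{A_1} = 0_{A_1,B}$ and $(0_{A_1,B_1} \oplus f_2) \circ \iota_{A_2} = \iota_{B_2} \circ f_2$; and finally $(f_1 \oplus f_2) \circ \iota_{A_i} = \iota_{B_i} \circ f_i$ for $i = 1,2$. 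Using the right distributivity of composition over $+$ furnished by the semiadditive structure of Theorem~\ref{thm:semiadditive-structure}, the left-hand side composed with $\iota_{A_1}$ becomes
\[ (f_1 \oplus 0_{A_2,B_2}) \circ \iota_{A_1} + (0_{A_1,B_1} \oplus f_2) \circ \iota_{A_1} = \iota_{B_1} \circ f_1 + 0_{A_1,B} = \iota_{B_1} \circ f_1, \]
where the final equality uses that $0_{A_1,B}$ is neutral in the monoid $\homset(A_1,B)$; this coincides with $(f_1 \oplus f_2) \circ \iota_{A_1}$. The computation against $\iota_{A_2}$ is completely symmetric and gives $\iota_{B_2} \circ f_2$ on both sides.

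Having matched both coprojections, the universal property of the coproduct $A$ forces the two morphisms to be equal, which is the claim. I expect no genuine obstacle here: the only ingredients are the defining equations of $\oplus$ on morphisms, the vanishing of the off-diagonal terms through zero-map absorption, and right distributivity. The one point that needs care is to resist expanding (\ref{fml:semiadditive-structure}) and instead use the coprojection characterisation, which keeps the biproducts involved of manageable size.
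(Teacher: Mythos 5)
Your proof is correct and follows essentially the same route as the paper's: reduce to components via the biproduct structure maps, then use distributivity of composition over $+$ together with absorption/neutrality of zero morphisms. The only difference is economy — the paper verifies all four matrix entries ${\iota_{B_k}}\adj \circ (\cdot) \circ \iota_{A_j}$, exploiting that the biproduct is both a product and a coproduct, whereas you precompose with the two coprojections only and conclude by the coproduct universal property, which halves the verification and needs only right distributivity.
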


\begin{proof}
We have to show that
\begin{align*}
{i_{B_k}}\adj \circ \left((f_1 \oplus 0_{A_2,B_2}) + (0_{A_1,B_1} \oplus f_2)\right) \circ i_{A_j} \;=\; {i_{B_k}}\adj \circ (f_1 \oplus f_2) \circ i_{A_j}
\end{align*}
for $j, k \in \{1,2\}$. In case $j = k =1$, we compute
\begin{align*}
{i_{B_1}}\adj & \circ (f_1 \oplus f_2) \circ i_{A_1} \;=\; {i_{B_1}}\adj \circ i_{B_1} \circ  f_1 \;=\; f_1
\end{align*}
and, since composition is left and right distributive over $+$,
\begin{align*}
& {i_{B_1}}\adj \circ \left((f_1 \oplus 0_{A_2,B_2}) + (0_{A_1,B_1} \oplus f_2)\right)\circ i_{A_1} \\
& \;=\; {i_{B_1}}\adj \circ \left((f_1 \oplus 0_{A_2,B_2})\circ i_{A_j} + %
(0_{A_1,B_1} \oplus f_2) \circ i_{A_1}\right)\\
& \;=\; {i_{B_1}}\adj \circ \left(i_{B_1} \circ f_1 + i_{B_1} \circ 0_{A_1,B_1} \right) \;=\;
{i_{B_1}}\adj \circ i_{B_1} \circ f_1 + {i_{B_1}}\adj \circ  i_{B_1} \circ 0_{A_1,B_1}\\
& \;=\; f_1 + 0_{A_1,B_1} \;=\; f_1.
\end{align*}
The remaining cases are shown similarly.
\end{proof}

\begin{example} \label{ex:OMSF}
Let $F$ be a Pythagorean, formally real $\ast$-sfield. In the dagger category $\OMS{F}$, we may understand the intuition behind the concepts mentioned in this section.

The zero object of $\OMS{F}$ is the zero space. By Lemma~\ref{lem:direct-sum-of-orthomodular-spaces}, the direct sum of two uniform orthomodular spaces is again a uniform orthomodular space. Hence $\OMS{F}$ has biproducts, a biproduct is given by the direct sum.

The isometries of $\OMS{F}$ are by Lemma~\ref{lem:unitary-maps-and-isometries} the linear isometries and the dagger isomorphisms are the unitary maps. The dagger simple objects of $\OMS{F}$ are the $1$-dimensional spaces.

The zero maps of $\OMS{F}$ are the constant $0$ linear maps. Finally, for linear maps $f, g \colon$ $H_1 \to H_2$ in $\OMS{F}$, $f+g$ is their usual sum.
\end{example}

To conclude the section, we consider functors between dagger categories that take into account the daggers.

By a {\it dagger functor} $\mathsf F$ between dagger categories $\C$ and $\mathcal D$, we mean functor between $\C$ and $\mathcal D$ preserving the dagger of morphisms. We call $\mathsf F$ {\it dagger essentially surjective} if, for any $Y \in \mathcal D$ there is an $X \in \C$ such that $\mathsf F(X)$ is dagger isomorphic to $Y$.

Moreover, following J.~Vicary \cite{Vic}, we call $\mathsf F$ a {\it unitary dagger equivalence} between $\C$ and $\mathcal D$ if there exists a dagger functor $F \colon \C \to \mathcal D$  which is part of an adjoint equivalence of categories, such that the unit and counit natural transformations are dagger isomorphisms at every stage. By \cite[Lemma~V.1]{Vic}, a dagger functor can be made a part of a unitary dagger equivalence if and only if it is full, faithful, and dagger essentially surjective.

\section{Categories of orthosets: three basic hypotheses}
\label{sec:three-basic-hypotheses}

After the lengthy preparations, we are now ready to turn our attention to the actual aims of this work. In a first step, we shall investigate dagger categories of orthosets subjected to three basic assumptions.

Throughout the remainder of this paper, we will denote by $\C$ a dagger category whose objects are orthosets of at most countable rank, whose morphisms are maps between them, and whose dagger assigns to each morphism an adjoint.

We assume that $\C$ fulfils the following conditions:
\begin{itemize}

\item[\rm (H1)] $\C$ has dagger biproducts.

\item[\rm (H2)] Let $X \in \C$ and let $A$ be a subspace of $X$. Then $A$ and $A\c$ as well as the inclusion maps $\iota_A \colon A \to X$ and $\iota_{A\c} \colon A\c \to X$ belong to $\C$ and
\[ \begin{tikzcd} A \arrow[r, "\iota_A"] & X & A\c \arrow[l, "\;\;\iota_{A\c}"'] \end{tikzcd} \]
is a dagger biproduct of $A$ and $A\c$.

\item[\rm (H3)] % Up to dagger isomorphism, $\C$ contains exactly one dagger simple object.
\begin{itemize}

\item[\rm (a)] Any unital orthoset in $\C$ is dagger simple.

\item[\rm (b)] Any two unital orthosets in $\C$ are dagger isomorphic.

\end{itemize}

\end{itemize}

Note that the assumptions are fulfilled if $\C$ consists solely of the zero orthoset and its identity map. We shall disregard this trivial case, that is, we will tacitly assume that $\C$ contains an orthoset of non-zero rank.

\begin{example} \label{ex:OMSF-fulfils-H1-H3}
For any Pythagorean, formally real $\ast$-sfield $F$, the dagger category $\OMS{F}$ fulfils conditions {\rm (H1)}--{\rm (H3)}.

Ad {\rm (H1)}: Biproducts are direct sums, see Example~\ref{ex:OMSF}.

Ad {\rm (H2)}: By definition, an orthomodular space is the direct sum of any pair of complementary subspaces.

Ad {\rm (H3)(a)}: The unital orthosets in $\OMS{F}$ are the $1$-dimensional spaces, which are dagger simple, see Example~\ref{ex:OMSF}.

Ad {\rm (H3)(b)}: Given two $1$-dimensional spaces in $\OMS{F}$, the linear map sending a unit vector to a unit vector is unitary and hence a dagger isomorphism.
\end{example}

\begin{lemma} \label{lem:zeroorthoset}
The zero orthoset $\Zero$ is the zero object of $\C$. For any $X \in \C$, the morphism $\Zero \to X$ is the map sending $0$ to $0$.
\end{lemma}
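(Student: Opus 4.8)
The plan is to verify the defining universal property of a zero object: for the orthoset $\Zero = \{0\}$, I must show that for every $X \in \C$ there is a \emph{unique} morphism $\Zero \to X$ and a \emph{unique} morphism $X \to \Zero$, and identify these morphisms explicitly. The natural candidate is forced at the level of underlying maps, since as a set $\Zero$ has only the single element $0$, and any adjointable map $f$ satisfies $f(0)=0$ (noted in the excerpt, following from adjointability). So on the level of functions there is at most one map $\Zero \to X$, namely $0 \mapsto 0$, and the unique map $X \to \Zero$ sends everything to $0$. The real content is to confirm that these set-theoretic maps are genuinely morphisms of $\C$ and that $\Zero$ itself is an object of $\C$.

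First I would locate $\Zero$ inside $\C$. Since we tacitly assume $\C$ contains an orthoset $X$ of non-zero rank, I apply \textrm{(H2)} to a suitable subspace of $X$. Concretely, choosing $A = X$ gives $A\c = X\c = \{0\} = \Zero$ as a subspace, and \textrm{(H2)} asserts that $A\c$ together with its inclusion map $\iota_{A\c}\colon \Zero \to X$ belongs to $\C$; moreover the diagram exhibiting $A \oplus A\c$ is a dagger biproduct, with $A \oplus \Zero = X$. This simultaneously puts $\Zero$ into $\C$ and supplies the morphism $\iota_{\Zero}\colon \Zero \to X$, which as a map sends $0$ to $0$. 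Its dagger $\iota_{\Zero}\adj\colon X \to \Zero$ is then the required morphism in the other direction.

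The substantive step is uniqueness, and it is where the dagger-biproduct structure does the work. For uniqueness of $X \to \Zero$: any morphism $g\colon X \to \Zero$ must have $g(x) \in \Zero = \{0\}$, so $g$ is forced to be the constant $0$ map; there is literally only one function, so uniqueness is automatic once I know at least one such function is a morphism, which is guaranteed by taking $g = \iota_{\Zero}\adj$. For uniqueness of $\Zero \to X$: any morphism $h\colon \Zero \to X$ satisfies $h(0) = 0$ by adjointability, so again there is only one function $\Zero \to X$, and it coincides with $\iota_{\Zero}$. Thus the hom-sets $\homset(\Zero, X)$ and $\homset(X, \Zero)$ are each singletons, which is exactly the statement that $\Zero$ is a zero object.

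The only point requiring care — and the closest thing to an obstacle — is ensuring that $\Zero$ is independent of the choice of $X$ used to produce it via \textrm{(H2)}, and that the forced maps are honestly adjointable (i.e.\ really lie in $\C$) rather than merely being set maps. Both are handled cleanly: \textrm{(H2)} directly guarantees the inclusion $\iota_{\Zero}$ and hence (via the dagger) its adjoint are $\C$-morphisms, and the underlying set $\{0\}$ is the same regardless of $X$, so the zero orthoset is canonical. The final identification in the statement — that $\Zero \to X$ is the map $0 \mapsto 0$ — then reads off immediately from the description of $\iota_{\Zero}$ as an inclusion.
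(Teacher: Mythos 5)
Your proof is correct, but it takes a genuinely different route from the paper's. The paper does not construct the zero object: its existence is taken as given (implicit in (H1), since the very definition of a dagger biproduct refers to zero morphisms and hence to a zero object), and the proof shows that any zero object $X_0$ must have rank $0$. Namely, if $x \in X_0$ were proper, the biproduct law ${\iota_2}\adj \circ \iota_1 = 0_{X_0,X_0}$ for $X_0 \oplus X_0$ gives $\iota_1(x) \perp \iota_2(x)$, so $\iota_1$ and $\iota_2$ are distinct morphisms $X_0 \to X_0 \oplus X_0$, contradicting the uniqueness of morphisms out of a zero object; the identification of $\Zero \to X$ with $0 \mapsto 0$ is then read off from adjointability, exactly as you do. You instead exhibit $\Zero$ as a zero object directly: (H2) with $A = X$ puts $\Zero = X\c$ into $\C$ together with its inclusion, the dagger supplies the morphism $X \to \Zero$, and concreteness (morphisms are adjointable maps, and adjointable maps send $0$ to $0$) forces $\homset(\Zero, X)$ and $\homset(X, \Zero)$ to be singletons. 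Your route is more elementary and self-contained: it uses only (H2) and preservation of falsity, it proves existence of the zero object rather than presupposing it, and it never needs to interpret the abstract zero morphism $0_{X_0,X_0}$ as the constant-falsity map --- a reading that the paper's computation ${\iota_2}\adj(\iota_1(x)) = 0$ leans on, and which is transparent only once one already knows the zero object is $\Zero$. What the paper's argument buys in exchange is the explicit identification of whatever zero object the categorical axioms provide with the rank-$0$ orthoset; but that also follows from your version, since any zero object is $\C$-isomorphic to $\Zero$, and the identity of such an object then factors through the one-element set $\{0\}$, forcing the object itself to be $\{0\}$.
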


\begin{proof}
Let $X_0$ be a zero object of $\C$ and assume that $X_0$ has rank $\geq 1$. By (H1), we have the dagger biproduct $\begin{tikzcd}[cramped] X_0 \arrow[r, "\iota_1"] & X_0 \oplus X_0 & X_0 \arrow[l, "\;\;\iota_2"'] \end{tikzcd}$. For any $x \in {X_0}\withoutzero$, we have $\iota_1(x) \perp \iota_2(x)$ because ${\iota_2}\adj(\iota_1(x)) = 0 \perp x$. But this means that $\iota_1$ and $\iota_2$ are distinct morphisms. We conclude that $X_0$ has rank $0$, that is, $X_0 = \Zero$.

Consider a morphism $f \colon \Zero \to X$. Since an adjointable maps sends $0$ to $0$, we have that $f(0) = 0$.
\end{proof}

\begin{lemma} \label{lem:dagger-isos-in-C}
Let $f \colon X \to Y$ be a $\C$-morphism.

\begin{itemize}

\item[\rm (i)] If $f$ is a dagger isomorphism, then $f$ is an orthoisomorphism.

\item[\rm (ii)] Assume that $f$ is an orthoisomorphism. Then $f\adj \circ f \prl \id_X$, and $f$ is a dagger isomorphism if and only if $f\adj \circ f = \id_X$.

\item[\rm (iii)] If $f$ is a dagger monomorphism, then $f$ preserves and reflects $\perp$.

\end{itemize}
\end{lemma}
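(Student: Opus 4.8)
The plan is to reduce all three parts to the characterisation of orthoisomorphisms in Proposition~\ref{prop:unitary-maps} together with the defining property of the dagger, namely that $f\adj$ is an adjoint of $f$ in the sense of Definition~\ref{def:adjoint}. Throughout I will use that the morphisms of $\C$ are genuine maps between orthosets and that composition is ordinary function composition, so that a categorical identity such as $f\adj \circ f = \id_X$ reads literally as $f\adj(f(x)) = x$ for every $x \in X$, and $\id_X$ is the identity map.

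For (i), I would first observe that the two defining equations of a dagger isomorphism, $f\adj \circ f = \id_X$ and $f \circ f\adj = \id_Y$, exhibit $f\adj$ as both a left and a right inverse of $f$. Hence $f$ is a bijection with $f^{-1} = f\adj$. Since the dagger supplies $f\adj$ as an adjoint of $f$, the inverse $f^{-1}$ is an adjoint of $f$, and the implication (b)$\,\Rightarrow\,$(a) of Proposition~\ref{prop:unitary-maps} yields that $f$ is an orthoisomorphism.

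For (ii), the relation $f\adj \circ f \prl \id_X$ is immediate from the implication (a)$\,\Rightarrow\,$(c) of Proposition~\ref{prop:unitary-maps}: an orthoisomorphism is bijective and adjointable, and every adjoint $g$ satisfies $g \circ f \prl \id_X$; applying this to the adjoint $g = f\adj$ gives the claim. For the stated equivalence, the forward direction holds by definition of a dagger isomorphism. Conversely, assuming $f\adj \circ f = \id_X$, I would use that $f$ is bijective (being an orthoisomorphism) to cancel $f$ on the right and conclude $f\adj = f^{-1}$; then $f \circ f\adj = f \circ f^{-1} = \id_Y$, so both dagger-isomorphism identities hold.

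Part (iii) is the most direct. Assuming $f\adj \circ f = \id_X$, I would apply the adjoint relation of Definition~\ref{def:adjoint} to the pair $x_1 \in X$ and $y = f(x_2) \in Y$, obtaining that $f(x_1) \perp f(x_2)$ holds if and only if $x_1 \perp f\adj(f(x_2))$; since $f\adj(f(x_2)) = x_2$ by the isometry identity, this says exactly that $f(x_1) \perp f(x_2)$ if and only if $x_1 \perp x_2$, which is preservation and reflection of $\perp$ at once. No step presents a genuine obstacle; the only points requiring care are the reading of categorical identities as literal identities of maps and, in the converse of (ii), invoking the bijectivity granted by the orthoisomorphism hypothesis before cancelling $f$.
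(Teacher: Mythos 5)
Your proposal is correct and follows essentially the same route as the paper: parts (i) and (ii) are obtained by specialising Proposition~\ref{prop:unitary-maps} (the paper simply calls them ``immediate'' from that proposition), and part (iii) is exactly the paper's argument, applying the adjointness relation of Definition~\ref{def:adjoint} together with $f\adj \circ f = \id_X$ to get $f(x_1) \perp f(x_2)$ iff $x_1 \perp x_2$. The extra care you take in spelling out the bijectivity/cancellation step in (ii) is sound and consistent with the paper's concrete reading of morphisms as maps.
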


\begin{proof}
Parts (i) and (ii) are immediate from Proposition~\ref{prop:unitary-maps}.

Ad (iii): $f\adj \circ f = \id_X$ implies that, for any $x_1, x_2 \in X$, $x_1 \perp x_2$ iff $f\adj(f(x_1)) \perp x_2$ iff $f(x_1) \perp f(x_2)$.
\end{proof}

\begin{lemma} \label{lem:inclusion-and-Sasaki-in-C}
Let $A$ be a subspace of $X \in \C$. Then the following hold:
\begin{itemize}

\item[\rm (i)] $A$ as well as the inclusion map $\iota_A \colon A \to X$ are in $\C$. We have $\iota_A = \id_A \oplus 0_{\Zero,A\c}$ and ${\iota_A}\adj = \id_A \oplus 0_{A\c,\Zero}$. Moreover, $\iota_A$ is an orthometry and ${\iota_A}\adj$ is in $\C$ the unique Sasaki map from $X$ onto $A$.

\item[\rm (ii)] Let $p = \iota_A \circ {\iota_A}\adj$. Then $p = \id_A \oplus 0_{A\c,A\c}$. Moreover, $p$ is in $\C$ the unique projection of $X$ onto $A$.

\item[\rm (iii)] $P(A)$ is a subspace of $P(X)$.

\end{itemize}
\end{lemma}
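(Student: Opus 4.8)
The plan is to read off the structural parts of the statement from the dagger-biproduct supplied by (H2), and to treat the two uniqueness claims as the only points needing real work. First I would dispose of the structural assertions of part~(i). Membership of $A$ and of $\iota_A$ in $\C$ is exactly what (H2) asserts, and (H2) moreover presents $A \to X \leftarrow A\c$ as a dagger biproduct, so $X = A \oplus A\c$ with coprojections $\iota_A, \iota_{A\c}$. Applying Lemma~\ref{lem:isometries-in-dagger-biproduct}(i) to this biproduct, and recalling from Lemma~\ref{lem:zeroorthoset} that the zero orthoset is the zero object, gives $\iota_A = \id_A \oplus 0_{\Zero,A\c}$ and ${\iota_A}\adj = \id_A \oplus 0_{A\c,\Zero}$. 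Since $\iota_A$ is a coprojection of a dagger biproduct it is a dagger monomorphism, so ${\iota_A}\adj \circ \iota_A = \id_A$; together with the fact that $\image \iota_A = A$ is orthoclosed, Proposition~\ref{prop:partial-isometries}(ii) shows that $\iota_A$ is an orthometry with generalised inverse ${\iota_A}\adj$. That same identity says ${\iota_A}\adj$ is an adjoint of $\iota_A$ restricting to $\id_A$ on $A$, i.e.\ a Sasaki map of $X$ onto $A$.

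The substantial point of part~(i) is uniqueness of this Sasaki map in $\C$, and here I would exploit that $X = A \oplus A\c$ is in particular a coproduct. Let $\sigma \colon X \to A$ be any Sasaki map belonging to $\C$. A morphism out of a coproduct is determined by its composites with the two coprojections, so it suffices to identify $\sigma \circ \iota_A$ and $\sigma \circ \iota_{A\c}$. The Sasaki condition gives $\sigma \circ \iota_A = \id_A$. For the remaining composite I use that $\sigma$ is an adjoint of $\iota_A$: given $a \in A$ and $a' \in A\c$ we have $a \perp a'$, hence $a \perp \sigma(a')$, so $\sigma(a') \in A\c$; as also $\sigma(a') \in A$ and $A \cap A\c = \{0\}$ by (O2), this forces $\sigma \circ \iota_{A\c} = 0_{A\c,A}$. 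But ${\iota_A}\adj$ satisfies the same two identities, the second being the relation ${\iota_A}\adj \circ \iota_{A\c} = 0_{A\c,A}$ built into the definition of a dagger biproduct. By the universal property of the coproduct, $\sigma = {\iota_A}\adj$.

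For part~(ii), Lemma~\ref{lem:morphism-plus-zero} gives $p = \iota_A \circ {\iota_A}\adj = \id_A \oplus 0_{A\c,A\c}$. This $p$ is a projection onto $A$ by Lemma~\ref{lem:projections}(c): it is idempotent because ${\iota_A}\adj \circ \iota_A = \id_A$, it is self-adjoint because the dagger is an involutive contravariant functor, so $p\adj = \iota_A \circ {\iota_A}\adj = p$, and its image $A$ is orthoclosed. For uniqueness, any projection $q \in \C$ onto $A$ factors as $q = \iota_A \circ \sigma$ by Lemma~\ref{lem:projections}(b); composing on the left with ${\iota_A}\adj$ exhibits $\sigma = {\iota_A}\adj \circ q$ as a Sasaki map lying in $\C$, whence $\sigma = {\iota_A}\adj$ by part~(i), and therefore $q = \iota_A \circ {\iota_A}\adj = p$. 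Part~(iii) is then immediate: $\iota_A$ is adjointable, so $P(A)$ is a subspace of $P(X)$ by Lemma~\ref{lem:PA-subspace-of-PX}.

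I expect the one genuinely load-bearing step to be the uniqueness of the Sasaki map. A naive comparison of adjoints via Lemma~\ref{lem:irredundancy-adjoints-1} would only yield $\sigma \prl {\iota_A}\adj$, which is too weak here, since the objects of $\C$ are not assumed irredundant and $\prl$-equivalent maps need not coincide. The coproduct argument above is precisely what repairs this: the value $\sigma(a') = 0$ is pinned down exactly, rather than merely up to $\prl$, and this upgrades the comparison to an honest equality of morphisms, from which the uniqueness of the projection in part~(ii) then follows formally.
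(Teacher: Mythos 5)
Your proof is correct and follows essentially the same route as the paper: the structural claims via (H2) and Lemma~\ref{lem:isometries-in-dagger-biproduct}, the key uniqueness of the Sasaki map via the coproduct universal property (pinning down $\sigma \circ \iota_{A\c} = 0_{A\c,A}$, which you do by unfolding adjointness directly where the paper cites Lemma~\ref{lem:injective-surjective}), and the uniqueness of the projection by factoring $q = \iota_A \circ \sigma$ through Lemma~\ref{lem:projections}. Your closing remark about why mere $\prl$-equivalence of adjoints would be insufficient correctly identifies the point the coproduct argument is designed to overcome.
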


\begin{proof}
Ad (i): By (H2), subspaces and their inclusion maps belong to $\C$, and we have $X = A \oplus A\c$ via the inclusion maps. $\iota_A$ and ${\iota_A}\adj$ may be expressed in the indicated way by Lemma~\ref{lem:isometries-in-dagger-biproduct}.

As $\image \iota_A = A$ is orthoclosed and ${\iota_A}\adj \circ \iota_A = \id_A$, $\,\iota_A$ is by Proposition \ref{prop:partial-isometries}(ii) an orthometry and ${\iota_A}\adj$ is a Sasaki map from $X$ onto $A$. If $\sigma \colon X \to A$ is a further Sasaki map, then, again by Proposition~\ref{prop:partial-isometries}(ii), $\sigma$ is an adjoint of $\iota_A$ such that $\sigma \circ \iota_A = \id_A$. By Lemma~\ref{lem:injective-surjective}, $\kernel \sigma = (\image \iota_A)\c = A\c$ and hence $\sigma \circ \iota_{A\c} = 0_{A\c,A}$. As $X$ is the coproduct of $A$ and $A\c$, it follows $\sigma = {\iota_A}\adj$.

Ad (ii): By part (i) and Lemma~\ref{lem:projections}, $p$ is a projection of $X$ onto $A$. Moreover, $p = (\id_A \oplus 0_{\Zero,A\c}) \circ (\id_A \oplus 0_{A\c,\Zero}) = \id_A \oplus 0_{A\c,A\c}$.

Any further projection $p' \in \C$ from $X$ onto $A$ is, by Lemma~\ref{lem:projections}, of the form $p' = \iota_A \circ \sigma$ for a Sasaki map $\sigma \colon X \to A$. As $\sigma = {\iota_A}\adj \circ p' \in \C$, we have $\sigma = {\iota_A}\adj$ by part (i) and hence $p' = p$.

Ad (iii): As $\iota_A$ is adjointable, this holds by Lemma \ref{lem:PA-subspace-of-PX}.
\end{proof}

\begin{lemma} \label{lem:restriction-corestriction-in-C}
Let $f \colon X \to Y$ be a $\C$-morphism.
\begin{itemize}

\item[\rm (i)] Let $A$ be a subspace of $X$ and let $B$ be a subspace of $Y$ such that $\image f \subseteq B$. Then $f|_A^B$ is likewise a morphism.

If, in addition, $\kernel f \supseteq A\c$, then $(f|_A^B)\adj = f\adj|_B^A$.

\item[\rm (ii)] The zero-kernel restriction $\zerokernel f$ is a $\C$-morphism.

\end{itemize}
\end{lemma}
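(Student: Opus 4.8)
The plan is to express each restriction–corestriction as a composite of morphisms that are already known to lie in $\C$, and then to obtain the adjoint relation from the contravariance of the dagger. Write $\iota_A \colon A \to X$ and $\iota_B \colon B \to Y$ for the two inclusion maps. The key fact I would use is that the corestricted Sasaki map ${\iota_B}\adj \colon Y \to B$ restricts to the identity on $B$; this is exactly the defining property of a Sasaki map recorded in Lemma~\ref{lem:inclusion-and-Sasaki-in-C}(i). Since $\image f \subseteq B$, for every $x \in A$ we have $f(\iota_A(x)) = f(x) \in B$, so that ${\iota_B}\adj(f(\iota_A(x))) = f(x)$. This identifies
\[ f|_A^B \;=\; {\iota_B}\adj \circ f \circ \iota_A, \]
a composite of three $\C$-morphisms (using Lemma~\ref{lem:inclusion-and-Sasaki-in-C}(i) for $\iota_A$ and ${\iota_B}\adj$), and hence itself a $\C$-morphism. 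This settles the first assertion of part (i).

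For the adjoint assertion, the additional hypothesis $\kernel f \supseteq A\c$ is precisely what guarantees that $f\adj$ carries $B$ into $A$, so that the corestriction $f\adj|_B^A$ is even defined. Indeed, by Lemma~\ref{lem:injective-surjective} we have $\kernel f = (\image f\adj)\c$, whence $A\c \subseteq (\image f\adj)\c$; applying orthocomplementation, which reverses inclusions, and using that $A$ is orthoclosed yields $\image f\adj \subseteq (\image f\adj)\cc \subseteq A\cc = A$. With this inclusion in hand, part (i) applied to $f\adj$ (with the roles of the domain and codomain subspaces interchanged) gives $f\adj|_B^A = {\iota_A}\adj \circ f\adj \circ \iota_B$. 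Taking daggers in the displayed formula for $f|_A^B$ and using that the dagger is involutive and contravariant,
\[ (f|_A^B)\adj \;=\; {\iota_A}\adj \circ f\adj \circ ({\iota_B}\adj)\adj \;=\; {\iota_A}\adj \circ f\adj \circ \iota_B \;=\; f\adj|_B^A. \]

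Finally, part (ii) falls out as the instance of part (i) with $A = (\kernel f)\c$ and $B = (\image f)\cc$: both are orthoclosed, and $\image f \subseteq (\image f)\cc = B$ holds automatically, so $\zerokernel f = f|_A^B$ is a $\C$-morphism. The only genuinely delicate point in the whole argument is the verification $\image f\adj \subseteq A$ in the second half of part (i), where the kernel hypothesis is consumed; everything else is routine bookkeeping with the defining property of Sasaki maps and the functoriality of the dagger. I would therefore present the composite formula first, verify it pointwise via the Sasaki identity, and keep the orthocomplement computation as the single conceptual step.
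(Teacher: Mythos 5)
Your proposal is correct and follows essentially the same route as the paper: both express $f|_A^B$ as the composite ${\iota_B}\adj \circ f \circ \iota_A$ via Lemma~\ref{lem:inclusion-and-Sasaki-in-C}(i), deduce $\image f\adj \subseteq A$ from the kernel hypothesis, and obtain the adjoint identity by contravariance of the dagger, with part (ii) as the special case $A = (\kernel f)\c$, $B = (\image f)\cc$. The only difference is that you spell out the pointwise Sasaki verification and the orthocomplement computation that the paper leaves implicit.
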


\begin{proof}
Ad (i): Let $\iota_A \colon A \to X$ and $\iota_B \colon B \to Y$ be the inclusion maps. By Lem\-ma~\ref{lem:inclusion-and-Sasaki-in-C}(i), $f|_A^B = {\iota_B}\adj \circ f \circ \iota_A$. If $\kernel f \supseteq A\c$, we have $\image f\adj \subseteq A$ and hence $f\adj|_B^A = {\iota_A}\adj \circ f\adj \circ \iota_B = (f|_A^B)\adj$.

Ad (ii): This is clear from part (i).
\end{proof}

\begin{lemma} \label{lem:C-consists-of-Dacey-spaces}
Any $X \in \C$ is an atomistic Dacey space and ${\mathsf C}(X)$ is an AC orthomodular lattice.
\end{lemma}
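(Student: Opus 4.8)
The plan is to reduce everything to Proposition~\ref{prop:adjoints-and-Dacey}, whose hypothesis is precisely that every inclusion map of a subspace is adjointable. By (H2) this is automatic: for any subspace $A$ of $X$ the inclusion $\iota_A \colon A \to X$ is a $\C$-morphism and hence possesses the adjoint ${\iota_A}\adj$. Thus Proposition~\ref{prop:adjoints-and-Dacey} immediately yields that $X$ is a Dacey space, and once we know in addition that $X$ is atomistic, part~(i) of that proposition gives that ${\mathsf C}(X)$ is an AC orthomodular lattice. So the whole substance of the lemma lies in establishing atomisticity.

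The heart of the argument is to show that for every $x \in X\withoutzero$ the subspace $U = \{x\}\cc$, which is unital by Lemma~\ref{lem:unital}(i), is in fact a singleton --- equivalently, that it has no proper non-zero subspace. This is where (H3) enters. By (H2) we have $U \in \C$, so by (H3)(a) the orthoset $U$ is dagger simple. Suppose for contradiction that $A$ is a subspace of $U$ with $A \neq \{0\}$ and $A \neq U$. Applying (H2) inside $U$ together with Lemma~\ref{lem:inclusion-and-Sasaki-in-C}(i), the inclusion $\iota_A \colon A \to U$ is an orthometry satisfying ${\iota_A}\adj \circ \iota_A = \id_A$, that is, a dagger monomorphism from a non-zero object. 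Dagger simplicity of $U$ then forces $\iota_A$ to be a dagger isomorphism, hence by Lemma~\ref{lem:dagger-isos-in-C}(i) an orthoisomorphism and in particular surjective; but surjectivity gives $A = U$, a contradiction. Therefore the only orthoclosed subsets of $U$ are $\{0\}$ and $U$ itself. In particular $U$ cannot contain two orthogonal proper elements $u_1 \perp u_2$, since $\{u_1\}\cc$ would then be a proper non-zero subspace; as $U$ is non-zero, it has rank $1$, i.e.\ $U$ is a singleton.

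It remains to deduce atomisticity in the form noted after Definition~\ref{def:separation-axioms}. Let $x, y \in X\withoutzero$ with $\{x\}\c \subseteq \{y\}\c$; applying orthocomplementation reverses the inclusion, giving $\{y\}\cc \subseteq \{x\}\cc$. The set $\{y\}\cc$ is orthoclosed in $X$ and is contained in the singleton $\{x\}\cc$, so by the Dacey property (criterion~(d) of Lemma~\ref{lem:Dacey-space}) it is already orthoclosed in $\{x\}\cc$; being non-zero, because it contains $y$, it must equal $\{x\}\cc$ by the previous paragraph. Hence $\{y\}\c = \{x\}\c$, which is exactly atomisticity. Combining this with the Dacey property and Proposition~\ref{prop:adjoints-and-Dacey}(i) completes the proof.

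I expect the main obstacle to be the middle step: extracting a concrete, rank-lowering dagger monomorphism from a merely abstract proper subspace, and matching the category-theoretic notion of dagger simplicity with the order-theoretic notion of being a singleton. The translation hinges on Lemma~\ref{lem:inclusion-and-Sasaki-in-C}(i) (inclusions are isometries, so they are genuine dagger monomorphisms) and Lemma~\ref{lem:dagger-isos-in-C}(i) (dagger isomorphisms are orthoisomorphisms, hence surjective); verifying that these really force $A = U$, rather than merely an isomorphism onto a proper part, is the one point that requires care.
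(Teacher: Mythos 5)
Your proof is correct and takes essentially the same route as the paper: Dacey-ness via (H2) and Proposition~\ref{prop:adjoints-and-Dacey}, and atomisticity by applying (H3)(a) to the dagger simple object $\{x\}\cc$, where (H2) makes the inclusion of any non-zero subspace a dagger monomorphism, hence a dagger isomorphism, hence surjective, forcing equality. The paper runs this contradiction directly on $\{y\}\cc \subsetneq \{x\}\cc$ instead of passing through your intermediate claim that $\{x\}\cc$ has no proper non-zero subspaces (and is a singleton), but the mechanism is identical.
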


\begin{proof}
By Lemma~\ref{lem:inclusion-and-Sasaki-in-C}(i), all the inclusion maps of the subspaces into $X$ are adjointable. Hence, by Proposition~\ref{prop:adjoints-and-Dacey}, $X$ is a Dacey space.

Assume, for sake of contradiction, that $X$ is not atomistic. Then there are $x, y \in X\withoutzero$ such that $\{y\}\cc \subsetneq \{x\}\cc$. By orthomodularity, $\{y\}\cc$ is a subspace of the orthoset $\{x\}\cc \in \C$. By (H2), the inclusion map $\iota \colon \{y\}\cc \to \{x\}\cc$ is a dagger monomorphism. By (H3)(a) $\{x\}\cc$ is dagger simple, hence $\iota$ is a dagger isomorphism and in particular bijective. But this means $\{y\}\cc = \{x\}\cc$, a contradiction.

It now follows from Proposition~\ref{prop:adjoints-and-Dacey}(i) that ${\mathsf C}(X)$ is AC.
\end{proof}

\begin{lemma} \label{lem:extended-dagger-biproduct}
Let $X \in \C$.
\begin{itemize}

\item[\rm (i)] Let $(A_1, \ldots, A_k)$ be a decomposition of $X$. Then $X$, together with the inclusion maps, is the dagger biproduct of $A_1, \ldots, A_k$.

\item[\rm (ii)] Assume that $X = A_1 \oplus \ldots \oplus A_k$. Then the coprojections $\iota_{A_1} \colon A_1 \to X, \ldots,$ $\iota_{A_k} \colon A_k \to X$ are orthometries, and $(\image \iota_{A_1}, \ldots, \image \iota_{A_k})$ is a decomposition of $X$.

\end{itemize}
\end{lemma}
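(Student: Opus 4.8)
The plan is to reduce both parts to the binary case and induct on $k$.

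For (i) I would induct on $k$. The case $k=1$ forces $A_1 = \{0\}\c = X$, which is trivial. For the inductive step set $B = \bigvee_{j=2}^k A_j$. Since $A_1 = (\bigvee_{j \neq 1} A_j)\c = B\c$ and $B$ is orthoclosed, we have $A_1\c = B\cc = B$, so $(A_1,B)$ is a pair of complementary subspaces; by (H2), $X$ is the dagger biproduct of $A_1$ and $B$ via the inclusion maps. Next I would verify that $(A_2, \ldots, A_k)$ is a decomposition of the subspace $B$. For $i \geq 2$, writing $C_i = \bigvee_{2 \leq j \leq k,\, j \neq i} A_j$ and using $B = A_1\c$ together with the Dacey identities of Lemma~\ref{lem:Dacey-space}(c),(d) (available since $X$ is Dacey by Lemma~\ref{lem:C-consists-of-Dacey-spaces}), one computes $C_i\ce{B} = C_i\c \cap A_1\c = (C_i \vee A_1)\c = (\bigvee_{j\neq i} A_j)\c = A_i$. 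By the inductive hypothesis $B$ is the dagger biproduct of $A_2, \ldots, A_k$ via the inclusions; as the dagger biproduct is associative and the composite $A_i \hookrightarrow B \hookrightarrow X$ is literally the inclusion $A_i \hookrightarrow X$, the claim follows.

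For (ii) the substance lies in the binary case $X = A \oplus B$, the general case following by regrouping $X = A_i \oplus \bigoplus_{j \neq i} A_j$ and inducting. Let $\iota_A, \iota_B$ be the coprojections; they are dagger monomorphisms with ${\iota_B}\adj \circ \iota_A = 0$ and, taking daggers, ${\iota_A}\adj \circ \iota_B = 0$. By Lemma~\ref{lem:injective-surjective} these give $\image \iota_A \subseteq (\image \iota_B)\c$ and $\image \iota_B \subseteq (\image \iota_A)\c$, so $C := (\image\iota_A)\cc$ and $D := (\image\iota_B)\cc$ are mutually orthogonal subspaces. The crucial point is that $C \vee D = X$, equivalently $(\image\iota_A)\c \cap (\image \iota_B)\c = \{0\}$. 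Suppose some $x \neq 0$ lay in this intersection and put $T = \{x\}\cc$. Since $x \perp \image\iota_A$ and $x \perp \image\iota_B$, both images lie in $\{x\}\c = T\c$, so $\iota_A$ and $\iota_B$ factor through the inclusion $\iota_{T\c}\colon T\c \to X$ by Lemma~\ref{lem:restriction-corestriction-in-C}. Then the projection $p_{T\c} = \iota_{T\c} \circ {\iota_{T\c}}\adj$ of Lemma~\ref{lem:inclusion-and-Sasaki-in-C} satisfies $p_{T\c} \circ \iota_A = \iota_A$ and $p_{T\c} \circ \iota_B = \iota_B$; as $\id_X$ does the same and $X$ is a coproduct of $A$ and $B$, the universal property forces $p_{T\c} = \id_X$. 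But $p_{T\c}$ sends $x \in T$ to $0 \neq x$, a contradiction. Hence the intersection is $\{0\}$.

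With $C \perp D$ and $C \vee D = X$ secured, orthomodularity of ${\mathsf C}(X)$ (Lemma~\ref{lem:C-consists-of-Dacey-spaces}) applied to $D \leq C\c$ with $C\c \wedge D\c = (C \vee D)\c = \{0\}$ yields $C\c = D \vee (C\c \wedge D\c) = D$, so $(C,D)$ is a decomposition. To identify the images I would compare the idempotent self-adjoint map $p_A = \iota_A \circ {\iota_A}\adj$ with the projection $p_C = \iota_C \circ {\iota_C}\adj$ onto $C$. Using $\image\iota_A \subseteq C$ and $\image\iota_B \subseteq D = C\c$, one checks $p_A \circ \iota_A = \iota_A = p_C \circ \iota_A$ and $p_A \circ \iota_B = 0 = p_C \circ \iota_B$, whence $p_A = p_C$ by the coproduct property. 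Taking images gives $\image\iota_A = \image p_A = \image p_C = C$, so $\image\iota_A$ is orthoclosed; combined with ${\iota_A}\adj \circ \iota_A = \id_A$, Proposition~\ref{prop:partial-isometries}(ii) makes $\iota_A$ an orthometry, and symmetrically for $\iota_B$. Thus $(\image\iota_A, \image\iota_B) = (C,D)$ is the required decomposition, and the general $k$ follows by regrouping and the inductive use of part~(i).

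I expect the main obstacle to be exactly the triviality of $(\image\iota_A)\c \cap (\image\iota_B)\c$: the underlying set of $X$ carries no additive structure in which to split a vector into its $A$- and $B$-components, so the familiar ``$\,p_A + p_B = \id\,$'' argument is unavailable at the level of points, and one must instead play the coproduct universal property against the concrete projection $p_{T\c}$. Once closedness of the images is in place, the remaining steps are routine lattice-theoretic and categorical bookkeeping.
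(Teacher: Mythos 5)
Your proof is correct, and while part (i) and the first two steps of part (ii) (pairwise orthogonality of the images, and their join being all of $X$) follow essentially the paper's own route, your treatment of the crucial step --- orthoclosedness of $\image \iota_{A_i}$ --- is genuinely different. The paper passes to the irredundant quotient: it invokes Lemma~\ref{lem:image-of-partial-isometry} to get that $\image P(\iota_{A_i})$ is orthoclosed in $P(X)$, and then uses atomisticity together with dagger simplicity of $\{x\}\cc$ (that is, hypothesis (H3)(a)) to show that $\iota_{A_i}$ carries each equivalence class $\class x$ onto the full class $\class{\iota_{A_i}(x)}$, whence $\image \iota_{A_i}$ is a union of classes and hence orthoclosed. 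You instead play the coproduct universal property a second time, identifying $p_A = \iota_A \circ {\iota_A}\adj$ with the concrete projection $p_C = \iota_C \circ {\iota_C}\adj$ onto $C = (\image \iota_A)\cc$, so that $\image \iota_A = \image p_A = \image p_C = C$ directly. This is more elementary and buys something real: your argument for (ii) uses only (H1), (H2) and the Dacey property of $X$ (which already follows from (H2) via Proposition~\ref{prop:adjoints-and-Dacey}), so it establishes the statement without (H3) --- your one citation of Lemma~\ref{lem:C-consists-of-Dacey-spaces}, whose proof does use (H3)(a), is needed only for orthomodularity, which is exactly the (H3)-free part. The paper's heavier route, on the other hand, additionally extracts the class-by-class bijectivity of $\iota_{A_i}$, which is reused elsewhere. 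Also your mechanism for the totality step differs in detail (the test morphism is the projection onto $\{x\}\c$ rather than the paper's family $0_{Y,Y} \oplus f$), but both are the same uniqueness trick.

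Two small points. First, your reduction of general $k$ to the binary case by regrouping is only sketched; one must check that $\image \iota_{B_i} = \bigvee_{j \neq i} \image \iota_{A_j}$, which needs the compatibility of joins and closures under the orthometry $\iota_{B_i}$ (Lemma~\ref{lem:Dacey-space}(c),(d)). In fact your argument runs verbatim for arbitrary $k$ without any regrouping: if $x \neq 0$ is orthogonal to all images, the projection onto $\{x\}\c$ agrees with $\id_X$ on every coprojection; and for orthoclosedness, $p_{C_i} \circ \iota_{A_j} = 0$ for $j \neq i$ holds because $\image \iota_{A_j} \subseteq (\image \iota_{A_i})\c = {C_i}\c = \kernel {\iota_{C_i}}\adj$; one then concludes with criterion (a) of Lemma~\ref{lem:Dacey-space} instead of the explicit orthomodularity computation. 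Second, in your totality step the factorisation of $\iota_A$ through $\iota_{T\c}$ via Lemma~\ref{lem:restriction-corestriction-in-C} is not actually needed; the pointwise identity $p_{T\c} \circ \iota_A = \iota_A$ already follows from $\image \iota_A \subseteq T\c$ and ${\iota_{T\c}}\adj|_{T\c} = \id_{T\c}$.
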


\begin{proof}
Ad (i): By Lemma~\ref{lem:C-consists-of-Dacey-spaces}, ${\mathsf C}(X)$ is orthomodular. Hence, for every $i = 2, \ldots, k$, $(A_1 \vee \ldots \vee A_{i-1}, \; A_i)$ is a decomposition of the subspace $A_1 \vee \ldots \vee A_i$. Thus the assertion follows from (H2) by an inductive argument.

Ad (ii): Let $x \in A_i$ and $y \in A_j$, where $i \neq j$. Then ${\iota_{A_i}}\adj(\iota_{A_j}(y)) = 0 \perp x$ and hence $\iota_{A_i}(x) \perp \iota_{A_j}(y)$. We conclude that $\image \iota_{A_1}, \ldots, \image \iota_{A_k}$ are pairwise orthogonal.

Let $Y = (\image \iota_{A_1})\cc \vee \ldots \vee (\image \iota_{A_k})\cc$. We claim that $Y = X$. Indeed, by (H2), $\begin{tikzcd}[cramped] Y \arrow[r, "\iota_Y"] & X & Y\c \arrow[l, "\iota_{Y\c}"'] \end{tikzcd}$ is a dagger biproduct and for any $f \colon Y\c \to Y\c$, the kernel of $0_{Y,Y} \oplus f \colon X \to X$ contains the subspace $Y$. We conclude that $(0_{Y,Y} \oplus f) \circ \iota_{A_i} = 0_{A_i,X}$ for any $i = 1, \ldots, k$ and any $f$, including the cases $f = 0_{Y\c,Y\c}$ and $f = \id_{Y\c}$.
\[ \begin{tikzcd}
Y \arrow[r, "\iota_Y"]
& X & Y\c \arrow[l, "\iota_{Y\c}"'] \\
Y \arrow[u, "0_{Y,Y}"'] \arrow[r, "\iota_Y"']
& X \arrow[u, "0_{Y,Y} \oplus f"'] & Y\c \arrow[l, "\iota_{Y\c}"] \arrow[u, "f"']
\end{tikzcd}
\quad\quad
\begin{tikzcd}
& X \\
A_i \arrow[ur, "0_{A_i,X}" {yshift=-2pt}]  \arrow[r, "\iota_{A_i}"']
& X \arrow[u, "\exists!", "0_{Y,Y} \oplus f"', dashed, {yshift=-2pt}]
\end{tikzcd} \]
By the definition of a biproduct, $0_{Y,Y} \oplus f$ is unique. Hence $0_{Y\c,Y\c} = \id_{Y\c}$, that is, $Y\c = \{0\}$ and $Y = X$.

It remains to show that $\image \iota_{A_1}, \ldots, \image \iota_{A_k}$ are orthoclosed. Then it will follow by Lemma \ref{lem:Dacey-space} that $(\image \iota_{A_1}, \ldots, \image \iota_{A_k})$ is a decomposition of $X$. It will furthermore follow by Proposition~\ref{prop:partial-isometries}(ii), $\iota_{A_1}, \ldots, \iota_{A_k}$ are orthometries.

Let $1 \leq i \leq k$. As $\iota_{A_i}$ is a dagger monomorphism, we have by Lemma~\ref{lem:image-of-partial-isometry} that $\image P(\iota_{A_i})$ is an orthoclosed subset of $P(X)$. Let $x \in {A_i}\withoutzero$. We have to show that $\iota_{A_i}$ maps $\class x$ onto $\class{\iota_{A_i}(x)}$.

Let $\iota_{\{x\}\cc} \colon \{x\}\cc \to A_i$ and $\iota_{\{\iota_{A_i}(x)\}\cc} \colon \{\iota_{A_i}(x)\}\cc \to X$ be the inclusion maps. Then $\iota_{A_i}\big|_{\{x\}\cc}^{\{\iota_{A_i}(x)\}\cc} = {\iota_{\{\iota_{A_i}(x)\}\cc}}\adj \circ \iota_{A_i} \circ \iota_{\{x\}\cc}$ is a dagger monomorphism because, by Lemma~\ref{lem:inclusion-and-Sasaki-in-C}(i),
\[ \begin{split}
& ({\iota_{\{\iota_{A_i}(x)\}\cc}}\adj \circ \iota_{A_i} \circ \iota_{\{x\}\cc})\adj \circ {\iota_{\{\iota_{A_i}(x)\}\cc}}\adj \circ \iota_{A_i} \circ \iota_{\{x\}\cc} \\
& \;=\; {\iota_{\{x\}\cc}}\adj \circ {\iota_{A_i}}\adj \circ \iota_{\{\iota_{A_i}(x)\}\cc} \circ {\iota_{\{\iota_{A_i}(x)\}\cc}}\adj \circ \iota_{A_i} \circ \iota_{\{x\}\cc} \\
& \;=\; {\iota_{\{x\}\cc}}\adj \circ {\iota_{A_i}}\adj \circ \iota_{A_i} \circ \iota_{\{x\}\cc}
\;=\; {\iota_{\{x\}\cc}}\adj \circ \iota_{\{x\}\cc}
\;=\; \id_{\{x\}\cc}.
\end{split} \]
By (H3)(a), $\iota_{A_i}\big|_{\{x\}\cc}^{\{\iota_{A_i}(x)\}\cc}$ is bijective. By Lemmas~\ref{lem:C-consists-of-Dacey-spaces} and~\ref{lem:subspaces-of-Dacey-spaces}(iii), both $A_i$ and $X$ are atomistic and by Lemma~\ref{lem:unital}(ii), $\{x\}\cc = \class x \cup \{0\}$ and $\{\iota_{A_i}(x)\}\cc = \class{\iota_{A_i}(x)}\cup \{0\}$. We conclude that $\iota_{A_i}$ maps $\class x$ bijectively to $\class{\iota_{A_i}(x)}$.
\end{proof}

For any dagger biproduct $X = A_1 \oplus \ldots \oplus A_k$, we may by Lemma~\ref{lem:extended-dagger-biproduct} identify the orthosets $A_1, \ldots, A_k$ with subspaces of $X$. This is what we will usually do from now on. That is, we usually consider dagger biproducts as decompositions.

$(A_1, \ldots, A_k)$ being a decomposition of $X$, let $1 \leq i_1 < \ldots < i_l \leq k$. We note that we may then identify the subspace $A_{i_1} \vee \ldots \vee A_{i_l}$ of $X$ with the dagger biproduct $A_{i_1} \oplus \ldots \oplus A_{i_l}$. Indeed, $(A_{i_1}, \ldots, A_{i_l})$ is a decomposition of $A_{i_1} \vee \ldots \vee A_{i_l}$ by criterion (a) of Lemma~\ref{lem:Dacey-space}. Hence $A_{i_1} \vee \ldots \vee A_{i_l} = A_{i_1} \oplus \ldots \oplus A_{i_l}$ via the inclusion maps.

\begin{lemma} \label{lem:decomposing-a-morphism}
Let $(A_1, \ldots, A_k)$ be a decomposition of $X \in \C$ and let $(B_1, \ldots, B_k)$ be a decomposition of \/ $Y \in \C$. Let $f \colon X \to Y$ be a $\C$-morphism such that $f(A_i) \subseteq B_i$ for each $i = 1, \ldots, k$. Then we have:
\begin{itemize}

\item[\rm (i)] $f = f|_{A_1}^{B_1} \oplus \ldots \oplus f|_{A_k}^{B_k}$.

\item[\rm (ii)] For $1 \leq i_1 < \ldots < i_l \leq k$, we have $f(A_{i_1} \oplus \ldots \oplus A_{i_l}) \subseteq B_{i_1} \oplus \ldots \oplus B_{i_l}$.

\end{itemize} 
\end{lemma}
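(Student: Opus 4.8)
The plan is to identify, via Lemma~\ref{lem:extended-dagger-biproduct}(i), the two decompositions with the dagger biproducts $X = A_1 \oplus \ldots \oplus A_k$ and $Y = B_1 \oplus \ldots \oplus B_k$ taken along the inclusion maps, and then to argue separately for the two parts.

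For part (i), I would first invoke Lemma~\ref{lem:restriction-corestriction-in-C}(i) to see that each $f|_{A_i}^{B_i}$ is a $\C$-morphism, using the hypothesis $f(A_i) \subseteq B_i$. By the definition of the biproduct of morphisms, the morphism $g = f|_{A_1}^{B_1} \oplus \ldots \oplus f|_{A_k}^{B_k}$ is the unique morphism $X \to Y$ with $g \circ \iota_{A_i} = \iota_{B_i} \circ f|_{A_i}^{B_i}$ for all $i$. The key point is then immediate: as maps between orthosets, $f \circ \iota_{A_i}$ and $\iota_{B_i} \circ f|_{A_i}^{B_i}$ both send $x \in A_i$ to $f(x) \in Y$, hence agree; since $X$ is the coproduct of $A_1, \ldots, A_k$, the uniqueness clause forces $f = g$.

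For part (ii), the decisive preliminary step is to note that the dagger is diagonal too: applying Lemma~\ref{lem:f-oplus-isomorphy} to the decomposition obtained in part (i) yields $f\adj = (f|_{A_1}^{B_1})\adj \oplus \ldots \oplus (f|_{A_k}^{B_k})\adj$, so that $f\adj(B_j) \subseteq A_j$ for every $j$. Writing $I = \{i_1, \ldots, i_l\}$, $C = \bigvee_{j \in I} A_j$, and $D = \bigvee_{j \in I} B_j$, I would use that distinct members of a decomposition are mutually orthogonal, so that $C \perp A_m$ for each $m \notin I$. Given $x \in C$ and $y \in B_m$ with $m \notin I$, adjointness gives that $f(x) \perp y$ if and only if $x \perp f\adj(y)$, and the right-hand side holds because $f\adj(y) \in A_m$ while $x \perp A_m$. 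Hence $f(x) \perp B_m$ for every $m \notin I$, so $f(x) \in \bigcap_{m \notin I} B_m\c = D$, which is the assertion.

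I do not anticipate a genuine obstacle: both parts reduce to the universal property of the biproduct together with the orthogonality relations encoded in a decomposition. The one step requiring care is the diagonality of $f\adj$ in part (ii); I would obtain it cleanly from Lemma~\ref{lem:f-oplus-isomorphy} rather than verifying it by hand, and I would make sure to record that $C$ and the complementary join $\bigvee_{m \notin I} A_m$ really are orthogonal, which is precisely the content of $(A_1, \ldots, A_k)$ being a decomposition.
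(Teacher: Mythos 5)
Your part (i) is essentially the paper's argument: both identify the decompositions with dagger biproducts via Lemma~\ref{lem:extended-dagger-biproduct}(i) and then invoke the uniqueness clause of the coproduct, differing only in how the identity $f \circ \iota_{A_i} = \iota_{B_i} \circ f|_{A_i}^{B_i}$ is verified (you check it pointwise; the paper writes $f|_{A_i}^{B_i} = {\iota_{B_i}}\adj \circ f \circ \iota_{A_i}$ using Lemma~\ref{lem:inclusion-and-Sasaki-in-C}). One caveat: your appeal to Lemma~\ref{lem:restriction-corestriction-in-C}(i) for the fact that $f|_{A_i}^{B_i}$ is a $\C$-morphism does not literally apply, since that lemma assumes $\image f \subseteq B_i$, whereas you only have $f(A_i) \subseteq B_i$. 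The repair is exactly the paper's formula: $f|_{A_i}^{B_i} = {\iota_{B_i}}\adj \circ f \circ \iota_{A_i}$ is a composite of $\C$-morphisms, because the Sasaki map ${\iota_{B_i}}\adj$ is the identity on $B_i \supseteq f(A_i)$.

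For part (ii) you take a genuinely different route, and here there is one unproved step. The paper's proof is a single application of Lemma~\ref{lem:lattice-adjoint}(ii): $f(A_{i_1} \vee \ldots \vee A_{i_l}) \subseteq f(A_{i_1})\cc \vee \ldots \vee f(A_{i_l})\cc \subseteq B_{i_1} \vee \ldots \vee B_{i_l}$; this uses only adjointability of $f$, never touches the dagger, and is independent of part (i). Your argument instead runs through part (i) and Lemma~\ref{lem:f-oplus-isomorphy} (extended to $k$ factors by induction) to get $f\adj(B_j) \subseteq A_j$, and then uses adjointness to conclude $f(C) \perp B_m$ for $m \notin I$ --- all correct. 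But the final step, $\bigcap_{m \notin I} {B_m}\c = D$, is asserted without justification, and it is \emph{not} a formal consequence of the decomposition axioms in an arbitrary orthoset: it amounts to the grouping identity $\big(\bigvee_{m \notin I} B_m\big)\c = \bigvee_{j \in I} B_j$, which can fail for decompositions into $k \geq 4$ parts in orthosets that are not Dacey. In $\C$ it does hold: by Lemma~\ref{lem:C-consists-of-Dacey-spaces}, $Y$ is a Dacey space, so criterion (a) of Lemma~\ref{lem:Dacey-space} applies to the mutually orthogonal subspaces $D$ and $B_m$, $m \notin I$, whose join is $Y$, making them a decomposition and yielding $D = \big(\bigvee_{m \notin I} B_m\big)\c$. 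This is the one reference you must record (beyond the orthogonality you did flag); with it, your proof is complete. The trade-off: the paper's route is shorter and more robust, while yours makes explicit how the dagger and the orthomodular structure of ${\mathsf C}(Y)$ enter.
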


\begin{proof}
Ad (i): By Lemma~\ref{lem:extended-dagger-biproduct}(i), $X = A_1 \oplus \ldots \oplus A_k$ and $Y = B_1 \oplus \ldots \oplus B_k$. By Lemma \ref{lem:inclusion-and-Sasaki-in-C}(i), $f \circ \iota_{A_i} = \iota_{B_i} \circ {\iota_{B_i}}\adj \circ f \circ \iota_{A_i} = \iota_{B_i} \circ f|_{A_i}^{B_i}$. The assertion follows.

Ad (ii): We have $f(A_{i_1} \vee \ldots \vee A_{i_l}) \subseteq f(A_{i_1})\cc \vee \ldots \vee f(A_{i_l})\cc \subseteq B_{i_1} \vee \ldots \vee B_{i_l}$ by Lemma~\ref{lem:lattice-adjoint}(ii). In view of our preceding remarks, the claim follows.
\end{proof}

We next show that any orthoset $X \in \C$ that has finite rank can be decomposed into smallest non-zero constituents: $X$ can be built up from unital orthosets by means of the dagger biproduct. It will follow that any object in $\C$ is up to dagger isomorphism uniquely characterised by its rank if the latter is finite.

\begin{lemma} \label{lem:nI}
\begin{itemize}

\item[\rm (i)] An orthoset $X \in \C$ is a singleton if and only if $X$ is unital.

\item[\rm (ii)] Let $1 \leq n < \aleph_0$. An orthoset $X \in \C$ has rank $n$ if and only if $X$ is the dagger biproduct of $n$ singletons.

\end{itemize}
\end{lemma}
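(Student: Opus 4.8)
Part (i) should fall out almost immediately from the machinery already assembled. By Lemma~\ref{lem:C-consists-of-Dacey-spaces}, every object of $\C$ is an atomistic Dacey space; in particular the given $X$ is atomistic. Hence the standing hypothesis ``atomistic'' in Lemma~\ref{lem:unital}(iii) is automatically satisfied, and that lemma collapses to the bare assertion that $X$ is unital if and only if $X$ is a singleton. So the plan here is simply to record that the atomisticity supplied by Lemma~\ref{lem:C-consists-of-Dacey-spaces} makes Lemma~\ref{lem:unital}(iii) directly applicable.

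For part~(ii) I would first dispatch the easier ``if'' direction. Suppose $X = S_1 \oplus \ldots \oplus S_n$ is a dagger biproduct of singletons. By Lemma~\ref{lem:extended-dagger-biproduct}(ii) the coprojections $\iota_{S_i}$ are orthometries and $(\image \iota_{S_1}, \ldots, \image \iota_{S_n})$ is a decomposition of $X$. Each $\iota_{S_i}$ corestricts to an orthoisomorphism onto $\image \iota_{S_i}$, so $\image \iota_{S_i}$ is again a singleton and hence of rank $1$. Since $X$ is an atomistic Dacey space whose orthoclosed subsets form an AC lattice (Lemma~\ref{lem:C-consists-of-Dacey-spaces}), Lemma~\ref{lem:rank-of-subspaces-of-Dacey-spaces}(ii) applies, and iterating it along the decomposition gives that the rank of $X$ is the sum of the ranks of the $\image \iota_{S_i}$, namely~$n$.

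The ``only if'' direction I would establish by induction on $n$. For $n = 1$ a rank-one object is by definition a singleton, which is trivially the dagger biproduct of a single singleton. For the inductive step, let $X$ have rank $n \ge 2$. Writing $n = 1 + (n-1)$ and invoking Lemma~\ref{lem:rank-of-subspaces-of-Dacey-spaces}(iii) produces a decomposition $(A,B)$ of $X$ with $A$ of rank $1$ and $B$ of rank $n-1$; thus $A$ is a singleton, and by (H2) both $A, B \in \C$. By Lemma~\ref{lem:extended-dagger-biproduct}(i) this decomposition exhibits $X$ as the dagger biproduct $A \oplus B$. Applying the inductive hypothesis to $B$ writes it as a dagger biproduct of $n-1$ singletons, whence $X = A \oplus B$ is a dagger biproduct of $n$ singletons.

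The only points that require any attention — and hence the nearest thing to an obstacle — are the two bookkeeping facts that a rank-one subspace really is a singleton (immediate from the definition of rank and the notion of singleton) and that decompositions and dagger biproducts may be used interchangeably. The latter is precisely what Lemma~\ref{lem:extended-dagger-biproduct} guarantees, so once it is in hand both implications reduce to a clean induction on $n$ with no genuine difficulty.
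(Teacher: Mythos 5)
Your proof is correct, and part (i) coincides with the paper's own argument (atomisticity from Lemma~\ref{lem:C-consists-of-Dacey-spaces}, then Lemma~\ref{lem:unital}(iii)). In part (ii) you take a somewhat different route. For the ``only if'' direction, the paper avoids induction entirely: it picks a maximal $\perp$-set $x_1, \ldots, x_n$ (of cardinality $n$ by Lemma~\ref{lem:rank-of-subspaces-of-Dacey-spaces}(i)), notes that the Dacey property makes $(\{x_1\}\cc, \ldots, \{x_n\}\cc)$ a decomposition of $X$ in one stroke, and then invokes Lemma~\ref{lem:extended-dagger-biproduct}(i) together with Lemma~\ref{lem:unital}(i) and part (i); your induction instead peels off one rank-$1$ summand at a time via Lemma~\ref{lem:rank-of-subspaces-of-Dacey-spaces}(iii) and (H2), which is equally valid but needs the (standard) associativity of dagger biproducts to reassemble the pieces. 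For the ``if'' direction, both you and the paper start from Lemma~\ref{lem:extended-dagger-biproduct}(ii); the paper then observes that the elements $\iota_{I_i}(x_i)$ form a $\perp$-set whose orthoclosure is $X$ and applies Lemma~\ref{lem:rank-of-subspaces-of-Dacey-spaces}(i) directly, while you iterate the rank-additivity statement (ii) of that lemma along the decomposition. Your iteration implicitly uses that, in the orthomodular lattice ${\mathsf C}(X)$, a $k$-fold decomposition can be regrouped as $(A_1, A_2 \vee \ldots \vee A_k)$ with $(A_2, \ldots, A_k)$ a decomposition of the second factor; this is true and is the same kind of regrouping the paper records after Lemma~\ref{lem:extended-dagger-biproduct}, but it deserves a sentence. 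The trade-off: the paper's argument is shorter and handles all $n$ singletons simultaneously, whereas yours isolates the two bookkeeping facts (rank-$1$ subspaces are singletons; decompositions and biproducts are interchangeable) and otherwise runs on pure induction.
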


\begin{proof}
Ad (i): By Lemma~\ref{lem:C-consists-of-Dacey-spaces}, $X$ is atomistic. Hence the assertion holds by Lem\-ma~\ref{lem:unital}(iii).

Ad (ii): Let $X$ have rank $n \geq 1$. Let $x_1, \ldots, x_n$ be a $\perp$-set in $X$. As $X$ is a Dacey space, $(\{x_1\}\cc, \ldots, \{x_n\}\cc)$ is a decomposition of $X$. By Lemma~\ref{lem:extended-dagger-biproduct}(i), Lemma~\ref{lem:unital}(i), and part (i), it follows that $X$ is the dagger biproduct of $n$ singletons.

Conversely, assume that $X = I_1 \oplus \ldots \oplus I_n$ for singletons $I_1 = \{x_1\}\cc, \ldots, I_n = \{x_n\}\cc$. By Lemma~\ref{lem:extended-dagger-biproduct}(ii), we have that $\image \iota_{I_i} = \{\iota_{I_i}(x_i)\}\cc$ for $i = 1, \ldots, n$ and that $X$ is the orthoclosure of the $\perp$-set $\{ \iota_{I_1}(x_1), \ldots, \iota_{I_n}(x_n) \}$. By Lemmas~\ref{lem:C-consists-of-Dacey-spaces} and~\ref{lem:rank-of-subspaces-of-Dacey-spaces}(i), $X$ has rank $n$.
\end{proof}

\begin{lemma} \label{lem:morphisms-between-any-pair-of-orthosets}
Let $X, Y \in \C$ and assume that $X$ has finite rank.
\begin{itemize}

\item[\rm (i)] The ranks of $X$ and $Y$ coincide if and only if there is a dagger isomorphism from $X$ to $Y$.

\item[\rm (ii)] The rank of $X$ is at most the rank of\/ $Y$ if and only if there is an isometry from $X$ to $Y$. In this case, any subspace of $Y$ that has the same rank as $X$ is the image of some isometry from $X$ to $Y$.

\end{itemize}
\end{lemma}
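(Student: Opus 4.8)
The plan is to reduce both parts to the decomposition of a finite-rank object into singletons furnished by Lemma~\ref{lem:nI}(ii), combined with hypothesis (H3)(b) that all unital orthosets are mutually dagger isomorphic. For part~(i), the forward implication is immediate: by Lemma~\ref{lem:dagger-isos-in-C}(i) a dagger isomorphism $X \to Y$ is an orthoisomorphism, hence a bijection preserving and reflecting $\perp$, and it therefore carries any maximal $\perp$-set of $X$ to a maximal $\perp$-set of $Y$, so the ranks agree. For the converse I assume $X$ and $Y$ both have rank $n$. The case $n = 0$ is trivial, since then $X = Y = \Zero$ and $\id_{\Zero}$ works. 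For $n \geq 1$ I would use Lemma~\ref{lem:nI}(ii) to write $X = I_1 \oplus \dots \oplus I_n$ and $Y = J_1 \oplus \dots \oplus J_n$ as dagger biproducts of singletons. Each $I_i$ and $J_i$ is unital, so (H3)(b) supplies a dagger isomorphism $h_i \colon I_i \to J_i$, and an inductive application of Lemma~\ref{lem:piecewise-dagger-isomorphy} then shows $h_1 \oplus \dots \oplus h_n \colon X \to Y$ is the desired dagger isomorphism.

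For part~(ii), the ``if'' direction is again soft: an isometry $f$ satisfies $f\adj \circ f = \id_X$, so by Lemma~\ref{lem:dagger-isos-in-C}(iii) it preserves and reflects $\perp$, and the same identity forces $f$ to be injective with zero kernel; hence the image of a $\perp$-set of $X$ of size $m$ is a $\perp$-set of the same size in $Y$, giving $\text{rank}\,X \leq \text{rank}\,Y$. For the remaining implication it suffices to establish the final clause, so let $B$ be any subspace of $Y$ of the same finite rank $m$ as $X$. By part~(i) there is a dagger isomorphism $g \colon X \to B$, while the inclusion $\iota_B \colon B \to Y$ is an orthometry with $\iota_B\adj \circ \iota_B = \id_B$ by Lemma~\ref{lem:inclusion-and-Sasaki-in-C}(i); then $f := \iota_B \circ g$ satisfies $f\adj \circ f = g\adj \circ \iota_B\adj \circ \iota_B \circ g = \id_X$ and $\image f = \iota_B(g(X)) = B$, so $f$ is an isometry with image $B$.

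It then remains to produce, whenever $m = \text{rank}\,X \leq \text{rank}\,Y$, a subspace $B$ of $Y$ of rank exactly $m$, to which the clause just proved applies. I would pick a $\perp$-set $\{y_1, \dots, y_m\}$ in $Y$ and set $B = \{y_1, \dots, y_m\}\cc$. Any $z \in B$ orthogonal to every $y_i$ lies in $B \cap B\c$ and thus satisfies $z \perp z$, forcing $z = 0$; hence $\{y_1, \dots, y_m\}$ is a maximal $\perp$-set of $B$, and since every object of $\C$ is an atomistic Dacey space with AC lattice (Lemma~\ref{lem:C-consists-of-Dacey-spaces}), Lemma~\ref{lem:rank-of-subspaces-of-Dacey-spaces}(i) gives $\text{rank}\,B = m$. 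The whole statement is thus essentially a corollary of the structural machinery already in place; the only steps needing genuine care are verifying that the orthoclosure of a size-$m$ $\perp$-set has rank exactly $m$ and that the composite of a dagger isomorphism with a subspace inclusion is an isometry with the expected image, and I anticipate no serious obstacle beyond these routine checks.
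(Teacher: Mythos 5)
Your proof is correct and follows essentially the same route as the paper's: for part (i), decomposition into singletons via Lemma~\ref{lem:nI}(ii), then (H3)(b) and Lemma~\ref{lem:piecewise-dagger-isomorphy}; for part (ii), composing the dagger isomorphism from part (i) with the inclusion $\iota_B$, and using Lemma~\ref{lem:dagger-isos-in-C} for the easy directions. The only cosmetic difference is that you build the rank-$m$ subspace $B$ of $Y$ by hand (as the orthoclosure of a $\perp$-set, with Lemma~\ref{lem:rank-of-subspaces-of-Dacey-spaces}(i) confirming its rank), whereas the paper simply cites Lemma~\ref{lem:rank-of-subspaces-of-Dacey-spaces}(iii).
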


\begin{proof}
Let $m$ be the rank of $X$ and $n$ the rank of $Y$.

Ad (i): If $m=n$, both $X$ and $Y$ are, by Lemma~\ref{lem:nI}(ii), dagger biproducts of $n$ singletons. By Lemma~\ref{lem:nI}(i) and (H3)(b), two singletons are dagger isomorphic. By Lemma~\ref{lem:piecewise-dagger-isomorphy} there is hence a dagger isomorphism between $X$ and $Y$.

Conversely, if $X$ and $Y$ are dagger isomorphic, $X$ and $Y$ are orthoisomorphic by Lemma~\ref{lem:dagger-isos-in-C}(i) and have hence coinciding ranks.

Ad (ii): Assume $m \leq n$. Let $B$ be a subspace of $Y$ that has rank $m$. Note that, by Lemma~\ref{lem:rank-of-subspaces-of-Dacey-spaces}(iii), such a subspace always exists. By part (i), there is a dagger isomorphism $h \colon X \to B$ and by (H2), $Y = B \oplus B\c$. Then $\iota_B \circ h$ is an isometry from $X$ to $Y$ whose image is $B$.

Conversely, let $f \colon X \to Y$ be an isometry. By Lemma~\ref{lem:dagger-isos-in-C}(iii), $f$ preserves the orthogonality relation and hence $m \leq n$.
\end{proof}

\begin{proposition} \label{prop:nI}
For any $n \in \Naturals$, $\C$ contains up to dagger isomorphism exactly one orthoset of rank $n$.
\end{proposition}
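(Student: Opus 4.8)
The statement has two halves: existence of an orthoset of each finite rank $n$ in $\C$, and uniqueness up to dagger isomorphism. The plan is to dispatch uniqueness immediately and then build the required objects out of singletons.

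Uniqueness is essentially already at hand. If $X, Y \in \C$ both have rank $n$ with $n$ finite, then Lemma~\ref{lem:morphisms-between-any-pair-of-orthosets}(i) applies (its hypothesis that one of the objects has finite rank is met) and yields a dagger isomorphism between them, since their ranks coincide. Hence any two rank-$n$ objects of $\C$ are dagger isomorphic, and it only remains to exhibit at least one of each rank.

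For existence I would argue as follows. The case $n = 0$ is settled by the zero orthoset $\Zero$, which lies in $\C$ and is in fact its zero object by Lemma~\ref{lem:zeroorthoset}. For $n \geq 1$, the first task is to produce a single singleton in $\C$. Here I would invoke the standing assumption that $\C$ contains an orthoset $X$ of non-zero rank. Choosing a proper element $x \in X\withoutzero$, the subspace $\{x\}\cc$ is unital by Lemma~\ref{lem:unital}(i) and belongs to $\C$ by (H2); by Lemma~\ref{lem:nI}(i) it is therefore a singleton, that is, an orthoset of rank $1$. With a singleton $I \in \C$ in hand, I would form the $n$-fold dagger biproduct $I \oplus \cdots \oplus I$, which exists in $\C$ by (H1). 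By Lemma~\ref{lem:nI}(ii), this biproduct has rank exactly $n$, completing the existence claim.

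This argument is a straightforward assembly of the preceding lemmas, so I do not expect a serious obstacle. The only point requiring genuine attention is the extraction of a singleton: it rests on the standing hypothesis that $\C$ is not the trivial category consisting of the zero orthoset alone, and on the fact (Lemma~\ref{lem:nI}(i)) that in $\C$ the notions of unital orthoset and singleton coincide, which is what turns an arbitrary one-dimensional subspace $\{x\}\cc$ into an object of rank precisely $1$.
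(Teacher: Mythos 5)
Your proposal is correct and follows essentially the same route as the paper's proof: the zero orthoset handles $n=0$, a unital subspace $\{x\}\cc$ of a non-zero object yields a singleton via Lemma~\ref{lem:unital}(i) and Lemma~\ref{lem:nI}(i), the $n$-fold dagger biproduct gives existence via Lemma~\ref{lem:nI}(ii), and uniqueness is Lemma~\ref{lem:morphisms-between-any-pair-of-orthosets}(i). The only difference is cosmetic (you treat uniqueness first and spell out the roles of (H1) and (H2), which the paper leaves implicit).
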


\begin{proof}
As the orthoset of rank $0$ is the zero object, the assertion is clear in case $n = 0$.

Moreover, as we have assumed $\C$ to contain a non-zero orthoset, $\C$ contains by Lemma~\ref{lem:unital}(i) a unital orthoset. Hence, by Lemma~\ref{lem:nI}, $\C$ contains, for any $n \geq 1$, an orthoset of rank $1$. By Lemma~\ref{lem:morphisms-between-any-pair-of-orthosets}, any two orthosets of rank $n$ are dagger isomorphic.
\end{proof}

Let us fix a singleton orthoset $I \in \C$. For $n \in \Naturals$, let $n I$ be the $n$-fold biproduct of $I$, the case $n=0$ being understood as the zero orthoset. Then the finite-rank objects in $\C$ are, up to dagger isomorphism, exactly the orthosets $n I$, $n \in \Naturals$. Note also that $mI \oplus nI = (m+n)I$ for any $m,n \in \Naturals$.

We next turn to the partial orthometries in $\C$.

\begin{lemma} \label{lem:unique-generalised-inverse-in-C}
Assume that the $\C$-morphism $f \colon X \to Y$ is a partial orthometry and $f\adj$ is its generalised inverse. Then $f\adj$ is in $\C$ the unique generalised inverse of $f$.
\end{lemma}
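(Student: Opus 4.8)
The plan is to fix an arbitrary generalised inverse $g' \colon Y \to X$ of $f$ lying in $\C$ and to prove that $g' = f\adj$. First I would record the structural data shared by $g'$ and $f\adj$. Being a generalised inverse, each of them is in particular an adjoint of $f$, so Lemma~\ref{lem:injective-surjective} gives $\kernel f = (\image g')\c = (\image f\adj)\c$. By Proposition~\ref{prop:partial-isometries}(i) both images are orthoclosed, whence $\image g' = (\kernel f)\c = \image f\adj =: R$, and $(R, \kernel f)$ is a decomposition of $X$.

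The crucial step is to prove $f \circ g' = f \circ f\adj$, and I would do this by recognising both composites as the projection of $Y$ onto $\image f$. Using the relations supplied by Proposition~\ref{prop:partial-isometries}(i) (namely $g' \circ f \circ g' = g'$ and $f\adj \circ f \circ f\adj = f\adj$), each of $f \circ g'$ and $f \circ f\adj$ is idempotent. Each is also self-adjoint: unwinding the adjoint relation $f(x) \perp y \iff x \perp g'(y)$ shows that both $(f \circ g')(y) \perp y'$ and $y \perp (f \circ g')(y')$ are equivalent to $g'(y) \perp g'(y')$, and symmetrically for $f\adj$. Finally the image of each composite is $f(R) = \image f$, which is orthoclosed. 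Hence by Lemma~\ref{lem:projections} both $f \circ g'$ and $f \circ f\adj$ are projections of $Y$ onto $\image f$, and by the uniqueness of projections in $\C$ (Lemma~\ref{lem:inclusion-and-Sasaki-in-C}(ii)) they coincide.

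It then remains to cancel $f$ on the left. The restriction $f|_R$ equals $\zerokernel f$ followed by the inclusion $\image f \hookrightarrow Y$; since $\zerokernel f$ is an orthoisomorphism (being, together with $\zerokernel{f\adj}$, part of the mutually inverse pair witnessing the generalised inverse) and the inclusion is injective, $f|_R$ is injective. For every $y \in Y$ both $g'(y)$ and $f\adj(y)$ lie in $R$ and satisfy $f(g'(y)) = f(f\adj(y))$, so injectivity of $f|_R$ forces $g'(y) = f\adj(y)$; thus $g' = f\adj$. I expect the main obstacle to be exactly this last upgrade from agreement up to $\prl$ to genuine equality: because an object of $\C$ need not be irredundant, Lemma~\ref{lem:irredundancy-adjoints-1} only yields $g' \prl f\adj$, and the residual $\prl$-ambiguity is eliminated solely through the rigidity of projections established in the previous step.
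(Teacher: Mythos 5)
Your proof is correct, but it takes a different route from the paper's. The paper argues directly with the coproduct: writing $B = \image f$ and $A = \image f\adj$, it observes that any generalised inverse $g$ in $\C$ must satisfy $g \circ \iota_B = \iota_A \circ (f|_A^B)^{-1}$ and $g \circ \iota_{B\c} = 0_{B\c,X}$, and then concludes $g = f\adj$ from the universal property of the dagger biproduct $Y = B \oplus B\c$ guaranteed by (H2) -- a two-line argument. You instead package the categorical rigidity into Lemma~\ref{lem:inclusion-and-Sasaki-in-C}(ii): you verify that $f \circ g'$ and $f \circ f\adj$ are idempotent, self-adjoint $\C$-endomorphisms with orthoclosed image $\image f$, hence by Lemma~\ref{lem:projections} both are projections of $Y$ onto $\image f$, hence equal by the uniqueness of projections in $\C$; you then finish with a purely set-theoretic cancellation, using that $g'(y)$ and $f\adj(y)$ both lie in $R = (\kernel f)\c$ where $f$ is injective. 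The two arguments rest on the same underlying fact -- morphisms out of the coproduct $Y = \image f \oplus (\image f)\c$ are determined by their restrictions -- since the paper's proof of Lemma~\ref{lem:inclusion-and-Sasaki-in-C} itself invokes that coproduct property; but your version confines the categorical input to a single cited lemma and handles the rest at the level of elements, which also makes explicit (as you note at the end) exactly where the passage from $g' \prl f\adj$ (which is all Lemma~\ref{lem:irredundancy-adjoints-1} gives) to genuine equality $g' = f\adj$ happens. The paper's proof is shorter; yours mirrors the familiar Hilbert-space argument that $f \circ g$ is the projection onto the image, at the cost of the extra cancellation step.
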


\begin{proof}
Let $A = \image f\adj$ and $B = \image f$. Then $f\adj|_B^A = (f|_A^B)^{-1}$ and $\kernel f\adj = B\c$, that is, $f\adj \circ \iota_B = \iota_A \circ (f|_A^B)^{-1}$ and $f\adj \circ \iota_{B\c} = 0_{B\c,X}$. For any further generalised inverse $g$ of $f$, we likewise have $g \circ \iota_B = \iota_A \circ (f|_A^B)^{-1}$ and $g \circ \iota_{B\c} = 0_{B\c,X}$. As $Y = B \oplus B\c$, the assertion follows.
\end{proof}

\begin{lemma} \label{lem:partial-isometries-in-C}
For a $\C$-morphism $f \colon X \to Y$, the following are equivalent:
\begin{itemize}

\item[\rm (a)] $f$ is a partial isometry.

\item[\rm (b)] $f$ is a partial orthometry and $f\adj$ is the generalised inverse of $f$.

\item[\rm (c)] $f = h \oplus 0_{C,D}$, where $h$ is a dagger isomorphism and $0_{C,D}$ is any zero map.

\item[\rm (d)] There is in $\C$ a dagger isomorphism $h$ between a subspace $A$ of $X$ and a subspace $B$ of\/ $Y$ such that $f = \iota_B \circ h \circ {\iota_A}\adj$, where $\iota_A \colon A \to X$ and $\iota_B \colon B \to Y$ are the inclusion maps.

\end{itemize}
\end{lemma}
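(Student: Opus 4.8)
The plan is to establish the cyclic chain (a)$\Rightarrow$(b)$\Rightarrow$(c)$\Rightarrow$(d)$\Rightarrow$(a). For (a)$\Rightarrow$(b): starting from $f \circ f\adj \circ f = f$ and applying the (involutive) dagger gives $f\adj \circ f \circ f\adj = f\adj$. By Proposition~\ref{prop:partial-isometries}(i) it then suffices to show that $\image f$ and $\image f\adj$ are orthoclosed, for then $f$ is a partial orthometry and $f\adj$ is its generalised inverse. Here I would note that $f \circ f\adj$ and $f\adj \circ f$ are idempotent and self-adjoint, and that $\image f = \image(f \circ f\adj)$ and $\image f\adj = \image(f\adj \circ f)$ (using $f \circ f\adj \circ f = f$). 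Moreover, applying Lemma~\ref{lem:image-of-partial-isometry}(i) to the adjoint pair $f, f\adj$ (valid since $X,Y$ are atomistic Dacey spaces by Lemma~\ref{lem:C-consists-of-Dacey-spaces}, and $f \circ f\adj \circ f \prl f$), the induced map $P(f)$ is already a partial orthometry with $P(f\adj)$ its generalised inverse, so $\image P(f)$ and $\image P(f\adj)$ are orthoclosed in the irredundant quotients.

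The main obstacle is precisely the passage from orthoclosedness in $P(X)$ and $P(Y)$ to orthoclosedness of $\image f$ and $\image f\adj$ in $X$ and $Y$ themselves; equivalently, one must show that the self-adjoint idempotent $f \circ f\adj$ (resp.\ $f\adj \circ f$) is a genuine projection, acting as the identity on $(\image f)\cc$ (resp.\ $(\image f\adj)\cc$). The quotient-level argument only yields that these maps fix the relevant subspaces up to the equivalence $\prl$, so the gap is exactly the upgrade from $\prl$ to equality on each atom. I expect this to be resolved by dagger simplicity (H3)(a), in the spirit of the argument in Lemma~\ref{lem:extended-dagger-biproduct}(ii): by restricting to the atoms $\{x\}\cc$ and exhibiting a dagger monomorphism into a dagger-simple object, one forces bijectivity and hence saturation of the image. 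This is the step I expect to require the most care, since it is what distinguishes $\C$ from an arbitrary category of orthosets.

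For (b)$\Rightarrow$(c), set $A = \image f\adj$ and $B = \image f$, which are orthoclosed because $f$ is a partial orthometry. By Lemma~\ref{lem:injective-surjective}, $A\c = \kernel f$ and $B\c = \kernel f\adj$, so $(A, \kernel f)$ and $(B, \kernel f\adj)$ are decompositions of $X$ and $Y$. Since $f(A) \subseteq B$ and $f(\kernel f) = \{0\}$, Lemma~\ref{lem:decomposing-a-morphism}(i) gives $f = f|_A^B \oplus 0$. Putting $h = f|_A^B$, Lemma~\ref{lem:restriction-corestriction-in-C}(i) (using $\kernel f = A\c$) identifies $h\adj = f\adj|_B^A$; because $\zerokernel f = h$ and $\zerokernel{f\adj} = h\adj$ are mutually inverse orthoisomorphisms, $h\adj \circ h = \id_A$ and $h \circ h\adj = \id_B$, so $h$ is a dagger isomorphism and (c) holds. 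For (c)$\Rightarrow$(d), identify the domain $A$ and codomain $B$ of $h$ with the subspaces $\image \iota_A \subseteq X$ and $\image \iota_B \subseteq Y$ via Lemma~\ref{lem:extended-dagger-biproduct}; then, using ${\iota_A}\adj = \id_A \oplus 0$ and $\iota_B = \id_B \oplus 0$ from Lemma~\ref{lem:inclusion-and-Sasaki-in-C}(i), a direct computation shows $\iota_B \circ h \circ {\iota_A}\adj = h \oplus 0 = f$, which is the form required in (d).

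Finally, for (d)$\Rightarrow$(a), write $f = \iota_B \circ h \circ {\iota_A}\adj$ with $h$ a dagger isomorphism, so that $f\adj = \iota_A \circ h\adj \circ {\iota_B}\adj$. Since the inclusions are isometries (Lemma~\ref{lem:inclusion-and-Sasaki-in-C}(i)), we have ${\iota_A}\adj \circ \iota_A = \id_A$ and ${\iota_B}\adj \circ \iota_B = \id_B$; combining these with $h \circ h\adj \circ h = h$ (as $h$ is a dagger isomorphism) yields $f \circ f\adj \circ f = \iota_B \circ (h \circ h\adj \circ h) \circ {\iota_A}\adj = \iota_B \circ h \circ {\iota_A}\adj = f$, which is (a) and closes the cycle. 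The three implications (b)$\Rightarrow$(c)$\Rightarrow$(d)$\Rightarrow$(a) are thus routine consequences of the biproduct calculus developed earlier; the entire weight of the lemma rests on the orthoclosedness step in (a)$\Rightarrow$(b) flagged above.
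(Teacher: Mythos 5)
Your proof follows the same architecture as the paper's: the cycle (a)$\Rightarrow$(b)$\Rightarrow$(c)$\Rightarrow$(d)$\Rightarrow$(a), with (a)$\Rightarrow$(b) reduced via Proposition~\ref{prop:partial-isometries}(i) to orthoclosedness of $\image f$ and $\image f\adj$, and Lemma~\ref{lem:image-of-partial-isometry} supplying orthoclosedness at the level of the irredundant quotients. Your arguments for (b)$\Rightarrow$(c), (c)$\Rightarrow$(d) and (d)$\Rightarrow$(a) are correct and essentially identical to the paper's (the paper settles (c)$\Rightarrow$(d) by the uniqueness clause of the coproduct rather than by the $\oplus$-calculus, and (b)$\Rightarrow$(c) by a commutative diagram rather than by Lemma~\ref{lem:decomposing-a-morphism}; these are cosmetic differences). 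You also diagnose the core of (a)$\Rightarrow$(b) exactly as the paper does: what is missing after the quotient-level step is $\prl$-saturation of the images, to be forced by dagger simplicity.

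The problem is that this last step is precisely where you stop: ``I expect this to be resolved by dagger simplicity (H3)(a), in the spirit of Lemma~\ref{lem:extended-dagger-biproduct}(ii)'' is a conjecture, not an argument, and by your own account it carries the entire weight of the lemma. Moreover, the execution is not a routine transcription of Lemma~\ref{lem:extended-dagger-biproduct}(ii): there the map being restricted to atoms is a dagger monomorphism, so ${\iota_{A_i}}\adj \circ \iota_{A_i} = \id$ is available and the restricted map is easily seen to be a dagger monomorphism; here $f$ is only a partial isometry, and for an arbitrary $x \in X\withoutzero$ the analogous computation for $f\big|_{\{x\}\cc}^{\{f(x)\}\cc} = {\iota_{\{f(x)\}\cc}}\adj \circ f \circ \iota_{\{x\}\cc}$ produces $(f\adj \circ f)\big|_{\{x\}\cc}$, which there is no reason to identify with the identity. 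The paper's proof circumvents this by choosing the atoms correctly: since $f = f \circ f\adj \circ f$ gives $\image f = f(\image f\adj)$, it suffices to treat $x = f\adj(y)$ with $y \in Y$; for such $x$ the dual identity $f\adj \circ f \circ f\adj = f\adj$ yields $(f\adj \circ f)\big|_{\image f\adj} = \id_{\image f\adj}$, in particular $f\adj(f(x)) = x$, and with this (together with Lemma~\ref{lem:inclusion-and-Sasaki-in-C}, i.e., the fact that $\iota_{\{f(x)\}\cc} \circ {\iota_{\{f(x)\}\cc}}\adj$ acts as the identity on $\{f(x)\}\cc$) the paper argues that $f\big|_{\{x\}\cc}^{\{f(x)\}\cc}$ is a dagger monomorphism between singletons; only then does (H3)(a) apply, giving bijectivity, hence $f(\class x) = \class{f(x)}$, hence saturation and orthoclosedness of $\image f$. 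Your proposal never makes the choice $x \in \image f\adj$, never uses the partial-isometry identities at atom level, and never exhibits the dagger monomorphism, so the implication (a)$\Rightarrow$(b) — and with it the lemma — remains unproven.
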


\begin{proof}
(a) $\Rightarrow$ (b): We will show that $\image f$ is orthoclosed. In a similarly way, we can see that also $\image f\adj$ is orthoclosed and (b) will then follow by Proposition~\ref{prop:partial-isometries}(i).

By Lemma~\ref{lem:image-of-partial-isometry}, $\image P(f)$ is an orthoclosed subset of $P(Y)$. Moreover, $f = f \circ f\adj \circ f$ implies that $\image f = f(\image f\adj)$. Note also that $f\adj = f\adj \circ f \circ f\adj$ and hence $(f\adj \circ f)\big|_{\image f\adj} = \id_{\image f\adj}$.

Let now $y \in Y$ and $x = f\adj(y)$. We claim that $f$ maps $\class{x}$ onto $\class{f(x)}$. This is clear if $x = 0$. Assume $x \neq 0$. Then also $f(x) \neq 0$. Let $\iota_{\{x\}\cc} \colon \{x\}\cc \to X$ and $\iota_{\{f(x)\}\cc} \colon \{f(x)\}\cc \to Y$ be the inclusion maps. Then $f\big|_{\{x\}\cc}^{\{f(x)\}\cc} = {\iota_{\{f(x)\}\cc}}\adj \circ f \circ \iota_{\{x\}\cc}$ is a dagger monomorphism and hence, by (H3)(a), bijective. The claim follows.

(b) $\Rightarrow$ (c): Let $f$ be a partial orthometry whose generalised inverse is $f\adj$. Let again $A = (\kernel f)\c$ and $B = \image f$, so that $f$ and $f\adj$ establish mutually inverse orthoisomorphisms between $A$ and $B$. Let $h = f|_A^B$. By Lemma~\ref{lem:restriction-corestriction-in-C}(i), $h\adj = f\adj|_B^A$ and hence $h$ is a dagger isomorphism. The commutative diagram
\[ \begin{tikzcd}
B \arrow[r, "\iota_B"]
& Y & B\c \arrow[l, "\iota_{B\c}"'] \\
A \arrow[u, "h"'] \arrow[r, "\iota_A"']
& X \arrow[u, "f"'] & A\c \arrow[l, "\iota_{A\c}"] \arrow[u, "0_{A\c,B\c}"']
\end{tikzcd} \]
shows $f = h \oplus 0_{A\c,B\c}$.

(c) $\Rightarrow$ (d): Let $h \colon A \to B$ and $f = h \oplus 0_{C,D}$ be as indicated. Then $X = A \oplus C$ and $Y = B \oplus D$. From the commutative diagram
\[ \begin{tikzcd}
B \arrow[r, "\iota_B"]
& Y & D \arrow[l, "\iota_{D}"'] \\
A \arrow[u, "h"'] \arrow[r, "\iota_A"']
& X \arrow[u, "f =", "h \oplus 0_{C,D}"' {yshift=-1pt}] & C \arrow[l, "\iota_{C}"] \arrow[u, "0_{C,D}"' {yshift=-2pt}]
\end{tikzcd} \]
we observe $(\iota_B \circ h \circ {\iota_A}\adj) \circ \iota_A = \iota_B \circ h = f \circ \iota_A$ and $(\iota_B \circ h \circ {\iota_A}\adj) \circ \iota_{C} = 0_{C,Y} = f \circ \iota_{C}$. Hence $f = \iota_B \circ h \circ {\iota_A}\adj$.

(d) $\Rightarrow$ (a): Let $h \colon A \to B$ be as indicated and $f = \iota_B \circ h \circ {\iota_A}\adj$. Then $\image f = \iota_B(h({\iota_A}\adj(X))) = \iota_B(h(A)) = B$ and $\image f\adj = \iota_A(h\adj({\iota_B}\adj(Y))) = \iota_A(h\adj(B)) = \iota_A(h^{-1}(B)) = A$. We easily verify $f \circ f\adj \circ f = f$ and thus (a) follows by Proposition~\ref{prop:partial-isometries}(i).
\end{proof}

\begin{lemma} \label{lem:isometries-in-C}
For a $\C$-morphism $f \colon X \to Y$, the following are equivalent:
\begin{itemize}

\item[\rm (a)] $f$ is an isometry.

\item[\rm (b)] $f$ is an orthometry and $f\adj$ is the generalised inverse of $f$.

\item[\rm (c)] $f = h \oplus 0_{0,B}$, where $h$ is a dagger isomorphism and $0_{0,B}$ is a zero map with domain $0$.

\item[\rm (d)] There is in $\C$ a dagger isomorphism $h$ between $X$ and a subspace $B$ of\/ $Y$ such that $f = \iota_B \circ h$, where $\iota_B \colon B \to Y$ is the inclusion map.

\end{itemize}
\end{lemma}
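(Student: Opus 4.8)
The statement is the injective analogue of Lemma~\ref{lem:partial-isometries-in-C}, and the plan is to run the cycle (a) $\Rightarrow$ (b) $\Rightarrow$ (c) $\Rightarrow$ (d) $\Rightarrow$ (a) in close parallel to its proof. The guiding idea is that an isometry is exactly a partial isometry whose kernel is trivial, so that the complementary summand of the \emph{domain} -- the factor $A\c$ that features in the (b) $\Rightarrow$ (c) step of Lemma~\ref{lem:partial-isometries-in-C} -- collapses to the zero object $\Zero$. Throughout I may therefore reuse the machinery of the previous lemma, the only genuinely new content being the passage from ``partial'' to ``total'' in the first step.

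For (a) $\Rightarrow$ (b): from $f\adj \circ f = \id_X$ I read off $f \circ f\adj \circ f = f$, so $f$ is a partial isometry, and Lemma~\ref{lem:partial-isometries-in-C} makes it a partial orthometry with generalised inverse $f\adj$; in particular $\image f$ is orthoclosed. Since in addition $f\adj \circ f = \id_X$, Proposition~\ref{prop:partial-isometries}(ii) now upgrades this to the statement that $f$ is an \emph{orthometry} with generalised inverse $f\adj$. For (b) $\Rightarrow$ (c): an orthometry is injective, so $\kernel f = \{0\}$ and $(\kernel f)\c = X$; setting $B = \image f$ (orthoclosed, hence a subspace), the maps $f$ and $f\adj$ restrict to mutually inverse orthoisomorphisms between $X$ and $B$. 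Writing $h = \zerokernel f = f|_X^B$ and applying Lemma~\ref{lem:restriction-corestriction-in-C}(i) with $A = X$ (so that the side condition $\kernel f \supseteq A\c = \{0\}$ holds trivially) yields $h\adj = f\adj|_B^X$, whence $h$ is a dagger isomorphism. Reading off the commutative diagram
\[ \begin{tikzcd}
B \arrow[r, "\iota_B"] & Y & B\c \arrow[l, "\iota_{B\c}"'] \\
X \arrow[u, "h"'] \arrow[r, "\id_X"'] & X \arrow[u, "f"'] & \Zero \arrow[l, "0_{\Zero,X}"] \arrow[u, "0_{\Zero,B\c}"']
\end{tikzcd} \]
then gives $f = h \oplus 0_{\Zero,B\c}$, which is of the form required in (c).

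For (c) $\Rightarrow$ (d): writing $f = h \oplus 0_{0,B}$ with $h \colon X \to B'$ and $Y = B' \oplus B$, I use $\iota_{B'} = \id_{B'} \oplus 0_{0,B}$ from Lemma~\ref{lem:isometries-in-dagger-biproduct}(i); since the complementary domain summand is $\Zero$, the computation of (c) $\Rightarrow$ (d) in Lemma~\ref{lem:partial-isometries-in-C} specialises (with ${\iota_X}\adj = \id_X$) to $f = \iota_{B'} \circ h$, i.e.\ the form in (d). For (d) $\Rightarrow$ (a): from $f = \iota_B \circ h$ I get $f\adj = h\adj \circ {\iota_B}\adj$, and hence $f\adj \circ f = h\adj \circ ({\iota_B}\adj \circ \iota_B) \circ h = h\adj \circ h = \id_X$, using ${\iota_B}\adj \circ \iota_B = \id_B$ from Lemma~\ref{lem:inclusion-and-Sasaki-in-C}(i) together with the fact that $h$ is a dagger isomorphism; thus $f$ is an isometry. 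The whole argument is essentially bookkeeping, and the only step with real content is the ``partial-to-total'' upgrade in (a) $\Rightarrow$ (b); I expect that to be the main (modest) obstacle, the key observation being that orthoclosedness of $\image f$ together with $f\adj \circ f = \id_X$ are precisely the hypotheses of Proposition~\ref{prop:partial-isometries}(ii).
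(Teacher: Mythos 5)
Your proof is correct, and it follows essentially the same route as the paper: both reduce the lemma to Lemma~\ref{lem:partial-isometries-in-C}, specialised to the case where the domain summand $A\c$ (equivalently, the kernel) is trivial. The paper is merely terser — it observes that each of (a)--(d) is exactly the corresponding condition of Lemma~\ref{lem:partial-isometries-in-C} conjoined with injectivity of $f$ — whereas you re-run the implication cycle explicitly; your individual steps (the upgrade via Proposition~\ref{prop:partial-isometries}(ii), the use of Lemma~\ref{lem:restriction-corestriction-in-C}(i) with $A = X$, and the specialisation of the biproduct diagrams) are all sound.
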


\begin{proof}
Each statement (a)--(d) is equivalent to the corresponding statement in Lem\-ma~\ref{lem:partial-isometries-in-C} together with the assumption that $f$ is injective.
\end{proof}

\begin{lemma} \label{lem:projection-C}
A $\C$-endomorphism $f$ is a projection if and only if $f$ is idempotent and self-adjoint.
\end{lemma}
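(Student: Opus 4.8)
The statement is a refinement of the general Lemma~\ref{lem:projections}, whose criterion (c) characterises projections as those endomorphisms that are idempotent, self-adjoint, \emph{and} have orthoclosed image. The whole point here is that inside $\C$ the orthoclosedness of the image comes for free, so that the third clause may be dropped. My plan is therefore to treat the two implications separately, importing the missing orthoclosedness from the partial-isometry analysis of Lemma~\ref{lem:partial-isometries-in-C}.

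For the ``only if'' direction, I would suppose that $f$ is a projection, say onto a subspace $A$ of $X$. By Lemma~\ref{lem:inclusion-and-Sasaki-in-C}(ii) the unique $\C$-projection onto $A$ is $\iota_A \circ {\iota_A}\adj$, so $f = \iota_A \circ {\iota_A}\adj$. Both properties can then be read off from this presentation: involutivity of the dagger gives $f\adj = ({\iota_A}\adj)\adj \circ {\iota_A}\adj = \iota_A \circ {\iota_A}\adj = f$, and the isometry relation ${\iota_A}\adj \circ \iota_A = \id_A$ from Lemma~\ref{lem:inclusion-and-Sasaki-in-C}(i) gives $f \circ f = \iota_A \circ ({\iota_A}\adj \circ \iota_A) \circ {\iota_A}\adj = f$. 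Hence $f$ is self-adjoint and idempotent. Passing through this explicit form is what secures self-adjointness with respect to the designated dagger (rather than merely being an adjoint of itself), which is the natural reading in the dagger-categorical setting.

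For the ``if'' direction, I would assume $f \colon X \to X$ is idempotent and self-adjoint, i.e.\ $f \circ f = f$ and $f\adj = f$. The key move is the trivial observation that $f \circ f\adj \circ f = f \circ f \circ f = f$, so that $f$ is a partial isometry. Lemma~\ref{lem:partial-isometries-in-C} (implication (a)$\Rightarrow$(b)) then upgrades this to the statement that $f$ is a partial orthometry whose generalised inverse is $f\adj$; in particular $\image f$ is orthoclosed. Since $f$ is moreover idempotent and self-adjoint (hence an adjoint of itself), Lemma~\ref{lem:projections} (implication (c)$\Rightarrow$(a)) now delivers that $f$ is a projection.

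The only nontrivial ingredient is the orthoclosedness of $\image f$, which fails for general orthosets but is guaranteed in $\C$; this is precisely where the hypotheses (H1)--(H3) enter, funnelled through the partial-isometry machinery of Lemma~\ref{lem:partial-isometries-in-C}. Everything else is a formal rewriting, so I expect no genuine obstacle beyond invoking that lemma correctly and being careful that ``self-adjoint'' is read as $f\adj = f$ for the designated dagger.
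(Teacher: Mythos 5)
Your proof is correct and takes essentially the same route as the paper's: for the ``if'' direction the paper argues exactly as you do, observing that idempotence and self-adjointness make $f$ a partial isometry, invoking Lemma~\ref{lem:partial-isometries-in-C} to get orthoclosedness of $\image f$, and concluding via Lemma~\ref{lem:projections}. Your ``only if'' direction, which routes through the uniqueness statement of Lemma~\ref{lem:inclusion-and-Sasaki-in-C}(ii) to write $f = \iota_A \circ {\iota_A}\adj$, is a slightly more explicit version of the paper's bare citation of Lemma~\ref{lem:projections}, and it usefully pins down that self-adjointness holds for the designated dagger rather than merely in the sense of being an adjoint of itself.
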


\begin{proof}
If $f$ is idempotent and self-adjoint, $f$ is a partial isometry and hence $\image f$ is orthoclosed by Lemma~\ref{lem:partial-isometries-in-C}. Therefore the assertion holds by Lemma~\ref{lem:projections}.
\end{proof}

\begin{lemma} \label{lem:transitivity-of-orthoset}
Let $x, y$ be orthogonal proper elements of an orthoset $X \in \C$. Then there is in $\C$ a dagger automorphism $f$ of $X$ such that $f(\{x\}\cc) = \{y\}\cc$, \linebreak $f(\{y\}\cc) = \{x\}\cc$, and $f(z) = z$ for any $z \in \{x,y\}\c$.
\end{lemma}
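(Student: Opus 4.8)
The plan is to realise $f$ as a dagger biproduct of dagger isomorphisms relative to a suitable decomposition of $X$. Write $A = \{x\}\cc$, $B = \{y\}\cc$, and $C = \{x,y\}\c$; these are orthoclosed subsets, hence subspaces belonging to $\C$ by (H2). First I would verify that $(A,B,C)$ is a decomposition of $X$. From $x \perp y$ we get $\{x\} \subseteq \{y\}\c$, and applying $\c$ twice yields $A = \{x\}\cc \subseteq \{y\}\c = B\c$, so $A$ and $B$ are orthogonal; moreover $C = \{x,y\}\c \subseteq \{x\}\c$ and $C \subseteq \{y\}\c$, so $C$ is orthogonal to both $A$ and $B$. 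By De Morgan in the ortholattice ${\mathsf C}(X)$ we have $(A \vee B)\c = A\c \cap B\c = \{x\}\c \cap \{y\}\c = C$, whence $A \vee B \vee C = (A \vee B) \vee (A \vee B)\c = X$. Since $X$ is a Dacey space by Lemma~\ref{lem:C-consists-of-Dacey-spaces}, criterion (a) of Lemma~\ref{lem:Dacey-space} applies to the mutually orthogonal subspaces $A, B, C$ with join $X$, so $(A,B,C)$ is a decomposition. By Lemma~\ref{lem:extended-dagger-biproduct}(i), $X = A \oplus B \oplus C$ via the inclusion maps, and likewise $X = B \oplus A \oplus C$.

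Next I would assemble the building blocks. The subspaces $A = \{x\}\cc$ and $B = \{y\}\cc$ are unital by Lemma~\ref{lem:unital}(i) (as $x, y$ are proper), so (H3)(b) furnishes a dagger isomorphism $h \colon A \to B$. Then $h\adj \colon B \to A$ is again a dagger isomorphism, and $\id_C$ is one trivially. Taking the dagger biproduct of morphisms from Section~\ref{sec:dagger-categories}, I set
\[ f \;=\; h \oplus h\adj \oplus \id_C \colon \; A \oplus B \oplus C \;\longrightarrow\; B \oplus A \oplus C, \]
both the domain and codomain biproducts being the object $X$. By Lemma~\ref{lem:piecewise-dagger-isomorphy}, the dagger biproduct of dagger isomorphisms is a dagger isomorphism; hence $f$ is a dagger automorphism of $X$ in $\C$.

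Finally I would read off the required properties from the defining relations of the biproduct of morphisms, namely $f \circ \iota_A = \iota_B \circ h$, $\;f \circ \iota_B = \iota_A \circ h\adj$, and $f \circ \iota_C = \iota_C \circ \id_C$. Since dagger isomorphisms are orthoisomorphisms by Lemma~\ref{lem:dagger-isos-in-C}(i), these give $f(\{x\}\cc) = f(A) = h(A) = B = \{y\}\cc$, $\;f(\{y\}\cc) = f(B) = h\adj(B) = A = \{x\}\cc$, and $f(z) = z$ for every $z \in C = \{x,y\}\c$, as claimed.

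I expect the only step that genuinely requires care to be the first one: establishing that $(A,B,C)$ is a decomposition, since this is precisely what licenses the identification $X = A \oplus B \oplus C$ and thereby lets the biproduct machinery of Section~\ref{sec:dagger-categories} apply. Once that identification is in place, the construction of $f$ as a ``permuting'' biproduct of dagger isomorphisms and the verification of its action on the three summands are routine, resting on Lemma~\ref{lem:piecewise-dagger-isomorphy} and the commutativity of the dagger biproduct up to dagger isomorphism.
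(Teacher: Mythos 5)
Your proof is correct and follows essentially the same route as the paper: the paper likewise observes that $(\{x\}\cc, \{y\}\cc, \{x,y\}\c)$ is a decomposition of the Dacey space $X$, invokes Lemma~\ref{lem:extended-dagger-biproduct}(i) to realise $X$ as the dagger biproduct of these three subspaces, obtains a dagger isomorphism $\{x\}\cc \to \{y\}\cc$ from (H3)(b), and concludes via Lemma~\ref{lem:piecewise-dagger-isomorphy}. The only difference is that you spell out the verification of the decomposition and the explicit form $f = h \oplus h\adj \oplus \id_C$, details the paper leaves implicit.
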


\begin{proof}
As $X$ is a Dacey space, $(\{ x \}\cc, \{ y \}\cc, \{ x, y \}\c)$ is a decomposition of $X$. Hence, by Lemma~\ref{lem:extended-dagger-biproduct}(i), $X = \{ x \}\cc \oplus \{ y \}\cc \oplus \{ x, y \}\c$. Moreover, by (H3)(b), $\{x\}\cc$ and $\{y\}\cc$ are dagger isomorphic. Hence, by Lemma~\ref{lem:piecewise-dagger-isomorphy}, there is a dagger automorphism of $X$ as required.
\end{proof}

We conclude the section by describing the objects of $\C$ as uniform orthomodular spaces.

\begin{lemma} \label{lem:CX-in-C}
For any $X \in \C$, ${\mathsf C}(X)$ is a complete, irreducible, AC orthomodular lattice.
\end{lemma}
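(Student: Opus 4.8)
The plan is to notice that most of the claim is already in hand and to reduce everything to irreducibility. For any orthoset, ${\mathsf C}(X)$ is a complete ortholattice, as recalled in Section~\ref{sec:orthosets}, and by Lemma~\ref{lem:C-consists-of-Dacey-spaces} it is moreover an AC orthomodular lattice. Hence it remains only to prove that ${\mathsf C}(X)$ is not directly decomposable. I would argue by contradiction and first translate this into a statement about $X$ itself. A direct decomposition corresponds to a central element $A \in {\mathsf C}(X)$ with $A \neq \{0\}, X$, yielding ${\mathsf C}(X) \cong [\{0\},A] \times [\{0\},A\c]$. This isomorphism carries each atom to an atom of one of the two factors; since by Lemma~\ref{lem:unital}(ii) the atoms of ${\mathsf C}(X)$ are precisely the subspaces $\{z\}\cc$ with $z \in X\withoutzero$, it forces $\{z\}\cc \subseteq A$ or $\{z\}\cc \subseteq A\c$ for every such $z$, i.e. $X = A \cup A\c$ with $A$ and $A\c$ both containing a proper element. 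Thus it suffices to show that no $X \in \C$ admits complementary subspaces $A, A\c$, both proper, with $X = A \cup A\c$.

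Next I would cut the problem down to rank~$2$. Assuming $X = A \cup A\c$ as above, choose proper $x \in A$ and $y \in A\c$; then $x \perp y$, and $S = \{x,y\}\cc$ is a subspace of rank~$2$ lying in $\C$ by (H2). Using atomisticity and $A \cap A\c = \{0\}$, the orthoclosed sets $S \cap A$ and $S \cap A\c$ have rank~$1$, so that $S \cap A = \{x\}\cc$ and $S \cap A\c = \{y\}\cc$; since every proper element of $S$ lies in $A$ or in $A\c$, this gives $S = \{x\}\cc \cup \{y\}\cc$. By Lemma~\ref{lem:extended-dagger-biproduct}(i) the decomposition $(\{x\}\cc, \{y\}\cc)$ realises $S$ as the dagger biproduct $\{x\}\cc \oplus \{y\}\cc$; moreover both $\{x\}\cc$ and $\{y\}\cc$ are unital (Lemma~\ref{lem:unital}(i)) and hence dagger isomorphic by (H3)(b), so I may fix a dagger isomorphism $h \colon \{x\}\cc \to \{y\}\cc$.

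Finally I would exhibit a ``superposition'' element of $S$ lying outside $\{x\}\cc \cup \{y\}\cc$, contradicting the previous paragraph. Writing $\iota_1, \iota_2$ for the coprojections of $S = \{x\}\cc \oplus \{y\}\cc$, the semiadditive structure of $\C$ (Theorem~\ref{thm:semiadditive-structure}) lets me form the morphism $d = \iota_1 + \iota_2 \circ h \colon \{x\}\cc \to S$. From left distributivity of composition over $+$ together with the dagger-biproduct relations ${\iota_1}\adj \circ \iota_1 = \id$, ${\iota_1}\adj \circ \iota_2 = 0$, ${\iota_2}\adj \circ \iota_1 = 0$, ${\iota_2}\adj \circ \iota_2 = \id$, I obtain the morphism identities ${\iota_1}\adj \circ d = \id_{\{x\}\cc}$ and ${\iota_2}\adj \circ d = h$. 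Evaluating these at the proper element $x$ gives ${\iota_1}\adj(d(x)) = x \neq 0$ and ${\iota_2}\adj(d(x)) = h(x) \neq 0$. Hence $w = d(x)$ is proper, and it can lie in neither $\{x\}\cc$ (which would force $w \perp y$ and thus ${\iota_2}\adj(w) = 0$) nor $\{y\}\cc$ (which would force ${\iota_1}\adj(w) = 0$); yet $w \in S = \{x\}\cc \cup \{y\}\cc$, the desired contradiction.

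The step I expect to be delicate is the passage from the categorically defined morphism $d = \iota_1 + \iota_2 \circ h$ to a concrete element of $S$, since $+$ is defined through the diagonal and codiagonal and has no a priori pointwise description. The resolution is that the morphisms of $\C$ are genuine maps of orthosets and composition is composition of maps, so the identities ${\iota_i}\adj \circ d$ may simply be evaluated at $x$; no description of $+$ on elements is required, only the biproduct relations among the $\iota_i$ and the distributivity of composition over $+$. In effect this realises the quantum superposition principle at the categorical level and is what rules out reducibility.
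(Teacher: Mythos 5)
Your proof is correct and follows essentially the same route as the paper's: reduce the claim to the irreducibility of ${\mathsf C}(X)$, translate a putative direct decomposition into a reducing pair $(A,A\c)$ with $X = A \cup A\c$, deduce that $S = \{x,y\}\cc$ equals $\class{x} \cup \class{y} \cup \{0\}$, and then contradict this by producing a morphism $d \colon \{x\}\cc \to S$ satisfying ${\iota_1}\adj \circ d = \id$ and ${\iota_2}\adj \circ d = h$, whose value at $x$ can lie in neither $\{x\}\cc$ nor $\{y\}\cc$. The only cosmetic differences are that you justify $S = \{x\}\cc \cup \{y\}\cc$ by rank counting where the paper uses the orthomodular lattice identity $(\{x\}\cc \vee \{y\}\cc) \cap A = \{x\}\cc$, and that you construct $d$ as $\iota_1 + \iota_2 \circ h$ via the semiadditive structure, whereas the paper obtains the same map directly from the product property of the dagger biproduct supplied by (H2).
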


\begin{proof}
By Lemma~\ref{lem:C-consists-of-Dacey-spaces}, $X$ is an atomistic Dacey space and ${\mathsf C}(X)$ is an AC orthomodular lattice. Clearly, ${\mathsf C}(X)$ is complete.

Assume, for sake of contradiction, that ${\mathsf C}(X)$ is directly decomposable. By Lem\-ma~\ref{lem:unital}(ii), $\{x\}\cc$, $x \in X\withoutzero$, are the atoms of ${\mathsf C}(X)$. As the map $X \to {\mathsf C}(X) \komma x \mapsto \{x\}\cc$ reflects $\perp$, it follows that $X$ is reducible.

Hence there is a decomposition $(A,B)$ of $X$, where $A$ and $B$ are non-zero subspaces such that $A \cup B = X$. Let $x \in A$ and $y \in B$. As $X$ is an atomistic Dacey space, we have $\{x,y\}\cc \cap A = (\{x\}\cc \vee \{y\}\cc) \cap A = \{x\}\cc = \class x \cup \{0\}$ and similarly $\{x,y\}\cc \cap B = \{y\}\cc = \class y \cup \{0\}$. Hence $\{x,y\}\cc = \class x \cup \class y \cup \{0\}$.

Given a dagger isomorphism $f \colon \{x\}\cc \to \{y\}\cc$, there is by (H2) a map $d$ such that the diagram
\[ \begin{tikzcd}
& \{x\}\cc \arrow[dl, "\id_{\{x\}\cc}"', shift right=0.5ex] \arrow[d, "d"] \arrow[dr, "f", shift left=0.5ex] \\
\{x\}\cc 
& \{x,y\}\cc \arrow[l, "{\iota_{\{x\}\cc}}\adj", shift right=0.5ex] \arrow[r, "{\iota_{\{y\}\cc}}\adj"', shift left=0.5ex]
& \{y\}\cc
\end{tikzcd} \]
commutes. This is, however, impossible. We conclude that ${\mathsf C}(X)$ is irreducible.
\end{proof}

\begin{theorem} \label{thm:C-induces-star-sfield}
For each $X \in \C$ of rank $n$, there is an $n$-dimensional uniform orthomodular space $H$ and an orthoisomorphism between $P(X)$ and $P(H)$.

Moreover, let $h \colon X_1 \to X_2$ be an isometry. Then there are uniform orthomodular spaces $H_1$ and $H_2$ such that $H_1$ is a subspace of $H_2$, and orthoisomorphisms $r_1 \colon P(X_1) \to P(H_1)$ and $r_2 \colon P(X_2) \to P(H_2)$ such that the diagram
\begin{equation} \label{fml:C-induces-star-sfield}
\begin{tikzcd}
P(X_1) \arrow[r, "\cong", "r_1"'] \arrow[d, "P(h)"]
& P(H_1)  \arrow[d, "P(\iota_{H_1})"] \\
P(X_2) \arrow[r, "\cong", "r_2"'] & P(H_2) \text{ .}
\end{tikzcd}
\end{equation}
commutes, where $\iota_{H_1} \colon H_1 \to H_2$ is the inclusion map.
\end{theorem}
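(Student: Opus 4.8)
The plan is to coordinatise the ortholattice ${\mathsf C}(X)$ by a theorem from projective geometry and then to upgrade the resulting orthomodular space to a \emph{uniform} one by importing the abundant symmetry of $\C$. First I would treat the generic case, when $X$ has rank $n \geq 4$ (this subsumes infinite rank). By Lemma~\ref{lem:CX-in-C}, ${\mathsf C}(X)$ is a complete, irreducible, AC orthomodular lattice, and since it is atomistic with the covering property its length equals the rank $n \geq 4$. Hence Theorem~\ref{thm:orthomodular-space-as-orthoset} yields an orthomodular space $H_0$ together with an ortholattice isomorphism $\theta \colon {\mathsf C}(X) \to {\mathsf C}(H_0)$. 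By Lemma~\ref{lem:unital}(ii) the atoms of ${\mathsf C}(X)$ are exactly the sets $\{x\}\cc = \class x \cup \{0\}$ with $x \in X\withoutzero$, and likewise for $H_0$; thus $\theta$ restricts to a bijection between the proper elements of $P(X)$ and those of $P(H_0)$, and because $\theta$ preserves $\c$ this bijection is an orthoisomorphism $r \colon P(X) \to P(H_0)$. Note $\dim H_0 = n$.

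The core step is to show that $H_0$ is transitive. Given orthogonal proper $x, y \in X$, Lemma~\ref{lem:transitivity-of-orthoset} provides a dagger automorphism $g$ of $X$ interchanging $\{x\}\cc$ and $\{y\}\cc$ and fixing every element of the subspace $S = \{x,y\}\c$, whose rank is $n - 2 \geq 2$. The induced orthoautomorphism $P(g)$ fixes $P(S)$ pointwise, so $r \circ P(g) \circ r^{-1}$ is an orthoautomorphism of $P(H_0)$ fixing $r(P(S))$ pointwise, and $r(P(S)) = P(S')$ for a subspace $S' \leq H_0$ of dimension $\geq 2$. Since $H_0$ is at least $3$-dimensional, Corollary~\ref{cor:Wigner-auto} applies and shows that this orthoautomorphism is induced by a genuinely \emph{unitary} map of $H_0$; hence the interchange of the two orthogonal lines $r(\class x)$ and $r(\class y)$ is realised unitarily. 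As every pair of orthogonal lines of $H_0$ arises this way, the unitary group contains a swap of any two orthogonal lines. For arbitrary lines $L, L'$ one routes through a common perpendicular $M \subseteq (L \vee L')\c$ (which exists as $\dim H_0 \geq 3$), so that the composite of the two corresponding unitary swaps carries $L$ to $L'$. Thus the unitary group acts transitively on lines, i.e.\ $H_0$ is transitive, and Lemma~\ref{lem:rescaling} produces a uniform orthomodular space $H$ with $P(H) \cong P(H_0) \cong P(X)$ and $\dim H = n$. This settles the case $n \geq 4$.

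The remaining ranks and the ``moreover'' part I would handle by one and the same device of passing to subspaces. The ranks $0$ and $1$ are immediate: take $H$ the zero space resp.\ $H = \Reals$, matching $P(X)$ trivially. For $n \in \{2,3\}$, choose by Lemma~\ref{lem:morphisms-between-any-pair-of-orthosets}(ii) an isometry of $X$ into an object $X'$ of rank $4$; then $X$ is dagger isomorphic to an orthoclosed $B \leq X'$, the case just proved gives $P(X') \cong P(H')$ with $H'$ uniform, the orthoclosed set corresponding to $P(B)$ is $P(S)$ for a subspace $S \leq H'$, and $S$ is uniform orthomodular by Lemma~\ref{lem:direct-sum-of-orthomodular-spaces}(ii); set $H = S$. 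For the ``moreover'' part, given an isometry $h \colon X_1 \to X_2$, Lemma~\ref{lem:isometries-in-C} writes $h = \iota_B \circ k$ with $k \colon X_1 \to B$ a dagger isomorphism onto an orthoclosed $B \leq X_2$. Applying the first part to $X_2$ gives an orthoisomorphism $r_2 \colon P(X_2) \to P(H_2)$ with $H_2$ uniform; let $H_1 \leq H_2$ be the subspace with $r_2(P(B)) = P(H_1)$, uniform again by Lemma~\ref{lem:direct-sum-of-orthomodular-spaces}(ii), and put $r_1 = r_2|_{P(B)} \circ P(k)$. Since $r_2$ restricts to an orthoisomorphism $P(B) \to P(H_1)$ and the class of $k(u)$ in $B$ agrees with its class in $X_2$ (Lemma~\ref{lem:PA-subspace-of-PX}), evaluating both composites on $\class u$ gives $r_2(P(h)(\class u)) = r_2(\class{k(u)}) = P(\iota_{H_1})(r_1(\class u))$, so the square commutes.

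The main obstacle is the transitivity step: the coordinatisation theorem delivers only \emph{some} orthomodular space, and an arbitrary such space need not be uniform, so homogeneity must be imported from $\C$. The delicate point is not the existence of automorphisms interchanging orthogonal lines -- these come at once from Lemma~\ref{lem:transitivity-of-orthoset} -- but the need to realise them by genuinely \emph{unitary}, rather than merely quasiunitary, maps; this is precisely what a pointwise-fixed subspace of dimension $\geq 2$ secures through Corollary~\ref{cor:Wigner-auto}, and it is exactly the reason the argument must first be confined to rank $\geq 4$, the lower ranks then being reduced to it by the subspace device.
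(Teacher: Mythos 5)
Your proposal is correct and follows essentially the same route as the paper's proof: coordinatise ${\mathsf C}(X)$ via Theorem~\ref{thm:orthomodular-space-as-orthoset} for rank $\geq 4$, import transitivity from $\C$ through Lemma~\ref{lem:transitivity-of-orthoset} and Corollary~\ref{cor:Wigner-auto}, obtain uniformity via Lemmas~\ref{lem:rescaling} and~\ref{lem:transitivity-and-unit-vectors}, and reduce the low ranks and the ``moreover'' part to the rank-$\geq 4$ case by passing to subspaces. The only differences are cosmetic: you rescale after proving transitivity rather than before, and you spell out the routing of non-orthogonal lines through a common perpendicular, a step the paper leaves implicit in ``the claim follows''.
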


\begin{proof}
Assume first that $X \in \C$ has rank $n \geq 4$. By Lemma~\ref{lem:CX-in-C}, ${\mathsf C}(X)$ is a complete, irreducible, AC orthomodular lattice, which has length $\geq 4$. Hence, by Theorem~\ref{thm:orthomodular-space-as-orthoset}, there is an $n$-dimensional orthomodular space $H$ such that ${\mathsf C}(X)$ is isomorphic to ${\mathsf C}(H)$. By Lemma~\ref{lem:rescaling}, $H$ can be chosen to contain a unit vector. It follows that $P(X)$ is orthoisomorphic with $P(H)$.

We claim that $H$ is transitive. By Lemma~\ref{lem:transitivity-and-unit-vectors}, this implies that $H$ is uniform. By Lemma~\ref{lem:transitivity-of-orthoset}, there is for any 
$x, y \in X\withoutzero$ a dagger automorphism interchanging $\{x\}\cc$ and $\{y\}\cc$ and keeping $\{ x, y \}\c$ fixed. That is, there is an orthoautomorphism of $P(X)$ interchanging $\class x$ and $\class y$ and keeping $\{ \class x, \class y \}\c$ fixed. Consequently, for any orthogonal non-zero vectors $u, v \in H$, there is an orthoautomorphism of $P(H)$ interchanging $\class u$ and $\class v$ and keeping $\{\class u, \class v\}\c$ fixed. By Corollary~\ref{cor:Wigner-auto}, there is a unitary map on $H$ sending $\lin u$ to $\lin v$. The claim follows.

Let now $h \colon X_1 \to X_2$ be an isometry and assume that there is a uniform orthomodular space $H_2$ and an orthoisomorphism $r_2 \colon P(X_2) \to P(H_2)$. By Lemma~\ref{lem:isometries-in-C}, $h$ is an orthometry and by Lemma~\ref{prop:partial-isometries}(ii) so is $P(h) \colon P(X_1) \to P(X_2)$. Consequently, $H_2$ possesses a subspace $H_1$ such that $P(H_1)$ is the image of $r_2 \circ P(h)$. Furthermore, there is an orthoisomorphism $r_1 \colon P(X_1) \to P(H_1)$ making (\ref{fml:C-induces-star-sfield}) commutative.

Assume finally that $X \in \C$ is of rank $\leq 3$. By Lemma~\ref{lem:morphisms-between-any-pair-of-orthosets}(ii), there is an orthoset $Y \in \C$ of rank $4$ and an isometry $h \colon X \to Y$. It is now clear that there is a uniform orthomodular space $H$ and an orthoisomorphism $r \colon P(X) \to P(H)$.
\end{proof}

% We note that, in view of Theorems~\ref{thm:Soler} and~\ref{thm:C-induces-star-sfield}, we can say already at this point that, if $\C$ contains an orthoset of infinite rank, then any object of $\C$ is of the form $P(H)$, where $H$ is a classical Hilbert space.

\section{The functor $\L \colon \C \to \OMS{F}$}
\label{sec:the-functor-L}

We will establish in this section that there is a $\ast$-sfield $F$ and a dagger functor $\L$ from $\C$ to the dagger category $\OMS{F}$. In particular, we will see that the orthosets in $\C$ arise from uniform orthomodular spaces over one and the same $\ast$-sfield. We will start by providing the details of the construction of the functor $\L$ as the precise procedure will matter in subsequent proofs. Afterwards we will investigate the properties of $\L$.

We begin by fixing an orthoset $Z \in \C$ of rank $3$. By Theorem~\ref{thm:C-induces-star-sfield}, there is a $3$-dimensional uniform orthomodular space $H^Z$ and an orthoisomorphism $r \colon P(Z) \to P(H^Z)$. We will denote its scalar $\ast$-sfield throughout this section by~$F$.

Let now $X$ be an arbitrary orthoset in $\C$. Consider the dagger biproduct
\[ \begin{tikzcd} X \arrow[r, "\iota_X"] & X \oplus Z & Z \arrow[l, "\iota_Z"'] \end{tikzcd} \]
and recall that, thanks to Lemma~\ref{lem:extended-dagger-biproduct}(ii), we may consider $X$ and $Z$ as complementary subspaces of $X \oplus Z$. By Lemma~\ref{lem:PA-subspace-of-PX}, $P(X)$ and $P(Z)$ are complementary subspaces of $P(X \oplus Z)$ and by Lemma~\ref{lem:extended-dagger-biproduct}(ii) and Proposition~\ref{prop:partial-isometries}(ii), the inclusion maps $P(\iota_X) \colon P(X) \to P(X \oplus Z)$ and $P(\iota_Z) \colon P(Z) \to P(X \oplus Z)$ are orthometries. Again by Theorem~\ref{thm:C-induces-star-sfield}, there is a uniform orthomodular space $\overline{H_X}$ and an orthoisomorphism $\bar r_X \colon P(X \oplus Z) \to P(\overline{H_X})$. Thanks to Lemma~\ref{lem:extension-of-representation}, we can assume that $H^Z$ is a subspace of $\overline{H_X}$ and $\bar r_X \circ P(\iota_Z) \circ r^{-1}$ is induced by the inclusion map $\iota_{H^Z} \colon H^Z \to \overline{H_X}$. Finally, $\bar r_X$ establishes an orthoisomorphism $r_X \colon P(X) \to P(H_X)$, where $H_X$ is the orthocomplement of $H^Z$ in $\overline{H_X}$. Note that then $\overline{H_X} = H_X \oplus H^Z$. We put $\L(X) = H_X$. In what follows, we will usually keep on writing $H_X$ rather than $\L(X)$. By Lemma~\ref{lem:direct-sum-of-orthomodular-spaces}, $H_X \in \OMS{F}$.

All this is summarised in the following commutative diagram:
\[ \begin{tikzcd}
P(Z) \arrow[r, "\cong", "r"'] \arrow[d, hook, "P(\iota_Z)"']
& P(H^Z) \arrow[d, hook, "P(\iota_{H^Z})"] \\
P(X \oplus Z) \arrow[r, "\cong", "\bar r_X"'] & P(H_X \oplus H^Z) \\
P(X) \arrow[r, "\cong", "r_X"'] \arrow[u, hook, "P(\iota_X)"]
& P(H_X) \arrow[u, hook, "P(\iota_{H_X})"']
\end{tikzcd} \]
Let now $f \colon X \to Y$ be a $\C$-morphism. Consider the morphism
\[ f \oplus \id_Z \colon X \oplus Z \to Y \oplus Z. \]
By Corollary~\ref{cor:Fundamental-Theorem}, there is a unique linear map $\bar\phi \colon H_X \oplus H^Z \to H_Y \oplus H^Z$ such that $P(\bar\phi) = \bar r_Y \circ P(f \oplus \id_Z) \circ {{\bar r}_X}^{\;\;-1}$ and $\bar\phi|_{H^Z} = \id_{H^Z}$. Note that $\bar\phi(H_X) \subseteq H_Y$ and hence we may define $\phi = \bar\phi|_{H_X}^{H_Y}$. In other words, $\phi \colon H_X \to H_Y$ is the unique linear map such that $\bar r_Y \circ P(f \oplus \id_Z) \circ {{\bar r}_X}^{\;\;-1} = P(\phi \oplus \id_{H^Z})$. We put $\L(f) = \phi$.

These facts are reflected in the following commutative diagrams:
\begin{equation} \label{fml:commutative-diagrams-for-Lf}
\begin{tikzcd}[row sep=large]
P(X \oplus Z) \arrow[r, "\cong", "\bar r_X"'] \arrow[d, "P(f \oplus \id_Z)"']
& P(H_X \oplus H^Z)  \arrow[d, "P(\bar\phi) \;=\; P(\phi \oplus \id_{H^Z})"'] \\
% "\substack{P(\bar\phi) \;=\;\; \\  P(\phi \oplus \id_{H^Z})}"'] \\
P(Y \oplus Z) \arrow[r, "\cong", "\bar r_Y"'] & P(H_Y \oplus H^Z)
\end{tikzcd}
\quad
\begin{tikzcd}[row sep=large]
H_X \arrow[d, "\phi"'] \arrow[r, hook, "\iota_{H_X}"] & H_X \arrow[d, "\bar\phi \;=\; \phi \oplus \id_{H^Z}"'] \oplus H^Z & H^Z \arrow[d, "\id_{H^Z}"'] \arrow[l, hook', "\iota_{H^Z}"'] \\
% "\substack{\bar\phi \;=\;\; \\ \phi \oplus \id_{H^Z}}"] \oplus H^Z & H^Z \arrow[d, "\id_{H^Z}"] \arrow[l, hook', "\iota_{H^Z}"] \\
H_Y \arrow[r, hook, "\iota_{H_Y}"] & H_Y \oplus H^Z & H^Z \arrow[l, hook', "\iota_{H^Z}"']
\end{tikzcd}
\end{equation}
Disregarding $Z$ and $H^Z$, we have the commutative diagram
\begin{equation} \label{fml:restricted-commutative-diagram-for-Lf}
\begin{tikzcd}
P(X) \arrow[r, "\cong", "r_X"'] \arrow[d, "P(f)"]
& P(H_X)  \arrow[d, "P(\phi)"] \\
P(Y) \arrow[r, "\cong", "r_Y"'] & P(H_Y) \text{ .}
\end{tikzcd}
\end{equation}

\begin{proposition} \label{prop:L-is-functor}
$\L$ is a dagger functor from $\C$ to $\OMS{F}$.
\end{proposition}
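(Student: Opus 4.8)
The plan is to verify the four things that make $\L$ a dagger functor into $\OMS{F}$: that $\L(f)$ is a genuine $\OMS{F}$-morphism (a linear map between uniform orthomodular spaces over $F$ possessing a linear adjoint), that $\L$ preserves identities and composition, and that $\L(f\adj) = \L(f)\adj$. The unifying device throughout is to argue not with $\phi = \L(f)$ directly but with its companion $\bar\phi = \phi \oplus \id_{H^Z}$ on $\overline{H_X} = H_X \oplus H^Z$, because $\overline{H_X}$ and $\overline{H_Y}$ always share the common $3$-dimensional subspace $H^Z$ on which $\bar\phi$ is the identity. This lets me invoke the uniqueness clause of Corollary~\ref{cor:Fundamental-Theorem} — a genuine uniqueness of the inducing linear map, not merely uniqueness up to a scalar — each time I need to identify two linear maps that induce the same map between irredundant quotients. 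That each $H_X$ lies in $\OMS{F}$ is already recorded in the construction via Lemma~\ref{lem:direct-sum-of-orthomodular-spaces}.

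For functoriality I would proceed as follows. For the identity, note $\id_X \oplus \id_Z = \id_{X \oplus Z}$, so $P(\bar\phi) = \bar r_X \circ \id \circ {\bar r_X}^{-1} = \id$; since $\id_{\overline{H_X}}$ is linear, restricts to $\id_{H^Z}$, and induces the identity, uniqueness in Corollary~\ref{cor:Fundamental-Theorem} forces $\bar\phi = \id_{\overline{H_X}}$, whence $\L(\id_X) = \id_{H_X}$. For composition, take $f \colon X \to Y$ and $g \colon Y \to W$ and write $\phi = \L(f)$, $\psi = \L(g)$, $\chi = \L(g \circ f)$. From $(g \circ f) \oplus \id_Z = (g \oplus \id_Z) \circ (f \oplus \id_Z)$ I get $P(\bar\chi) = P(\bar\psi) \circ P(\bar\phi) = P(\bar\psi \circ \bar\phi)$. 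The map $\bar\psi \circ \bar\phi = (\psi \circ \phi) \oplus \id_{H^Z}$ is linear and restricts to $\id_{H^Z}$, so uniqueness in Corollary~\ref{cor:Fundamental-Theorem} gives $\bar\chi = \bar\psi \circ \bar\phi$, i.e.\ $\chi = \psi \circ \phi$.

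The dagger is the substantial step. First I would establish adjointability of $\phi$: the map $P(\bar\phi) = \bar r_Y \circ P(f \oplus \id_Z) \circ {\bar r_X}^{-1}$ is a composite of adjointable maps (the $\bar r$ are orthoisomorphisms, and $P(f \oplus \id_Z)$ is adjointable by Proposition~\ref{prop:quotient-map}), hence is itself adjointable; by Theorem~\ref{thm:adjoint-of-map-induced-by-linear-map} the linear map $\bar\phi$ then has a linear adjoint $\bar\phi\adj$, and Lemma~\ref{lem:linear-adjoints-of-sums} applied to $\bar\phi = \phi \oplus \id_{H^Z}$ shows that $\phi$ itself has a linear adjoint with $\bar\phi\adj = \phi\adj \oplus \id_{H^Z}$; thus $\L(f)$ is an $\OMS{F}$-morphism. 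To identify $\L(f\adj)$ with $\phi\adj$, I compute the adjoint of $P(\bar\phi)$: using $\bar r\adj = \bar r^{-1}$ (Proposition~\ref{prop:unitary-maps}), the fact that $P((f \oplus \id_Z)\adj)$ is the adjoint of $P(f \oplus \id_Z)$ (Proposition~\ref{prop:quotient-map}), and $(f \oplus \id_Z)\adj = f\adj \oplus \id_Z$ (Lemma~\ref{lem:f-oplus-isomorphy}), the adjoint of $P(\bar\phi)$ equals $\bar r_X \circ P(f\adj \oplus \id_Z) \circ {\bar r_Y}^{-1}$, which is precisely the map defining $\overline{\L(f\adj)} = \L(f\adj) \oplus \id_{H^Z}$. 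Since adjoints in the irredundant quotient are unique (Lemma~\ref{lem:irredundancy-adjoints-1}), $P(\bar\phi\adj)$ and $P(\overline{\L(f\adj)})$ coincide. Both $\bar\phi\adj$ and $\overline{\L(f\adj)}$ are linear, restrict to $\id_{H^Z}$, and induce this common map, so a final application of uniqueness in Corollary~\ref{cor:Fundamental-Theorem} yields $\bar\phi\adj = \overline{\L(f\adj)}$, and hence $\L(f\adj) = \phi\adj = \L(f)\adj$.

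I expect the main obstacle to be exactly this last identification: at the level of irredundant quotients one only controls the inducing linear map up to a nonzero scalar (Lemma~\ref{lem:uniqueness-of-inducing-quasilinear-map}), so a naive argument would leave $\L(f\adj)$ and $\L(f)\adj$ equal only up to a factor. Pinning the scalar to $1$ is what the anchoring to $H^Z$ achieves: because the barred maps are forced to be the identity on the fixed $3$-dimensional $H^Z$, Corollary~\ref{cor:Fundamental-Theorem} upgrades ``equal up to a scalar'' to honest equality. The same anchoring removes any scalar ambiguity in $\L(\id_X)$ and $\L(g \circ f)$ as well.
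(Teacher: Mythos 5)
Your proposal is correct and follows essentially the same route as the paper: the same chain (adjointability of $P(\bar\phi)$ as a composite of adjointable maps, Theorem~\ref{thm:adjoint-of-map-induced-by-linear-map} to get a linear adjoint of $\bar\phi$, Lemma~\ref{lem:linear-adjoints-of-sums} to split off $\phi\adj$, and $P(f \oplus \id_Z)\adj = P(f\adj \oplus \id_Z)$ via Proposition~\ref{prop:quotient-map} and Lemma~\ref{lem:f-oplus-isomorphy}) establishes the dagger preservation. The only difference is one of explicitness: the paper dismisses functoriality as ``readily checked'' and leaves the final identification $\L(f\adj) = \phi\adj$ implicit in the uniqueness clause of the definition of $\L$, whereas you correctly spell out both via the anchoring to $H^Z$ and Corollary~\ref{cor:Fundamental-Theorem}.
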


\begin{proof}
The functoriality is readily checked. To see that $\L$ preserves the dagger, let $f \colon X \to Y$ be a morphism. Let $\phi = \L(f)$, that is, let $\phi \colon H_X \to H_Y$ be the unique linear map such that $\bar r_Y \circ P(f \oplus \id_Z) \circ \bar r_X^{-1} = P(\phi \oplus \id_{H^Z})$. By Proposition~\ref{prop:unitary-maps}, $\bar r_X$ is an adjoint of $\bar r_X^{-1}$ and $\bar r_Y^{-1}$ is an adjoint of $\bar r_Y$. As $P(f \oplus \id_Z)$ is adjointable, we conclude by Proposition~\ref{prop:quotient-map} that $P(\phi \oplus \id_{H^Z})$ possesses the unique adjoint $P(\phi \oplus \id_{H^Z})\adj = \bar r_X \circ P(f \oplus \id_Z)\adj \circ \bar r_Y^{-1}$.

By \cite[Theorem~3.23]{PaVe5}, the adjointability of $P(\phi \oplus \id_{H^Z})$ implies that $\phi \oplus \id_{H^Z}$ possesses a linear adjoint. From Lemma~\ref{lem:linear-adjoints-of-sums} it follows that also $\phi$ possesses a linear adjoint $\phi\adj$.

By Proposition~\ref{prop:quotient-map} and Lemma~\ref{lem:f-oplus-isomorphy}, we have $P(f \oplus \id_Z)\adj = P((f \oplus \id_Z)\adj) = P(f\adj \oplus \id_Z)$, and by Theorem~\ref{thm:adjoint-of-map-induced-by-linear-map} and Lemma~\ref{lem:linear-adjoints-of-sums}, $P(\phi \oplus \id_{H^Z})\adj = P((\phi \oplus \id_{H^Z})\adj) = P(\phi\adj \oplus \id_{H^Z})$. We conclude $\L(f\adj) = \phi\adj = \L(f)\adj$.
\end{proof}

\begin{proposition} \label{prop:L}
\begin{itemize}

\item[\rm (i)] For any orthoset $X \in \C$ of rank $n$, $H_X$ is $n$-dimensional.

\item[\rm (ii)] If $f \colon X \to Y$ is a dagger isomorphism, then $\L(f)$ is unitary.

\item[\rm (iii)] If $f \colon X \to Y$ is an isometry, then $\L(f)$ is a linear isometry.

\item[\rm (iv)] If $p \colon X \to X$ is a projection, then $\L(p)$ is a projection.

\item[\rm (v)] Let $\begin{tikzcd}[cramped] A \arrow[r, "\iota_A"] & X & B \arrow[l, "\iota_B"'] \end{tikzcd}$ be a biproduct and let
\begin{equation} \label{fml:prop:L}
\begin{tikzcd}[cramped] H_A \arrow[r, "\phi_A"] & H_X & H_B \arrow[l, "\phi_B"'] \end{tikzcd}
\end{equation}
be its image under $\L$. Then {\rm (\ref{fml:prop:L})} is a biproduct in $\OMS{F}$. That is, $\phi_A$ and $\phi_B$ are isometries whose images are complementary subspaces of $H_X$.

\end{itemize}
\end{proposition}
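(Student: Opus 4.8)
The plan is to lean on the fact that $\L$ is a dagger functor (Proposition~\ref{prop:L-is-functor}), combined with the dictionary between the order-theoretic notions in $\C$ and their linear counterparts in $\OMS{F}$ developed in Section~\ref{sec:adjointable-linear-maps}. Under this dictionary parts (i)--(iv) become short, and the genuine content lies in part (v).

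For (i), recall that $r_X \colon P(X) \to P(H_X)$ is an orthoisomorphism, so $P(X)$ and $P(H_X)$ have the same rank. The rank of $P(X)$ equals the rank of $X$, namely $n$, while the rank of $P(H_X)$ equals the dimension of $H_X$ as a Hermitian space (Example~\ref{ex:Hermitian-space}); hence $H_X$ is $n$-dimensional. Parts (ii)--(iv) are pure equation-chasing through $\L$. If $f$ is a dagger isomorphism, functoriality and dagger-preservation give $\L(f)\adj \circ \L(f) = \L(f\adj \circ f) = \id_{H_X}$ and $\L(f) \circ \L(f)\adj = \id_{H_Y}$, so $\L(f)$ is bijective with $\L(f)\adj = \L(f)^{-1}$ and is unitary by Lemma~\ref{lem:unitary-maps-and-isometries}(i). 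If $f$ is an isometry, the same computation yields $\L(f)\adj \circ \L(f) = \id_{H_X}$, so $\L(f)$ is a linear isometry by Lemma~\ref{lem:unitary-maps-and-isometries}(ii). If $p$ is a projection, it is idempotent and self-adjoint (Lemma~\ref{lem:projection-C}), whence so is $\L(p)$, and Lemma~\ref{lem:projections-in-orthomodular-spaces} makes $\L(p)$ a projection.

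For (v), I would first invoke Lemma~\ref{lem:extended-dagger-biproduct} to identify the biproduct with a decomposition $(A,B)$ of $X$ whose coprojections are the inclusion maps $\iota_A, \iota_B$; these are isometries (Lemma~\ref{lem:isometries-in-C}). By part (iii), $\phi_A = \L(\iota_A)$ and $\phi_B = \L(\iota_B)$ are linear isometries, so their images $S_A = \image \phi_A$ and $S_B = \image \phi_B$ are subspaces of $H_X$. The crux is to show that $S_A$ and $S_B$ are mutual orthocomplements. Using diagram~(\ref{fml:restricted-commutative-diagram-for-Lf}) for $f = \iota_A$, I have $P(\phi_A) = r_X \circ P(\iota_A) \circ {r_A}^{-1}$. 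Since $\iota_A$ is an orthometry, $\image P(\iota_A) = P(A)$ by Proposition~\ref{prop:partial-isometries}(ii), and since $\phi_A$ is a linear isometry, $\image P(\phi_A) = P(S_A)$; as $r_A$ is bijective and $r_X$ an orthoisomorphism, this gives $P(S_A) = r_X(P(A))$, and likewise $P(S_B) = r_X(P(B))$. Because $(A,B)$ is a decomposition, $P(A)$ and $P(B)$ are mutual orthocomplements in $P(X)$, and since $r_X$ preserves orthocomplementation, $P(S_A)$ and $P(S_B)$ are mutual orthocomplements in $P(H_X)$. Passing through the isomorphism ${\mathsf C}(H_X) \cong {\mathsf C}(P(H_X))$, I conclude that $S_A$ and $S_B$ are complementary subspaces of $H_X$, which is the assertion; the relation $\phi_B\adj \circ \phi_A = 0$ then follows either from the orthogonality $S_A \perp S_B$ together with anisotropy, or directly from $\L(\iota_B\adj \circ \iota_A) = \L(0_{A,B}) = 0$.

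I expect part (v) to be the main obstacle: the work is entirely in transporting the image subspaces correctly through $P(\cdot)$ and through the orthoisomorphisms $r_A$ and $r_X$, and in translating the complementarity obtained at the level of the irredundant quotients back to the orthomodular space $H_X$. The remaining parts are immediate once functoriality and dagger-compatibility of $\L$ are combined with the characterisations established in Section~\ref{sec:adjointable-linear-maps}.
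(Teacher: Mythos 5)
Your proof is correct and follows essentially the same route as the paper: parts (i)--(iv) via the orthoisomorphism $r_X$, functoriality, dagger-preservation, and the characterisation lemmas (Lemmas~\ref{lem:unitary-maps-and-isometries}, \ref{lem:projection-C}, \ref{lem:projections-in-orthomodular-spaces}), and part (v) by transporting the complementarity of $\image P(\iota_A)$ and $\image P(\iota_B)$ through diagram~(\ref{fml:restricted-commutative-diagram-for-Lf}) to $P(H_X)$ and then back to $H_X$. Your additional explicit verification of $\phi_B\adj \circ \phi_A = 0$ is a harmless refinement of what the paper subsumes under the identification of dagger biproducts in $\OMS{F}$ with direct sums.
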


\begin{proof}
Ad (i): $P(X)$ and $P(H_X)$ are orthoisomorphic, hence the rank of $X$ coincides with the dimension of $H_X$.

Ad (ii), (iii): This follows from the functoriality of $\L$ and Lemma~\ref{lem:unitary-maps-and-isometries}.

Ad (iv): This follows from Lemma~\ref{lem:projection-C}, the functoriality of $\L$, and Lemma~\ref{lem:projections-in-orthomodular-spaces}.

Ad (v): Since $\iota_A$ and $\iota_B$ are isometries, $\phi_A$ and $\phi_B$ are by part (iii) linear isometries. According to (\ref{fml:restricted-commutative-diagram-for-Lf}), we have the commutative diagram
\[ \begin{tikzcd}
P(A) \arrow[d, "\cong" {rotate=270, anchor=south}, "r_A"'] \arrow[r, "P(\iota_A)"]
& P(X) \arrow[d, "\cong" {rotate=270, anchor=south}, "r_X"']
& P(B) \arrow[d, "\cong" {rotate=270, anchor=south}, "r_B"'] \arrow[l, "P(\iota_B)"'] \\
P(H_A)  \arrow[r, "P(\phi_A)"']
& P(H_X) & P(H_B) \text{ .} \arrow[l, "P(\phi_B)"]
\end{tikzcd} \]
By Lemmas~\ref{lem:extended-dagger-biproduct}(ii) and~\ref{prop:partial-isometries}(ii), $\image P(\iota_A)$ and $\image P(\iota_B)$ are complementary subspaces of $P(X)$. Hence $\image P(\phi_A)$ and $\image P(\phi_B)$ are complementary subspaces of $P(H_X)$. This means that $\image \phi_A$ and $\image \phi_B$ are complementary subspaces of $H_X$. As $H_X$ is orthomodular, this means $H_X = \image \phi_A \,\oplus\, \image \phi_B$. As dagger biproducts are given in $\OMS{F}$ by direct sums and as $\phi_A|^{\image \phi_A}$ and $\phi_B|^{\image \phi_B}$ are dagger isomorphisms in $\OMS{F}$, the assertion follows.
\end{proof}

Let us denote by $\Cfin$ be full subcategory of $\C$ consisting of orthosets of finite rank. Similarly, we denote by $\OMSfin{F}$ full subcategory of $\OMS{F}$ consisting of finite-dimensional spaces. Obviously, $\L$ restricts to a dagger functor $\Lfin \colon \Cfin \to \OMSfin{F}$. If $F$ is not one of the classical fields, $\OMS{F}$ does not contain spaces of infinite dimensions by Theorem~\ref{thm:Soler}(iii). In this case, we hence have $\Cfin = \C$ and $\OMSfin{F} = \OMS{F}$, so in particular $\Lfin$ coincides with $\L$.

\begin{proposition} \label{prop:L-is-surjective}
\begin{itemize}

\item[\rm (i)] $\Lfin$ is dagger essentially surjective.

\item[\rm (ii)] Assume that $\C$ contains an orthoset of infinite rank. Then $\L$ is dagger essentially surjective.

\end{itemize}
\end{proposition}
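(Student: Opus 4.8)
The plan is to exploit that $\L$ sends the rank of an orthoset to the dimension of its image, so that dagger essential surjectivity reduces to a classification of uniform orthomodular spaces over $F$ by dimension. The fact I would isolate first is the following: \emph{any two uniform orthomodular spaces over $F$ of the same dimension $\leq \aleph_0$ are unitarily isomorphic.} In the finite-dimensional case each such space admits an orthonormal basis---orthomodularity (Theorem~\ref{thm:Soler}(i)) yields an orthogonal basis, which uniformity lets us rescale to unit vectors---and the linear bijection matching two orthonormal bases preserves the Hermitian form, hence is unitary. In the infinite-dimensional case, Theorem~\ref{thm:Soler}(iii) identifies the space with a (separable) Hilbert space over one of $\Reals, \Complexes, \Quaternions$, and such spaces are classified up to unitary isomorphism by their orthonormal dimension. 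Throughout, recall from Example~\ref{ex:OMSF} that in $\OMS{F}$ the dagger isomorphisms are precisely the unitary maps.

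For part~(i), let $Y \in \OMSfin{F}$ have dimension $n$. By Proposition~\ref{prop:nI}, $\C$ contains an orthoset $X$ of rank $n$; since $n$ is finite, $X \in \Cfin$ and $\Lfin(X) = H_X$, which is $n$-dimensional by Proposition~\ref{prop:L}(i). Thus $H_X$ and $Y$ are $n$-dimensional uniform orthomodular spaces over $F$, so by the classification fact they are unitarily, hence dagger, isomorphic. Therefore $\Lfin(X)$ is dagger isomorphic to $Y$.

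For part~(ii), the hypothesis provides an orthoset $W \in \C$ of infinite rank; as all ranks are assumed at most countable, $W$ has rank $\aleph_0$. The construction of $\L$ supplies an orthoisomorphism $P(W) \cong P(H_W)$ (cf.\ Theorem~\ref{thm:C-induces-star-sfield}), so $H_W = \L(W)$ has dimension $\aleph_0$; by Theorem~\ref{thm:Soler}(iii) this forces $F \in \{\Reals, \Complexes, \Quaternions\}$, whence $\OMS{F} = \Hil{F}$ and every object of $\OMS{F}$ is a Hilbert space over $F$ of dimension at most $\aleph_0$. Now let $Y \in \OMS{F}$. If $\dim Y$ is finite, part~(i) already produces a suitable $X$; if $\dim Y = \aleph_0$, then $Y$ and $H_W$ are two $\aleph_0$-dimensional Hilbert spaces over $F$ and hence unitarily isomorphic, so $\L(W)$ is dagger isomorphic to $Y$.

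The only step requiring genuine work is the isolated classification fact---in particular, the construction of an orthonormal basis of a finite-dimensional uniform orthomodular space and the verification that a basis-matching map is unitary. The infinite-dimensional half is a standard Hilbert-space statement, available once Theorem~\ref{thm:Soler}(iii) has been invoked.
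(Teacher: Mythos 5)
Your proposal is correct and follows essentially the same route as the paper: realise each dimension by an object of $\C$ via Proposition~\ref{prop:nI} and Proposition~\ref{prop:L}(i), then conclude using the classification of uniform orthomodular spaces over $F$ (respectively Hilbert spaces over $\Reals$, $\Complexes$, $\Quaternions$, via Theorem~\ref{thm:Soler}(iii)) by dimension up to unitary isomorphism. The only difference is cosmetic: you prove the finite-dimensional classification by matching orthonormal bases, while the paper cites it as the identification of any $n$-dimensional uniform orthomodular space with $F^n$ carrying the standard inner product.
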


\begin{proof}
Ad (i): By Proposition~\ref{prop:nI}, $\C$ contains for any $n \in \Naturals$ an orthoset $X$ of rank $n$ and by Theorem \ref{prop:L-is-functor}(i), $\L(X)$ is $n$-dimensional. Moreover, by Example \ref{ex:categories}-1, any $n$-dimensional uniform orthomodular space $H$ is isomorphic to $F^n$, equipped with the standard inner product. Therefore any two $n$-dimensional uniform orthomodular spaces are isomorphic and hence dagger isomorphic in $\OMS{F}$.

Ad (ii): In this case, $\C$ contains an orthoset $X$ of any rank $n \in \Naturals \cup \{\aleph_0\}$. By Theorem~\ref{thm:Soler}(iii), $\L(X)$ is an $n$-dimensional Hilbert space over $F \in \{\Reals, \Complexes, \Quaternions\}$. Any two Hilbert spaces over $F$ are isomorphic if their dimensions coincide, hence they are dagger isomorphic in $\OMS{F}$.
\end{proof}

We next turn to the question which morphisms are in the image of $\L$. We insert an auxiliary lemma.

\begin{lemma} \label{lem:preservation-of-identity-parts}
Let $A$ be a subspace of $X \in \C$ and let $f \colon X \to X$ be a $\C$-endomorphism such that $f|_A = \id_A$ and $f(A\c) \subseteq A\c$. Let $S$ be the subspace of $H_X$ such that $r_X(P(A)) = P(S)$. Then $\L(f)$ is on $S$ the identity.
\end{lemma}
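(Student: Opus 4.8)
The plan is to reduce the statement to a \emph{projective} identity on a subspace large enough to activate the normalisation that is built into the definition of $\L(f)$. First I would use the hypotheses to decompose $f$ itself. Since $(A, A\c)$ is a decomposition of $X$ by (H2), and since $f(A) = A$ (because $f|_A = \id_A$) while $f(A\c) \subseteq A\c$ by assumption, Lemma~\ref{lem:decomposing-a-morphism}(i) gives $f = \id_A \oplus g$, where $g = f|_{A\c}^{A\c}$. Forming the morphism $f \oplus \id_Z \colon X \oplus Z \to X \oplus Z$ and regarding $(A \oplus Z, A\c)$ as a decomposition of $X \oplus Z$, we see that $f \oplus \id_Z$ restricts to the identity on the subspace $A \oplus Z$ and to $g$ on $A\c$. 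In particular $(f \oplus \id_Z)|_{A \oplus Z} = \id_{A \oplus Z}$, so $P(f \oplus \id_Z)$ fixes every element of $P(A \oplus Z)$.

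Next I would transport this fixed-point property along $\bar r_X$ into $\overline{H_X} = H_X \oplus H^Z$. The quotient $P$ induces an ortholattice isomorphism ${\mathsf C}(X \oplus Z) \cong {\mathsf C}(P(X \oplus Z))$, and the orthoisomorphism $\bar r_X$ preserves joins; using $A \oplus Z = A \vee Z$ we therefore get $\bar r_X\big(P(A \oplus Z)\big) = \bar r_X(P(A)) \vee \bar r_X(P(Z))$. The two summands are identified by the commutative diagram defining $\L$: $\bar r_X$ sends $P(A)$ to $r_X(P(A)) = P(S)$ and sends $P(Z)$ to $P(H^Z)$. Hence $\bar r_X\big(P(A \oplus Z)\big) = P(S) \vee P(H^Z) = P(S \oplus H^Z)$. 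Writing $\bar\phi = \L(f) \oplus \id_{H^Z}$, so that $P(\bar\phi) = \bar r_X \circ P(f \oplus \id_Z) \circ \bar r_X^{-1}$, it follows that $P(\bar\phi)$ fixes $P(S \oplus H^Z)$ pointwise.

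The concluding step is where the only genuine subtlety lies. That $P(\bar\phi)$ fixes $P(S \oplus H^Z)$ pointwise means the linear map $\bar\phi$ carries each line of $S \oplus H^Z$ into itself; since $\dim(S \oplus H^Z) \geq \dim H^Z = 3$, the usual additivity argument forces $\bar\phi|_{S \oplus H^Z} = \lambda \, \id$ for a single scalar $\lambda \in F \setminus \{0\}$. The main obstacle is precisely that the projective data alone cannot do better than this scalar: $P(\bar\phi)|_{P(S \oplus H^Z)} = \id$ pins down $\bar\phi|_{S \oplus H^Z}$ only up to a global factor, and a naive restriction to $S$ would determine $\L(f)|_S$ merely up to a scalar. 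This ambiguity is exactly what the normalisation in the construction of $\L$ removes: by definition $\bar\phi|_{H^Z} = \id_{H^Z}$, and since $H^Z$ is a non-zero subspace of $S \oplus H^Z$ we must have $\lambda = 1$. Therefore $\bar\phi|_{S \oplus H^Z} = \id$, and restricting to $S \subseteq H_X$ yields $\L(f)|_S = \bar\phi|_S = \id_S$, as claimed. Note that the degenerate case $A = \Zero$ (so $S = \{0\}$) is trivial, and no lower bound on $\dim S$ is ever needed because the argument is run throughout on $S \oplus H^Z$, which always has dimension at least $3$.
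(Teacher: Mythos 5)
Your proof is correct and follows essentially the same route as the paper's: decompose $f = \id_A \oplus f|_{A\c}^{A\c}$ via Lemma~\ref{lem:decomposing-a-morphism}(i), observe that $f \oplus \id_Z$ is the identity on $A \oplus Z$, transport this through $\bar r_X$ to conclude that $P(\bar\phi)$ is the identity on $P(S \oplus H^Z)$, and then pass from the projective identity back to $\bar\phi|_{S \oplus H^Z} = \id$. The only difference is that you make explicit two points the paper leaves implicit -- the join computation $\bar r_X(P(A \oplus Z)) = P(S \oplus H^Z)$ and the scalar ambiguity resolved by the normalisation $\bar\phi|_{H^Z} = \id_{H^Z}$ -- both of which you handle correctly.
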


\begin{proof}
By Lemma \ref{lem:decomposing-a-morphism}(i), $f = \id_A \oplus f|_{A\c}^{A\c}$. Hence $f \oplus \id_Z = \id_A \oplus f|_{A\c}^{A\c} \oplus \id_Z$ is the identity on the subspace $A \oplus Z$ of $X$. Let $\phi = \L(f)$. In view of (\ref{fml:commutative-diagrams-for-Lf}) (with $Y = X$), we conclude that $P(\phi \oplus \id_{H^Z})$ is the identity on $\bar r_X(P(A \oplus Z)) = P(S \oplus H^Z)$. Hence $\phi \oplus \id_{H^Z}$ is the identity on $S \oplus H^Z$, in particular $\phi$ is the identity on $S$.
\end{proof}

Let $H_1, H_2$ be orthomodular spaces in the image of $\L$. We denote by $\E(H_1,H_2)$ the set of linear maps from $H_1$ to $H_2$ in the image of $\L$, and we write $\E(H_1) = \E(H_1,H_1)$.

\begin{proposition} \label{prop:image-L-contains-certain-maps}
Let $X, X_1, X_2 \in \C$ and put $H = \L(X)$, $\,H_1 = \L(X_1)$, and $H_2 = \L(X_2)$.
\begin{itemize}

\item[\rm (i)] Assume that $H_1$ and $H_2$ have the same finite dimension. Then $\E(H_1,H_2)$ contains a unitary map between $H_1$ to $H_2$.

\item[\rm (ii)] Assume that $H_1$ is finite-dimensional and let $S$ be a subspace of $H_2$ such that $\dim S = \dim H_1$. Then $\E(H_1,H_2)$ contains a linear isometry from $H_1$ to $H_2$ whose image is $S$.

\item[\rm (iii)] For each subspace $S$ of $H$, $\E(H)$ contains the projection from $H$ to $S$.

\item[\rm (iv)] $\E(H)$ is closed under composition.

\item[\rm (v)] $\E(H_1,H_2)$ is closed under addition.

\item[\rm (vi)] Let $u, v \in H \setminus \{0\}$ such that $u \perp v$. Then $\E(H)$ contains a unitary map on $H$ that interchanges $\lin u$ and $\lin v$ and keeps $\{u,v\}\c$ elementwise fixed.

\end{itemize}
\end{proposition}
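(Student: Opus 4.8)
The plan is to dispatch the six claims by exhibiting, in each case, a suitable $\C$-morphism and transporting it through $\L$, reading off its effect on $H$ from the square~\eqref{fml:restricted-commutative-diagram-for-Lf}, which gives $P(\L(f)) = r_Y \circ P(f) \circ r_X^{-1}$ for $f\colon X\to Y$. Throughout I would match a subspace $S$ of $H=\L(X)$ with the subspace $A$ of $X$ determined by $r_X(P(A))=P(S)$, and an atom $\lin u\subseteq H$ with the atom $\class x$ of $P(X)$ satisfying $r_X(\class x)=\lin u$. The five transfer results of Proposition~\ref{prop:L} do most of the work; the one place needing genuine care is the bookkeeping in (iv) and (v).

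For~(i), equal finite dimension of $H_1,H_2$ forces $X_1,X_2$ to have equal rank, so Lemma~\ref{lem:morphisms-between-any-pair-of-orthosets}(i) yields a dagger isomorphism $h\colon X_1\to X_2$, and $\L(h)$ is unitary by Proposition~\ref{prop:L}(ii) and lies in $\E(H_1,H_2)$. For~(ii), the subspace $A$ of $X_2$ matched with $S$ has rank equal to $\dim S=\dim H_1$, the rank of $X_1$; since this is at most the rank of $X_2$, Lemma~\ref{lem:morphisms-between-any-pair-of-orthosets}(ii) produces an isometry $f\colon X_1\to X_2$ with $\image f=A$, and $\L(f)$ is a linear isometry by Proposition~\ref{prop:L}(iii) with $\image P(\L(f))=r_{X_2}(\image P(f))=r_{X_2}(P(A))=P(S)$, so its image is $S$. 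For~(iii), take $A\subseteq X$ matched with $S$ and the projection $p=\iota_A\circ{\iota_A}\adj$ of $X$ onto $A$ from Lemma~\ref{lem:inclusion-and-Sasaki-in-C}(ii); then $\L(p)$ is a projection by Proposition~\ref{prop:L}(iv), its image is $S$ by the same computation, and by Lemma~\ref{lem:preservation-of-identity-parts} it is the identity on $S$, hence it is the orthogonal projection onto $S$.

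For~(vi), choose proper $x,y\in X$ with $r_X(\class x)=\lin u$ and $r_X(\class y)=\lin v$; orthogonality of $u,v$ gives $x\perp y$. Lemma~\ref{lem:transitivity-of-orthoset} supplies a dagger automorphism $f$ of $X$ interchanging $\{x\}\cc$ and $\{y\}\cc$ and fixing $\{x,y\}\c$ pointwise, so $\L(f)$ is unitary. Since $\{x\}\cc$ and $\{y\}\cc$ are singletons, $P(f)(\class x)=\class y$ and $P(f)(\class y)=\class x$, whence $\L(f)$ interchanges $\lin u$ and $\lin v$; applying Lemma~\ref{lem:preservation-of-identity-parts} to $A=\{x,y\}\c$ shows $\L(f)$ is the identity on the subspace matched with $\{x,y\}\c$, which is precisely $\{u,v\}\c$.

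Parts~(iv) and~(v) rest on functoriality: if $\phi=\L(f)$ and $\psi=\L(g)$ are represented by $\C$-morphisms with matching domain and codomain, then $\psi\circ\phi=\L(g\circ f)$, while for addition one uses that $\L$ preserves biproducts (Proposition~\ref{prop:L}(v)) and therefore also diagonals and codiagonals, so it preserves the semiadditive structure of Theorem~\ref{thm:semiadditive-structure} and $\phi+\psi=\L(f+g)$ (the sum being the usual one by Example~\ref{ex:OMSF}). The main obstacle, and the step I expect to demand the most care, is that the definition of $\E$ permits the representing $\C$-morphisms of $\phi$ and $\psi$ to live on \emph{different} preimages of $H$; one must therefore replace them by representatives with a common domain and codomain. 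I would arrange this by transporting one representative along dagger isomorphisms supplied by part~(i), the delicate point being to do so without introducing a spurious unitary, that is, to ensure the transporting isomorphisms induce the identity on $H$ so that the composite and the sum are computed honestly.
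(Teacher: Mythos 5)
Your proposal is correct and, in all six parts, follows the same route as the paper: parts (i) and (ii) are exactly Lemma~\ref{lem:morphisms-between-any-pair-of-orthosets} combined with Proposition~\ref{prop:L}(ii)--(iii); part (iii) is the projection $\iota_A \circ {\iota_A}\adj$ from Lemma~\ref{lem:inclusion-and-Sasaki-in-C}(ii) pushed through $\L$ via Proposition~\ref{prop:L}(iv); part (vi) is Lemma~\ref{lem:transitivity-of-orthoset} plus Proposition~\ref{prop:L}(ii) plus Lemma~\ref{lem:preservation-of-identity-parts}; and parts (iv)--(v) are functoriality together with the image of diagram~(\ref{fml:addition}) under $\L$, using Proposition~\ref{prop:L}(v) and the uniqueness of the semiadditive structure (Theorem~\ref{thm:semiadditive-structure}). (One small redundancy: in (iii) the appeal to Lemma~\ref{lem:preservation-of-identity-parts} is unnecessary, since by Lemma~\ref{lem:projections-in-orthomodular-spaces} an idempotent self-adjoint linear map whose image is $S$ is already the orthogonal projection onto $S$.)

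The one point where you diverge is the ``main obstacle'' you flag in (iv)--(v), and there the news is that the obstacle is not actually present. The proposition fixes the objects $X, X_1, X_2$ in advance, and $\E(H_1,H_2)$ is to be read relative to that choice, namely as $\L\big(\homset_\C(X_1,X_2)\big)$; this is how the paper argues (its proof of (v) simply takes $f, g \colon X_1 \to X_2$ and shows $\L(f+g) = \L(f) + \L(g)$), and it is also the reading required by the later applications (Proposition~\ref{prop:L-is-full-for-certain-F}, Lemma~\ref{lem:all-endomorphisms}, and the fullness of $\L$), where one needs morphisms between the \emph{given} objects. Under this reading, representatives automatically share domain and codomain and no transport is needed. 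That is fortunate, because your proposed repair could not be carried out at this stage of the development: to move a representative $f \colon X' \to X''$ to one with domain and codomain $X$ ``without introducing a spurious unitary'' you would need a dagger isomorphism $h \colon X \to X'$ with $\L(h) = \id_H$, and producing such an $h$ from the bare coincidence $\L(X) = \L(X')$ is essentially a fullness statement about $\L$ --- precisely what this proposition is a stepping stone towards, and not available here. So the correct resolution is definitional, not a transport argument.
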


\begin{proof}
Ad (i): This is clear from Lemma~\ref{lem:morphisms-between-any-pair-of-orthosets}(i) and Proposition~\ref{prop:L}(ii).

Ad (ii): This is clear from Lemma~\ref{lem:morphisms-between-any-pair-of-orthosets}(ii) and Proposition~\ref{prop:L}(iii).

Ad (iii): Let $A$ be the subspace of $X$ such that $r_X(P(A)) = P(S)$, and let $p = \iota_A \circ {\iota_A}\adj$. By Lemma~\ref{lem:inclusion-and-Sasaki-in-C}(ii), $p$ is the projection of $X$ onto $A$ and by Proposition~\ref{prop:L}(iv), $\pi = \L(p)$ is a projection as well. As $\image p = A$, we have, in view of (\ref{fml:restricted-commutative-diagram-for-Lf}), that $\image \pi = S$.

Ad (iv): This is clear from the functoriality of $\L$.

Ad (v): Since $\C$ has biproducts, $\C$ possesses by Theorem~\ref{thm:semiadditive-structure} a unique semiadditive structure. Let $f, g \colon X_1 \to X_2$ be $\C$-morphisms. We claim that then $\L(f+g) = \L(f) + \L(g)$.

Let $\phi = \L(f)$, and $\psi = \L(g)$. By Proposition~\ref{prop:L}(v), the image of (\ref{fml:addition}) under $\L$ is as follows:
\[ \begin{tikzcd}
& H_1 \arrow[dl, "\id"', shift right=0.5ex] \arrow[d, dashed, "\Delta"] \arrow[dr, "\id", shift left=0.5ex] \\
H_1 \arrow[r, "h_{1l}"', shift right=0.5ex] \arrow[d, "\phi"]
& H_{11} \arrow[l, "h_{1l}\adj"', shift right=0.5ex] \arrow[r, "h_{1r}\adj", shift left=0.5ex] \arrow[d, dashed, "\phi \oplus \psi"]
& H_1 \arrow[l, "h_{1r}", shift left=0.5ex] \arrow[d, "\psi"] \\
H_2 \arrow[r, "h_{2l}"'] \arrow[dr, "\id"', shift right=0.5ex]
& H_{22} \arrow[d, dashed, "\nabla"]
& H_2 \arrow[l, "h_{2r}"] \arrow[dl, "\id", shift left=0.5ex] \\
& H_2
\end{tikzcd} \]
Here, $h_{1l}$ and $h_{1r}$ are isometries whose images are complementary subspaces of the orthomodular space $H_{11}$, and similarly for $h_{2l}$ and $h_{2r}$. We conclude that $\L(f+g) = \nabla \circ (\phi \oplus \psi) \circ \Delta = \phi + \psi = \L(f) + \L(g)$.

Ad (vi): Let $x, y \in X$ be such that $r_X(\class x) = \class u$ and $r_X(\class y) = \class v$. By Lemma~\ref{lem:transitivity-of-orthoset}, there is a dagger automorphism $f \colon X \to X$ such that $\{x\}\cc$ and $\{y\}\cc$ are interchanged and $\{x,y\}\c$ is elementwise fixed. By Proposition~\ref{prop:L}(ii), $\phi = \L(f)$ is unitary. Moreover, $\phi$ interchanges $\lin u$ and $\lin v$, and by Lemma~\ref{lem:preservation-of-identity-parts}, $\phi$ is the identity on $\{u,v\}\c$.
\end{proof}

According to Proposition~\ref{prop:image-L-contains-certain-maps}, $\E(H)$ contains, for any space $H$ in the image of $\L$, all projections and is closed under addition as well as composition. Under a particular assumption, we can draw the conclusion that $\E(H)$ contains all endomorphisms.

Let $\Sym F = \{ \alpha \in F \colon \alpha\inv = \alpha \}$, which is the set of {\it symmetric} elements of $F$. Let $C(F) = \{ \alpha\inv \alpha \colon \alpha \in F \}$. Then $C(F)$ is a positive cone of the additive group $\Sym F$. Indeed, we clearly have $C(F) \subseteq \Sym F$. As by Lemma~\ref{lem:sfield-of-transitive-orthomodular-spaces} $F$ is Pythagorean and formally real, $\alpha, \beta \in C(F)$ implies $\alpha + \beta \in C(F)$, and $\alpha, -\alpha \in C(F)$ implies $\alpha = 0$. Putting $\alpha \leq \beta$ if $\beta - \alpha \in C(F)$, $\Sym F$ becomes consequently a partially ordered abelian group. Clearly, $C(F) = \{ \alpha \in \Sym F \colon \alpha \geq 0 \}$.

We note that $1 > 0$ and in fact $r > 0$ for all strictly positive rationals $r$. Moreover, $\alpha \geq 0$ implies $\beta\inv \alpha \beta \geq 0$ for any $\beta \in F$. In particular, $\alpha > 0$ implies $\alpha^{-1} = \alpha^{-1} \alpha \alpha^{-1} > 0$.
% In fact, $0 < \alpha < \beta$ implies that $\beta^{-1} < \alpha^{-1}$; see, e.g., \cite[Lemma~15(v)]{Vet1}.

Let us call $\alpha \in C(F)$ {\it bounded} if there is an $n \in \Naturals$ such that $\alpha \leq n$. We will call $F$ {\it Archimedean} if any $\alpha \in C(F)$ is bounded. We note that in this case $0 \leq \alpha \leq \tfrac 1 n$ holds for all $n \in \Naturals \setminus \{0\}$ only if $\alpha = 0$. Obviously, the classical $\ast$-sfields $\Reals$, $\Complexes$, and $\Quaternions$ are Archimedean.

\begin{lemma} \label{lem:Archimedean-sfield}
Assume that $F$ is Archimedean. Let $K(F)$ be the set of all $\alpha \in F$ with the property that there is a $\beta \in F$ such that $\alpha \alpha\inv + \beta \beta\inv = 1$. Then each element of $F$ is a sum of elements of $K(F)$.
\end{lemma}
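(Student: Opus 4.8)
The plan is to recognise $K(F)$ as the ``unit ball'' $\{\alpha \in F : \alpha\alpha\inv \leq 1\}$ and then to use the Archimedean hypothesis to shrink an arbitrary element into this ball by dividing by a suitable natural number.

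First I would establish that $\alpha \in K(F)$ if and only if $\alpha\alpha\inv \leq 1$. One direction is immediate: from $\alpha\alpha\inv + \beta\beta\inv = 1$ and $\beta\beta\inv \in C(F)$ we get $\alpha\alpha\inv = 1 - \beta\beta\inv \leq 1$. For the converse, $\alpha\alpha\inv \leq 1$ means $1 - \alpha\alpha\inv \in C(F)$, say $1 - \alpha\alpha\inv = \delta\inv\delta$; putting $\beta = \delta\inv$ gives $\beta\beta\inv = \delta\inv\delta = 1 - \alpha\alpha\inv$, whence $\alpha\alpha\inv + \beta\beta\inv = 1$. Here I rely on the fact that $\{\alpha\alpha\inv : \alpha \in F\}$ coincides with $C(F) = \{\alpha\inv\alpha : \alpha \in F\}$, which holds because $\inv$ is an involutive bijection on $F$.

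Now fix $\gamma \in F$; I may assume $\gamma \neq 0$, since $0 \in K(F)$. As $\gamma\gamma\inv \in C(F)$ and $F$ is Archimedean, there is an $n \in \Naturals$ with $\gamma\gamma\inv \leq n$. I then pick $m \in \Naturals$ with $m \geq 1$ and $m^2 \geq n$, and set $\alpha = m^{-1}\gamma$, where $m^{-1}$ denotes the inverse in $F$ of the prime-field image of $m$. Since $m$ is central and symmetric, the key computation reads $1 - \alpha\alpha\inv = m^{-1}(m^2 - \gamma\gamma\inv)m^{-1}$. Here $m^2 - \gamma\gamma\inv = (m^2 - n) + (n - \gamma\gamma\inv) \geq 0$, being a sum of a non-negative rational and a non-negative element of $\Sym F$; and conjugating by $m^{-1}$ preserves non-negativity, by the already-noted rule that $\beta\inv x \beta \geq 0$ whenever $x \geq 0$. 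Thus $\alpha\alpha\inv \leq 1$, so $\alpha \in K(F)$, and $\gamma = m\alpha = \alpha + \cdots + \alpha$ ($m$ summands) exhibits $\gamma$ as a sum of elements of $K(F)$.

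I do not expect a genuine obstacle; the Archimedean hypothesis is precisely what lets one turn ``bounded length'' into ``rescalable into the unit ball''. The one point requiring care is the order-theoretic bookkeeping in the non-commutative setting: one must verify that $m$ lies in the centre and is fixed by $\inv$ (so that $\alpha\alpha\inv = m^{-2}\gamma\gamma\inv$ and the conjugation identity holds) and that the positive cone $C(F)$ is stable under conjugation and under addition — all of which are among the properties of $\Sym F$ recorded just before the statement.
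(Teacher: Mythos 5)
Your proof is correct and follows essentially the same route as the paper: both arguments use the Archimedean property to find a natural number $m$ with $\gamma\gamma\inv \leq m^2$, verify that $\tfrac{\gamma}{m} \in K(F)$, and then write $\gamma$ as the sum of $m$ copies of $\tfrac{\gamma}{m}$. Your version merely makes explicit some details the paper leaves implicit (the characterisation of $K(F)$ as $\{\alpha \colon \alpha\alpha\inv \leq 1\}$ and the centrality, symmetry, and conjugation bookkeeping for $m^{-1}$), which is fine.
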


\begin{proof}
Let $\alpha \in F \setminus \{0\}$. Then $\alpha \alpha\inv > 0$ and hence there is an $n \in \Naturals \setminus \{0\}$ such that $\alpha \alpha\inv \leq n \leq n^2$. This means that there is a $\beta \in F$ such that $\alpha \alpha\inv + \beta \beta\inv = n^2$ and consequently $\tfrac{\alpha}{n}\big(\tfrac{\alpha}{n}\big)\inv + \tfrac{\beta}{n}\big(\tfrac{\beta}{n}\big)\inv = 1$. As $\alpha = n \, \tfrac \alpha n$, the assertion follows.
\end{proof}

\begin{remark}
Assume that $F$ is commutative. Then we may extend $C(F)$ to a set $\bar C(F) \subseteq \Sym F$ with the following properties: $\bar C(F) + \bar C(F) \subseteq \bar C(F)$, $\,\bar C(F) \alpha\inv \alpha \subseteq C(F)$ for any $\alpha \in F$, $1 \in \bar C(F)$, and $\bar C(F)$ contains, for any non-zero $\alpha \in \Sym F$, exactly one of $\alpha$ or $-\alpha$. Thus $F$ can be made into a Baer-ordered $\ast$-sfield. Consequently, if $F$ is Archimedean, then $F$ embeds into $\Reals$ or $\Complexes$ \cite[2.~Theorem]{Hol1}.
\end{remark}

\begin{proposition} \label{prop:L-is-full-for-certain-F}
Assume that $F$ is Archimedean. Let $H$ be a finite-dimensional orthomodular space in the image of \/ $\L$. Then $\E(H)$ contains all linear endomorphisms of $H$.
\end{proposition}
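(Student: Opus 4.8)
The plan is to exploit that $\E(H)$ contains every projection and is closed under addition and composition (Proposition~\ref{prop:image-L-contains-certain-maps}(iii)--(v)), and to show that this alone forces every scalar multiple of every ``matrix unit'' into $\E(H)$; arbitrary endomorphisms then follow by summation. First I would fix an orthonormal basis $e_1, \ldots, e_n$ of $H$ (it exists because $H$ is finite-dimensional, orthomodular, and uniform, so an orthogonal basis can be normalised). For a unit vector $w$, write $P_w$ for the projection onto $\lin w$, write $P_i$ for $P_{e_i}$, and let $E_{ij}$ denote the linear map with $E_{ij}(e_j) = e_i$ and $E_{ij}(e_k) = 0$ for $k \neq j$. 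Since every linear endomorphism $\phi$ of $H$ is the finite sum $\phi = \sum_{i,j} a_{ij} E_{ij}$, closure under addition reduces the claim to showing $a E_{ij} \in \E(H)$ for every $a \in F$ and all $i,j$.

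The first key step is a sandwiching computation. For a unit vector $w = \sum_k \mu_k e_k$ one has $P_w(e_j) = \herm{e_j}{w}\, w = \mu_j\inv w$, and projecting onto $\lin{e_i}$ retains the coefficient $\mu_j\inv \mu_i$ while $P_j$ kills $e_k$ for $k \neq j$; hence $P_i \circ P_w \circ P_j = (\mu_j\inv \mu_i)\, E_{ij}$. As all three projections lie in $\E(H)$ by Proposition~\ref{prop:image-L-contains-certain-maps}(iii) and $\E(H)$ is closed under composition by (iv), this places $(\mu_j\inv\mu_i)\,E_{ij}$ in $\E(H)$ for every unit vector $w$. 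The second step realises, for $i \neq j$, an arbitrary coefficient $\eta \in K(F)$: solving $\mu_j\inv\mu_i = \eta$ by $\mu_i = \mu_j\eta$, the unit-length condition $\mu_i\mu_i\inv + \mu_j\mu_j\inv = 1$ becomes $\mu_j(1 + \eta\eta\inv)\mu_j\inv = 1$. Now $1 + \eta\eta\inv \in C(F)$, since $C(F)$ is closed under addition ($F$ is Pythagorean) and contains $1$ and $\eta\eta\inv$; moreover $C(F) = \{\gamma\gamma\inv : \gamma \in F\}$, so $1 + \eta\eta\inv = \gamma\gamma\inv$ for some $\gamma \neq 0$. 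Taking $\mu_j = \gamma^{-1}$ and $\mu_i = \gamma^{-1}\eta$ solves the equation and yields a genuine unit vector $w = \mu_i e_i + \mu_j e_j$. Thus $\eta E_{ij} \in \E(H)$ for every $\eta \in K(F)$ and every $i \neq j$.

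The Archimedean hypothesis enters in the third step through Lemma~\ref{lem:Archimedean-sfield}: every $a \in F$ is a finite sum $\eta_1 + \cdots + \eta_m$ of elements of $K(F)$, so $a E_{ij} = \sum_l \eta_l E_{ij} \in \E(H)$ by additive closure, for all $i \neq j$. For the diagonal, when $n \geq 2$ I pick some $j \neq i$ and write $a E_{ii} = (a E_{ij}) \circ E_{ji}$, where $E_{ji} = 1\cdot E_{ji} \in \E(H)$ because $1 \in K(F)$; composition closure then gives $a E_{ii} \in \E(H)$. Summing over $i,j$ yields that every endomorphism lies in $\E(H)$. The case $n = 0$ is trivial, and $n = 1$ I would reduce to $n = 2$: embed $H = \L(X)$ as a component of $\L(X \oplus X_1) = H \oplus H_1$ for a singleton $X_1$ (Proposition~\ref{prop:L}(v)), realise $a\,\id_H \oplus 0$ there by the $n \geq 2$ case, and pull it back along the isometry $\iota = \L(\iota_X)$ via $\iota\adj \circ (\,\cdot\,) \circ \iota$, which recovers $a\,\id_H$ and keeps the map in $\C$.

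The main obstacle is the second step. Projections by themselves deliver only ``bounded, positive'' coefficients (e.g.\ $P_i P_w P_i = (\mu_i\inv\mu_i) P_i$ with $\mu_i\inv\mu_i \leq 1$), so the real work is manufacturing an \emph{arbitrary} $\eta \in K(F)$ as an off-diagonal entry. This is exactly where the identity $C(F) = \{\gamma\gamma\inv\}$ together with the Pythagorean closure of $C(F)$ is used, to produce the required unit vector; the Archimedean assumption is then precisely what lets the third step pass from $K(F)$ to all of $F$ by finite sums.
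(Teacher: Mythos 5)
Your steps 1 and 3 (sandwiching between projections to extract matrix entries, and the Archimedean summation via Lemma~\ref{lem:Archimedean-sfield}) are correct and match the paper's strategy, but step 2 contains a genuine error: you conflate the involution $\mu_j\inv$ with the inverse $\mu_j^{-1}$. Setting $\mu_i = \mu_j\eta$ solves $\mu_j^{-1}\mu_i = \eta$, \emph{not} $\mu_j\inv\mu_i = \eta$. With your choices $\mu_j = \gamma^{-1}$ and $\mu_i = \gamma^{-1}\eta$, where $\gamma\gamma\inv = 1+\eta\eta\inv$, the vector $w$ is indeed a unit vector, but the entry your sandwich extracts is
\[ \mu_j\inv\mu_i \;=\; (\gamma\inv)^{-1}\gamma^{-1}\eta \;=\; (\gamma\gamma\inv)^{-1}\eta \;=\; (1+\eta\eta\inv)^{-1}\eta, \]
which equals $\eta$ only when $\eta = 0$. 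So what you have actually placed in $\E(H)$ is $(1+\eta\eta\inv)^{-1}\eta\, E_{ij}$, and Lemma~\ref{lem:Archimedean-sfield} no longer finishes the job: it decomposes an arbitrary element of $F$ into a sum of elements of $K(F)$, not into a sum of these damped coefficients.

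The flaw is not merely notational; it cannot be repaired inside your two-dimensional setup. Demanding genuinely $\mu_j\inv\mu_i = \eta$ together with $\mu_i\mu_i\inv + \mu_j\mu_j\inv = 1$ forces (multiply the unit condition by $\mu_j\inv$ on the left and $\mu_j$ on the right) $\eta\eta\inv = t(1-t)$ with $t = \mu_j\inv\mu_j \in C(F)$, equivalently $(2t-1)^2 = 1-4\eta\eta\inv$. Solving this requires square roots of symmetric elements, which the hypotheses (Pythagorean, formally real, Archimedean) do not provide: Pythagorean-ness only closes $C(F)$ under addition. This is precisely why the paper works in dimension $\geq 3$: there one takes $u = c\,\alpha e_1 + c\,\beta e_2 + c' e_3$ with $\alpha\alpha\inv+\beta\beta\inv = 1$ and \emph{rational} $c, c'$ subject to $c^2 + c'^2 = 1$, so the unit-length constraint involves no unknowns from $F$ at all, and the $(1,3)$-entry of the resulting projection is a fixed rational multiple of $\alpha$, which is then rescaled to $\alpha$ using $N_{1/2}$ and integer sums. (The paper's printed coefficients $\tfrac23,\tfrac23,\tfrac13$ are themselves slightly off -- $\tfrac35,\tfrac35,\tfrac45$ work -- but this rational-anchor idea is the missing ingredient.) Correspondingly, your reduction of the low-dimensional cases should embed into a space of dimension $\geq 3$ rather than $2$, since the core argument is unavailable in dimension~$2$.
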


\begin{proof}
Let $n$ be the dimension of $H$. We will represent vectors and endomorphisms of $H$ w.r.t.\ some fixed ordered orthonormal basis $(e_1, \ldots, e_n)$. We denote the projection on a subspace $S$ by $P_S$. Note that, for a unit vector $u = \begin{sm} \alpha_1 \\ \vdots \\ \alpha_n \end{sm} \in H$, we have $P_{\lin u} = (\alpha_j\inv \alpha_i)_{i,j}$.

For any $A = (a_{ij})_{i,j} \in \E(H)$, we have $P_{\lin{e_i}} \, A \, P_{\lin{e_j}} = (\delta_{ii'} \delta_{jj'} a_{ij})_{i',j'}$. Moreover, for $u = \begin{sm} 1 \\ \vdots \\ 1 \end{sm}$, we get $n P_{\lin u} = (1)_{i,j}$. It follows that the unitary maps on $H$ that interchange two basis vectors are in $\E(H)$. We conclude that whenever each value $a_{ij}$, $i, j = 1, \ldots, n$, occurs as some entry in a matrix representing an endomorphism in $\E(H)$, also $(a_{ij})_{i,j} \in \E(H)$.

Let us now restrict to the case that $n \geq 3$. For $\alpha \in F$, denote by $N_\alpha$ the matrix that has $\alpha$ at position $(1,1)$ and else $0$'s. Considering the projection to $\lin{e_1+e_2}$, we see that $N_{\frac 1 2} \in \E(H)$.

Let $K(F)$ be defined as in Lemma~\ref{lem:Archimedean-sfield}. Let $\alpha \in K(F)$ and let $\beta \in F$ be such that $\alpha \alpha\inv + \beta \beta\inv = 1$. Then $u = \frac 2 3 \alpha \, e_1 + \frac 2 3 \beta \, e_2 + \frac 1 3 \, e_3$ is a unit vector. Among the entries of the matrix representing the projection to $\lin u$, we find 
$(\frac 1 3)\inv \, \frac 2 3 \alpha = \frac 2 9 \alpha$. Hence $N_{\frac 2 9 \alpha} \in \E(H)$ and hence also $9 N_{\frac 1 2} N_{\frac 2 9 \alpha} = N_{\alpha}$. This in turn implies that $\E(H)$ contains all endomorphisms whose matrices have entries in $K(F)$. By Lemma~\ref{lem:Archimedean-sfield}, $\E(H)$ consists of all endomorphisms.

Assume now that $H$ is $1$- or $2$-dimensional. Let $H'$ be a $3$-dimensional space in the image of $\L$. Then the image of $\L$ contains an isometry $\phi \colon H \to H'$. Let $M_\alpha$ be the multiplication by $\alpha \in F$ in $H'$. We claim that then $\phi\adj \, M_\alpha \, \phi$ is the multiplication by $\alpha$ in $H$. Indeed, for any $u, v \in H$, we have
\[ \herm{\phi\adj(\alpha\phi(u))}{v}
\;=\; \herm{\alpha\phi(u)}{\phi(v)}
\;=\; \alpha \herm{\phi(u)}{\phi(v)}
\;=\; \alpha \herm{u}{v}
\;=\; \herm{\alpha u}{v} \]
and hence $\phi\adj(\alpha\phi(u)) = \alpha u$. The above arguments now show that $\E(H)$ contains all endomorphisms.
\end{proof}

Our final theorem applies in particular to the case that $\C$ contains orthosets of infinite rank.

\begin{theorem} \label{thm:L-is-surjective}
Assume that $F$ is Archimedean. The $\Lfin$ is dagger essentially surjective and full.
\end{theorem}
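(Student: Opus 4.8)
The dagger essential surjectivity of $\Lfin$ is already contained in Proposition~\ref{prop:L-is-surjective}(i), so the plan is to concentrate entirely on fullness. Concretely, for $X_1, X_2 \in \Cfin$ with $H_1 = \L(X_1)$ and $H_2 = \L(X_2)$, I must show that every morphism $\phi \colon H_1 \to H_2$ of $\OMSfin{F}$ arises as $\L(g)$ for some $\C$-morphism $g \colon X_1 \to X_2$; equivalently, that $\E(H_1, H_2)$ comprises all linear maps. (Recall that in finite dimensions every linear map has a linear adjoint, so the morphisms of $\OMSfin{F}$ are just the linear maps.)

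My strategy is to reduce the case of an arbitrary morphism to the case of an endomorphism, which is settled by Proposition~\ref{prop:L-is-full-for-certain-F}. First I would form the dagger biproduct $X = X_1 \oplus X_2$; since rank is additive (Lemma~\ref{lem:rank-of-subspaces-of-Dacey-spaces}(ii)), $X$ lies in $\Cfin$. Let $\iota_{X_1} \colon X_1 \to X$ and $\iota_{X_2} \colon X_2 \to X$ be the coprojection isometries. Applying $\L$ and invoking Proposition~\ref{prop:L}(v), I obtain a biproduct $\L(X) = H_1 \oplus H_2$ in $\OMSfin{F}$ whose coprojections are the linear isometries $\phi_i = \L(\iota_{X_i})$; in particular $\phi_i\adj \circ \phi_i = \id_{H_i}$ for $i = 1, 2$.

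Next I would transport $\phi$ to the endomorphism $\psi = \phi_2 \circ \phi \circ \phi_1\adj$ of $\L(X)$. Because $\L(X)$ is finite-dimensional and lies in the image of $\L$, and $F$ is Archimedean, Proposition~\ref{prop:L-is-full-for-certain-F} guarantees $\psi \in \E(\L(X))$, that is, $\psi = \L(g_0)$ for some $\C$-endomorphism $g_0$ of $X$. Using the isometry relations, one recovers
\[ \phi_2\adj \circ \psi \circ \phi_1 \;=\; (\phi_2\adj \circ \phi_2) \circ \phi \circ (\phi_1\adj \circ \phi_1) \;=\; \phi. \]
Since $\L$ is a dagger functor (Proposition~\ref{prop:L-is-functor}), we have $\phi_2\adj = \L(\iota_{X_2}\adj)$ and $\phi_1 = \L(\iota_{X_1})$, so functoriality gives $\phi = \L(\iota_{X_2}\adj \circ g_0 \circ \iota_{X_1})$, with $\iota_{X_2}\adj \circ g_0 \circ \iota_{X_1} \colon X_1 \to X_2$ a $\Cfin$-morphism. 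Hence $\phi \in \E(H_1, H_2)$, and $\Lfin$ is full.

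The entire mathematical weight of the theorem rests on Proposition~\ref{prop:L-is-full-for-certain-F}, whose proof uses the Archimedean hypothesis to realise arbitrary scalar multiplications and thereby every matrix entry. The present argument is then only a matter of bookkeeping: the one point deserving attention is that $\L$ sends biproduct coprojections to \emph{isometries} (so that the sandwich $\phi \mapsto \phi_2 \circ \phi \circ \phi_1\adj$ admits the stated left inverse), which is precisely what Proposition~\ref{prop:L}(v) provides.
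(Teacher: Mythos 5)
Your proof is correct and is essentially the paper's own argument: the paper proves this theorem in one line by citing Propositions~\ref{prop:L-is-surjective}, \ref{prop:image-L-contains-certain-maps} and~\ref{prop:L-is-full-for-certain-F}, and your write-up supplies exactly the intended bookkeeping, namely reducing fullness to the endomorphism case settled by Proposition~\ref{prop:L-is-full-for-certain-F}. The only cosmetic difference is that you realise $\phi$ inside an endomorphism of $\L(X_1 \oplus X_2)$ via Proposition~\ref{prop:L}(v), whereas one can equally sandwich $\phi$ with a single isometry between $H_1$ and $H_2$ (in whichever direction the ranks allow, Proposition~\ref{prop:image-L-contains-certain-maps}(ii)) and invoke closure of the image of $\L$ under composition; both routes rest on the same ingredients.
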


\begin{proof}
This is clear from Propositions~\ref{prop:L-is-surjective},~\ref{prop:image-L-contains-certain-maps} and~\ref{prop:L-is-full-for-certain-F}.
\end{proof}

\section{Categories of orthosets: extended set of hypotheses}
\label{sec:extended-set-of-hypotheses}

So far we have established that $\C$, a dagger category of orthosets fulfilling the three properties (H1)--(H3), can be regarded as a dagger category of uniform orthomodular spaces. The scalar $\ast$-sfield $F$ is the same for all spaces and is one of $\Reals$, $\Complexes$, or $\Quaternions$ if $\C$ contains an orthoset of infinite rank. We have moreover defined a functor $\L \colon \C \to \OMS{F}$, sending orthosets to orthomodular spaces and adjointable maps to linear maps.

We shall now extend our set of hypotheses. The strengthening will turn out to imply faithfulness of $\L$. If $\C$ contains an orthoset of infinite rank, one further assumption will imply that $F = \Complexes$ and that $\L$ is also full. Consequently, we get a unitary dagger equivalence with $\Hil{\Complexes}$ in this case.

For convenience, we will restate all our conditions. $\C$ is assumed to fulfil the following conditions:
\begin{itemize}

\item[\rm (H1)] $\C$ has dagger biproducts.

\item[\rm (H2)] Let $X \in \C$ and let $A$ be a subspace of $X$. Then $A$ and $A\c$ as well as the inclusion maps $\iota_A \colon A \to X$ and $\iota_{A\c} \colon A\c \to X$ belong to $\C$ and
\[ \begin{tikzcd} A \arrow[r, "\iota_A"] & X & A\c \arrow[l, "\;\;\iota_{A\c}"'] \end{tikzcd} \]
is a dagger biproduct of $A$ and $A\c$.

\item[\rm (H3')] Any non-zero morphism between unital orthosets in $\C$ is an isomorphism.

\item[\rm (H4)] For any two objects $X, Y \in \C$, there is a dagger monomorphism from $X$ to $Y$ or from $Y$ to $X$.

\end{itemize}
Again, it is understood that $\C$ does not consist of the zero orthoset alone.

Note that conditions (H1) and (H2) are unmodified and that (H3') together with (H4) implies (H3). The $\star$-sfield $F$ and the functor $\L \colon \C \to \OMS{F}$ are still assumed to be given according to our specifications in Sections~\ref{sec:three-basic-hypotheses} and ~\ref{sec:the-functor-L}.

\begin{example} \label{ex:HilF-fulfils-H1-H4}
$\OMS{F}$ fulfils conditions {\rm (H1)}--{\rm (H4)}. Indeed, we have already seen in Example~\ref{ex:OMSF-fulfils-H1-H3} that $\OMS{F}$ fulfils {\rm (H1)} and {\rm (H2)}.

Ad {\rm (H3')}: A non-zero morphism between unital orthosets in $\OMS{F}$ is a non-zero linear map $\phi$ between $1$-dimensional spaces and hence a linear isomorphism. As $\phi$ and $\phi^{-1}$ are adjointable, $\phi$ is an isomorphism in $\OMS{F}$.

Ad {\rm (H4)}: Let $H_1$ and $H_2$ be uniform orthomodular spaces. If $H_1$ and $H_2$ are finite-dimensional, any one-to-one map from an orthonormal basis of one space to an orthonormal basis of the other one defines a linear isometry. If one of $H_1$ or $H_2$ is infinite-dimensional, then $\OMS{F}$ consists of Hilbert spaces and hence there is likewise a linear isometry between $H_1$ and $H_2$.
\end{example}

We show some lemmas depending on the new condition (H3').

\begin{lemma} \label{lem:isomorphism-of-singletons}
Let $f \colon X \to Y$ be a $\C$-morphism and let $x \in X \setminus \kernel f$. Then $f\big|_{\{x\}\cc}^{\{f(x)\}\cc}$ is an isomorphism. Consequently, $f$ establishes a bijection between $\class x$ and $\class{f(x)}$
\end{lemma}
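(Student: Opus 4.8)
The plan is to exhibit $g := f\big|_{\{x\}\cc}^{\{f(x)\}\cc}$ as a non-zero $\C$-morphism between two unital orthosets, and then to read off the conclusion from hypothesis (H3'). First I would pin down the domain and codomain. Since $x \in X \setminus \kernel f$ we have $f(x) \neq 0$, and in particular $x \neq 0$. By Lemma~\ref{lem:C-consists-of-Dacey-spaces} both $X$ and $Y$ are atomistic Dacey spaces, so Lemma~\ref{lem:unital}(ii) gives $\{x\}\cc = \class x \cup \{0\}$ and $\{f(x)\}\cc = \class{f(x)} \cup \{0\}$, and both subspaces are singletons, hence unital. Next I would check that $g$ is a genuine $\C$-morphism between them. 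The one substantive point is well-definedness of the corestriction, which amounts to $f(\{x\}\cc) \subseteq \{f(x)\}\cc$; this is exactly Lemma~\ref{lem:lattice-adjoint}(i) applied to the one-element set $A = \{x\}$, yielding $f(\{x\}\cc) \subseteq f(\{x\})\cc = \{f(x)\}\cc$. Writing $g = {\iota_{\{f(x)\}\cc}}\adj \circ f \circ \iota_{\{x\}\cc}$ as a composite of $\C$-morphisms (Lemma~\ref{lem:restriction-corestriction-in-C}(i)) then shows $g \in \C$.

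Now comes the key step. Since $x \in \{x\}\cc$, we have $g(x) = f(x) \neq 0$, so $g$ is a non-zero morphism between unital orthosets, whence (H3') makes $g$ an isomorphism in $\C$. Its inverse $g^{-1}$ satisfies $g^{-1} \circ g = \id_{\{x\}\cc}$ and $g \circ g^{-1} = \id_{\{f(x)\}\cc}$, and since composition of $\C$-morphisms is composition of maps while the identities are the identity maps, $g$ is a bijection of the underlying sets $\{x\}\cc \to \{f(x)\}\cc$. Finally, $g$ is adjointable and hence fixes $0$, so it restricts to a bijection of $\{x\}\cc \setminus \{0\} = \class x$ onto $\{f(x)\}\cc \setminus \{0\} = \class{f(x)}$; as $g$ agrees with $f$ on $\{x\}\cc$, this is precisely the bijection between $\class x$ and $\class{f(x)}$ induced by $f$.

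I do not expect a serious obstacle here, as the statement is essentially a bookkeeping consequence of (H3') once the right objects have been identified. The two points that require a little care are the well-definedness of the corestriction (handled by Lemma~\ref{lem:lattice-adjoint}(i)) and the passage from a categorical isomorphism to a set-theoretic bijection, where one must use that $\C$-morphisms are genuine maps, that the categorical inverse is the set-theoretic inverse, and that $g(0) = 0$ in order to descend to the equivalence classes.
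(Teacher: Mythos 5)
Your proof is correct and follows essentially the same route as the paper's: restrict and corestrict $f$ to the singletons $\{x\}\cc$ and $\{f(x)\}\cc$ via Lemma~\ref{lem:restriction-corestriction-in-C}, observe the result is non-zero, apply (H3'), and read off the bijection from $\{x\}\cc = \class x \cup \{0\}$, $\{f(x)\}\cc = \class{f(x)} \cup \{0\}$, and $f(0)=0$. The only difference is that you make explicit the well-definedness check $f(\{x\}\cc) \subseteq \{f(x)\}\cc$ via Lemma~\ref{lem:lattice-adjoint}(i), which the paper leaves implicit.
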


\begin{proof}
By Lemma~\ref{lem:restriction-corestriction-in-C}, $f|_{\{x\}\cc}^{\{f(x)\}\cc}$ is a morphism, which is non-zero and hence by (H3') an isomorphism. We recall that $\{x\}\cc = \class x \cup \{0\}$, $\,\{f(x)\}\cc = \class{f(x)} \cup \{0\}$, and $f(0) = 0$. Thus $f$ maps $\class x$ bijectively to $\class{f(x)}$.
\end{proof}

We recall that we denote by $I$ a unital orthoset in $\C$.

\begin{lemma} \label{lem:dagger-mono-from-mono}
Let $X \in \C$ and let $f \colon I \to X$ be a non-zero morphism. Then $f$ is a monomorphism, whose image is a unital subspace $J$ of $X$. Moreover, there is an isomorphism $h \colon J \to I$ such that $f \circ h$ equals the inclusion map $\iota_J \colon J \to X$.
\end{lemma}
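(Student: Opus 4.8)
The plan is to exploit that the fixed object $I$ is a singleton and to apply Lemma~\ref{lem:isomorphism-of-singletons}. Recall first that by Lemma~\ref{lem:nI}(i) together with Lemma~\ref{lem:unital} the unital orthoset $I$ is a singleton, so that $\{x\}\cc = I$ for every proper $x \in I$ and all proper elements of $I$ constitute a single $\prl$-class. Using this I would begin by showing that $f$ has zero kernel. Since $f$ is non-zero and $f(0) = 0$, there is a proper $x \in I$ with $f(x) \neq 0$. Every other proper $x' \in I$ satisfies $x' \prl x$, and $f$ preserves $\prl$ by Proposition~\ref{prop:quotient-map}; as the $\prl$-class of $0$ is $\{0\}$, this forces $f(x') \neq 0$, so $\kernel f = \{0\}$.

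Next I fix such an $x$ and put $J = \{f(x)\}\cc$. By Lemma~\ref{lem:unital}, $J$ is a unital subspace of $X$ --- in fact a singleton, since $X$ is atomistic by Lemma~\ref{lem:C-consists-of-Dacey-spaces} --- and $J = \class{f(x)} \cup \{0\}$. Because $\{x\}\cc = I$, Lemma~\ref{lem:isomorphism-of-singletons} shows that $g := f|_I^{J}$ is an isomorphism from $I$ to $J$ and that $f$ maps $\class{x}$ bijectively onto $\class{f(x)}$; combined with $f(0) = 0$ this yields $\image f = \class{f(x)} \cup \{0\} = J$. Thus the image of $f$ is the claimed unital subspace $J$.

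Finally, let $\iota_J \colon J \to X$ denote the inclusion map, so that $f = \iota_J \circ g$. By Lemma~\ref{lem:inclusion-and-Sasaki-in-C}(i) we have ${\iota_J}\adj \circ \iota_J = \id_J$, so $\iota_J$ is a dagger monomorphism, in particular a monomorphism; as $g$ is an isomorphism, $f = \iota_J \circ g$ is a monomorphism. Putting $h = g^{-1} \colon J \to I$, which is again an isomorphism, we obtain $f \circ h = \iota_J \circ g \circ g^{-1} = \iota_J$, as required.

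I expect the only genuine subtlety to be the zero-kernel step: one has to use that all proper elements of the singleton $I$ lie in a single $\prl$-class in order to promote the hypothesis $f \neq 0$ to the statement that $f$ sends every proper element to a proper element. Without this, the image could fail to fill out the whole atom $\{f(x)\}\cc$, and the clean factorisation $f = \iota_J \circ g$ through an \emph{iso}morphism $g$ --- which is what makes $f$ a monomorphism and supplies the required $h$ --- would break down.
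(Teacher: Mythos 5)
Your proof is correct and follows essentially the same route as the paper's: choose a proper $x$ with $f(x) \neq 0$, set $J = \{f(x)\}\cc$, apply Lemma~\ref{lem:isomorphism-of-singletons} (noting $\{x\}\cc = I$ since $I$ is a singleton) to get the isomorphism $g = f|_I^J$, and factor $f = \iota_J \circ g$ with $h = g^{-1}$. Your preliminary zero-kernel argument is correct but redundant, since the bijectivity of $f|_I^J$ supplied by Lemma~\ref{lem:isomorphism-of-singletons} already forces $\image f = J$ and $\kernel f = \{0\}$.
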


\begin{proof}
Let $x \in I\withoutzero$ and $J = \{f(x)\}\cc$. By Lemma~\ref{lem:isomorphism-of-singletons}, $f|^J$ is an isomorphism. In particular, $\image f = J$. As the inclusion map $\iota_J \colon J \to X$ is a dagger monomorphism, it follows that $f = \iota_J \circ f|^J$ is likewise a monomorphism. Moreover, putting $h = (f|^J)^{-1}$ we have $f \circ h = \iota_J \circ f|^J \circ h = \iota_J$.
\end{proof}

\begin{lemma} \label{lem:sums-parallel-to-identity}
Let $f, g \colon I \to I$ be such that $f \oplus g \prl \id_{I \oplus I}$. Then $f = g$.
\end{lemma}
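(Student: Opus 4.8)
The plan is to exploit the diagonal morphism into the biproduct $I \oplus I$ together with the rigidity of singletons supplied by Lemma~\ref{lem:isomorphism-of-singletons}. The essential difficulty is that $I$ is a singleton, so $P(I)$ carries a single proper class and $P(f) = P(g) = \id$ automatically; hence the asserted equality $f = g$ is completely invisible after passing to irredundant quotients, and any argument factoring through the $P$-level collapses. The content must therefore be extracted from the actual elements of the rank-$2$ orthoset $I \oplus I$, and precisely from the way the two biproduct components of an element are correlated inside one ``diagonal'' class.

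Concretely, I would write $\iota_1, \iota_2 \colon I \to I \oplus I$ for the coprojections, so that $\iota_i\adj \iota_i = \id_I$ and $\iota_1\adj \iota_2 = \iota_2\adj \iota_1 = 0$, and form the diagonal $\delta = \iota_1 + \iota_2 \colon I \to I \oplus I$ in the semiadditive structure. Using left/right distributivity of composition over $+$, one checks $\iota_1\adj \circ \delta = \id_I = \iota_2\adj \circ \delta$, and likewise $(f \oplus g) \circ \delta = \iota_1 \circ f + \iota_2 \circ g$, whence $\iota_1\adj \circ (f\oplus g)\circ \delta = f$ and $\iota_2\adj \circ (f \oplus g)\circ\delta = g$. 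Next I would record that $\delta$ maps $I\withoutzero$ bijectively onto a single class of $I \oplus I$: since $I$ is a singleton, all its proper elements are $\prl$-equivalent, so by Proposition~\ref{prop:quotient-map} (adjointable maps preserve $\prl$) all $\delta(x)$, $x \in I\withoutzero$, lie in one class $\class{\delta(x_0)}$; and applying Lemma~\ref{lem:isomorphism-of-singletons} to $\delta$ at a proper $x_0$ shows that $\delta$ restricts to a bijection from $\class{x_0} = I\withoutzero$ onto $\class{\delta(x_0)}$.

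Finally I would feed in the hypothesis. For each proper $x$, set $w = (f\oplus g)(\delta(x))$; the assumption $f\oplus g \prl \id_{I\oplus I}$ gives $w \prl \delta(x)$, so $w$ is a proper element of the class $\class{\delta(x_0)}$. By the bijection just established there is a unique proper $y$ with $w = \delta(y)$, and then $\iota_1\adj(w) = y = \iota_2\adj(w)$. But by the computation above $\iota_1\adj(w) = f(x)$ and $\iota_2\adj(w) = g(x)$, so $f(x) = y = g(x)$. As this holds for every proper $x$ and $f(0) = g(0) = 0$, we conclude $f = g$. I expect the only genuinely delicate point to be the realisation that one must leave the $P$-level, namely recognising that the diagonal class, read through Lemma~\ref{lem:isomorphism-of-singletons}, is exactly the device that detects the difference between $f$ and $g$; once the diagonal bijection is in place, the remaining steps are formal biproduct identities.
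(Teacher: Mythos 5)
Your proof is correct, and it is leaner than the paper's. Both arguments hinge on the same two devices -- the diagonal morphism into $I \oplus I$ and the rigidity of singletons coming from (H3') via Lemma~\ref{lem:isomorphism-of-singletons} -- but you exploit them differently. The paper first symmetrises: it introduces the swap map $m = \iota_2 \circ {\iota_1}\adj + \iota_1 \circ {\iota_2}\adj$, deduces $g \oplus f \prl \id_{I\oplus I}$ from the hypothesis, and then proves the morphism identity $(f\oplus g)\circ\Delta = (g\oplus f)\circ\Delta$ by testing components against the dagger biproduct $J \oplus J\c$ with $J = \{\Delta(x)\}\cc$ (the $J$-component uses commutativity of the semiadditive addition, $h\adj\circ(f+g) = h\adj\circ(g+f)$, while the $J\c$-component vanishes because $f\oplus g$ preserves the diagonal class); finally it recovers $f = g$ by composing with a projection. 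You instead stay at the level of elements: since $\delta = \iota_1 + \iota_2$ maps $I\withoutzero$ bijectively onto the single class $\class{\delta(x_0)}$, the hypothesis forces $(f\oplus g)(\delta(x)) = \delta(y)$ for some proper $y$, and reading off the two coordinates via ${\iota_1}\adj$ and ${\iota_2}\adj$ gives $f(x) = y = g(x)$ at once. This avoids the swap map, the commutativity of $+$, and the component-testing argument altogether; the only additive facts you need are distributivity and the unit law, and your sharper use of the diagonal class (its elements are exactly the $\delta(y)$, not merely elements annihilated by ${d_2}\adj$) is what makes the shortcut work. Two small points you leave implicit but which are immediate: $x_0 \notin \kernel\delta$, needed to invoke Lemma~\ref{lem:isomorphism-of-singletons}, follows from ${\iota_1}\adj \circ \delta = \id_I$; and properness of $w = (f\oplus g)(\delta(x))$ follows since $w \prl \delta(x)$ and $\delta(x)$ is proper.
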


\begin{proof}
Consider the biproduct $\begin{tikzcd}[cramped] I \arrow[r, "\iota_1"] & I \oplus I & I \arrow[l, "\iota_2"'] \end{tikzcd}$ and let $m \;=\; \iota_2 \circ {\iota_1}\adj \;+\; \iota_1 \circ {\iota_2}\adj$. By Lemmas~\ref{lem:morphism-plus-zero} and~\ref{lem:plus-oplus}, $m \circ m = \iota_1 \circ {\iota_1}\adj \;+\; \iota_2 \circ {\iota_2}\adj = (\id_I \oplus 0) + (0 \oplus \id_I) = \id_I \oplus \id_I = \id_{I \oplus I}$. By Lemma~\ref{lem:adjoints-and-plus}, $m$ is self-adjoint and hence $m$ is an orthoautomorphism of $I \oplus I$. A short calculation and a further application of Lemmas~\ref{lem:morphism-plus-zero} and~\ref{lem:plus-oplus} gives
\[ m \circ (f \oplus g) \circ m
\;=\; \iota_2 \circ f \circ {\iota_2}\adj + \iota_1 \circ g \circ {\iota_1}\adj
\;=\; (0 \oplus f) + (g \oplus 0)
\;=\; g \oplus f. \]
We conclude that $g \oplus f \prl \id_{I \oplus I}$.

Consider next the diagonal $\Delta \colon I \to I \oplus I$. Let $x \in I\withoutzero$ and put $J=\{ \Delta(x) \}\cc$. By Lemma~\ref{lem:dagger-mono-from-mono}, there is an isomorphism $h \colon J \to I$ such that $d_1 = \Delta \circ h$ is the inclusion map from $J$ to $I \oplus I$. Hence there is a dagger biproduct $\begin{tikzcd}[cramped] J \arrow[r, "d_1"] & I \oplus I & J^{\c} \arrow[l, "d_2"'] \end{tikzcd}$. We have $d_1\adj = h\adj \circ \nabla$ and thus
\[ d_1\adj \circ (f \oplus g) \circ \Delta
\;=\; h\adj \circ (f+g) \;=\; h\adj \circ (g+f) = d_1\adj \circ (g \oplus f) \circ \Delta. \]
Moreover, $d_2\adj \circ d_1 = 0$ implies $d_2\adj \circ \Delta = 0$. By Lemma~\ref{lem:dagger-mono-from-mono},
\[ \image \Delta \;=\; \{ \Delta(x) \}\cc \;=\; \class{\Delta(x)} \cup \{0\} \;\subseteq\; \kernel d_2\adj. \]
Recall that $f \oplus g \prl \id_{I \oplus I}$, hence $f \oplus g$ maps $\class{\Delta(x)}$ to itself. We conclude that also $d_2\adj \circ (f \oplus g) \circ \Delta = 0$. Similarly, we see that $d_2\adj \circ (g \oplus f) \circ \Delta = 0$. It follows that $(f \oplus g) \circ \Delta = (g \oplus f) \circ \Delta$. From the commutative diagram
\[ \begin{tikzcd}
& I \arrow[dl, "\id"', shift right=0.5ex] \arrow[d, "\Delta"] \arrow[dr, "\id"] \\
I \arrow[d, "f"]
& I \oplus I \arrow[l, "\pi_1"'] \arrow[r, "\pi_2"] \arrow[d, "f \oplus g"]
& I \arrow[d, "g"] \\
I
& I \oplus I \arrow[l, "\pi_1"'] \arrow[r, "\pi_2"]
& I
\end{tikzcd} \]
we observe that $\pi_1 \circ (f \oplus g) \circ \Delta = f \circ \pi_1 \circ \Delta = f$. Similarly, we have $\pi_1 \circ (g \oplus f) \circ \Delta = g \circ \pi_1 \circ \Delta = g$ and hence $f = g$.
\end{proof}

We are ready to show that $\L$ is faithful.

\begin{lemma} \label{lem:oplus-id-cancellation}
Let $f, g \colon X \to Y$ be in $\C$ and let $W \in \C$ be non-zero. If $f \oplus \id_W \prl g \oplus \id_W$, then $f = g$.
\end{lemma}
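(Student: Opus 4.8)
The plan is to reduce the assertion to the case of singletons and then invoke Lemma~\ref{lem:sums-parallel-to-identity}, which already encapsulates the required rigidity of $\prl$ on a rank-$2$ object. The point is that $\prl$ records images only up to equivalence, so that the hypothesis by itself yields no more than $f \prl g$; the extra summand $\id_W$ is exactly what upgrades this equivalence to genuine equality $f = g$.

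I would fix a proper element $x \in X\withoutzero$ and aim to show $f(x) = g(x)$, the value at $0$ being forced since adjointable maps fix $0$. Evaluating the hypothesis at $\iota_X(x)$ and using $(f \oplus \id_W) \circ \iota_X = \iota_Y \circ f$ (and likewise for $g$) gives $\iota_Y(f(x)) \prl \iota_Y(g(x))$ in $Y \oplus W$. As $\iota_Y$ is an isometry, it reflects $\perp$ by Lemma~\ref{lem:dagger-isos-in-C}(iii), hence reflects $\prl$, so that $f(x) \prl g(x)$. In particular $f(x) = 0$ forces $g(x) = 0$, and I may henceforth assume $f(x) \neq 0$, whence also $g(x) \neq 0$.

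Now I set $A = \{x\}\cc$ and $B = \{f(x)\}\cc = \{g(x)\}\cc$, which are singletons by Lemmas~\ref{lem:C-consists-of-Dacey-spaces} and~\ref{lem:unital}(ii), and I pick a proper $w \in W\withoutzero$ (possible since $W$ is non-zero), putting $J = \{w\}\cc$. Using Lemma~\ref{lem:lattice-adjoint}(ii) one checks that both $f \oplus \id_W$ and $g \oplus \id_W$ map the rank-$2$ subspace $A \oplus J$ into $B \oplus J$, and by Lemma~\ref{lem:decomposing-a-morphism}(i) their restrictions are $\alpha \oplus \id_J$ and $\beta \oplus \id_J$, where $\alpha = f|_A^B$ and $\beta = g|_A^B$ are isomorphisms of singletons by Lemma~\ref{lem:isomorphism-of-singletons}. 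Since $A \oplus J$ and $B \oplus J$ are orthoclosed, equivalence inside them coincides with equivalence in the ambient spaces (Lemma~\ref{lem:PA-subspace-of-PX}), so the hypothesis restricts to $\alpha \oplus \id_J \prl \beta \oplus \id_J$. Composing on the left with the adjointable morphism $\beta^{-1} \oplus \id_J$, which preserves $\prl$ by Proposition~\ref{prop:quotient-map}, yields $(\beta^{-1}\alpha) \oplus \id_J \prl \id_{A \oplus J}$. Transporting along a dagger isomorphism that identifies the singletons $A$ and $J$ with the fixed singleton $I$ (available from (H3') and (H4)), Lemma~\ref{lem:sums-parallel-to-identity} gives $\beta^{-1}\alpha = \id_A$, that is $\alpha = \beta$, and therefore $f(x) = \alpha(x) = \beta(x) = g(x)$.

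The main obstacle is the third step: carving out the rank-$2$ subspace $A \oplus J$ so that the two maps acquire the normal form $(\text{automorphism}) \oplus \id$, verifying that $\prl$ is inherited by the restriction, and collapsing the two a priori distinct singletons $A$ and $J$ onto the single fixed $I$ needed to quote Lemma~\ref{lem:sums-parallel-to-identity}. Once this normal form is in place, the conclusion is precisely that lemma, and the bookkeeping (the restriction/corestriction via Lemma~\ref{lem:restriction-corestriction-in-C} and the transport along isomorphisms via Lemma~\ref{lem:piecewise-dagger-isomorphy}) is routine.
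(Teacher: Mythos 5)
Your proposal is correct and follows essentially the same route as the paper's own proof: first derive $f \prl g$ pointwise, then restrict to the rank-$2$ subspaces $\{x\}\cc \oplus \{w\}\cc \to \{f(x)\}\cc \oplus \{w\}\cc$, use (H3') to obtain isomorphisms of singletons, compose to get a map $\prl$ the identity, and conclude with Lemma~\ref{lem:sums-parallel-to-identity}. Your explicit transport of the two singletons onto the fixed singleton $I$ is, if anything, slightly more careful than the paper, which applies that lemma to endomorphisms of $\{x\}\cc$ and $\{w\}\cc$ directly.
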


\begin{proof}
Assume $f \oplus \id_W \prl g \oplus \id_W$ and let $x \in X$. By assumption, $f = \pi_X \circ (f \oplus \id_W)\circ \iota_X \prl \pi_X \circ (g \oplus \id_W) \circ \iota_X = g$. We conclude that $\{f(x)\}\cc = \{g(x)\}\cc$. We have to show that in fact $f(x) = g(x)$. This is clear if $f(x) = 0$. Let us assume that $f(x)$ is proper. Note that then also $g(x)$ is proper.

Pick some $w \in W\withoutzero$, let $\tilde X = \{x\}\cc \oplus \{w\}\cc$ and $\tilde Y = \{f(x)\}\cc \oplus \{w\}\cc = \{g(x)\}\cc \oplus \{w\}\cc$. Furthermore, let $\tilde f = (f \oplus \id_W)\big|_{\tilde X}^{\tilde Y} = f\big|_{\{x\}\cc}^{\{f(x)\}\cc} \oplus \id_{\{w\}\cc}$ and similarly $\tilde g  = (g \oplus \id_W)\big|_{\tilde X}^{\tilde Y} = g\big|_{\{x\}\cc}^{\{f(x)\}\cc}  \oplus \id_{\{w\}\cc}$. Then $\tilde f \prl \tilde g$ and by (H3'), $\tilde f$ and $\tilde g$ are isomorphisms and we have
\[ {\tilde g}^{-1} \circ \tilde f \;=\; \Big(\big(g\big|_{\{x\}\cc}^{\{f(x)\}\cc}\big)^{-1} \circ f\big|_{\{x\}\cc}^{\{f(x)\}\cc}\Big) \oplus \id_{\{w\}\cc} \prl \id_{\{x\}\cc \oplus \{w\}\cc}. \]
By Lemma~\ref{lem:sums-parallel-to-identity}, $\big(g\big|_{\{x\}\cc}^{\{f(x)\}\cc}\big)^{-1} \circ f\big|_{\{x\}\cc}^{\{f(x)\}\cc} = \id_{\{x\}\cc}$. That is, $f(x) = g(x)$.
\end{proof}

\begin{lemma} \label{lem:L-is-faithful}
$\L$ is faithful.
\end{lemma}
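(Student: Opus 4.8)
The plan is to reduce faithfulness to the cancellation Lemma~\ref{lem:oplus-id-cancellation}. Suppose $f, g \colon X \to Y$ are $\C$-morphisms with $\L(f) = \L(g)$; I want to conclude $f = g$. The first observation is that equality of $\L$-images forces the induced maps on the irredundant quotients to agree. Indeed, the defining commutative square (\ref{fml:restricted-commutative-diagram-for-Lf}) gives $P(f) = r_Y^{-1} \circ P(\L(f)) \circ r_X$ and likewise for $g$, where $r_X \colon P(X) \to P(H_X)$ and $r_Y \colon P(Y) \to P(H_Y)$ are the orthoisomorphisms fixed in the construction of $\L$. Since $r_X$ and $r_Y$ are bijections and $\L(f) = \L(g)$, this yields $P(f) = P(g)$, that is, $f \prl g$.

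The point is that $f \prl g$ is weaker than $f = g$, so no direct conclusion is available; the role of the new condition (H3'), channelled through Lemmas~\ref{lem:sums-parallel-to-identity} and~\ref{lem:oplus-id-cancellation}, is precisely to bridge this gap. The idea is therefore to run the argument of the previous paragraph not on $f$ and $g$ themselves but on $f \oplus \id_W$ and $g \oplus \id_W$ for some fixed non-zero object $W$ (for concreteness a singleton $I$, which exists since $\C$ is assumed non-trivial). Because $\L$ is a dagger functor that preserves biproducts (Proposition~\ref{prop:L}(v)), it preserves the biproduct of morphisms, so that $\L(f \oplus \id_W) = \L(f) \oplus \id_{H_W} = \L(g) \oplus \id_{H_W} = \L(g \oplus \id_W)$. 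Applying the first step to the pair $f \oplus \id_W,\; g \oplus \id_W \colon X \oplus W \to Y \oplus W$ then gives $f \oplus \id_W \prl g \oplus \id_W$. With this in hand, Lemma~\ref{lem:oplus-id-cancellation}, invoked with the non-zero $W$, immediately yields $f = g$, establishing faithfulness.

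The step I expect to require the most care is the claim $\L(f \oplus \id_W) = \L(f) \oplus \id_{H_W}$: one must verify that the construction of $\L$ on morphisms is compatible with the biproduct of morphisms. This follows from the fact that $\L$ carries the coprojections $\iota_X$ and $\iota_W$ of $X \oplus W$ to the coprojections of the biproduct $H_X \oplus H_W$ (Proposition~\ref{prop:L}(v)), combined with the universal property of the coproduct; but it is the one place where the bookkeeping is not wholly automatic. Everything genuinely difficult has already been absorbed into Lemma~\ref{lem:oplus-id-cancellation}, whose proof rests in turn on the diagonal/codiagonal computation of Lemma~\ref{lem:sums-parallel-to-identity} and on hypothesis (H3'); so the present statement is, by design, only a short deduction from the material assembled above.
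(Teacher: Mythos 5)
Your proof is correct, and it ends exactly where the paper's does: with an application of Lemma~\ref{lem:oplus-id-cancellation}. The difference lies in how you reach that lemma's hypothesis $f \oplus \id \prl g \oplus \id$. The paper's proof is a one-liner because the auxiliary object is already hard-wired into the construction of $\L$: by definition, $\L(f)$ is the unique linear map $\phi$ satisfying $P(\phi \oplus \id_{H^Z}) = \bar r_Y \circ P(f \oplus \id_Z) \circ {\bar r_X}^{\;-1}$, so $\L(f) = \L(g)$ immediately forces $P(f \oplus \id_Z) = P(g \oplus \id_Z)$, i.e.\ $f \oplus \id_Z \prl g \oplus \id_Z$, and the cancellation lemma is invoked with $W = Z$ (the fixed rank-$3$ orthoset, which is non-zero). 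You instead read off only $f \prl g$ from the restricted diagram (\ref{fml:restricted-commutative-diagram-for-Lf}), correctly recognize that this is too weak, and then tensor with a fresh singleton $W = I$; this obliges you to verify that $\L(f) = \L(g)$ implies $\L(f \oplus \id_W) = \L(g \oplus \id_W)$, which you rightly flag as the delicate step. That step does go through: by Proposition~\ref{prop:L}(v) the $\L$-images of the coprojections of $X \oplus W$ are again coprojections of a biproduct in $\OMS{F}$, so $\L(X \oplus W)$ with these maps is a coproduct of $H_X$ and $H_W$, and computing the composites of $\L(f \oplus \id_W)$ and $\L(g \oplus \id_W)$ with $\L(\iota_X)$ and $\L(\iota_W)$ via functoriality shows they agree, whence the two maps coincide by the universal property. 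So your argument is sound; its only inefficiency is that it re-proves a compatibility which the paper gets for free, since the morphism $f \oplus \id_Z$ is precisely the raw material from which $\L(f)$ is constructed in Section~\ref{sec:the-functor-L}.
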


\begin{proof}
Let $f, g \colon X \to Y$ be such that $\phi = \L(f) = \L(g)$. Then $P(f \oplus \id_Z) = P(g \oplus \id_Z)$, that is, $f \oplus \id_Z \prl g \oplus \id_Z$. Hence $f = g$ by Lemma~\ref{lem:oplus-id-cancellation}.
\end{proof}

\begin{theorem}
Assume that $F$ is Archimedean. Then $\Cfin$ and $\OMSfin{F}$ are unitarily dagger equivalent.
\end{theorem}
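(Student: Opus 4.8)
The plan is to invoke the characterisation recalled at the close of Section~\ref{sec:dagger-categories}: by \cite[Lemma~V.1]{Vic}, a dagger functor can be completed to a unitary dagger equivalence exactly when it is full, faithful, and dagger essentially surjective. It therefore suffices to check that the restricted functor $\Lfin \colon \Cfin \to \OMSfin{F}$ enjoys these three properties; no bespoke construction of unit and counit is needed, as the lemma supplies those automatically together with the required unitarity.

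First I would confirm that $\Lfin$ is a dagger functor. Proposition~\ref{prop:L-is-functor} establishes that $\L \colon \C \to \OMS{F}$ is a dagger functor, and as observed in Section~\ref{sec:the-functor-L} it restricts to a dagger functor on the full subcategories $\Cfin$ and $\OMSfin{F}$ of finite-rank orthosets and finite-dimensional spaces respectively. Next, under the standing Archimedean hypothesis on $F$, Theorem~\ref{thm:L-is-surjective} delivers immediately that $\Lfin$ is both dagger essentially surjective and full.

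It remains only to record faithfulness. Lemma~\ref{lem:L-is-faithful} asserts that $\L$ itself is faithful, the crux being the cancellation property of Lemma~\ref{lem:oplus-id-cancellation}. Since $\Cfin$ sits as a full subcategory of $\C$ and $\Lfin$ agrees with $\L$ on every hom-set of $\Cfin$, faithfulness descends to $\Lfin$ with no additional argument. Assembling the three properties and feeding them into the Vicary criterion yields the asserted unitary dagger equivalence between $\Cfin$ and $\OMSfin{F}$.

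I do not anticipate a genuine obstacle at this final stage: all the substantive labour---building $\L$ and verifying that it commutes with the dagger, extracting fullness from the Archimedean condition in Proposition~\ref{prop:L-is-full-for-certain-F}, and proving faithfulness through the cancellation lemmas of Section~\ref{sec:extended-set-of-hypotheses}---has already been discharged. The theorem is thus a direct corollary, and the only care required is to cite the Vicary criterion in the correct direction and to note that the finite-rank restriction inherits each of the three properties cleanly.
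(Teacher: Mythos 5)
Your proposal is correct and is essentially identical to the paper's proof: both cite Theorem~\ref{thm:L-is-surjective} for fullness and dagger essential surjectivity of $\Lfin$ under the Archimedean hypothesis, Lemma~\ref{lem:L-is-faithful} for faithfulness (which passes to the restriction since $\Cfin$ is a full subcategory), and then conclude via \cite[Lemma~V.1]{Vic}.
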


\begin{proof}
By Theorem~\ref{thm:L-is-surjective}, $\Lfin$ is dagger essentially surjective and full, and by Lem\-ma~\ref{lem:L-is-faithful}, $\Lfin$ is faithful. Hence, by \cite[Lemma~V.1]{Vic}, $\Lfin$ is a unitary dagger equivalence.
\end{proof}

We finally turn to the announced description of the category of complex Hilbert spaces and bounded linear maps.

Let $f \colon X \to X$ be a dagger automorphism in $\C$. We call $g \colon X \to X$ a {\it strict square root} of $f$ if $g$ is a dagger automorphism such that $g^2 = f$ and for any projection $p$ of $X$, $p$ commutes with $f$ if and only if $p$ commutes with $g$.

We add the following hypothesis:

\begin{itemize}

\item[\rm (H5)] Every dagger automorphism in $\C$ possesses a strict square root.

\end{itemize}

We moreover assume that $\C$ contains an orthoset of infinite rank. Recall that $F$ is in this case one of $\Reals$, $\Complexes$, or $\Quaternions$ and $\OMS{F}$ coincides with $\Hil{F}$, the dagger category of Hilbert spaces over $F$.

\begin{example} \label{ex:HilF-fulfils-H1-H5}
The dagger category $\Hil{\Complexes}$ of complex Hilbert spaces fulfils all conditions {\rm (H1)}--{\rm (H5)}. Indeed, $\Hil{\Complexes} = \OMS{\Complexes}$ and hence by Example~\ref{ex:HilF-fulfils-H1-H4}, $\Hil{\Complexes}$ fulfils {\rm (H1)}--{\rm (H4)}.

Ad {\rm (H5)}: The dagger automorphisms of $\Hil{\Complexes}$ are the unitary operators and the projections of $\Hil{\Complexes}$ are the projections in the usual sense. Hence {\rm (H5)} holds by Lemma~\ref{lem:strict-square-root-and-C}.
\end{example}

We next extend Proposition~\ref{prop:image-L-contains-certain-maps} by one more statement.

\begin{proposition} \label{prop:linear-maps-in-E}
Let $H$ be a Hilbert space in the image of $\L$. Then any unitary map $\phi \colon H \to H$ in $\E(H)$ has a strict square root in $\E(H)$.
\end{proposition}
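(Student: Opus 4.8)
The plan is to lift $\phi$ to a dagger automorphism in $\C$, invoke {\rm (H5)} to extract a strict square root there, and transport it back along $\L$, using faithfulness of $\L$ to translate the ``commuting with projections'' condition between the two categories.

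First I would fix $X \in \C$ with $\L(X) = H$ and realise the given unitary as $\phi = \L(f)$ for a $\C$-endomorphism $f \colon X \to X$; this is the form in which the elements of $\E(H)$ arise (cf.\ the proof of Proposition~\ref{prop:image-L-contains-certain-maps}). Since $\L$ is a dagger functor (Proposition~\ref{prop:L-is-functor}) and faithful (Lemma~\ref{lem:L-is-faithful}), and since $\phi\adj \circ \phi = \phi \circ \phi\adj = \id_H$ by Lemma~\ref{lem:unitary-maps-and-isometries}(i), the computations
\[ \L(f\adj \circ f) \;=\; \phi\adj \circ \phi \;=\; \id_H \;=\; \L(\id_X), \qquad \L(f \circ f\adj) \;=\; \phi \circ \phi\adj \;=\; \id_H \]
force $f\adj \circ f = \id_X = f \circ f\adj$. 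Hence $f$ is a dagger automorphism of $X$.

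By {\rm (H5)}, $f$ admits a strict square root $g \colon X \to X$ in $\C$: a dagger automorphism with $g^2 = f$ such that a projection $p$ of $X$ commutes with $f$ if and only if it commutes with $g$. Put $\psi = \L(g)$. Three of the four required properties are then immediate: $\psi \in \E(H)$ by construction, $\psi$ is unitary by Proposition~\ref{prop:L}(ii), and $\psi^2 = \L(g)^2 = \L(g^2) = \L(f) = \phi$ by functoriality.

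The remaining, and principal, step is the reducing-subspace condition. Recalling that a closed subspace $S \subseteq H$ reduces a unitary operator exactly when the orthogonal projection $P_S$ onto $S$ commutes with it, it suffices to show, for every $S$, that $P_S$ commutes with $\phi$ if and only if it commutes with $\psi$. By Proposition~\ref{prop:image-L-contains-certain-maps}(iii), $P_S$ lies in $\E(H)$ and equals $\L(p)$, where $p$ is the projection of $X$ onto the subspace $A$ determined by $r_X(P(A)) = P(S)$. Faithfulness of $\L$ then converts operator identities into $\C$-identities:
\[ P_S \circ \phi = \phi \circ P_S \iff \L(p \circ f) = \L(f \circ p) \iff p \circ f = f \circ p, \]
and likewise $P_S$ commutes with $\psi$ if and only if $p \circ g = g \circ p$. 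The defining property of the strict square root $g$ equates these two commutation statements, so the chain of equivalences closes and $\psi$ is a strict square root of $\phi$ in $\E(H)$. I expect the only delicate bookkeeping to be the two facts just used---that every projection on $H$ is $\L$ of a $\C$-projection, and that $\phi$ may be realised by an endomorphism of a single fixed object $X$---both of which are secured by Proposition~\ref{prop:image-L-contains-certain-maps} and the construction of $\L$.
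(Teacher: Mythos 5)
Your proposal is correct and follows essentially the same route as the paper's proof: lift $\phi$ to $f$ in $\C$, use faithfulness and dagger-functoriality of $\L$ to see that $f$ is a dagger automorphism, apply (H5) to obtain a strict square root $g$, and transfer the commuting-with-projections condition back via Proposition~\ref{prop:image-L-contains-certain-maps}(iii) (every projection of $H$ is $\L$ of a $\C$-projection) together with faithfulness. The paper states this very tersely; your write-up merely makes explicit the same chain of equivalences, including the standard identification of reducing subspaces with commuting orthogonal projections.
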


\begin{proof}
Let $f \colon X \to X$ be such that $\phi = \L(f)$. As $\L$ is by Lemma~\ref{lem:L-is-faithful} faithful, $f$ is a dagger automorphism. By (H5), $f$ possesses a strict square root $g \in \C$. By Proposition~\ref{prop:image-L-contains-certain-maps}, $\E(H)$ contains all the projections of $H$. It follows that $\L(g)$ is a strict square root of $\L(f)$.
\end{proof}

We shall now show that the $\star$-sfield $F$ is actually the field of complex numbers, equipped with the complex conjugation. We put $U(F) = \{ \epsilon \in F \colon \epsilon\inv \epsilon = 1 \}$.

\begin{lemma} \label{lem:C-consists-of-complex-Hilbert-spaces}
$F = \Complexes$.
\end{lemma}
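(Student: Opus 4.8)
Since we are in the case where $\C$ contains an orthoset of infinite rank, the scalar $\ast$-sfield is already one of $\Reals$, $\Complexes$, $\Quaternions$, so the task is really to exclude $\Reals$ and $\Quaternions$. The natural tool is Lemma~\ref{lem:strict-square-root-and-C}, which, for an at least two-dimensional Hilbert space over a classical $\ast$-sfield, reduces the identity $F = \Complexes$ to the single assertion that $-\id_H$ admits a strict square root. Hypothesis (H5) is precisely what should deliver such a square root, transported through $\L$. The plan is therefore to manufacture a suitable finite-dimensional Hilbert space $H$ in the image of $\L$ and to verify that $-\id_H$ lies in $\E(H)$, so that Propositions~\ref{prop:linear-maps-in-E} and~\ref{lem:strict-square-root-and-C} finish the argument.

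\textbf{Step 1 (choosing the test space).} First I would invoke Proposition~\ref{prop:nI} to fix an orthoset $X \in \C$ of rank $2$. By Proposition~\ref{prop:L}(i) together with Theorem~\ref{thm:Soler}, the space $H := \L(X)$ is then a two-dimensional Hilbert space over $F$. Note that the infinite-rank assumption has already been used (via $\L$ applied to an infinite-rank object and Theorem~\ref{thm:Soler}(iii)) to guarantee $F \in \{\Reals,\Complexes,\Quaternions\}$; in particular $F$ is Archimedean, which is the hypothesis I will need in the next step.

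\textbf{Step 2 (the main obstacle: $-\id_H \in \E(H)$).} The delicate point is that $-\id_H$ must be realised as the image under $\L$ of an actual $\C$-morphism, i.e.\ it must belong to $\E(H)$. One cannot read this off directly from the finite-rank fullness of $\Lfin$ applied to $X$, since $-\id_H$ is not a ``small'' perturbation of the identity. The key observation that resolves this is that the chosen $H$ is itself \emph{finite}-dimensional: because $F$ is Archimedean and $H$ is a finite-dimensional space in the image of $\L$, Proposition~\ref{prop:L-is-full-for-certain-F} shows that $\E(H)$ consists of \emph{all} linear endomorphisms of $H$. In particular the unitary $-\id_H$ lies in $\E(H)$. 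This is where descending from the infinite-rank object to a finite-rank one is essential, as the corresponding fullness statement is available only in finite dimension.

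\textbf{Step 3 (conclusion).} With $-\id_H$ exhibited as a unitary in $\E(H)$, Proposition~\ref{prop:linear-maps-in-E} (which rests on (H5)) provides a strict square root of $-\id_H$ inside $\E(H)$; in particular $-\id_H$ possesses a strict square root in the operator sense. Since $H$ is an at least two-dimensional Hilbert space over $F \in \{\Reals,\Complexes,\Quaternions\}$, the implication (b)$\,\Rightarrow\,$(c) of Lemma~\ref{lem:strict-square-root-and-C} then yields $F = \Complexes$, as desired. I expect Step~2 to be the only part requiring genuine thought; Steps~1 and~3 are assembly of already-established results.
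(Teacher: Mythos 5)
Your proposal is correct and follows exactly the paper's own argument: take a rank-$2$ orthoset $X$, use Proposition~\ref{prop:L-is-full-for-certain-F} (valid since $F\in\{\Reals,\Complexes,\Quaternions\}$ is Archimedean) to place $-\id_{\L(X)}$ in $\E(\L(X))$, obtain a strict square root via Proposition~\ref{prop:linear-maps-in-E} (i.e.\ hypothesis (H5)), and conclude by Lemma~\ref{lem:strict-square-root-and-C}. Your additional remarks justifying the existence of the rank-$2$ object and the Archimedean property merely make explicit what the paper leaves implicit.
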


\begin{proof}
Let $X \in \C$ be of rank $2$ and let $H = \L(X)$. By Proposition~\ref{prop:L-is-full-for-certain-F}, the map $\phi \colon H \to H \komma u \mapsto -u$ is in $\E(H)$ and hence, by Proposition~\ref{prop:linear-maps-in-E}, possesses a square root. The assertion now follows from Lemma~\ref{lem:strict-square-root-and-C}.
\end{proof}

To prove the fullness of $\L$, we make use of a theorem of A.~M.~Bikchentaev \cite{Bik1}, see also \cite{Bik2,Bik3}.

\begin{lemma} \label{lem:all-endomorphisms}
Let $H$ be a Hilbert space in the image of \/ $\L$. Then for each endomorphism $\phi \colon H \to H$, there is a $\C$-morphism $f$ such that $\L(f) = \phi$.
\end{lemma}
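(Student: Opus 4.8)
The plan is to exploit that $\E(H)$, the set of linear maps lying in the image of $\L$, is already a very rigid subset of the bounded operators on $H$. By Proposition~\ref{prop:image-L-contains-certain-maps} it contains every projection of $H$ (part~(iii)), it is closed under composition (part~(iv)), and it is closed under the \emph{finite} addition supplied by the semiadditive structure (part~(v)). Consequently $\E(H)$ contains every finite sum of finite products of projections. The whole statement therefore reduces to a purely operator-theoretic representation result: it suffices to exhibit an arbitrary endomorphism $\phi$ of $H$ in exactly this form, whence $\phi \in \E(H)$ and so $\phi = \L(f)$ for some $\C$-morphism $f$.

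First I would invoke Lemma~\ref{lem:C-consists-of-complex-Hilbert-spaces} to record that $F = \Complexes$, so that $H$ is genuinely a complex Hilbert space and the operator theory over $\Complexes$ is at our disposal. If $H$ is finite-dimensional there is nothing new to prove, since Proposition~\ref{prop:L-is-full-for-certain-F} already shows that $\E(H)$ contains all linear endomorphisms of $H$ (the field $\Complexes$ being Archimedean). Hence the substantial case is $\dim H = \infty$, and this is precisely where the present, separate argument is needed.

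The key external input is the theorem of Bikchentaev \cite{Bik1}: in $B(H)$ for a complex Hilbert space $H$, every bounded operator is a finite sum of products of (orthogonal) projections. Granting this, I would write $\phi = \sum_{k=1}^{m} Q_{k,1} \circ Q_{k,2} \circ \cdots \circ Q_{k,n_k}$, where each $Q_{k,j}$ is a projection of $H$. Each factor $Q_{k,j}$ lies in $\E(H)$ by Proposition~\ref{prop:image-L-contains-certain-maps}(iii), each product $Q_{k,1} \circ \cdots \circ Q_{k,n_k}$ lies in $\E(H)$ by closure under composition (part~(iv)), and the finite sum lies in $\E(H)$ by closure under addition (part~(v)). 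Thus $\phi \in \E(H)$, which is exactly the assertion. Note that no control over scalar coefficients is required, because the representation uses only sums and products of projections, all of which are already in the image of $\L$.

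I expect the only real obstacle to be matching the precise hypotheses of Bikchentaev's theorem to the situation at hand: one must make sure that the cited representation as a finite sum of products of projections is indeed available for $B(H)$ with $H$ infinite-dimensional (and not only in, say, the setting of a finite von Neumann algebra), and that the projections occurring are genuine \emph{orthogonal} projections, since these are exactly the ones furnished by Proposition~\ref{prop:image-L-contains-certain-maps}(iii). Once the applicability of the theorem is secured, the remainder is a direct transport of the three closure properties of $\E(H)$ through the given decomposition of $\phi$.
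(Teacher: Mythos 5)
Your proposal is correct and follows essentially the same route as the paper: the paper's proof likewise combines Proposition~\ref{prop:image-L-contains-certain-maps} (projections in $\E(H)$, closure under composition and addition) with Bikchentaev's theorem \cite[Section 3, Theorem]{Bik1} that every bounded operator on a complex Hilbert space is a finite sum of products of projections. Your additional remarks -- settling $F=\Complexes$ via Lemma~\ref{lem:C-consists-of-complex-Hilbert-spaces} and dispatching the finite-dimensional case by Proposition~\ref{prop:L-is-full-for-certain-F} -- are sound but not needed, since the argument works uniformly in all dimensions.
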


\begin{proof}
By Proposition~\ref{prop:image-L-contains-certain-maps}, $\E(H)$ contains all projections and is closed under addition as well as composition. From \cite[Section 3, Theorem]{Bik1}, it follows that $\E(H)$ consists of all endomorphisms of $H$.
\end{proof}

\begin{lemma} \label{lem:L-is-full}
$\L$ is full.
\end{lemma}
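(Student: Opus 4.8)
The plan is to bootstrap fullness for arbitrary morphisms from the fullness for \emph{endomorphisms} already established in Lemma~\ref{lem:all-endomorphisms}, using the biproduct to package a map $H_X \to H_Y$ as an endomorphism of $H_X \oplus H_Y$. Recall that by Lemma~\ref{lem:C-consists-of-complex-Hilbert-spaces} we are in the situation $F = \Complexes$ and $\OMS{F} = \Hil{\Complexes}$, so every object in the image of $\L$ is a complex Hilbert space and the morphisms to be hit are the bounded linear maps.

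First I would fix $X, Y \in \C$ and a bounded linear map $\phi \colon H_X \to H_Y$, and form the dagger biproduct $W = X \oplus Y$ with coprojections $\iota_X \colon X \to W$ and $\iota_Y \colon Y \to W$. Applying $\L$ and invoking Proposition~\ref{prop:L}(v), the maps $j_X = \L(\iota_X)$ and $j_Y = \L(\iota_Y)$ are linear isometries whose images are complementary subspaces of $H_W = \L(W)$; in particular $j_X\adj \circ j_X = \id_{H_X}$ and $j_Y\adj \circ j_Y = \id_{H_Y}$ by Lemma~\ref{lem:unitary-maps-and-isometries}(ii). I would then consider the bounded linear endomorphism
\[ \Phi \;=\; j_Y \circ \phi \circ j_X\adj \colon H_W \to H_W, \]
which is the block map sending $\image j_X$ to $H_Y$ via $\phi$ and annihilating $\image j_Y$.

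Next, since $H_W$ is a Hilbert space in the image of $\L$, Lemma~\ref{lem:all-endomorphisms} supplies a $\C$-morphism $F \colon W \to W$ with $\L(F) = \Phi$. I would then set $f = \iota_Y\adj \circ F \circ \iota_X \colon X \to Y$, a $\C$-morphism because $\C$ is a dagger category and hence $\iota_Y\adj$ is again a morphism. Using that $\L$ is a dagger functor (Proposition~\ref{prop:L-is-functor}) together with the isometry relations above, the verification is the short computation
\[ \L(f) \;=\; j_Y\adj \circ \Phi \circ j_X \;=\; (j_Y\adj \circ j_Y) \circ \phi \circ (j_X\adj \circ j_X) \;=\; \phi, \]
which shows $\phi$ lies in the image of $\L$ and hence that $\L$ is full.

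The substantive content of fullness has already been spent in Lemma~\ref{lem:all-endomorphisms} (which rests on Bikchentaev's theorem); the step above is essentially bookkeeping. The only points demanding care are that the identification $H_W \cong H_X \oplus H_Y$ be made precisely through the isometries $j_X, j_Y$ rather than merely assumed, and that $\Phi$ be recognized as a genuine bounded endomorphism of $H_W$ so that Lemma~\ref{lem:all-endomorphisms} applies; both are immediate from Proposition~\ref{prop:L}(v). No infinite-rank-specific difficulty arises, so the same argument applies uniformly whether $H_W$ is finite- or infinite-dimensional.
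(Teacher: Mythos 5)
Your argument is correct, and like the paper's proof it draws all of its substance from Lemma~\ref{lem:all-endomorphisms}; where you differ is in how a general morphism is reduced to an endomorphism. The paper's (very terse) proof invokes (H4): there is a dagger monomorphism between $X$ and $Y$ in one of the two directions, hence by Proposition~\ref{prop:L}(iii) a linear isometry $\psi$ between $H_X$ and $H_Y$ in the image of $\L$, and an arbitrary $\phi \colon H_X \to H_Y$ is then recovered as, say, $(\phi \circ \psi\adj) \circ \psi$ (or as $\psi\adj \circ (\psi \circ \phi)$ in the other case), the bracketed endomorphism being hit by Lemma~\ref{lem:all-endomorphisms}. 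You instead use (H1): both objects embed into the dagger biproduct $W = X \oplus Y$, Proposition~\ref{prop:L}(v) makes $j_X$ and $j_Y$ isometries with complementary images in $H_W$, and $\phi$ is conjugated to the endomorphism $j_Y \circ \phi \circ j_X\adj$ of $H_W$ and then compressed back; your computation $\L(\iota_Y\adj \circ F \circ \iota_X) = j_Y\adj \circ \Phi \circ j_X = \phi$ is right, using that $\L$ is a dagger functor. Your route is symmetric in $X$ and $Y$ (no case distinction on the direction of the embedding) and makes no appeal to (H4) at this step, so it exhibits fullness as resting only on biproducts plus Lemma~\ref{lem:all-endomorphisms}; the paper's route is shorter given that (H4) is a standing hypothesis anyway. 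One tacit point, shared with the paper, that you should make explicit: you need the preimage $F$ supplied by Lemma~\ref{lem:all-endomorphisms} to be an endomorphism of $W$ itself, so that $\iota_Y\adj \circ F \circ \iota_X$ is composable in $\C$. This is indeed what that lemma's proof delivers, since the set of $\L$-images of morphisms in $\homset_{\C}(W,W)$ already contains all projections of $H_W$ and is closed under addition and composition (Proposition~\ref{prop:image-L-contains-certain-maps}), and therefore by Bikchentaev's theorem exhausts all bounded endomorphisms of $H_W$.
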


\begin{proof}
Let $H_1, H_2$ be Hilbert spaces in the image of $\L$. By (H4), there is an isometry between $H_1$ and $H_2$ in the image of $\L$. Hence the assertion is a consequence of Lemma~\ref{lem:all-endomorphisms}.
\end{proof}

\begin{theorem}
The dagger categories $\C$ and $\Hil{\Complexes}$ are unitarily dagger equivalent.
\end{theorem}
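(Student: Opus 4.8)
The plan is to deduce the claim directly from J.~Vicary's characterisation of unitary dagger equivalences \cite[Lemma~V.1]{Vic}: a dagger functor between dagger categories is part of a unitary dagger equivalence precisely when it is full, faithful, and dagger essentially surjective. The functor in question is $\L \colon \C \to \OMS{F}$, which is a dagger functor by Proposition~\ref{prop:L-is-functor}, so it remains only to (a) identify its codomain with $\Hil{\Complexes}$ and (b) verify the three properties.

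First I would pin down $F$. Since $\C$ is assumed to contain an orthoset of infinite rank, $\L$ produces an infinite-dimensional uniform orthomodular space, so by Theorem~\ref{thm:Soler}(iii) we have $F \in \{\Reals, \Complexes, \Quaternions\}$ and $\OMS{F}$ coincides with $\Hil{F}$. Lemma~\ref{lem:C-consists-of-complex-Hilbert-spaces} then sharpens this to $F = \Complexes$, so that the codomain of $\L$ is exactly $\Hil{\Complexes}$. In particular $F$ is Archimedean, which legitimises the appeal to the fullness results below.

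It then suffices to assemble the three properties of $\L$ already established in the preceding sections. Faithfulness is Lemma~\ref{lem:L-is-faithful}. Dagger essential surjectivity follows from Proposition~\ref{prop:L-is-surjective}(ii), whose hypothesis is met precisely because $\C$ contains an orthoset of infinite rank. Fullness is Lemma~\ref{lem:L-is-full}. Invoking \cite[Lemma~V.1]{Vic} with these three facts then yields that $\L$ is a unitary dagger equivalence between $\C$ and $\Hil{\Complexes}$, completing the argument.

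The genuine difficulty here lies not in this final assembly but in the preparatory lemmas it invokes. The hardest single ingredient is fullness (Lemma~\ref{lem:L-is-full}), which rests on Bikchentaev's theorem to upgrade the knowledge that $\E(H)$ contains all projections and is closed under addition and composition to the conclusion that $\E(H)$ consists of \emph{all} endomorphisms of $H$; the identification $F = \Complexes$ (via the strict-square-root hypothesis (H5) and Lemma~\ref{lem:strict-square-root-and-C}) is the other essential input. Once both of these are in hand, the present theorem requires nothing beyond citing Vicary's criterion.
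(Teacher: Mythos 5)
Your proposal is correct and follows essentially the same route as the paper's own proof: both cite Proposition~\ref{prop:L-is-functor} (dagger functor), Proposition~\ref{prop:L-is-surjective} (dagger essential surjectivity), Lemmas~\ref{lem:L-is-faithful} and~\ref{lem:L-is-full} (faithful and full), and conclude via \cite[Lemma~V.1]{Vic}. The only difference is that you spell out explicitly the identification of the codomain with $\Hil{\Complexes}$ via Theorem~\ref{thm:Soler}(iii) and Lemma~\ref{lem:C-consists-of-complex-Hilbert-spaces}, which the paper leaves implicit from the surrounding discussion.
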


\begin{proof}
$\L \colon \C \to \Hil{\Complexes}$ is by Proposition~\ref{prop:L-is-functor} a dagger functor. By Proposition~\ref{prop:L-is-surjective}, $\L$ is dagger essentially surjective. By Lemmas~\ref{lem:L-is-faithful} and~\ref{lem:L-is-full}, $\L$ is faithful and full. Hence the assertion follows by \cite[Lemma~V.1]{Vic}.
\end{proof}

\subsubsection*{Acknowledgements}

This research was funded in part by the Austrian Science Fund (FWF) 10.55776/ PIN5424624 and the Czech Science Foundation (GA\v CR) 25-20013L.


\begin{thebibliography}{HHLN}

\bibitem[AbCo]{AbCo}
S. Abramsky, B. Coecke,
A categorical semantics of quantum protocols,
Proceedings of the 19th Annual IEEE Symposium on Logic in Computer Science, LICS 2004, IEEE Computer Society Press 2004, p.~415--425.

\bibitem[AmAr]{AmAr}
I. Amemiya, H. Araki,
A remark on Piron’s paper,
{\it Publ. Res. Inst. Math. Sci., Kyoto Univ., Ser. A} {\bf 2} (1966), 423--427.

\bibitem[Bae]{Bae} R. Baer,
``Linear algebra and projective geometry'',
Academic Press, New York 1952.

\bibitem[Bik1]{Bik1} A.~M.~Bikchentaev,
On the representation of linear operators in a Hilbert space as finite sums of products of projections (in Russian),
{\it Dokl. Ross. Akad. Nauk} {\bf 393} (2003), 444--447.

\bibitem[Bik2]{Bik2} A.~M.~Bikchentaev,
On representation of elements of a von Neumann algebra in the form of finite sums of products of projections,
{\it Sib. Math. J.} {\bf 46} (2005), 24--34.

\bibitem[Bik3]{Bik3} A.~M.~Bikchentaev,
Representation of elements of von Neumann algebras in the form of finite sums of products of projections. II,
in: K. R. Davidson et al. (Eds.),
``Operator theory'', Proceedings of the 20th international conference on operator theory (Timi\cb{s}oara 2004),
Theta, Bucharest 2006;
p.~15-23.

\bibitem[Dac]{Dac} J. R. Dacey,
``Orthomodular spaces'',
Ph.D.~Thesis, University of Massachusetts, Amherst 1968.

\bibitem[Ern]{Ern} M. Ern\'e,
Closure,
in: F. Mynard, E. Pearl (Eds.),
``Beyond topology'', Contemporary Mathematics 486,
American Mathematical Society, Providence 2009;
163--238.

\bibitem[Fil]{Fil} P. A. Fillmore,
Sums of operators with square zero,
{\it Acta Sci. Math.} {\bf 28} (1967), 285--288.

\bibitem[FaFr]{FaFr}
C.-A. Faure, A. Fr\" olicher,
``Modern projective geometry'',
Kluwer Academic Publishers, Dordrecht 2000.

\bibitem[HeKa]{HeKa} C. Heunen, M. Karvonen,
Limits in dagger categories,
{\it Theory Appl. Categ.} {\bf 34} (2019), 468--513.

\bibitem[HeKo]{HeKo} C. Heunen, A. Kornell,
Axioms for the category of Hilbert spaces,
{\it Proc. Nat. Acad. Sciences} {\bf 119}, No. 9, e2117024119.

\bibitem[HeSt]{HeSt} H. Herrlich, G. Strecker, 
``Category Theory. An Introduction'', 
Allyn and Bacon, Boston 1973.

\bibitem[HeVi]{HeVi} C. Heunen, J. Vicary,
``Categories for quantum theory. An introduction'',
Oxford University Press, Oxford 2019.

\bibitem[Hol1]{Hol1} S. S. Holland,
Orderings and square roots in $\ast$-fields,
{\it J. Algebra} {\bf 46} (1977), 207--219.

\bibitem[Hol2]{Hol2} S. S. Holland,
Orthomodularity in infinite dimensions; a theorem of M. Sol\` er,
{\it Bull. Am. Math. Soc., New Ser.} {\bf 32} (1995), 205--234.

\bibitem[KPV]{KPV} M. Korbel\' a\v r, J. Paseka, T. Vetterlein,
Characterisation of quadratic spaces over the Hilbert field by means of the orthogonality relation,
available at {\tt https://arxiv.org/abs/2504.19563}.

\bibitem[MaMa]{MaMa} F. Maeda, S. Maeda,
``Theory of symmetric lattices'',
Springer-Verlag, Berlin~-~Heidelberg~-~New York 1970.

\bibitem[May]{May} R. Mayet,
Some characterizations of the underlying division ring of a Hilbert lattice by automorphisms,
{\it Int. J. Theor. Phys.} {\bf 37} (1998), 109--114.

\bibitem[PaVe1]{PaVe1} J. Paseka, T. Vetterlein,
Categories of orthogonality spaces,
{\it J.\ Pure Appl.\ Algebra} {\bf 226} (2021).

\bibitem[PaVe2]{PaVe2} J. Paseka, T. Vetterlein,
Normal orthogonality spaces,
{\it J. Math. Anal. Appl.} {\bf 507} (2022), 125730.

\bibitem[PaVe3]{PaVe3} J. Paseka, T. Vetterlein,
Linear orthosets and orthogeometries,
{\it Int. J. Theor. Phys.} {\bf 62} (2023), 55.

\bibitem[PaVe4]{PaVe4} J. Paseka, T. Vetterlein,
Categories of orthosets and adjointable maps,
{\it Int. J. Theor. Phys.} {\bf 64} (2025), 164.

\bibitem[PaVe5]{PaVe5} J. Paseka, T. Vetterlein,
Adjointable maps between linear orthosets,
{\it J. Math. Anal. Appl.} {\bf 550} (2025), 129494.

\bibitem[Piz1]{Piz1} R. Piziak, Orthogonality-preserving transformations on quadratic spaces,
{\it Port. Math.} {\bf 43} (1986), 201--212.

\bibitem[Piz2]{Piz2} R. Piziak,
Orthomodular lattices and quadratic spaces: a survey,
{\it Rocky Mt. J. Math.} {\bf 21} (1991), 951--992.

\bibitem[Sel]{Sel}
P. Selinger,
Dagger compact closed categories and completely positive maps,
in:
P. Selinger (Ed.),
Proceedings of the 3rd international workshop on quantum programming languages (QPL 2005),
Elsevier, Amsterdam 2007; p.~139--163.

\bibitem[Sol]{Sol} M. P. Sol\` er,
Characterization of Hilbert spaces by orthomodular spaces,
{\it Commun. Algebra} {\bf 23} (1995), 219--243.

\bibitem[Vet1]{Vet1} T. Vetterlein,
Orthogonality spaces of finite rank and the complex Hilbert spaces,
{\it Int. J. Geom. Methods Mod. Phys.} {\bf 16} (2019), 1950080.

\bibitem[Vet2]{Vet2} T. Vetterlein,
Orthogonality spaces arising from infinite-dimensional complex Hilbert spaces,
{\it Int. J. Theor. Phys.} {\bf 60} (2021), 727--738.

\bibitem[Vet3]{Vet3} T. Vetterlein,
Gradual transitivity in orthogonality spaces of finite rank,
{\it Aequat. Math.} {\bf 95} (2021), 483--503.

\bibitem[Vet4]{Vet4} T. Vetterlein,
Transitivity and homogeneity of orthosets and inner-product spaces over subfields of $\Reals$,
{\it Geom. Dedicata} {\bf 216} (2022), 216:36.

\bibitem[Vic]{Vic}
J. Vicary,
Completeness of $\dagger$-categories and the complex numbers,
{\it J. Math. Phys.} {\bf 52} (2011), 082104.

\bibitem[Wlb]{Wlb} W. J. Wilbur, On characterizing the standard quantum logics, {\it Trans. Am. Math. Soc.} {\bf 233} (1977), 265--282.

\bibitem[Wlc]{Wlc} A. Wilce,
Test spaces,
in:
K. Engesser, D. M. Gabbay, D. Lehmann (eds.),
``Handbook of quantum logic and quantum structures. Quantum logic'',
Elsevier/North-Holland, Amsterdam 2009; 443--549.

\end{thebibliography}
\end{document}